\DeclareFontFamily{U}{shuffle}{}
\DeclareFontShape{U}{shuffle}{m}{n}{ <-8>shuffle7 <8->shuffle10}{}
\newcommand\abs[1]{\left|#1\right|}
\newtheorem{observation}{Remark}[section]
\newtheorem{lemma}[observation]{Lemma}  
\newtheorem{theorem}[observation]{Theorem}
\newtheorem{definition}[observation]{Definition}
\newtheorem{example}[observation]{Example}
\newtheorem{proposition}[observation]{Proposition} 
\newtheorem{corollary}[observation]{Corollary}
\newcommand{\proof}{\noindent{\sc Proof:}\xspace}
\def\endproof{~\hfill$\Box$\vskip 10pt}
\definecolor{hexcolor0xff0000}{rgb}{1.000,0.000,0.000}
\definecolor{hexcolor0x000000}{rgb}{0.000,0.000,0.000}
\definecolor{hexcolor0x00ff00}{rgb}{0.000,1.000,0.000}
\definecolor{hexcolor0xffff00}{rgb}{1.000,1.000,0.000}
\definecolor{hexcolor0x000000}{rgb}{0.000,0.000,0.000}
\definecolor{hexcolor0x000000}{rgb}{0.000,0.000,0.000}
\definecolor{white}{rgb}{1.000,1.000,1.000}
\tikzstyle{none}=[inner sep=0pt]
\tikzstyle{port}=[inner sep=0pt]
\tikzstyle{component}=[circle,fill=white,draw=black,inner sep=1pt]
\tikzstyle{integral}=[inner sep=0pt]
\tikzstyle{differential}=[inner sep=0pt]
\tikzstyle{codifferential}=[inner sep=0pt]
\tikzstyle{function}=[rectangle,fill=white,draw=black,inner sep =1pt]
\tikzstyle{duplicate}=[inner sep=1pt]
\tikzstyle{wire}=[-,draw=black,line width=1.000]
\tikzstyle{dwire}=[-,dotted,draw=black,line width=0.5]
\tikzstyle{object}=[inner sep=2pt]
\newdimen\w@dth
\def\setw@dth#1#2{\setbox\z@\hbox{\scriptsize $#1$}\w@dth=\wd\z@
\setbox\@ne\hbox{\scriptsize $#2$}\ifnum\w@dth<\wd\@ne \w@dth=\wd\@ne \fi
\advance\w@dth by 1.2em}
\def\t@^#1_#2{\allowbreak\def\n@one{#1}\def\n@two{#2}\mathrel
{\setw@dth{#1}{#2}
\mathop{\hbox to \w@dth{\rightarrowfill}}\limits
\ifx\n@one\empty\else ^{\box\z@}\fi
\ifx\n@two\empty\else _{\box\@ne}\fi}}
\def\t@@^#1{\@ifnextchar_ {\t@^{#1}}{\t@^{#1}_{}}}
\def\t@left^#1_#2{\def\n@one{#1}\def\n@two{#2}\mathrel{\setw@dth{#1}{#2}
\mathop{\hbox to \w@dth{\leftarrowfill}}\limits
\ifx\n@one\empty\else ^{\box\z@}\fi
\ifx\n@two\empty\else _{\box\@ne}\fi}}
\def\t@@left^#1{\@ifnextchar_ {\t@left^{#1}}{\t@left^{#1}_{}}}
\def\two@^#1_#2{\def\n@one{#1}\def\n@two{#2}\mathrel{\setw@dth{#1}{#2}
\mathop{\vcenter{\hbox to \w@dth{\rightarrowfill}\kern-1.7ex
                 \hbox to \w@dth{\rightarrowfill}}%
       }\limits
\ifx\n@one\empty\else ^{\box\z@}\fi
\ifx\n@two\empty\else _{\box\@ne}\fi}}
\def\tw@@^#1{\@ifnextchar_ {\two@^{#1}}{\two@^{#1}_{}}}
\def\tofr@^#1_#2{\def\n@one{#1}\def\n@two{#2}\mathrel{\setw@dth{#1}{#2}
\mathop{\vcenter{\hbox to \w@dth{\rightarrowfill}\kern-1.7ex
                 \hbox to \w@dth{\leftarrowfill}}%
       }\limits
\ifx\n@one\empty\else ^{\box\z@}\fi
\ifx\n@two\empty\else _{\box\@ne}\fi}}
\def\t@fr@^#1{\@ifnextchar_ {\tofr@^{#1}}{\tofr@^{#1}_{}}}
\newdimen\W@dth
\def\setW@dth#1#2{\setbox\z@\hbox{$#1$}\W@dth=\wd\z@
\setbox\@ne\hbox{$#2$}\ifnum\W@dth<\wd\@ne \W@dth=\wd\@ne \fi
\advance\W@dth by 1.2em}
\def\T@^#1_#2{\allowbreak\def\N@one{#1}\def\N@two{#2}\mathrel
{\setW@dth{#1}{#2}
\mathop{\hbox to \W@dth{\rightarrowfill}}\limits
\ifx\N@one\empty\else ^{\box\z@}\fi
\ifx\N@two\empty\else _{\box\@ne}\fi}}
\def\T@@^#1{\@ifnextchar_ {\T@^{#1}}{\T@^{#1}_{}}}
\def\T@left^#1_#2{\def\N@one{#1}\def\N@two{#2}\mathrel{\setW@dth{#1}{#2}
\mathop{\hbox to \W@dth{\leftarrowfill}}\limits
\ifx\N@one\empty\else ^{\box\z@}\fi
\ifx\N@two\empty\else _{\box\@ne}\fi}}
\def\T@@left^#1{\@ifnextchar_ {\T@left^{#1}}{\T@left^{#1}_{}}}
\def\Tofr@^#1_#2{\def\N@one{#1}\def\N@two{#2}\mathrel{\setW@dth{#1}{#2}
\mathop{\vcenter{\hbox to \W@dth{\rightarrowfill}\kern-1.7ex
                 \hbox to \W@dth{\leftarrowfill}}%
       }\limits
\ifx\N@one\empty\else ^{\box\z@}\fi
\ifx\N@two\empty\else _{\box\@ne}\fi}}
\def\T@fr@^#1{\@ifnextchar_ {\Tofr@^{#1}}{\Tofr@^{#1}_{}}}
\def\Two@^#1_#2{\def\N@one{#1}\def\N@two{#2}\mathrel{\setW@dth{#1}{#2}
\mathop{\vcenter{\hbox to \W@dth{\rightarrowfill}\kern-1.7ex
                 \hbox to \W@dth{\rightarrowfill}}%
       }\limits
\ifx\N@one\empty\else ^{\box\z@}\fi
\ifx\N@two\empty\else _{\box\@ne}\fi}}
\def\Tw@@^#1{\@ifnextchar_ {\Two@^{#1}}{\Two@^{#1}_{}}}
\def\to{\@ifnextchar^ {\t@@}{\t@@^{}}}
\def\from{\@ifnextchar^ {\t@@left}{\t@@left^{}}}
\def\tofro{\@ifnextchar^ {\t@fr@}{\t@fr@^{}}}
\def\To{\@ifnextchar^ {\T@@}{\T@@^{}}}
\def\From{\@ifnextchar^ {\T@@left}{\T@@left^{}}}
\def\Two{\@ifnextchar^ {\Tw@@}{\Tw@@^{}}}
\def\Tofro{\@ifnextchar^ {\T@fr@}{\T@fr@^{}}}
\title{Integral Categories and Calculus Categories}
\author{J.R.B. Cockett         \and
        J-S. Lemay}
\begin{document}
\allowdisplaybreaks

\maketitle



\begin{abstract}
Differential categories are now an established abstract setting for differentiation. However not much attention has been given to the process which is inverse to differentiation: integration. This  paper presents the parallel development for integration by axiomatizing an integral transformation, $\mathsf{s}_A: \oc A \to \oc A \otimes A$,  in a symmetric monoidal category with a coalgebra modality. When integration is combined with differentiation, the two fundamental theorems of calculus are expected to hold (in a suitable sense):  a differential category with integration which satisfies these two theorem is called a {\em calculus category\/}.  

Modifying an approach to antiderivatives by T. Ehrhard,  we define having antiderivatives as the demand that a certain natural 
transformation, $\mathsf{K}: \oc A \to \oc A$, is invertible. We observe that a differential category having antiderivatives, in this 
sense, is always a calculus category. 

When the coalgebra modality is monoidal, it is natural to demand an extra coherence between integration and the coalgebra modality.   In the presence of this extra coherence we show that a calculus category with a monoidal coalgebra modality has its integral transformation given by antiderivatives and, thus, that the integral structure is uniquely determined by the differential structure.

The paper finishes by providing a suite of separating examples.  Examples of differential categories, integral categories, and calculus categories based on both monoidal and (mere) coalgebra modalities are presented.  
In addition, differential categories which are {\em not\/} integral categories are discussed and vice versa.
\end{abstract}

\section{Introduction}\label{intro}

The two fundamental theorems of calculus relate the two most important operations of calculus: differentiation and integration. The first theorem states that the derivative of the integral of a function $f: \mathbb{R} \to \mathbb{R}$ is the original function $f$:
\begin{align*}
\frac{{\sf d} (\int_a^t f(u)~{\sf d}u)}{{\sf d}t}(x)=f(x) \tag{\sf FTC1}
\end{align*}
while the second states that the integral of the derivative of a function $f: \mathbb{R} \to \mathbb{R}$ on a closed interval $[a,b]$ is equal to the difference of $f$ evaluated at the end points:
\begin{align*}
\int_a^b\frac{{\sf d} f(t)}{{\sf d}t}(x)~{\sf d}x=f(b)-f(a). \tag{\sf FTC2}
\end{align*}
They are called ``fundamental" theorems because they are absolutely fundamental to the development of classical calculus. 

There has been significant progress in the abstract understanding of differentiation in category theory.  In contrast, the abstract formulation of integration has not received the same level of attention. Nonetheless, one might expect that, when suitably adjoined to the formulation of differentiation, an abstract form for integration should encompass these fundamental theorems.   The purpose of this paper is to explore the extent to which this expectation is realized.

In the early 2000's, T. Ehrhard and L. Regnier introduced the differential $\lambda$-calculus \cite{ehrhard2003differential} and differential proof nets \cite{ehrhard2006differential}. A few years later, R. Blute, R. Cockett and R. Seely introduced differential categories \cite{dblute2006differential}, which were the appropriate categorical structure for modelling Ehrhard and Regnier's differential linear logic. Differential categories now have a rich literature of their own \cite{blute2011kahler, blute2015cartesian, blute2009cartesian, blute2015derivations, cockett2011differential, cockett2014differential, fiore2007differential, laird2013constructing} and there are many examples which have been extensively studied \cite{blute2010convenient, dblute2006differential, ehrhard2017introduction}. However, as mentioned before, little attention has been given to abstracting the process of integration.

In 2014, T. Ehrhard observed that in certain $*$-autonomous categories which had the appropriate structure to be a differential category, it was possible with one additional assumption to produce 
{\em antiderivatives\/} \cite{ehrhard2017introduction}. The additional assumption was that a certain natural transformation -- which he called $\mathsf{J}$ -- constructed from the deriving transformation was a natural 
isomorphism.  With this assumption, Ehrhard constructed an integral transformation with an inverse behaviour to the deriving transformation, in the sense that he gave necessary and sufficient conditions 
for a {\em map\/} to satisfy the first fundamental theorem of calculus -- by proving Poincar\'e's Lemma.  Furthermore, when the deriving transformation satisfied an extra -- non-equational -- condition, 
which he called the ``Taylor Property", he then showed that every differentiable function satisfied the second fundamental theorem of calculus.  

While much of the inspiration for our approach to integration derives from these observations, Ehrhard made no attempt to axiomatize integration separately from differentiation.  Here we
introduce integral categories as a notion which stands on its own (i.e. in the absence of differentiation).  The inspiration for this independent axiomatization of integral 
categories comes from the much older notion of a Rota-Baxter algebra \cite{baxter1960analytic, guo2012introduction, rota1969baxter}, the classical algebraic abstraction of integration. Briefly, for a commutative ring $R$, a Rota-Baxter algebra of weight $\lambda$ is an $R$-algebra, $A$, with an $R$-linear morphism, $P: A \to A$, which satisfies the Rota-Baxter rule: 
\[P(a)P(b)=P(aP(b))+P(P(a)b)+\lambda P(ab) \quad \forall a,b \in A\]
where $\lambda \in R$. The map $P$ is called a Rota-Baxter operator of weight $\lambda$. A particular example of a Rota-Baxter algebra of weight zero is the $\mathbb{R}$-algebra of real continuous functions $\mathsf{Cont}(\mathbb{R})$, where the Rota-Baxter operator $P: \mathsf{Cont}(\mathbb{R}) \to \mathsf{Cont}(\mathbb{R})$ is defined as the integral of the function centred at zero:
\[P(f)(x)=\int_0^x f(t)~{\sf d}t\]
The Rota-Baxter rule for this example is the expression of the integration by parts rule without the use of derivatives (see \cite{guo2012introduction} for more details): 
\[  \int^x_0 f(t)~\mathsf{d}t \cdot \int^x_0 g(t)~\mathsf{d}t=  \int^x_0 f(t) \cdot (\int^t_0 g(u)~\mathsf{d}u)~\mathsf{d}t + \int^x_0  (\int^t_0 f(u)~\mathsf{d}u) \cdot g(t)~\mathsf{d}t\]
This example motivates the idea that the Rota-Baxter rule is one of the fundamental axioms of integration. In particular, a formalization of the Rota-Baxter rule of weight zero will be one of the axioms of the integral category structure.

When differentiation and integration are combined into what we call here a 
{\em calculus category\/}, we demand that the two fundamental theorems hold.  The second fundamental theorem is assumed to hold verbatim.  However, the first fundamental theorem, 
as above, has to be interpreted as being on {\em maps\/} -- rather than objects -- and, under this interpretation, becomes the Poincar\'e condition, a conditional property which provides necessary 
and sufficient conditions for a map to be the differential of its integral.   

To obtain the notion of integration as an {\em antiderivative\/}, we insist that a slightly different natural transformation, which we call $\mathsf{K}$,  should be invertible.  We show this is equivalent to 
requiring both that Ehrhard's transformation $\mathsf{J}$ is invertible and that the ``Taylor Property'' -- which  Ehrhard had suggested was desirable -- holds.  This improvement is easily underestimated: the  ``Taylor Property''  is a conditional requirement,  replacing a conditional requirement by a purely equational requirement is {\em always}, mathematically, a significant step. 
Demanding that, $\mathsf{K}$, is invertible not only produces an integral transformation, but also secures the first {\em and\/} second fundamental theorem of calculus.  

In fact, an integral transformation is already obtained when just Ehrhard's transformation, $\mathsf{J}$, is invertible (see Proposition \ref{Jint}). Furthermore, this integral satisfies the Poincar\'e condition -- our interpretation of the first fundamental theorem.  However, this integral need not satisfy the second fundamental theorem of calculus -- for a counterexample see Section \ref{examplesec}, Example \ref{JnotKexample} -- and, thus, may not yield a calculus category.  The  ``Taylor Property''  is required in addition to the invertibility of $\mathsf{J}$ to secure a calculus category.  Of course, assuming invertibility of $\mathsf{J}$ and the Taylor property, guarantees the invertibility of $\mathsf{K}$.  

To prove that the antiderivative produced using the inverse of $\mathsf{K}$ yields a calculus category, we use the fact that, when $\mathsf{K}$ is invertible, $\mathsf{J}$ is also invertible and that the antiderivative produced by the inverse of $\mathsf{K}$, is precisely the antiderivative produced by the inverse of $\mathsf{J}$.  This immediately gives that the invertibility of $\mathsf{K}$ gives an integral transformation satisfying the Poincar\'e condition; it then remains only to show that invertibility of $\mathsf{K}$ implies the second fundamental theorem of calculus (see Theorem \ref{anticalc}).

\subsection{Conventions and the Graphical Calculus}

Before beginning, we should mention the conventions that we use in the paper. We shall use diagrammatic order for composition: explicitly, this means that the composite map $fg$ is the map which first does $f$ then $g$.   Furthermore, to simplify working in symmetric monoidal categories, we will allow ourselves to work in strict symmetric monoidal categories and so will generally suppress the associator and unitor isomorphisms.  For a symmetric monoidal category we will use $\otimes$ for the tensor product, $K$ for the unit, and $\sigma: A \otimes B \to B \otimes A$ for the symmetry isomorphism.  

We shall make extensive use of the graphical calculus (see \cite{joyal1991geometry}) for symmetric monoidal categories as this makes proofs easier to follow.  We refer the reader to \cite{selinger2010survey} for an introduction to the graphical calculus in monoidal categories and its variations.  Note, however, that our diagrams are to be read down the page -- from top to bottom -- and we shall often omit labelling wires with objects. 

We will be working with coalgebra modalities: these are based on a comonad $(\oc, \delta, \varepsilon)$ where $\oc$ is the functor, $\delta$ is the comultiplicaton and $\varepsilon$ is the counit.  As in  \cite{dblute2006differential}, we will use functor boxes when dealing with string diagrams involving the functor $\oc$:  a mere map $f: A \to B$ will be encased in a circle while $\oc(f): \oc A \to \oc B$ will be encased in a box: 
$$ f= \begin{array}[c]{c}\resizebox{!}{2cm}{%
\begin{tikzpicture}
	\begin{pgfonlayer}{nodelayer}
		\node [style=circle] (0) at (0, 3) {$A$};
		\node [style=circle] (1) at (0, 0) {$B$};
		\node [style={circle, draw}] (2) at (0, 1.5) {$f$};
	\end{pgfonlayer}
	\begin{pgfonlayer}{edgelayer}
		\draw [style=wire] (0) to (2);
		\draw [style=wire] (2) to (1);
	\end{pgfonlayer}
\end{tikzpicture}
  }%
\end{array} ~~~~~   \oc(f)= \begin{array}[c]{c}\resizebox{!}{2cm}{%
\begin{tikzpicture}
	\begin{pgfonlayer}{nodelayer}
		\node [style=circle] (0) at (0, 3) {$\oc A$};
		\node [style=circle] (1) at (0, 0) {$\oc B$};
		\node [style={regular polygon,regular polygon sides=4, draw}] (2) at (0, 1.5) {$f$};
	\end{pgfonlayer}
	\begin{pgfonlayer}{edgelayer}
		\draw [style=wire] (0) to (2);
		\draw [style=wire] (2) to (1);
	\end{pgfonlayer}
\end{tikzpicture}
  }%
\end{array} $$

\section{Coalgebra Modalites and the Coderiving Transformation}\label{coalgsec}

\subsection{Coalgebra Modalities}

In a symmetric monoidal category, a cocommutative comonoid is a triple $(C, \Delta, e)$ consisting of an object $C$, a map $\Delta: C \to C \otimes C$ called the comultiplication and a map $e: C \to K$ called the counit such that the following diagrams commute: 
        $$  \xymatrixcolsep{2pc}\xymatrix{C  \ar[r]^-{\Delta} \ar[d]_-{\Delta} & C \otimes C \ar[d]^-{\Delta \otimes 1}& & C \ar[d]^-{\Delta} \ar@{=}[dr]\ar@{=}[dl] &      C   \ar[r]^-{\Delta}  \ar[dr]_-{\Delta} & C \otimes C \ar[d]^-{\sigma}\\
      C \otimes C \ar[r]_-{1 \otimes \Delta} & C \otimes C \otimes C& C & C \otimes C \ar[l]^-{e \otimes 1} \ar[r]_-{1 \otimes e} & C &  C \otimes C}$$
A morphism of comonoids $f: (C, \Delta, e) \to (D, \Delta^\prime, e^\prime)$ is a map $f: C \to D$ which preserves the comultiplication and counit, that is, the following diagrams commute: 
  $$\xymatrixcolsep{2pc}\xymatrix{
        C   \ar[r]^{\Delta} \ar[d]_-{f} & C \otimes C \ar[d]^-{f \otimes f} & C  \ar[r]^-{f} \ar[dr]_-{e} & D \ar[d]^{e}\\
        D  \ar[r]_-{\Delta} &  D \otimes D & & K}$$
Commutative monoids and their morphisms are defined dually. 

\begin{definition} \normalfont A \textbf{coalgebra modality} \cite{dblute2006differential} on a symmetric monoidal category is a quintuple $(\oc, \delta, \varepsilon, \Delta, e)$ consisting of a comonad $(\oc, \delta, \varepsilon)$, a natural transformation $\Delta$ with components $\Delta_A: \oc A \to \oc A \otimes \oc A$, and a natural transformation $e$ with components $e_A: \oc A \to K$ such that for each object $A$, $(\oc A, \Delta_A, e_A)$ is a cocommutative comonoid, and $\delta$ preserves the comultiplication, that is, $\delta\Delta=\Delta(\delta \otimes \delta)$. Algebra modalities are defined dually. 
\end{definition}

Requiring that $\Delta$ and $e$ be natural transformations is equivalent to asking that for each map $f: A \to B$, $\oc(f): \oc A \to \oc B$ is a comonoid morphism. This can be used to show that $\delta$ is a comonoid morphism. 

\begin{lemma} For any coalgebra modality $(\oc, \delta, \varepsilon, \Delta, e)$, $\delta$ also preserves the counit, that is, $\delta e = e$, and so $\delta$ is a comonoid morphism.
\end{lemma}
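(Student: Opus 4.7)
The plan is to reduce the claim $\delta_A ; e_{\oc A} = e_A$ to naturality of $e$ combined with one of the comonad triangle identities. The key observation is that $\varepsilon_A : \oc A \to A$ is itself a morphism in the base category, so naturality of $e$ along $\varepsilon_A$ yields the identity
\[ \oc(\varepsilon_A) ; e_A = e_{\oc A}. \]
This is the only naturality square I need, and it rewrites $e_{\oc A}$ in terms of $e_A$ at the cost of introducing $\oc(\varepsilon_A)$.

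From here the proof is essentially one line. Pre-composing the above equation with $\delta_A$ gives
\[ \delta_A ; e_{\oc A} = \delta_A ; \oc(\varepsilon_A) ; e_A, \]
and the right-hand side collapses via the standard comonad triangle law $\delta_A ; \oc(\varepsilon_A) = 1_{\oc A}$, leaving exactly $e_A$. Thus $\delta$ preserves the counit. Combined with the preservation of the comultiplication that is already part of the definition of a coalgebra modality, we conclude that $\delta_A$ is a morphism of cocommutative comonoids from $(\oc A, \Delta_A, e_A)$ to $(\oc\oc A, \Delta_{\oc A}, e_{\oc A})$.

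There is no real obstacle here: the argument is a routine two-step diagram chase. The only thing to be careful about is the direction of naturality — one wants to naturality-transport $e$ along $\varepsilon_A$ (not $\oc(\varepsilon_A)$), and then use the triangle identity $\delta ; \oc\varepsilon = 1$ rather than $\delta ; \varepsilon_{\oc} = 1$; picking the other triangle identity would leave a $\varepsilon_{\oc A}$ stranded and go nowhere. If desired, the whole computation can be rendered as a short string-diagram manipulation: slide the counit $e$ up through a functor box by naturality, then annihilate the $\delta/\oc\varepsilon$ pair using the comonad law.
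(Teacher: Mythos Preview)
Your proof is correct and is essentially the same argument as the paper's: the paper phrases the key step as ``$\oc(\varepsilon)$ is a comonoid morphism, hence $\oc(\varepsilon)e = e$,'' which is exactly your naturality square for $e$ along $\varepsilon$, and then both proofs finish with the triangle identity $\delta\,\oc(\varepsilon) = 1$.
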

\begin{proof} By the above remark, $\oc(\varepsilon)$ is a comonoid morphism which implies that $\oc(\varepsilon)e=e$. Therefore, since $(\oc, \delta, \varepsilon)$ is a comonad, we obtain the following: 
\[\delta e=\delta\oc(\varepsilon)e=e\]
\end{proof} 

\subsection{Coderiving Transformation}

\begin{definition} \normalfont For a coalgebra modality $(!, \delta, \varepsilon, \Delta, e)$ on an additive symmetric monoidal category, the \textbf{coderiving transformation} is the natural transformation $\mathsf{d}^\circ_A:  \oc A \to \oc A \otimes A$ defined as:
\[\mathsf{d}^\circ_A= \Delta_A (1_{\oc A} \otimes \varepsilon_A)\] \end{definition}
 In the graphical calculus, we represent the coderiving transformation as follows:
    $$\mathsf{d}^\circ \coloneqq \begin{array}[c]{c} \resizebox{!}{1.5cm}{%
 \begin{tikzpicture}
	\begin{pgfonlayer}{nodelayer}
		\node [style=port] (0) at (-0.75, 0.75) {};
		\node [style=port] (1) at (0.75, 0.75) {};
		\node [style=port] (2) at (0, 3) {};
		\node [style=codifferential] (3) at (0, 2) {{\bf =\!=\!=\!=}};
	\end{pgfonlayer}
	\begin{pgfonlayer}{edgelayer}
		\draw [style=wire, bend left, looseness=1.00] (3) to (1);
		\draw [style=wire] (2) to (3);
		\draw [style=wire, bend right, looseness=1.00] (3) to (0);
	\end{pgfonlayer}
\end{tikzpicture}}
   \end{array} :=
   \begin{array}[c]{c} \resizebox{!}{1.5cm}{%
\begin{tikzpicture}
	\begin{pgfonlayer}{nodelayer}
		\node [style=duplicate] (0) at (0, 2.25) {$\Delta$};
		\node [style=port] (1) at (0.75, 0.75) {};
		\node [style={circle, draw}] (2) at (0.75, 1.25) {$\varepsilon$};
		\node [style=port] (3) at (0, 3) {};
		\node [style=port] (4) at (-0.75, 0.75) {};
	\end{pgfonlayer}
	\begin{pgfonlayer}{edgelayer}
		\draw [style=wire] (3) to (0);
		\draw [style=wire, bend left, looseness=1.00] (0) to (2);
		\draw [style=wire] (2) to (1);
		\draw [style=wire, in=90, out=-156, looseness=1.00] (0) to (4);
	\end{pgfonlayer}
\end{tikzpicture}}
   \end{array}$$

\begin{proposition}\label{codevprop} The coderiving transformation $\mathsf{d}^\circ$ satisfies the following properties: 
 \begin{enumerate}[{\bf [cd.1]}]
\item $\mathsf{d}^\circ (e \otimes 1) = \varepsilon$
\item $\mathsf{d}^\circ (\varepsilon \otimes 1)= \Delta (\varepsilon \otimes \varepsilon)$
\item $\mathsf{d}^\circ(\Delta \otimes 1)=\Delta(1 \otimes \mathsf{d}^\circ)$
\item $\mathsf{d}^\circ(\Delta \otimes 1)(1 \otimes \sigma)=\Delta(\mathsf{d}^\circ \otimes 1)$
\item $\mathsf{d}^\circ(\delta \otimes 1)=\delta\mathsf{d}^\circ(1 \otimes \varepsilon)$
\item $\mathsf{d}^\circ (\mathsf{d}^\circ \otimes 1)=\mathsf{d}^\circ (\mathsf{d}^\circ \otimes 1)(1 \otimes \sigma)$
\item $\delta\mathsf{d}^\circ(\oc(\varepsilon) \otimes e) = 1$
\end{enumerate}
 \end{proposition}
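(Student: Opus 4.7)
The plan is to expand $\mathsf{d}^\circ = \Delta(1 \otimes \varepsilon)$ on both sides of each identity and then reduce using the cocommutative comonoid axioms, the coassociativity and cocommutativity of $\Delta$, the comonad triangle identities $\delta\varepsilon = 1$ and $\delta\oc(\varepsilon)=1$, and the compatibility $\delta\Delta = \Delta(\delta\otimes\delta)$ together with $\delta e = e$ established in the preceding lemma. I would handle the seven items in the order presented, since the later ones build on the style of manipulation established in the earlier ones.

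For \textbf{[cd.1]} the LHS unfolds to $\Delta(e \otimes \varepsilon)$, which collapses by the comonoid counit law $\Delta(e\otimes 1)=1$ (after pushing $\varepsilon$ past a unitor). \textbf{[cd.2]} is immediate from the expansion $\Delta(\varepsilon\otimes \varepsilon)$. For \textbf{[cd.3]} both sides unfold to a triple comultiplication with a single $\varepsilon$ on one leg, and coincide by coassociativity $\Delta(\Delta\otimes 1) = \Delta(1\otimes \Delta)$. Then \textbf{[cd.4]} follows from [cd.3] by cocommutativity $\Delta\sigma = \Delta$ applied to the inner $\Delta$.

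For \textbf{[cd.5]}, expand LHS to $\Delta_A(\delta_A \otimes \varepsilon_A)$ and RHS to $\delta_A\Delta_{\oc A}(1\otimes \varepsilon_{\oc A}\varepsilon_A)$; apply $\delta\Delta = \Delta(\delta\otimes\delta)$ on the RHS to get $\Delta_A(\delta_A\otimes \delta_A\varepsilon_{\oc A}\varepsilon_A)$, and use $\delta\varepsilon = 1$ to reduce $\delta_A\varepsilon_{\oc A}= 1_{\oc A}$. For \textbf{[cd.6]}, both compositions unfold to $\Delta_A(\Delta_A\otimes 1)(1\otimes \varepsilon\otimes\varepsilon)$; the desired symmetry in the two $A$ legs follows by rewriting $\Delta(\Delta\otimes 1) = \Delta(1\otimes \Delta)$ via coassociativity and then applying cocommutativity $\Delta\sigma = \Delta$ to the inner comultiplication, which absorbs the swap $1\otimes\sigma$ on the output.

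Finally, for \textbf{[cd.7]}, expand to $\delta_A\Delta_{\oc A}(\oc(\varepsilon_A)\otimes \varepsilon_{\oc A} e_A)$, push $\delta$ through $\Delta$ using $\delta\Delta = \Delta(\delta\otimes\delta)$ to obtain $\Delta_A(\delta_A\oc(\varepsilon_A)\otimes \delta_A\varepsilon_{\oc A}e_A)$, then collapse both tensor factors using the comonad identities $\delta\oc(\varepsilon) = 1$ and $\delta\varepsilon = 1$ to arrive at $\Delta_A(1\otimes e_A)$, which is $1_{\oc A}$ by the counit law of the comonoid. The only real obstacle across the proof is keeping careful track of subscripts and tensor positions in [cd.5] and [cd.7], where both the outer $\oc$ and inner $\oc\oc$ copies of $\varepsilon$ and $\delta$ appear; the graphical calculus makes the shape of each rewrite transparent and is the natural medium in which to present the proof.
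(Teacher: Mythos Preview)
Your proposal is correct and follows essentially the same approach as the paper: expand $\mathsf{d}^\circ = \Delta(1\otimes\varepsilon)$ and reduce using the cocommutative comonoid axioms, coassociativity, the comonad triangle identities, and the fact that $\delta$ is a comonoid morphism. The only cosmetic differences are that the paper proves \textbf{[cd.4]} directly (rather than deducing it from \textbf{[cd.3]}) and proves \textbf{[cd.6]} by invoking \textbf{[cd.4]} (rather than expanding directly), but these are the same underlying arguments.
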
 
 
 \begin{proof} Most of these properties follow from the coalgebra modality structure: \\
{\bf[cd.1]}: Here we use the counit of the comultiplication:  
\begin{align*}

\end{align*}
\end{proof} 
 
\subsection{Monoidal Coalgebra Modalities}

A symmetric monoidal functor \cite{mac2013categories} is a triple $(\oc, m_\otimes, m_K)$ consisting of a functor $\oc$ between symmetric monoidal categories, a natural transformation $m_\otimes: \oc A \otimes \oc B \to \oc (A \otimes B)$ and a map $m_K: K \to \oc K$ satisfying certain coherences. This can be extended to defining a symmetric monoidal comonad, which is a quintuple $(\oc, \delta, \varepsilon, m_\otimes, m_K)$ consisting of a symmetric monoidal functor $(\oc, m_\otimes, m_K)$ and a comonad $(\oc, \delta, \varepsilon)$ such that $\delta$ and $\varepsilon$ be monoidal natural transformations. For a full detailed list of the coherences for symmetric monoidal comonads, see \cite{bierman1995categorical}. The string diagrams representations of $m_\otimes$ and $m_K$ are as follows: 
$$  m_\otimes = \begin{array}[c]{c}\resizebox{!}{1cm}{%
\begin{tikzpicture}
	\begin{pgfonlayer}{nodelayer}
		\node [style=port] (0) at (2, 2) {};
		\node [style=port] (1) at (0.5, 2) {};
		\node [style={regular polygon,regular polygon sides=4, draw, inner sep=1pt,minimum size=1pt}] (2) at (1.25, 1) {$\bigotimes$};
		\node [style=port] (3) at (1.25, 0.25) {};
	\end{pgfonlayer}
	\begin{pgfonlayer}{edgelayer}
		\draw [style=wire, in=-90, out=180, looseness=1.50] (2) to (1);
		\draw [style=wire, in=0, out=-90, looseness=1.50] (0) to (2);
		\draw [style=wire] (2) to (3);
	\end{pgfonlayer}
\end{tikzpicture}
  }%
\end{array} ~~~~~ m_K= \begin{array}[c]{c}\resizebox{!}{1cm}{%
\begin{tikzpicture}
	\begin{pgfonlayer}{nodelayer}
		\node [style={circle, draw}] (0) at (1, 1.75) {$m$};
		\node [style=port] (1) at (1, 0.5) {};
	\end{pgfonlayer}
	\begin{pgfonlayer}{edgelayer}
		\draw [style=wire] (0) to (1);
	\end{pgfonlayer}
\end{tikzpicture}}
   \end{array}$$
 
\begin{definition} \normalfont A \textbf{monoidal coalgebra modality} \cite{bierman1995categorical, blute2015cartesian} on a symmetric monoidal category is a symmetric monoidal comonad, $(\oc, \delta, \varepsilon, m_\otimes, m_K)$, and a coalgebra modality, $(\oc, \delta, \varepsilon, \Delta, e)$, satisfying
\begin{enumerate}[{\em (i)}]
\item $\Delta$ and $e$ are monoidal transformations, that is, the following diagrams commute:  
$$ \xymatrixcolsep{2pc}\xymatrix{& K & & K\\
\oc A \otimes \oc B \ar[ur]^-{e \otimes  e} \ar[d]_-{\Delta \otimes \Delta} \ar[r]^-{m_\otimes}  & \oc(A \otimes B) \ar[u]_-{e} \ar[dd]^-{\Delta} & K \ar[r]^-{m_K}  \ar@{=}[ur]^-{} \ar[dr]_-{m_K \otimes m_K} & \oc K \ar[d]^-{\Delta}  \ar[u]_-{e} \\
  \oc A \otimes \oc A \otimes \oc B \otimes \oc B \ar[d]_-{1 \otimes \sigma \otimes 1} & &  & \oc K \otimes \oc K \\
  \oc A \otimes \oc B \otimes \oc A \otimes \oc B \ar[r]_-{m_\otimes \otimes m_\otimes} & \oc(A \otimes B) \otimes \oc(A \otimes B) 
  } $$
  \item $\Delta$ and $e$ are $!$-coalgebra morphisms, that is, the following diagrams commute:
$$\xymatrixcolsep{2pc}\xymatrix{\oc A \ar[d]_-{\Delta} \ar[rr]^-{\delta} & & \oc \oc A \ar[d]^-{\oc(\Delta)} & \oc A \ar[d]_-{e} \ar[r]^-{\delta} & \oc \oc A \ar[d]^-{\oc(e)} \\
    \oc A \otimes \oc A \ar[r]_-{\delta \otimes \delta} & \oc \oc A \otimes \oc \oc A \ar[r]_-{m_\otimes} & \oc(\oc A \otimes \oc A) &   K \ar[r]_-{m_K} & \oc(K)
  } $$
\end{enumerate}
\end{definition}  

\begin{proposition} The coderiving transformation $\mathsf{d}^\circ$ of a monoidal coalgebra modality satisfies the following monoidal rule:  
\begin{description}
\item[\textbf{[cd.m]}] $m_\otimes \mathsf{d}^\circ=(\mathsf{d}^\circ \otimes \mathsf{d}^\circ)(1 \otimes \sigma \otimes 1)(m_\otimes \otimes 1 \otimes 1)$
\[ \begin{array}[c]{c}\resizebox{!}{2.5cm}{%
\begin{tikzpicture}
	\begin{pgfonlayer}{nodelayer}
		\node [style=differential] (0) at (2, -2.25) {$\bigotimes$};
		\node [style=port] (1) at (0.5, -3.25) {};
		\node [style={regular polygon,regular polygon sides=4, draw, inner sep=1pt,minimum size=1pt}] (2) at (1.25, 0) {$\bigotimes$};
		\node [style=port] (3) at (2, 1) {};
		\node [style=port] (4) at (0.5, 1) {};
		\node [style=codifferential] (5) at (1.25, -1) {{\bf =\!=\!=}};
		\node [style=port] (6) at (1.5, -3.25) {};
		\node [style=port] (7) at (2.5, -3.25) {};
	\end{pgfonlayer}
	\begin{pgfonlayer}{edgelayer}
		\draw [style=wire, in=-90, out=180, looseness=1.25] (2) to (4);
		\draw [style=wire, in=0, out=-90, looseness=1.25] (3) to (2);
		\draw [style=wire, bend left, looseness=1.00] (5) to (0);
		\draw [style=wire, in=90, out=-135, looseness=1.00] (5) to (1);
		\draw [style=wire] (2) to (5);
		\draw [style=wire, in=90, out=-33, looseness=1.25] (0) to (7);
		\draw [style=wire, in=90, out=-150, looseness=1.00] (0) to (6);
	\end{pgfonlayer}
\end{tikzpicture}
  }%
\end{array}=    \begin{array}[c]{c}\resizebox{!}{2.25cm}{%
\begin{tikzpicture}
	\begin{pgfonlayer}{nodelayer}
		\node [style={regular polygon,regular polygon sides=4, draw, inner sep=1pt,minimum size=1pt}] (0) at (-0.75, 1) {$\bigotimes$};
		\node [style=port] (1) at (-0.75, 0) {};
		\node [style=port] (2) at (1, 0) {};
		\node [style=codifferential] (3) at (-1, 3.25) {{\bf =\!=\!=}};
		\node [style=port] (4) at (-1, 4.25) {};
		\node [style=codifferential] (5) at (1, 3.25) {{\bf =\!=\!=}};
		\node [style=port] (6) at (2.25, 0) {};
		\node [style=port] (7) at (1, 4.25) {};
	\end{pgfonlayer}
	\begin{pgfonlayer}{edgelayer}
		\draw [style=wire] (0) to (1);
		\draw [style=wire, in=90, out=-28, looseness=1.25] (3) to (2);
		\draw [style=wire] (4) to (3);
		\draw [style=wire, in=90, out=-54, looseness=1.25] (5) to (6);
		\draw [style=wire] (7) to (5);
		\draw [style=wire, in=0, out=-135, looseness=1.25] (5) to (0);
		\draw [style=wire, in=180, out=-135, looseness=1.50] (3) to (0);
	\end{pgfonlayer}
\end{tikzpicture}
  }%
\end{array}\]
\end{description}
\end{proposition}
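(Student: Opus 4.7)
My plan is to unfold both sides using the definition $\mathsf{d}^\circ = \Delta(1 \otimes \varepsilon)$ and then reduce them to a common form using the monoidality of $\Delta$ and $\varepsilon$ together with naturality of the symmetry $\sigma$. No use of the previously proved {\bf [cd.1]}--{\bf [cd.7]} is needed; the proof rests entirely on the axioms of a monoidal coalgebra modality.

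First, for the left-hand side, I would apply axiom (i) of a monoidal coalgebra modality to rewrite $m_\otimes \Delta_{A \otimes B} = (\Delta_A \otimes \Delta_B)(1 \otimes \sigma \otimes 1)(m_\otimes \otimes m_\otimes)$. Combined with the monoidality of the counit, which gives $m_\otimes \varepsilon_{A \otimes B} = \varepsilon_A \otimes \varepsilon_B$, one obtains
\[ m_\otimes \mathsf{d}^\circ_{A \otimes B} = m_\otimes \Delta_{A \otimes B}(1 \otimes \varepsilon_{A \otimes B}) = (\Delta_A \otimes \Delta_B)(1 \otimes \sigma \otimes 1)(m_\otimes \otimes \varepsilon_A \otimes \varepsilon_B). \]

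Second, for the right-hand side, expanding each $\mathsf{d}^\circ$ and using the interchange law for $\otimes$ gives
\[ (\mathsf{d}^\circ \otimes \mathsf{d}^\circ)(1 \otimes \sigma \otimes 1)(m_\otimes \otimes 1 \otimes 1) = (\Delta_A \otimes \Delta_B)(1 \otimes \varepsilon_A \otimes 1 \otimes \varepsilon_B)(1 \otimes \sigma_{A, \oc B} \otimes 1)(m_\otimes \otimes 1 \otimes 1). \]
I would then invoke naturality of $\sigma$, specifically the instance $(\varepsilon_A \otimes 1_{\oc B})\sigma_{A, \oc B} = \sigma_{\oc A, \oc B}(1_{\oc B} \otimes \varepsilon_A)$, to push the counit on the second wire across the swap, and then commute the resulting $\varepsilon_A \otimes \varepsilon_B$ (acting on the two rightmost wires) past $m_\otimes \otimes 1 \otimes 1$ (which acts on the two leftmost wires) using functoriality of $\otimes$. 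This rewrites the right-hand side into $(\Delta_A \otimes \Delta_B)(1 \otimes \sigma \otimes 1)(m_\otimes \otimes \varepsilon_A \otimes \varepsilon_B)$, matching the left-hand side.

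The only real obstacle is keeping the bookkeeping straight through the several symmetry swaps; this is exactly where the string-diagram calculus shines. Graphically, both sides reduce to the same picture: two copies of $\Delta$ whose outer legs are interchanged by $\sigma$ and then combined by $m_\otimes$, while their inner legs are terminated by $\varepsilon$. I would therefore carry out the proof in the graphical calculus, so that each of the steps above becomes a visually obvious move.
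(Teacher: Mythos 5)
Your proposal is correct and follows essentially the same route as the paper's proof: both unfold $\mathsf{d}^\circ$ as $\Delta(1\otimes\varepsilon)$ and then apply the monoidality of $\Delta$ and of $\varepsilon$, with the remaining work being symmetry and functoriality bookkeeping in the graphical calculus. The only cosmetic difference is that you meet in the middle while the paper rewrites the left-hand side directly into the right-hand side.
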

\begin{proof} Here we use that $\Delta$ and $\varepsilon$ are monoidal transformations
\begin{align*}
\begin{array}[c]{c}\resizebox{!}{2.5cm}{%
\begin{tikzpicture}
	\begin{pgfonlayer}{nodelayer}
		\node [style=differential] (0) at (2, -2.25) {$\bigotimes$};
		\node [style=port] (1) at (0.5, -3.25) {};
		\node [style={regular polygon,regular polygon sides=4, draw, inner sep=1pt,minimum size=1pt}] (2) at (1.25, 0) {$\bigotimes$};
		\node [style=port] (3) at (2, 1) {};
		\node [style=port] (4) at (0.5, 1) {};
		\node [style=codifferential] (5) at (1.25, -1) {{\bf =\!=\!=}};
		\node [style=port] (6) at (1.5, -3.25) {};
		\node [style=port] (7) at (2.5, -3.25) {};
	\end{pgfonlayer}
	\begin{pgfonlayer}{edgelayer}
		\draw [style=wire, in=-90, out=180, looseness=1.25] (2) to (4);
		\draw [style=wire, in=0, out=-90, looseness=1.25] (3) to (2);
		\draw [style=wire, bend left, looseness=1.00] (5) to (0);
		\draw [style=wire, in=90, out=-135, looseness=1.00] (5) to (1);
		\draw [style=wire] (2) to (5);
		\draw [style=wire, in=90, out=-33, looseness=1.25] (0) to (7);
		\draw [style=wire, in=90, out=-150, looseness=1.00] (0) to (6);
	\end{pgfonlayer}
\end{tikzpicture}
  }%
\end{array}&= 
   \begin{array}[c]{c}\resizebox{!}{2.75cm}{%
\begin{tikzpicture}
	\begin{pgfonlayer}{nodelayer}
		\node [style=port] (0) at (0.5, -0.75) {};
		\node [style=duplicate] (1) at (1.25, 2.25) {$\Delta$};
		\node [style={circle, draw}] (2) at (2, 1.25) {$\varepsilon$};
		\node [style={regular polygon,regular polygon sides=4, draw, inner sep=1pt,minimum size=1pt}] (3) at (1.25, 3.25) {$\bigotimes$};
		\node [style=port] (4) at (2, 4.25) {};
		\node [style=port] (5) at (0.5, 4.25) {};
		\node [style=differential] (6) at (2, 0.25) {$\bigotimes$};
		\node [style=port] (7) at (1.5, -0.75) {};
		\node [style=port] (8) at (2.5, -0.75) {};
	\end{pgfonlayer}
	\begin{pgfonlayer}{edgelayer}
		\draw [style=wire, bend left, looseness=1.00] (1) to (2);
		\draw [style=wire, in=90, out=-150, looseness=0.75] (1) to (0);
		\draw [style=wire, in=-90, out=180, looseness=1.25] (3) to (5);
		\draw [style=wire, in=0, out=-90, looseness=1.25] (4) to (3);
		\draw [style=wire] (3) to (1);
		\draw [style=wire, in=90, out=-150, looseness=1.00] (6) to (7);
		\draw [style=wire] (2) to (6);
		\draw [style=wire, in=90, out=-30, looseness=1.25] (6) to (8);
	\end{pgfonlayer}
\end{tikzpicture}
  }%
\end{array} =    \begin{array}[c]{c}\resizebox{!}{3.25cm}{%
\begin{tikzpicture}
	\begin{pgfonlayer}{nodelayer}
		\node [style=port] (0) at (-5.5, 1.75) {};
		\node [style=duplicate] (1) at (-5.5, 0.75) {$\Delta$};
		\node [style=port] (2) at (-3.5, 1.75) {};
		\node [style={regular polygon,regular polygon sides=4, draw, inner sep=1pt,minimum size=1pt}] (3) at (-3.5, -1.25) {$\bigotimes$};
		\node [style={regular polygon,regular polygon sides=4, draw, inner sep=1pt,minimum size=1pt}] (4) at (-5.5, -1.25) {$\bigotimes$};
		\node [style=duplicate] (5) at (-3.5, 0.75) {$\Delta$};
		\node [style={circle, draw}] (6) at (-3.5, -2.25) {$\varepsilon$};
		\node [style=port] (7) at (-5.5, -4.25) {};
		\node [style=port] (8) at (-4, -4.25) {};
		\node [style=port] (9) at (-3, -4.25) {};
		\node [style=differential] (10) at (-3.5, -3.25) {$\bigotimes$};
	\end{pgfonlayer}
	\begin{pgfonlayer}{edgelayer}
		\draw [style=wire] (0) to (1);
		\draw [style=wire, in=180, out=-135, looseness=1.25] (1) to (4);
		\draw [style=wire] (2) to (5);
		\draw [style=wire, in=0, out=-45, looseness=1.25] (5) to (3);
		\draw [style=wire, in=0, out=-150, looseness=1.00] (5) to (4);
		\draw [style=wire, in=180, out=-30, looseness=1.00] (1) to (3);
		\draw [style=wire] (3) to (6);
		\draw [style=wire] (4) to (7);
		\draw [style=wire, in=90, out=-33, looseness=1.25] (10) to (9);
		\draw [style=wire, in=90, out=-150, looseness=1.00] (10) to (8);
		\draw [style=wire] (6) to (10);
	\end{pgfonlayer}
\end{tikzpicture}
  }%
\end{array}=    \begin{array}[c]{c}\resizebox{!}{2.5cm}{%
\begin{tikzpicture}
	\begin{pgfonlayer}{nodelayer}
		\node [style=port] (0) at (-5.5, 1.75) {};
		\node [style=duplicate] (1) at (-5.5, 0.75) {$\Delta$};
		\node [style=port] (2) at (-3.5, 1.75) {};
		\node [style={regular polygon,regular polygon sides=4, draw, inner sep=1pt,minimum size=1pt}] (3) at (-5.5, -1.25) {$\bigotimes$};
		\node [style=duplicate] (4) at (-3.5, 0.75) {$\Delta$};
		\node [style={circle, draw}] (5) at (-2, -1.25) {$\varepsilon$};
		\node [style=port] (6) at (-5.5, -2.25) {};
		\node [style={circle, draw}] (7) at (-3.75, -1.25) {$\varepsilon$};
		\node [style=port] (8) at (-3.75, -2.25) {};
		\node [style=port] (9) at (-2, -2.25) {};
	\end{pgfonlayer}
	\begin{pgfonlayer}{edgelayer}
		\draw [style=wire] (0) to (1);
		\draw [style=wire, in=180, out=-135, looseness=1.25] (1) to (3);
		\draw [style=wire] (2) to (4);
		\draw [style=wire, in=0, out=-150, looseness=1.00] (4) to (3);
		\draw [style=wire] (3) to (6);
		\draw [style=wire, in=90, out=-30, looseness=0.75] (1) to (7);
		\draw [style=wire, in=90, out=-15, looseness=0.75] (4) to (5);
		\draw [style=wire] (7) to (8);
		\draw [style=wire] (5) to (9);
	\end{pgfonlayer}
\end{tikzpicture}
  }%
\end{array}\\&=\begin{array}[c]{c}\resizebox{!}{2.25cm}{%
\begin{tikzpicture}
	\begin{pgfonlayer}{nodelayer}
		\node [style={regular polygon,regular polygon sides=4, draw, inner sep=1pt,minimum size=1pt}] (0) at (-0.75, 1) {$\bigotimes$};
		\node [style=port] (1) at (-0.75, 0) {};
		\node [style=port] (2) at (1, 0) {};
		\node [style=codifferential] (3) at (-1, 3.25) {{\bf =\!=\!=}};
		\node [style=port] (4) at (-1, 4.25) {};
		\node [style=codifferential] (5) at (1, 3.25) {{\bf =\!=\!=}};
		\node [style=port] (6) at (2.25, 0) {};
		\node [style=port] (7) at (1, 4.25) {};
	\end{pgfonlayer}
	\begin{pgfonlayer}{edgelayer}
		\draw [style=wire] (0) to (1);
		\draw [style=wire, in=90, out=-28, looseness=1.25] (3) to (2);
		\draw [style=wire] (4) to (3);
		\draw [style=wire, in=90, out=-54, looseness=1.25] (5) to (6);
		\draw [style=wire] (7) to (5);
		\draw [style=wire, in=0, out=-135, looseness=1.25] (5) to (0);
		\draw [style=wire, in=180, out=-135, looseness=1.50] (3) to (0);
	\end{pgfonlayer}
\end{tikzpicture}
  }%
\end{array}
\end{align*}
\end{proof} 

\section{Integral Categories}\label{intcatsec}

Integral categories are integral analogues of differential categories and therefore there are two equivalent definitions of an integral category: one in terms of the integral combinator and another in terms of the integral transformation. While having two different ways of illustrating a concept is always important, there is a reason why we wish to have both. The combinator description of the integral category structure is more intuitive while the transformation description is much more practical (in particular for examples and calculations). For this reason, we give both definitions and prove the equivalence between an integral combinator and an integral transformation. This way of introducing integral categories mirrors precisely the way differential categories were originally introduced in  \cite{dblute2006differential}. 

\subsection{Additive Categories}

The basic setting of integral and differential categories: additive symmetric monoidal categories with a coalgebra modality.  Here ``additive'' simply means commutative monoid enriched: we do not assume negatives nor do we assume biproducts (this differs from the usage in \cite{mac2013categories} for example). This allows many important examples such as the category of sets and relation or the category of modules for a commutative rig (also known as semirings: they are ri{\em n\/}gs without {\em n\/}egatives.).

\begin{definition} \normalfont An \textbf{additive category} is a commutative monoid enriched category, that is, a category in which each hom-set is a commutative monoid -- with an addition operation $+$ and zero $0$ -- such that composition preserves the additive structure, that is: $k(f\!+\!g)h\!=kfh\!+\!kgh$, $0f=0$ and $f0=0$. An \textbf{additive symmetric monoidal category} is a symmetric monoidal category which is also an additive category in which the tensor product is compatible with the additive structure in the sense that $k \otimes (f\!+\!g)\otimes h\!= \!k\otimes\!f\otimes h \!+ \!k\otimes\!g\otimes h$, $0\otimes h\!=\!0$, and $h \otimes 0\!=\!0$. 
\end{definition}

For the maps of any additive category, there is a notion of ``scalar multiplication'' by the natural numbers $\mathbb{N}$. The scalar multiplication of a map $f: A \to B$ by $n \in \mathbb{N}$ is the map $n \cdot f: A \to B$ defined by summing $n$ copies of $f$ together.
\[n \cdot f \coloneqq \begin{cases} 
 \underbrace{f + ... + f}_{n\text{-times}} & \text{if } n \geq 1 \\
 0 & \text{if } n=0 
\end{cases}\]
Furthermore, in additive symmetric monoidal categories, one has that $(n \cdot f ) \otimes g= n \cdot (f \otimes g)=f \otimes (n \cdot g)$. For every object $A$ and every natural number $n \in \mathbb{N}$ we can also define the map $n_A: A \to A$ as scalar multiplying the identity by $n$, that is, $n_A=n \cdot 1_A$. For every map $f: A \to B$, we have that $n_Af=n \cdot f=fn_B$, which implies that $n_A$ commutes (with respect to composition) with all endomorphisms $f: A \to A$. 

Since we are not assuming any negatives, it is possible to have $1=2$: this is the case in the category of sets and relations where addition is union. 

\begin{definition} \normalfont In an additive category, an object $A$ is said to be \textbf{additively idempotent} if $1_A + 1_A = 1_A$. \end{definition}

It is important to note that if $A$ is additively idempotent then $A \otimes B$ is additively idempotent for each object $B$. In particular, if the monoidal unit is additively idempotent then every object is as well. An additive category in which every object is additively idempotent is, as we shall see shortly, always an integral category.

\subsection{Integral Combinators}

An integral category is an additive symmetric monoidal category with a coalgebra modality and an integral combinator. An integral combinator is an operator on maps with the type $\oc A \to B$, called integrable maps, which produces the integral of that map. 

\begin{definition}\label{intcombdef} \normalfont For an additive symmetric monoidal category with a coalgebra modality $(!, \delta, \varepsilon, \Delta, e)$, an \textbf{integral combinator} $\mathsf{S}$ produces for each map $f: \oc A \otimes A \to B$ a map $\mathsf{S}[f]: \oc A \to B$: 
\[\infer[\mathsf{S}]{\mathsf{S}[f]: \oc A \to B}{f: \oc A \otimes A \to B}\]
such that $\mathsf{S}$ satisfies the following properties: 
\begin{enumerate}[{\bf [S.A]}]
\item Additivity: $\mathsf{S}[f+g]=\mathsf{S}[f]+\mathsf{S}[g]$
\end{enumerate}
\begin{enumerate}[{\bf [S.N]}]
\item Naturality/Linear Substitution: If the square on the left commutes, then the square on the right commutes:
  \[ \begin{array}[c]{c} \xymatrixcolsep{5pc}\xymatrix{  \oc A \otimes A \ar[r]^{f} \ar[d]_{!h \otimes h} & B \ar[d]^{k} \\
 !C \otimes C  \ar[r]_{g} & D} \end{array} \Rightarrow  
 \begin{array}[c]{c}\xymatrixcolsep{5pc}\xymatrix{  \oc A  \ar[r]^{\mathsf{S}[f]} \ar[d]_{!h} & B \ar[d]^{k} \\  
  !C \ar[r]_{\mathsf{S}[g]} & D} \end{array}\]
\end{enumerate}
\begin{enumerate}[{\bf [S.1]}]
\item Integral of Constants Rule: $\mathsf{S}[(e \otimes 1)]=\varepsilon$; 
\item Rota-Baxter Rule: 
\[\Delta (\mathsf{S}[f]\otimes \mathsf{S}[g]) =\mathsf{S}[(\Delta \otimes 1)(\mathsf{S}[f] \otimes g)]+\mathsf{S}[(\Delta \otimes 1)(1 \otimes \sigma)(f \otimes \mathsf{S}[g])]\] 
\item Interchange rule: $\mathsf{S}[\mathsf{S}[f \otimes 1] \otimes 1]=\mathsf{S}[(\mathsf{S}[f \otimes 1] \otimes 1)(1 \otimes \sigma)]$. 
\end{enumerate}
An \textbf{integral category} is an additive symmetric monoidal category with a coalgebra modality and an integral combinator. In an integral category, a map is \textbf{integrable} if it is of the form $f: \oc A \otimes A \to B$ and the \textbf{integral} of $f$ is the map $\mathsf{S}[f]: \oc A \to B$. 
\end{definition}

It might be useful for the reader to have some intuition regarding integral combinators, integrable maps and the axioms. The integral of $f: \oc A \otimes A \to B$,  $\mathsf{S}[f]: \oc A \to B$, should be thought of as the classical integral of $f$ evaluated from $0$ to $x$ as a function of $x$:
\[ S[f](x) := \int_0^x f(t)~{\sf d}t\]
To interpret this as $\mathsf{S}[f]$ one must regard $f$ as being a function of two variables $t$ and ${\sf d} t$, which is linear in ${\sf d} t$.  Classically, $f$ is regarded as a function of one (one dimensional) variable, $t$, and to obtain the interpretation as a function of two arguments one simply multiplies by the variable ${\sf d}t$.  This allows a simple interpretation of the integral notation for one dimensional functions: it leaves open the interpretation for multidimensional functions -- an issue to which we shall return.

The first axiom \textbf{[S.A]} simply formalizes the fact that the integral of a sum of functions is the sum of each integral. The second axiom \textbf{[S.N]} is a naturality condition with respect to linear maps which amounts to saying that the integral preserves linear substitution. The next axiom \textbf{[S.1]} asks our integral combinator to satisfy that the integral of the constant function $1$, or simply $\mathsf{d}t$, is the linear function $x$ (which is not the standard identity but rather the identity in the coKleisli category). The next axiom \textbf{[S.2]} is the so called Rota-Baxter rule, as discussed in our introduction. This is the fundamental axiom of our integral combinator. Finally {\bf [S.3]} ensures the independence of the order of integration -- the interchange law -- that is integral with respect to $u$ then $t$ is the same as integral with respect to $t$ then $u$.  It may be tempting to think this is related to Fubini's theorem. In fact, it is not closely related at all: we discuss this at the end of this section. 

Naturality of the integral combinator also gives a general way of describing the integrals of maps. Notice since the square on the left commutes trivially for every integrable map then the square on the right commutes:
  \[ \begin{array}[c]{c} \xymatrixcolsep{5pc}\xymatrix{  \oc A \otimes A \ar@{=}[r]^{1_{\oc A \otimes A}} \ar@{=}[d]_{\oc(1_A) \otimes 1_A} & \oc A \otimes A \ar[d]^{f} \\
 \oc A \otimes A  \ar[r]_{f} & B} \end{array} \Rightarrow  
 \begin{array}[c]{c}\xymatrixcolsep{5pc}\xymatrix{  \oc A  \ar[r]^{\mathsf{S}[1_{\oc A \otimes A}]} \ar[d]_{\oc(1_A)} & \oc A \otimes A \ar[d]^{f} \\  
  \oc A \ar[r]_{\mathsf{S}[f]} & B} \end{array}\]
Therefore, for every integrable map $f: \oc A \otimes A \to B$, we have that $\mathsf{S}[f]=\mathsf{S}[1_{\oc A \otimes A}]f$. If we define $\mathsf{s}_A=\mathsf{S}[1_{\oc A \otimes A}]: \oc A \to \oc A \otimes A$, then $\mathsf{S}[f]=\mathsf{s}_Af$. In fact, the integral combinator $\mathsf{S}$ can be completely re-expressed in terms of $\mathsf{s}$, which we discuss next.

\subsection{Integral Transformation}

Integral transformations give an equivalent (and extremely useful) definition of an integral category -- we integrate maps by precomposing with the integral transformation. Integral transformations are the analogue of deriving transformations for differential categories  \cite{dblute2006differential} and are obtained in a similar fashion through naturality of the combinator as discussed above.

\begin{definition}\label{inttran}\normalfont For an additive symmetric monoidal category with a coalgebra modality $(!, \delta, \varepsilon, \Delta, e)$, an \textbf{integral transformation} is a natural transformation $\mathsf{s}$ with components $\mathsf{s}_A: \oc A \to \oc A \otimes A$ which is represented in the graphical calculus as:
\[\mathsf{s} \coloneqq 
\begin{array}[c]{c}\resizebox{!}{1.5cm}{%
\begin{tikzpicture}
	\begin{pgfonlayer}{nodelayer}
		\node [style=integral] (0) at (0, 2) {{\bf -----}};
		\node [style=port] (1) at (0, 3) {};
		\node [style=port] (2) at (0.75, 0.75) {};
		\node [style=port] (3) at (-0.75, 0.75) {};
	\end{pgfonlayer}
	\begin{pgfonlayer}{edgelayer}
		\draw [style=wire, bend left, looseness=1.00] (0) to (2);
		\draw [style=wire] (1) to (0);
		\draw [style=wire, bend right, looseness=1.00] (0) to (3);
	\end{pgfonlayer}
\end{tikzpicture}}\end{array}\]
 such that $\mathsf{s}$ satisfies the following equations: 
\begin{enumerate}[{\bf [s.1]}]
\item Integral of Constants: $\mathsf{s}(e \otimes 1)=\varepsilon$
\[\begin{array}[c]{c}\resizebox{!}{2cm}{%
\begin{tikzpicture}
	\begin{pgfonlayer}{nodelayer}
		\node [style=port] (0) at (0, 3) {};
		\node [style=integral] (1) at (0, 2) {{\bf -----}};
		\node [style={circle, draw}] (2) at (-0.5, 0.75) {$e$};
		\node [style=port] (3) at (0.5, 0) {};
	\end{pgfonlayer}
	\begin{pgfonlayer}{edgelayer}
		\draw [style=wire, bend right=15, looseness=1.25] (1) to (2);
		\draw [style=wire, bend left=15, looseness=1.00] (1) to (3);
		\draw [style=wire] (0) to (1);
	\end{pgfonlayer}
\end{tikzpicture}}
   \end{array} =
   \begin{array}[c]{c}\resizebox{!}{2cm}{%
\begin{tikzpicture}
	\begin{pgfonlayer}{nodelayer}
		\node [style=port] (0) at (0, 3) {};
		\node [style=port] (1) at (0, 0) {};
		\node [style={circle, draw}] (2) at (0, 1.5) {$\varepsilon$};
	\end{pgfonlayer}
	\begin{pgfonlayer}{edgelayer}
		\draw [style=wire] (0) to (2);
		\draw [style=wire] (2) to (1);
	\end{pgfonlayer}
\end{tikzpicture}}
   \end{array}
   \]
\item Rota-Baxter Rule: $\Delta (\mathsf{s} \otimes \mathsf{s}) =\mathsf{s}(\Delta \otimes 1)(\mathsf{s} \otimes 1 \otimes 1)+\mathsf{s}(\Delta \otimes 1)(1 \otimes \sigma)(1 \otimes 1 \otimes \mathsf{s})$
    $$\begin{array}[c]{c} \resizebox{!}{2cm}{%
 \begin{tikzpicture}
	\begin{pgfonlayer}{nodelayer}
		\node [style=integral] (0) at (-0.75, 1) {{\bf -----}};
		\node [style=port] (1) at (-0.5, 0) {};
		\node [style=port] (2) at (-1, 0) {};
		\node [style=integral] (3) at (0.75, 1) {{\bf -----}};
		\node [style=port] (4) at (1, 0) {};
		\node [style=port] (5) at (0.5, 0) {};
		\node [style=duplicate] (6) at (0, 2) {$\Delta$};
		\node [style=duplicate] (7) at (0, 2) {$\Delta$};
		\node [style=port] (8) at (0, 2.5) {};
	\end{pgfonlayer}
	\begin{pgfonlayer}{edgelayer}
		\draw [style=wire, bend right=15, looseness=1.00] (0) to (2);
		\draw [style=wire, bend left=15, looseness=1.00] (0) to (1);
		\draw [style=wire, bend right=15, looseness=1.00] (3) to (5);
		\draw [style=wire, bend left=15, looseness=1.00] (3) to (4);
		\draw [style=wire, in=74, out=-136, looseness=1.00] (6) to (0);
		\draw [style=wire, bend left=15, looseness=1.00] (6) to (3);
		\draw [style=wire] (8) to (7);
	\end{pgfonlayer}
\end{tikzpicture} }
   \end{array} =
   \begin{array}[c]{c} \resizebox{!}{2cm}{%
\begin{tikzpicture}
	\begin{pgfonlayer}{nodelayer}
		\node [style=duplicate] (0) at (-0.5, 1.5) {$\Delta$};
		\node [style=port] (1) at (-0.5, 0) {};
		\node [style=port] (2) at (-1.5, 0) {};
		\node [style=port] (3) at (0.15, 0) {};
		\node [style=duplicate] (4) at (0, 2.25) {{\bf -----}};
		\node [style=port] (5) at (0.75, 0) {};
		\node [style=duplicate] (6) at (0, 2.25) {{\bf -----}};
		\node [style=integral] (7) at (-1, 0.75) {{\bf -----}};
		\node [style=port] (8) at (0, 2.75) {};
	\end{pgfonlayer}
	\begin{pgfonlayer}{edgelayer}
		\draw [style=wire, bend left=15, looseness=1.00] (0) to (3);
		\draw [style=wire, in=71, out=-139, looseness=1.00] (0) to (7);
		\draw [style=wire, bend right=15, looseness=1.00] (7) to (2);
		\draw [style=wire, in=105, out=-45, looseness=1.00] (7) to (1);
		\draw [style=wire, in=90, out=-45, looseness=1.00] (4) to (5);
		\draw [style=wire, bend right=15, looseness=1.00] (4) to (0);
		\draw [style=wire] (8) to (6);
	\end{pgfonlayer}
\end{tikzpicture}}
   \end{array}+
   \begin{array}[c]{c} \resizebox{!}{2cm}{%
\begin{tikzpicture}
	\begin{pgfonlayer}{nodelayer}
		\node [style=port] (0) at (-1.15, 0) {};
		\node [style=port] (1) at (-0.5, 0) {};
		\node [style=duplicate] (2) at (-0.5, 1.5) {$\Delta$};
		\node [style=integral] (3) at (0.75, 0.75) {{\bf -----}};
		\node [style=port] (4) at (1.25, 0) {};
		\node [style=duplicate] (5) at (0, 2.25) {{\bf -----}};
		\node [style=duplicate] (6) at (0, 2.25) {{\bf -----}};
		\node [style=port] (7) at (0.25, 0) {};
		\node [style=port] (8) at (0, 2.75) {};
	\end{pgfonlayer}
	\begin{pgfonlayer}{edgelayer}
		\draw [style=wire, bend right=15, looseness=1.00] (6) to (2);
		\draw [style=wire, in=78, out=-132, looseness=1.00] (3) to (7);
		\draw [style=wire, in=102, out=-48, looseness=1.00] (3) to (4);
		\draw [style=wire, in=-45, out=90, looseness=1.25] (1) to (5);
		\draw [style=wire] (8) to (6);
		\draw [style=wire, bend right, looseness=1.00] (2) to (0);
		\draw [style=wire, in=90, out=-45, looseness=1.00] (2) to (3);
	\end{pgfonlayer}
\end{tikzpicture}}
   \end{array}$$
\item Interchange rule: $\mathsf{s}(\mathsf{s} \otimes 1)=\mathsf{s}(\mathsf{s} \otimes 1)(1 \otimes \sigma)$
      $$\begin{array}[c]{c} \resizebox{!}{1.5cm}{%
   \begin{tikzpicture}
	\begin{pgfonlayer}{nodelayer}
		\node [style=port] (0) at (0, 2.5) {};
		\node [style=integral] (1) at (-0.5, 1.5) {{\bf -----}};
		\node [style=integral] (2) at (0, 2) {{\bf -----}};
		\node [style=port] (3) at (1, 0.5) {};
		\node [style=port] (4) at (0, 0.5) {};
		\node [style=port] (5) at (-1, 0.5) {};
	\end{pgfonlayer}
	\begin{pgfonlayer}{edgelayer}
		\draw [style=wire, bend right=15, looseness=1.25] (2) to (1);
		\draw [style=wire, bend left=15, looseness=1.00] (2) to (3);
		\draw [style=wire] (0) to (2);
		\draw [style=wire, bend right=15, looseness=1.00] (1) to (5);
		\draw [style=wire, bend left=15, looseness=1.00] (1) to (4);
	\end{pgfonlayer}
\end{tikzpicture}}
   \end{array} =
   \begin{array}[c]{c} \resizebox{!}{1.5cm}{%
\begin{tikzpicture}
	\begin{pgfonlayer}{nodelayer}
		\node [style=port] (0) at (0, 2.5) {};
		\node [style=port] (1) at (1, 0.5) {};
		\node [style=integral] (2) at (-0.5, 1.5) {{\bf -----}};
		\node [style=port] (3) at (0, 0.5) {};
		\node [style=integral] (4) at (0, 2) {{\bf -----}};
		\node [style=port] (5) at (-1, 0.5) {};
	\end{pgfonlayer}
	\begin{pgfonlayer}{edgelayer}
		\draw [style=wire, bend right=15, looseness=1.25] (4) to (2);
		\draw [style=wire] (0) to (4);
		\draw [style=wire, bend right=15, looseness=1.00] (2) to (5);
		\draw [style=wire, in=-45, out=90, looseness=1.50] (3) to (4);
		\draw [style=wire, bend right=15, looseness=1.00] (1) to (2);
	\end{pgfonlayer}
\end{tikzpicture}}
   \end{array}
   $$
\end{enumerate}
\end{definition}

The axioms for the integral transformation are simple re-expressions of the axioms for the integral combinator. Therefore every integral transformation induces an integral combinator by pre-composition $\mathsf{S}[f]=\mathsf{s}_Af$. Conversly, every integral combinator induces an integral transformation by defining $\mathsf{s}_A=\mathsf{S}[1_{\oc A \otimes A}]$.

\begin{proposition}\label{strancomb} For an additive symmetric monoidal category with a coalgebra modality, integral combinators are in bijective correspondence to integral transformations by: 
\begin{align*}
&&\mathsf{S} &\longmapsto \mathsf{s}\coloneqq \mathsf{S}[1]
&&\mathsf{s}  \longmapsto  \mathsf{S}[f] \coloneqq \mathsf{s}f
\end{align*}
Therefore, the following are equivalent: 
\begin{enumerate}[{\em (i)}]
\item An integral category;
\item An additive symmetric monoidal category with a coalgebra modality and an integral transformation. 
\end{enumerate}
\end{proposition}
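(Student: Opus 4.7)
The plan is to establish the bijection by exhibiting the two constructions as mutual inverses and then verifying that each correspondence carries the axioms of one structure to those of the other.

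First, I would unpack the bijection itself. Starting from a combinator $\mathsf{S}$, set $\mathsf{s}_A := \mathsf{S}[1_{\oc A\otimes A}]$, then define $\mathsf{S}'[f] := \mathsf{s}_A f$ for $f:\oc A\otimes A\to B$. The claim $\mathsf{S}'=\mathsf{S}$ is precisely the factorization $\mathsf{S}[f]=\mathsf{s}_A f$ derived immediately before the statement: apply \textbf{[S.N]} to the trivially commuting square with $h=1_A$, $k=f$, and $g=f$, forcing $\mathsf{S}[f]=\mathsf{S}[1]\,f$. Conversely, starting from a natural transformation $\mathsf{s}$, define $\mathsf{S}[f]:=\mathsf{s}f$; then $\mathsf{S}[1_{\oc A\otimes A}]=\mathsf{s}_A\cdot 1=\mathsf{s}_A$, so the round trip returns $\mathsf{s}$ on the nose. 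This settles the bijection as sets of data.

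Next, I would check axiom preservation in each direction. Going from a combinator $\mathsf{S}$ to $\mathsf{s}=\mathsf{S}[1]$: naturality of $\mathsf{s}$ is the special case of \textbf{[S.N]} applied to the square with $f=1$, $g=1$; \textbf{[s.1]} is \textbf{[S.1]} read off from $\mathsf{s}(e\otimes 1)=\mathsf{S}[e\otimes 1]$; \textbf{[s.2]} comes from \textbf{[S.2]} by noting $\mathsf{S}[(\Delta\otimes 1)(\mathsf{s}\otimes 1\otimes 1)]=\mathsf{s}(\Delta\otimes 1)(\mathsf{s}\otimes 1\otimes 1)$ and similarly for the other summand; and \textbf{[s.3]} is \textbf{[S.3]} applied to $f=1_{\oc A\otimes A}$ after rewriting inner $\mathsf{S}[1\otimes 1]=\mathsf{S}[1]=\mathsf{s}$. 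Going from $\mathsf{s}$ to $\mathsf{S}[f]:=\mathsf{s}f$: \textbf{[S.A]} follows because precomposition with $\mathsf{s}$ is additive in additive symmetric monoidal categories; \textbf{[S.N]} follows from naturality of $\mathsf{s}$ chained with the given commuting square; and \textbf{[S.1]}, \textbf{[S.2]}, \textbf{[S.3]} are immediate rewrites of \textbf{[s.1]}, \textbf{[s.2]}, \textbf{[s.3]} after substituting $\mathsf{S}[g]=\mathsf{s}g$ throughout.

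The only step requiring genuine care is verifying \textbf{[S.N]} from naturality of $\mathsf{s}$, since the combinator axiom quantifies over arbitrary commuting squares rather than just the naturality square of $\mathsf{s}$. Given $f\cdot k = (\oc h\otimes h)\cdot g$, one computes $\mathsf{S}[f]\cdot k = \mathsf{s}_A\cdot f\cdot k = \mathsf{s}_A\cdot(\oc h\otimes h)\cdot g = \oc h\cdot\mathsf{s}_C\cdot g = \oc h\cdot\mathsf{S}[g]$, where the third equality is naturality of $\mathsf{s}$ at $h$. The equivalence of the two notions of integral category in part \emph{(ii)} then follows formally, since the data match bijectively and each set of axioms translates into the other.
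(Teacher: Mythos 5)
Your proposal is correct and follows exactly the route the paper takes (the paper in fact leaves the verification implicit, relying only on the observation $\mathsf{S}[f]=\mathsf{S}[1]f$ obtained from \textbf{[S.N]} and the remark that the transformation axioms are re-expressions of the combinator axioms). Your identification of \textbf{[S.N]} as the one axiom needing genuine care, and your derivation of it from naturality via $\mathsf{S}[f]k=\mathsf{s}_A f k=\mathsf{s}_A(\oc h\otimes h)g=\oc(h)\mathsf{s}_C g$, is precisely the right fleshing-out of what the paper leaves unsaid.
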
 

Using the integral transformation, we can obtain sufficient conditions for when two maps have the same integral. This will be important in future work when we take the coKleisli category of an integral category to obtain a cartesian integral category. 

\begin{proposition} In an integral category, if $f,g: \oc A \otimes A \to B$ are integrable maps such that $(1 \otimes \varepsilon)f=(1 \otimes \varepsilon)g$, then $\mathsf{S}[f]=\mathsf{S}[g]$. 
\end{proposition}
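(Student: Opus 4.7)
The plan is to reduce this to a statement about $\mathsf{s}_A$ itself: since $\mathsf{S}[f] = \mathsf{s}_A f$ and $\mathsf{S}[g] = \mathsf{s}_A g$ by Proposition \ref{strancomb}, it suffices to show that $\mathsf{s}_A$ factors through the map $1_{\oc A} \otimes \varepsilon_A : \oc A \otimes \oc A \to \oc A \otimes A$. That is, I want to exhibit a map $m : \oc A \to \oc A \otimes \oc A$ with $\mathsf{s}_A = m \cdot (1 \otimes \varepsilon_A)$, because then
\[\mathsf{S}[f] \;=\; \mathsf{s}_A f \;=\; m(1\otimes \varepsilon_A) f \;=\; m (1 \otimes \varepsilon_A) g \;=\; \mathsf{s}_A g \;=\; \mathsf{S}[g].\]

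The key step is the construction of $m$, which comes from naturality of $\mathsf{s}$ together with the comonad triangle identity. Applying naturality of $\mathsf{s}$ to the map $\varepsilon_A : \oc A \to A$ gives
\[\mathsf{s}_{\oc A} \cdot (\oc(\varepsilon_A) \otimes \varepsilon_A) \;=\; \oc(\varepsilon_A) \cdot \mathsf{s}_A.\]
Pre-composing both sides with $\delta_A$ and using the comonad triangle identity $\delta_A \cdot \oc(\varepsilon_A) = 1_{\oc A}$ yields
\[\mathsf{s}_A \;=\; \delta_A \cdot \mathsf{s}_{\oc A} \cdot (\oc(\varepsilon_A) \otimes \varepsilon_A).\]
Since $\oc(\varepsilon_A) \otimes \varepsilon_A = (\oc(\varepsilon_A) \otimes 1_{\oc A}) \cdot (1_{\oc A} \otimes \varepsilon_A)$, this gives the desired factorization with $m \coloneqq \delta_A \cdot \mathsf{s}_{\oc A} \cdot (\oc(\varepsilon_A) \otimes 1_{\oc A})$.

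There is no real obstacle here; the only thing to notice is that the $A$-component of $\mathsf{s}_A$ can always be ``lifted'' to an $\oc A$-component by going up through $\delta$ and naturalizing at $\varepsilon$. Intuitively, the output wire of type $A$ produced by $\mathsf{s}$ is, up to the comonad structure, already the image of an $\oc A$-wire under $\varepsilon$, so $\mathsf{s}_A f$ depends on $f$ only via its precomposition with $1 \otimes \varepsilon$. The proof is then a one-line diagram chase once the factorization is written down. (The same result could be obtained by applying \textbf{[S.N]} directly to the square witnessed by $\varepsilon_A$, but the direct factorization of $\mathsf{s}_A$ is slightly more transparent.)
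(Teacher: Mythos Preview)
Your proof is correct and is essentially the same as the paper's: both use naturality of $\mathsf{s}$ at $\varepsilon_A$ together with the comonad triangle identity $\delta_A \cdot \oc(\varepsilon_A) = 1_{\oc A}$ to factor $\mathsf{s}_A$ through $1 \otimes \varepsilon_A$. The paper writes this as the single chain $\mathsf{s}f = \delta\,\oc(\varepsilon)\,\mathsf{s}\,f = \delta\,\mathsf{s}\,(\oc(\varepsilon)\otimes\varepsilon)\,f = \delta\,\mathsf{s}\,(\oc(\varepsilon)\otimes\varepsilon)\,g = \mathsf{s}g$, while you spell out the intermediate map $m$ explicitly, but the content is identical.
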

\begin{proof} Suppose that $(1\otimes \varepsilon)f=(1 \otimes \varepsilon)g$. By the comonad triangle equalities and the naturality of the integral transformation, we have that:
\begin{align*}
\mathsf{s}f = \delta \oc (\varepsilon) \mathsf{s} f = \delta \mathsf{s} (\oc(\varepsilon) \otimes \varepsilon) f = \delta \mathsf{s} (\oc(\varepsilon) \otimes \varepsilon) g = \delta \oc (\varepsilon) \mathsf{s} g = \mathsf{s} g
\end{align*}
\end{proof} 

The coderiving transformation would probably be one's first attempt at constructing an integral transformation. Notice that {\bf [cd.1]} and {\bf[cd.6]} from Proposition \ref{codevprop} are precisely the same as {\bf [s.1]} and {\bf [s.3]}. However, {\bf [cd.3]} and {\bf [cd.4]} give us that: 
\begin{align*}
\begin{array}[c]{c}\resizebox{!}{2cm}{%
\begin{tikzpicture}
	\begin{pgfonlayer}{nodelayer}
		\node [style=duplicate] (0) at (-0.5, 1.5) {$\Delta$};
		\node [style=port] (1) at (-0.5, -0.25) {};
		\node [style=port] (2) at (-1.5, -0.25) {};
		\node [style=port] (3) at (0.25, -0.25) {};
		\node [style=duplicate] (4) at (0, 2.25) {{\bf =\!=\!=}};
		\node [style=port] (5) at (1.25, -0.25) {};
		\node [style=duplicate] (6) at (0, 2.25) {{\bf =\!=\!=}};
		\node [style=integral] (7) at (-1, 0.75) {{\bf =\!=\!=}};
		\node [style=port] (8) at (0, 3) {};
	\end{pgfonlayer}
	\begin{pgfonlayer}{edgelayer}
		\draw [style=wire, bend left=15, looseness=1.00] (0) to (3);
		\draw [style=wire, in=71, out=-139, looseness=1.00] (0) to (7);
		\draw [style=wire, bend right=15, looseness=1.00] (7) to (2);
		\draw [style=wire, in=105, out=-45, looseness=1.00] (7) to (1);
		\draw [style=wire, in=90, out=-45, looseness=1.00] (4) to (5);
		\draw [style=wire, bend right=15, looseness=1.00] (4) to (0);
		\draw [style=wire] (8) to (6);
	\end{pgfonlayer}
\end{tikzpicture}}%
   \end{array}+
   \begin{array}[c]{c}\resizebox{!}{2cm}{%
\begin{tikzpicture}
	\begin{pgfonlayer}{nodelayer}
		\node [style=port] (0) at (-1.5, -0.25) {};
		\node [style=port] (1) at (-0.5, -0.25) {};
		\node [style=duplicate] (2) at (-0.5, 1.5) {$\Delta$};
		\node [style=integral] (3) at (0.75, 0.75) {{\bf =\!=\!=}};
		\node [style=port] (4) at (1.25, -0.25) {};
		\node [style=duplicate] (5) at (0, 2.25) {{\bf =\!=\!=}};
		\node [style=duplicate] (6) at (0, 2.25) {{\bf =\!=\!=}};
		\node [style=port] (7) at (0.25, -0.25) {};
		\node [style=port] (8) at (0, 3) {};
	\end{pgfonlayer}
	\begin{pgfonlayer}{edgelayer}
		\draw [style=wire, bend right=15, looseness=1.00] (6) to (2);
		\draw [style=wire, in=78, out=-132, looseness=1.00] (3) to (7);
		\draw [style=wire, in=102, out=-48, looseness=1.00] (3) to (4);
		\draw [style=wire, in=-45, out=90, looseness=1.25] (1) to (5);
		\draw [style=wire] (8) to (6);
		\draw [style=wire, bend right, looseness=1.00] (2) to (0);
		\draw [style=wire, in=90, out=-45, looseness=1.00] (2) to (3);
	\end{pgfonlayer}
\end{tikzpicture}}%
   \end{array}=\begin{array}[c]{c}\resizebox{!}{2cm}{%
 \begin{tikzpicture}
	\begin{pgfonlayer}{nodelayer}
		\node [style=integral] (0) at (-0.75, 1) {{\bf =\!=\!=}};
		\node [style=port] (1) at (-0.25, -0.25) {};
		\node [style=port] (2) at (-1.25, -0.25) {};
		\node [style=integral] (3) at (0.75, 1) {{\bf =\!=\!=}};
		\node [style=port] (4) at (1.25, -0.25) {};
		\node [style=port] (5) at (0.25, -0.25) {};
		\node [style=duplicate] (6) at (0, 2.25) {$\Delta$};
		\node [style=duplicate] (7) at (0, 2.25) {$\Delta$};
		\node [style=port] (8) at (0, 3) {};
	\end{pgfonlayer}
	\begin{pgfonlayer}{edgelayer}
		\draw [style=wire, bend right=15, looseness=1.00] (0) to (2);
		\draw [style=wire, bend left=15, looseness=1.00] (0) to (1);
		\draw [style=wire, bend right=15, looseness=1.00] (3) to (5);
		\draw [style=wire, bend left=15, looseness=1.00] (3) to (4);
		\draw [style=wire, in=74, out=-136, looseness=1.00] (6) to (0);
		\draw [style=wire, bend left=15, looseness=1.00] (6) to (3);
		\draw [style=wire] (8) to (7);
	\end{pgfonlayer}
\end{tikzpicture}}%
   \end{array}+\begin{array}[c]{c}\resizebox{!}{2cm}{%
 \begin{tikzpicture}
	\begin{pgfonlayer}{nodelayer}
		\node [style=integral] (0) at (-0.75, 1) {{\bf =\!=\!=}};
		\node [style=port] (1) at (-0.25, -0.25) {};
		\node [style=port] (2) at (-1.25, -0.25) {};
		\node [style=integral] (3) at (0.75, 1) {{\bf =\!=\!=}};
		\node [style=port] (4) at (1.25, -0.25) {};
		\node [style=port] (5) at (0.25, -0.25) {};
		\node [style=duplicate] (6) at (0, 2.25) {$\Delta$};
		\node [style=duplicate] (7) at (0, 2.25) {$\Delta$};
		\node [style=port] (8) at (0, 3) {};
	\end{pgfonlayer}
	\begin{pgfonlayer}{edgelayer}
		\draw [style=wire, bend right=15, looseness=1.00] (0) to (2);
		\draw [style=wire, bend left=15, looseness=1.00] (0) to (1);
		\draw [style=wire, bend right=15, looseness=1.00] (3) to (5);
		\draw [style=wire, bend left=15, looseness=1.00] (3) to (4);
		\draw [style=wire, in=74, out=-136, looseness=1.00] (6) to (0);
		\draw [style=wire, bend left=15, looseness=1.00] (6) to (3);
		\draw [style=wire] (8) to (7);
	\end{pgfonlayer}
\end{tikzpicture}}%
   \end{array} = 2 \cdot \begin{array}[c]{c}\resizebox{!}{2cm}{%
 \begin{tikzpicture}
	\begin{pgfonlayer}{nodelayer}
		\node [style=integral] (0) at (-0.75, 1) {{\bf =\!=\!=}};
		\node [style=port] (1) at (-0.25, -0.25) {};
		\node [style=port] (2) at (-1.25, -0.25) {};
		\node [style=integral] (3) at (0.75, 1) {{\bf =\!=\!=}};
		\node [style=port] (4) at (1.25, -0.25) {};
		\node [style=port] (5) at (0.25, -0.25) {};
		\node [style=duplicate] (6) at (0, 2.25) {$\Delta$};
		\node [style=duplicate] (7) at (0, 2.25) {$\Delta$};
		\node [style=port] (8) at (0, 3) {};
	\end{pgfonlayer}
	\begin{pgfonlayer}{edgelayer}
		\draw [style=wire, bend right=15, looseness=1.00] (0) to (2);
		\draw [style=wire, bend left=15, looseness=1.00] (0) to (1);
		\draw [style=wire, bend right=15, looseness=1.00] (3) to (5);
		\draw [style=wire, bend left=15, looseness=1.00] (3) to (4);
		\draw [style=wire, in=74, out=-136, looseness=1.00] (6) to (0);
		\draw [style=wire, bend left=15, looseness=1.00] (6) to (3);
		\draw [style=wire] (8) to (7);
	\end{pgfonlayer}
\end{tikzpicture}}%
   \end{array} 
\end{align*}
Therefore, the coderiving transformation fails to satisfy {\bf [s.2]}, the Rota-Baxter rule, since there is an extra factor of $2$. Of course, this problem is solved if $1+1=1$. Thus as advertised, we obtain: 

\begin{proposition} Every additive symmetric monoidal category with a coalgebra modality such that for each object $A$, $\oc A$ is additively idempotent is an integral category with respect to the coderiving transformation, that is, the coderiving transformation is an integral transformation. \end{proposition}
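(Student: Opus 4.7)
The plan is to verify each of the three axioms \textbf{[s.1]}, \textbf{[s.2]}, \textbf{[s.3]} for the integral transformation, with the coderiving transformation $\mathsf{d}^\circ$ in place of $\mathsf{s}$. Two of these are essentially free from Proposition \ref{codevprop}: axiom \textbf{[s.1]} (integral of constants) is identical to \textbf{[cd.1]}, and axiom \textbf{[s.3]} (interchange) is identical to \textbf{[cd.6]}. So the real content is verifying the Rota-Baxter rule \textbf{[s.2]}, and this is precisely where the additive idempotency hypothesis enters.

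For \textbf{[s.2]}, I would simply invoke the calculation carried out in the paragraph immediately preceding the proposition. That argument, which uses \textbf{[cd.3]} and \textbf{[cd.4]} to rewrite each of the two summands on the right-hand side of the Rota-Baxter rule, shows
\[
\mathsf{d}^\circ(\Delta \otimes 1)(\mathsf{d}^\circ \otimes 1 \otimes 1) + \mathsf{d}^\circ(\Delta \otimes 1)(1 \otimes \sigma)(1 \otimes 1 \otimes \mathsf{d}^\circ) \;=\; 2 \cdot \Delta(\mathsf{d}^\circ \otimes \mathsf{d}^\circ).
\]
Thus the only gap between the coderiving transformation and the Rota-Baxter rule is the extra factor of $2$, which must be absorbed.

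To absorb the factor of $2$, I would use the standing hypothesis: $\oc A$ is additively idempotent. As remarked in Section \ref{intcatsec}, if $\oc A$ is additively idempotent then so is $\oc A \otimes B$ for every $B$; iterating this, the codomain $\oc A \otimes A \otimes \oc A \otimes A$ of $\Delta(\mathsf{d}^\circ \otimes \mathsf{d}^\circ)$ is additively idempotent. In an additive category, if the codomain $C$ of a map $h$ satisfies $1_C + 1_C = 1_C$, then $2 \cdot h = h \cdot 2_C = h \cdot 1_C = h$. Applied to $h = \Delta(\mathsf{d}^\circ \otimes \mathsf{d}^\circ)$, this gives $2 \cdot \Delta(\mathsf{d}^\circ \otimes \mathsf{d}^\circ) = \Delta(\mathsf{d}^\circ \otimes \mathsf{d}^\circ)$, which is exactly the left-hand side of \textbf{[s.2]}.

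There is really no obstacle here; the only thing that requires a moment's care is checking that the codomain of $\Delta(\mathsf{d}^\circ \otimes \mathsf{d}^\circ)$ inherits additive idempotency from $\oc A$, and that this is enough to collapse the scalar $2$. The naturality of $\mathsf{d}^\circ$ is inherited from the naturality of $\Delta$ and $\varepsilon$, so no separate argument is needed for that. Assembling the three verifications and appealing to Proposition \ref{strancomb} concludes that $(\oc, \delta, \varepsilon, \Delta, e, \mathsf{d}^\circ)$ forms an integral category.
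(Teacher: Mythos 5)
Your proposal is correct and matches the paper's own argument: the paper proves the proposition implicitly via the displayed computation immediately preceding it, which shows the Rota-Baxter rule holds up to a factor of $2$ that additive idempotency of $\oc A$ (propagated to tensor products) collapses, while \textbf{[cd.1]} and \textbf{[cd.6]} give \textbf{[s.1]} and \textbf{[s.3]} directly. Your added care about the codomain of $\Delta(\mathsf{d}^\circ \otimes \mathsf{d}^\circ)$ inheriting idempotency is exactly the right justification and is consistent with the remark in the paper that $A$ additively idempotent implies $A \otimes B$ additively idempotent.
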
 

\subsection{Integration by Substitution}

Arguably the most important axiom of differential categories is the chain rule -- the axiom which describes the derivative of a composition of functions. The chain rule is what separates differential categories from differential modules and differential algebras. The integration rule analogous to the chain rule is known as integration by substitution or $U$-substitution. As for integration by parts, integration by substitution is expressed using derivatives but can also be expressed in terms of only integrals: 
\[\int_0^x f(\int_0^{t}g(u)~{\sf d}u) \cdot g(t)~{\sf d}t = \int_{0}^{\int_0^{x}g(u)~{\sf d}u} f(t) ~{\sf d}t\]
Unlike differential categories, integration by substitution is not an axiom of integral categories but rather a property that only certain integrable maps satisfy (similar to a situation we will explore later in Subsection \ref{FT1subsec} with the First Fundamental Theorem of Calculus). 
 
\begin{definition} \normalfont In an integral category, an integrable map $g: \oc A \otimes A \to B$ can be \textbf{substituted} into an integrable map $f: \oc B \otimes B \to C$ if the following equality holds: 
 \[\delta\oc(\mathsf{S}[g])\mathsf{S}[f]= \mathsf{S}[(\Delta \otimes 1)(\delta \otimes 1 \otimes 1)(\oc{S}[g] \otimes g)f]\]
\[   \begin{array}[c]{c}\resizebox{!}{4cm}{%
\begin{tikzpicture}
	\begin{pgfonlayer}{nodelayer}
		\node [style={circle, draw}] (0) at (0, 3) {$\delta$};
		\node [style=port] (1) at (0, 4) {};
		\node [style=port] (2) at (0, -2.5) {};
		\node [style=port] (3) at (-0.75, 0) {};
		\node [style=port] (4) at (0.75, 0) {};
		\node [style=port] (5) at (0.75, 2.25) {};
		\node [style=port] (6) at (-0.75, 2.25) {};
		\node [style={circle, draw}] (7) at (0, 0.5) {$g$};
		\node [style=integral] (8) at (0, 1.75) {{\bf -----}};
		\node [style={circle, draw}] (9) at (0, -1.75) {$f$};
		\node [style=integral] (10) at (0, -0.5) {{\bf -----}};
	\end{pgfonlayer}
	\begin{pgfonlayer}{edgelayer}
		\draw [style=wire] (1) to (0);
		\draw [style=wire] (6) to (5);
		\draw [style=wire] (6) to (3);
		\draw [style=wire] (5) to (4);
		\draw [style=wire] (3) to (4);
		\draw [style=wire, bend right=45, looseness=1.00] (8) to (7);
		\draw [style=wire, bend left=45, looseness=1.00] (8) to (7);
		\draw [style=wire, bend right=45, looseness=1.00] (10) to (9);
		\draw [style=wire, bend left=45, looseness=1.00] (10) to (9);
		\draw [style=wire] (0) to (8);
		\draw [style=wire] (7) to (10);
		\draw [style=wire] (9) to (2);
	\end{pgfonlayer}
\end{tikzpicture}
  }%
\end{array}=   \begin{array}[c]{c}\resizebox{!}{4cm}{%
\begin{tikzpicture}
	\begin{pgfonlayer}{nodelayer}
		\node [style=duplicate] (0) at (5.75, 7.25) {{\bf -----}};
		\node [style=port] (1) at (5.75, 8) {};
		\node [style=duplicate] (2) at (5.75, 7.25) {{\bf -----}};
		\node [style=duplicate] (3) at (5, 6) {$\Delta$};
		\node [style=integral] (4) at (4, 3.75) {{\bf -----}};
		\node [style=port] (5) at (3.25, 2) {};
		\node [style=port] (6) at (4.75, 2) {};
		\node [style=port] (7) at (4.75, 4.25) {};
		\node [style={circle, draw}] (8) at (4, 5) {$\delta$};
		\node [style={circle, draw}] (9) at (4, 2.5) {$g$};
		\node [style=port] (10) at (3.25, 4.25) {};
		\node [style={circle, draw}] (11) at (6.25, 2.5) {$g$};
		\node [style={circle, draw}] (12) at (5.25, 1) {$f$};
		\node [style=port] (13) at (5.25, 0) {};
	\end{pgfonlayer}
	\begin{pgfonlayer}{edgelayer}
		\draw [style=wire, bend right=15, looseness=1.00] (0) to (3);
		\draw [style=wire] (1) to (2);
		\draw [style=wire] (8) to (4);
		\draw [style=wire, bend right=45, looseness=1.00] (4) to (9);
		\draw [style=wire, bend left=45, looseness=1.00] (4) to (9);
		\draw [style=wire] (10) to (7);
		\draw [style=wire] (10) to (5);
		\draw [style=wire] (7) to (6);
		\draw [style=wire] (5) to (6);
		\draw [style=wire, in=6, out=-45, looseness=0.75] (2) to (11);
		\draw [style=wire, in=90, out=-150, looseness=1.00] (3) to (8);
		\draw [style=wire, in=180, out=-30, looseness=0.75] (3) to (11);
		\draw [style=wire, bend right, looseness=1.00] (9) to (12);
		\draw [style=wire, bend left, looseness=1.00] (11) to (12);
		\draw [style=wire] (12) to (13);
	\end{pgfonlayer}
\end{tikzpicture}  }%
\end{array}\]
An integral map $f: \oc A \otimes A \to B$ satisfies \textbf{integration by substitution} if every integrable map with codomain $A$ can be substituted into $f$. 
\end{definition}

\begin{proposition} In an integral category whose integral transformation is the coderiving transformation, all maps satisfy integration by substitution. 
\end{proposition}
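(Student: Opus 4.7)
The plan is to verify the integration-by-substitution identity by reducing both sides to a common expression, using only the properties of $\mathsf{d}^\circ$ and the coalgebra modality. Write $h := \mathsf{S}[g] = \mathsf{d}^\circ g : \oc A \to B$, so that $\mathsf{S}[f] = \mathsf{d}^\circ f$ as well, and unfold $\mathsf{d}^\circ = \Delta(1 \otimes \varepsilon)$ where useful.

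For the left-hand side $\delta\, \oc(h)\, \mathsf{d}^\circ_B f$, I first push $\oc(h)$ across $\Delta_B$ using the fact that $\oc(h)$ is a comonoid morphism, giving $\delta\, \Delta_{\oc A}\, (\oc h \otimes \oc h)(1 \otimes \varepsilon_B)\, f$. Naturality of $\varepsilon$ converts $\oc h \cdot \varepsilon_B$ into $\varepsilon_{\oc A}\cdot h$, and then the coalgebra-modality axiom $\delta\Delta = \Delta(\delta\otimes\delta)$ moves $\delta$ past $\Delta_{\oc A}$. Finally the comonad triangle identity $\delta\varepsilon_{\oc A} = 1_{\oc A}$ simplifies the second tensor factor, so the LHS reduces to
\[ \Delta_A\,(\delta\,\oc(h) \otimes h)\, f. \]

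For the right-hand side $\mathsf{d}^\circ_A(\Delta_A \otimes 1)(\delta \otimes 1 \otimes 1)(\oc h \otimes g)\, f$, I apply property \textbf{[cd.3]} of Proposition~\ref{codevprop} to rewrite the leading $\mathsf{d}^\circ(\Delta \otimes 1) = \Delta(1 \otimes \mathsf{d}^\circ)$. Bifunctoriality of $\otimes$ then commutes $(1 \otimes \mathsf{d}^\circ_A)$ with $(\delta \otimes 1 \otimes 1)$, and absorbing $\mathsf{d}^\circ_A g = h$ into the second factor produces exactly
\[ \Delta_A\,(\delta\,\oc(h) \otimes h)\, f, \]
matching the LHS.

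The proof is a direct computation and no step is genuinely difficult; the only care needed is in tracking wires — ensuring that the $\delta$ in the definition of the RHS lands on the correct copy of $\oc A$ produced by $\Delta$, and that the substitution $\mathsf{d}^\circ g = h$ occurs in the second tensor factor. The same calculation is perhaps cleaner in the graphical calculus, where the use of \textbf{[cd.3]}, naturality of $\Delta$ for $\oc h$, and the comonad identities are immediately visible as local rewrites.
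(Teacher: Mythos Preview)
Your proof is correct and follows essentially the same approach as the paper: both arguments rely on \textbf{[cd.3]}, the comonad triangle identity, and the fact that $\delta$ is a comonoid morphism (together with naturality of $\varepsilon$, equivalently naturality of $\mathsf{d}^\circ$). The only cosmetic difference is that you reduce each side separately to the common form $\Delta_A(\delta\,\oc(h)\otimes h)f$, whereas the paper transforms one side directly into the other via string diagrams; the underlying rewrites are identical.
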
 
\begin{proof} Let $g: \oc A \otimes A \to B$ and $f: \oc B \otimes B \to C$ be any integrable map. Using \textbf{[cd.3]}, the comonad triangle identities, that $\delta$ is a comonoid morphism, and the naturality of the coderiving transformation we have that: 
\begin{align*}
\begin{array}[c]{c}\resizebox{!}{4cm}{%
\begin{tikzpicture}
	\begin{pgfonlayer}{nodelayer}
		\node [style=duplicate] (0) at (5.75, 7.25) {{\bf -----}};
		\node [style=port] (1) at (5.75, 8) {};
		\node [style=duplicate] (2) at (5.75, 7.25) {{\bf -----}};
		\node [style=duplicate] (3) at (5, 6) {$\Delta$};
		\node [style=integral] (4) at (4, 3.75) {{\bf -----}};
		\node [style=port] (5) at (3.25, 2) {};
		\node [style=port] (6) at (4.75, 2) {};
		\node [style=port] (7) at (4.75, 4.25) {};
		\node [style={circle, draw}] (8) at (4, 5) {$\delta$};
		\node [style={circle, draw}] (9) at (4, 2.5) {$g$};
		\node [style=port] (10) at (3.25, 4.25) {};
		\node [style={circle, draw}] (11) at (6.25, 2.5) {$g$};
		\node [style={circle, draw}] (12) at (5.25, 1) {$f$};
		\node [style=port] (13) at (5.25, 0) {};
	\end{pgfonlayer}
	\begin{pgfonlayer}{edgelayer}
		\draw [style=wire, bend right=15, looseness=1.00] (0) to (3);
		\draw [style=wire] (1) to (2);
		\draw [style=wire] (8) to (4);
		\draw [style=wire, bend right=45, looseness=1.00] (4) to (9);
		\draw [style=wire, bend left=45, looseness=1.00] (4) to (9);
		\draw [style=wire] (10) to (7);
		\draw [style=wire] (10) to (5);
		\draw [style=wire] (7) to (6);
		\draw [style=wire] (5) to (6);
		\draw [style=wire, in=6, out=-45, looseness=0.75] (2) to (11);
		\draw [style=wire, in=90, out=-150, looseness=1.00] (3) to (8);
		\draw [style=wire, in=180, out=-30, looseness=0.75] (3) to (11);
		\draw [style=wire, bend right, looseness=1.00] (9) to (12);
		\draw [style=wire, bend left, looseness=1.00] (11) to (12);
		\draw [style=wire] (12) to (13);
	\end{pgfonlayer}
\end{tikzpicture}  }%
\end{array} &= \begin{array}[c]{c}\resizebox{!}{4cm}{%
\begin{tikzpicture}
	\begin{pgfonlayer}{nodelayer}
		\node [style=codifferential] (0) at (5.75, 7.25) {{\bf =\!=\!=}};
		\node [style=port] (1) at (5.75, 8) {};
		\node [style=codifferential] (2) at (5.75, 7.25) {{\bf =\!=\!=}};
		\node [style=duplicate] (3) at (5, 6) {$\Delta$};
		\node [style=codifferential] (4) at (4, 3.75) {{\bf =\!=\!=}};
		\node [style=port] (5) at (3.25, 2) {};
		\node [style=port] (6) at (4.75, 2) {};
		\node [style=port] (7) at (4.75, 4.25) {};
		\node [style={circle, draw}] (8) at (4, 5) {$\delta$};
		\node [style={circle, draw}] (9) at (4, 2.5) {$g$};
		\node [style=port] (10) at (3.25, 4.25) {};
		\node [style={circle, draw}] (11) at (6.25, 2.5) {$g$};
		\node [style={circle, draw}] (12) at (5.25, 1) {$f$};
		\node [style=port] (13) at (5.25, 0) {};
	\end{pgfonlayer}
	\begin{pgfonlayer}{edgelayer}
		\draw [style=wire, bend right=15, looseness=1.00] (0) to (3);
		\draw [style=wire] (1) to (2);
		\draw [style=wire] (8) to (4);
		\draw [style=wire, bend right=45, looseness=1.00] (4) to (9);
		\draw [style=wire, bend left=45, looseness=1.00] (4) to (9);
		\draw [style=wire] (10) to (7);
		\draw [style=wire] (10) to (5);
		\draw [style=wire] (7) to (6);
		\draw [style=wire] (5) to (6);
		\draw [style=wire, in=6, out=-45, looseness=0.75] (2) to (11);
		\draw [style=wire, in=90, out=-150, looseness=1.00] (3) to (8);
		\draw [style=wire, in=180, out=-30, looseness=0.75] (3) to (11);
		\draw [style=wire, bend right, looseness=1.00] (9) to (12);
		\draw [style=wire, bend left, looseness=1.00] (11) to (12);
		\draw [style=wire] (12) to (13);
	\end{pgfonlayer}
\end{tikzpicture}  }%
\end{array}=    \begin{array}[c]{c}\resizebox{!}{4cm}{%
\begin{tikzpicture}
	\begin{pgfonlayer}{nodelayer}
		\node [style=port] (0) at (5, 7) {};
		\node [style=duplicate] (1) at (5, 6.25) {$\Delta$};
		\node [style=codifferential] (2) at (4, 3.75) {{\bf =\!=\!=}};
		\node [style=port] (3) at (3.25, 2) {};
		\node [style=port] (4) at (4.75, 2) {};
		\node [style=port] (5) at (4.75, 4.25) {};
		\node [style={circle, draw}] (6) at (4, 5) {$\delta$};
		\node [style={circle, draw}] (7) at (4, 2.5) {$g$};
		\node [style=port] (8) at (3.25, 4.25) {};
		\node [style={circle, draw}] (9) at (5.25, 1) {$f$};
		\node [style=port] (10) at (5.25, 0) {};
		\node [style={circle, draw}] (11) at (6.25, 2.5) {$g$};
		\node [style=codifferential] (12) at (6.25, 3.75) {{\bf =\!=\!=}};
	\end{pgfonlayer}
	\begin{pgfonlayer}{edgelayer}
		\draw [style=wire] (0) to (1);
		\draw [style=wire] (6) to (2);
		\draw [style=wire, bend right=45, looseness=1.00] (2) to (7);
		\draw [style=wire, bend left=45, looseness=1.00] (2) to (7);
		\draw [style=wire] (8) to (5);
		\draw [style=wire] (8) to (3);
		\draw [style=wire] (5) to (4);
		\draw [style=wire] (3) to (4);
		\draw [style=wire, bend right, looseness=1.00] (7) to (9);
		\draw [style=wire] (9) to (10);
		\draw [style=wire, in=90, out=-150, looseness=1.00] (1) to (6);
		\draw [style=wire, bend right=45, looseness=1.00] (12) to (11);
		\draw [style=wire, bend left=45, looseness=1.00] (12) to (11);
		\draw [style=wire, in=30, out=-90, looseness=1.00] (11) to (9);
		\draw [style=wire, in=90, out=-30, looseness=1.25] (1) to (12);
	\end{pgfonlayer}
\end{tikzpicture}
  }%
\end{array}=    \begin{array}[c]{c}\resizebox{!}{4cm}{%
\begin{tikzpicture}
	\begin{pgfonlayer}{nodelayer}
		\node [style=port] (0) at (5, 7) {};
		\node [style=duplicate] (1) at (5, 6.25) {$\Delta$};
		\node [style=codifferential] (2) at (4, 3.75) {{\bf =\!=\!=}};
		\node [style=port] (3) at (3.25, 2) {};
		\node [style=port] (4) at (4.75, 2) {};
		\node [style=port] (5) at (4.75, 4.25) {};
		\node [style={circle, draw}] (6) at (4, 5) {$\delta$};
		\node [style={circle, draw}] (7) at (4, 2.5) {$g$};
		\node [style=port] (8) at (3.25, 4.25) {};
		\node [style={circle, draw}] (9) at (5.25, 1) {$f$};
		\node [style=port] (10) at (5.25, 0) {};
		\node [style={circle, draw}] (11) at (6.25, 2.5) {$g$};
		\node [style=codifferential] (12) at (6.25, 3.75) {{\bf =\!=\!=}};
		\node [style={circle, draw}] (13) at (6.25, 5.5) {$\delta$};
		\node [style={circle, draw}] (14) at (6.25, 4.5) {$\varepsilon$};
	\end{pgfonlayer}
	\begin{pgfonlayer}{edgelayer}
		\draw [style=wire] (0) to (1);
		\draw [style=wire] (6) to (2);
		\draw [style=wire, bend right=45, looseness=1.00] (2) to (7);
		\draw [style=wire, bend left=45, looseness=1.00] (2) to (7);
		\draw [style=wire] (8) to (5);
		\draw [style=wire] (8) to (3);
		\draw [style=wire] (5) to (4);
		\draw [style=wire] (3) to (4);
		\draw [style=wire, bend right, looseness=1.00] (7) to (9);
		\draw [style=wire] (9) to (10);
		\draw [style=wire, in=90, out=-150, looseness=1.00] (1) to (6);
		\draw [style=wire, bend right=45, looseness=1.00] (12) to (11);
		\draw [style=wire, bend left=45, looseness=1.00] (12) to (11);
		\draw [style=wire, in=30, out=-90, looseness=1.00] (11) to (9);
		\draw [style=wire, in=90, out=-31, looseness=1.00] (1) to (13);
		\draw [style=wire] (14) to (12);
		\draw [style=wire] (13) to (14);
	\end{pgfonlayer}
\end{tikzpicture}
  }%
\end{array}\\&=   \begin{array}[c]{c}\resizebox{!}{4cm}{%
\begin{tikzpicture}
	\begin{pgfonlayer}{nodelayer}
		\node [style=port] (0) at (5, 7.25) {};
		\node [style=codifferential] (1) at (4, 3.75) {{\bf =\!=\!=}};
		\node [style=port] (2) at (3.25, 2) {};
		\node [style=port] (3) at (4.75, 2) {};
		\node [style=port] (4) at (4.75, 4.25) {};
		\node [style={circle, draw}] (5) at (4, 2.5) {$g$};
		\node [style=port] (6) at (3.25, 4.25) {};
		\node [style={circle, draw}] (7) at (5.25, 1) {$f$};
		\node [style=port] (8) at (5.25, 0) {};
		\node [style={circle, draw}] (9) at (6.25, 2.5) {$g$};
		\node [style=codifferential] (10) at (6.25, 3.75) {{\bf =\!=\!=}};
		\node [style=codifferential] (11) at (5, 5.75) {{\bf =\!=\!=}};
		\node [style={circle, draw}] (12) at (5, 6.5) {$\delta$};
	\end{pgfonlayer}
	\begin{pgfonlayer}{edgelayer}
		\draw [style=wire, bend right=45, looseness=1.00] (1) to (5);
		\draw [style=wire, bend left=45, looseness=1.00] (1) to (5);
		\draw [style=wire] (6) to (4);
		\draw [style=wire] (6) to (2);
		\draw [style=wire] (4) to (3);
		\draw [style=wire] (2) to (3);
		\draw [style=wire, bend right, looseness=1.00] (5) to (7);
		\draw [style=wire] (7) to (8);
		\draw [style=wire, bend right=45, looseness=1.00] (10) to (9);
		\draw [style=wire, bend left=45, looseness=1.00] (10) to (9);
		\draw [style=wire, in=30, out=-90, looseness=1.00] (9) to (7);
		\draw [style=wire] (0) to (12);
		\draw [style=wire] (12) to (11);
		\draw [style=wire, in=90, out=-135, looseness=1.00] (11) to (1);
		\draw [style=wire, in=90, out=-30, looseness=1.25] (11) to (10);
	\end{pgfonlayer}
\end{tikzpicture}
  }%
\end{array}= \begin{array}[c]{c}\resizebox{!}{4cm}{%
\begin{tikzpicture}
	\begin{pgfonlayer}{nodelayer}
		\node [style={circle, draw}] (0) at (0, 3) {$\delta$};
		\node [style=port] (1) at (0, 4) {};
		\node [style=port] (2) at (0, -2.5) {};
		\node [style=port] (3) at (-0.75, 0) {};
		\node [style=port] (4) at (0.75, 0) {};
		\node [style=port] (5) at (0.75, 2.25) {};
		\node [style=port] (6) at (-0.75, 2.25) {};
		\node [style={circle, draw}] (7) at (0, 0.5) {$g$};
		\node [style=codifferential] (8) at (0, 1.75) {{\bf =\!=\!=}};
		\node [style={circle, draw}] (9) at (0, -1.75) {$f$};
		\node [style=codifferential] (10) at (0, -0.5) {{\bf =\!=\!=}};
	\end{pgfonlayer}
	\begin{pgfonlayer}{edgelayer}
		\draw [style=wire] (1) to (0);
		\draw [style=wire] (6) to (5);
		\draw [style=wire] (6) to (3);
		\draw [style=wire] (5) to (4);
		\draw [style=wire] (3) to (4);
		\draw [style=wire, bend right=45, looseness=1.00] (8) to (7);
		\draw [style=wire, bend left=45, looseness=1.00] (8) to (7);
		\draw [style=wire, bend right=45, looseness=1.00] (10) to (9);
		\draw [style=wire, bend left=45, looseness=1.00] (10) to (9);
		\draw [style=wire] (0) to (8);
		\draw [style=wire] (7) to (10);
		\draw [style=wire] (9) to (2);
	\end{pgfonlayer}
\end{tikzpicture}
  }%
\end{array}= \begin{array}[c]{c}\resizebox{!}{4cm}{%
\begin{tikzpicture}
	\begin{pgfonlayer}{nodelayer}
		\node [style={circle, draw}] (0) at (0, 3) {$\delta$};
		\node [style=port] (1) at (0, 4) {};
		\node [style=port] (2) at (0, -2.5) {};
		\node [style=port] (3) at (-0.75, 0) {};
		\node [style=port] (4) at (0.75, 0) {};
		\node [style=port] (5) at (0.75, 2.25) {};
		\node [style=port] (6) at (-0.75, 2.25) {};
		\node [style={circle, draw}] (7) at (0, 0.5) {$g$};
		\node [style=integral] (8) at (0, 1.75) {{\bf -----}};
		\node [style={circle, draw}] (9) at (0, -1.75) {$f$};
		\node [style=integral] (10) at (0, -0.5) {{\bf -----}};
	\end{pgfonlayer}
	\begin{pgfonlayer}{edgelayer}
		\draw [style=wire] (1) to (0);
		\draw [style=wire] (6) to (5);
		\draw [style=wire] (6) to (3);
		\draw [style=wire] (5) to (4);
		\draw [style=wire] (3) to (4);
		\draw [style=wire, bend right=45, looseness=1.00] (8) to (7);
		\draw [style=wire, bend left=45, looseness=1.00] (8) to (7);
		\draw [style=wire, bend right=45, looseness=1.00] (10) to (9);
		\draw [style=wire, bend left=45, looseness=1.00] (10) to (9);
		\draw [style=wire] (0) to (8);
		\draw [style=wire] (7) to (10);
		\draw [style=wire] (9) to (2);
	\end{pgfonlayer}
\end{tikzpicture}
  }%
\end{array}
\end{align*}

\end{proof} 

\subsection{Polynomial Integration and Positive Rationals}
Perhaps the first formula learned in first year calculus is the integral of monomials: 
\[\int_0^x x^n \mathsf{d}x = \frac{1}{n+1}x^{n+1}\] 
However this formula cannot be expressed in a general additive category -- simply because there may not be fractions. That said, we will soon see that in every integral category there is a notion of scalar multiplication by positive rationals, that is, certain hom-sets are $\mathbb{Q}_{\geq 0}$-modules, where $\mathbb{Q}_{\geq 0}$ is the rig of non-negative rationals. However, the integral of monomials can be re-expressed as: 
\[(n+1) \int_0^x x^n ~\mathsf{d}x = x^{n+1}\]
This re-expression of the identity does hold in any integral category!  The proof of the polynomial integration formula is actually quite elegant and demonstrates the utility of the graphical calculus. The beauty of the proof of the polynomial integration formula is that it uses every axiom: integration of constants, the Rota-Baxter rule and the interchange rule. In fact, polynomial integration in classical calculus also uses all three axioms but the use of the interchange rule goes unnoticed, since, in classical calculus, the interchange rule is simply a change of variables. 

To express the polynomial integration identity in integral categories, we will need the $n$-fold comultiplication. For every natural number $n \in \mathbb{N}$, define the $n$-fold comultiplication: 
\[\Delta_{n}: \oc A \to \underbrace{\oc A \otimes ... \otimes \oc A}_{n\text{-times}}\]
by comultiplying $\oc A$ into $n$ copies of $\oc A$. Explicitly one can write  $\Delta_{n}$ as follows:
\[\Delta_{n}=\Delta(\Delta \otimes 1)(\Delta \otimes 1 \otimes 1)...(\Delta \otimes \underbrace{1 \otimes ... \otimes 1}_{n-2\text{-times}})\]
However by co-associativity, we could have comultiplied in any other order. For example, we could have expressed $\Delta_4$ as follows:
\[\Delta_4=\Delta(1 \otimes \Delta)(1 \otimes \Delta \otimes 1)\] 
Note the following for the cases $n=0,1$ and $2$:
 \begin{enumerate}[{\em (i)}]
 \item  When $n=0$, $\Delta_{0}=e$ (since zero copies of $\oc A$ is the monoidal unit);
 \item When $n=1$, $\Delta_{1}=1$;
 \item When $n=2$, $\Delta_{2}=\Delta$. 
\end{enumerate}

With this $n$-fold comultiplication, we can now properly express and prove a rule for integration of monomials in integral categories which works properly.
\begin{proposition}\label{polyint} For every $n \in \mathbb{N}$, the integral transformation satisfies the following:
 \begin{description}
\item{\bf [s. Poly]} $(n+1) \cdot \mathsf{s}(\Delta_{n} \otimes 1)(\underbrace{\varepsilon \otimes ... \otimes \varepsilon}_{\text{$n$-times}} \otimes 1) = \Delta_{n+1}(\underbrace{\varepsilon \otimes ... \otimes \varepsilon}_{(n+1)\text{-times}})$ 
\[
(n+1) \cdot ~~    

\end{align*}}%
The induction hypothesis was used in the 3rd, 6th, and last of the above equalities. 

 \end{proof} 
 
 In classical calculus notation, the $\mathsf{d}t$ always appears at the end of the expression of the integral. However, for integral categories the position of the linear part $\mathsf{d}t$ is unimportant when integrating polynomials. Intuitively, this can be expressed using classical calculus notation as follows: 
\[\int_0^x t^n ~\mathsf{d}t = \int_0^x t^{k}~\mathsf{d}t~t^{n-k}\]
To express these formulas in integral categories, we need to introduce notation for a specific twist map. In any symmetric monoidal category $\mathbb{X}$, for each finite family of objects $\lbrace A_1, ..., A_n \rbrace$ and for each $k \leq n$, define the following permutation isomorphism:
\[\omega_{(k \ n)}: A_1 \otimes ... \otimes A_n \to A_{1} \otimes ... \otimes A_n \otimes ... \otimes A_{k}\]
which swaps the last tensor factor and the $k$-th tensor factor. Notice that $\omega_{(n \ n)}$ is the identity and when $k=0$, define $\omega_{(0 \ n)}$ as follows: 
\[\omega_{(0 \ n)}: A_1 \otimes ... \otimes A_n \to A_{n} \otimes A_1 \otimes ... \otimes A_{n-1}\]
For each finite family of objects of size $n \geq 1$ there are $n+1$ of these kinds of permutation isomorphisms. 
\begin{proposition}\label{swappoly} For every $n \in \mathbb{N}$ and every $k \leq n$, the following equality holds:
\[\mathsf{s}(\Delta_{n} \otimes 1)(\underbrace{\varepsilon \otimes ... \otimes \varepsilon}_{\text{$n$-times}} \otimes 1)\omega_{(k \ n)} = \mathsf{s}(\Delta_{n} \otimes 1)(\underbrace{\varepsilon \otimes ... \otimes \varepsilon}_{\text{$n$-times}} \otimes 1)\] 
\[\begin{array}[c]{c}\resizebox{!}{2cm}{%
\begin{tikzpicture}
	\begin{pgfonlayer}{nodelayer}
		\node [style=port] (0) at (-2, -0.25) {};
		\node [style={circle, draw}] (1) at (1, 0.5) {$\varepsilon$};
		\node [style=port] (2) at (0, 3) {};
		\node [style=port] (3) at (-1, 0.5) {$\hdots$};
		\node [style=port] (4) at (1, -0.25) {};
		\node [style={circle, draw}] (5) at (-2, 0.5) {$\varepsilon$};
		\node [style=duplicate] (6) at (-0.5, 1.5) {$\Delta_n$};
		\node [style=duplicate] (7) at (0, 2.25) {{\bf -----}};
		\node [style=port] (8) at (-0.5, -0.25) {};
		\node [style=duplicate] (9) at (0, 2.25) {{\bf -----}};
		\node [style=port] (10) at (0.25, 0.5) {$\hdots$};
	\end{pgfonlayer}
	\begin{pgfonlayer}{edgelayer}
		\draw [style=wire, in=90, out=-45, looseness=1.50] (7) to (8);
		\draw [style=wire, bend right=15, looseness=1.00] (7) to (6);
		\draw [style=wire] (2) to (9);
		\draw [style=wire] (5) to (0);
		\draw [style=wire] (1) to (4);
		\draw [style=wire, in=90, out=-135, looseness=1.25] (6) to (5);
		\draw [style=wire, in=90, out=-45, looseness=1.25] (6) to (1);
	\end{pgfonlayer}
\end{tikzpicture}}
   \end{array}=
   \begin{array}[c]{c}\resizebox{!}{2cm}{%
\begin{tikzpicture}
	\begin{pgfonlayer}{nodelayer}
		\node [style=port] (0) at (-1.25, -0.25) {};
		\node [style=port] (1) at (1.25, -0.25) {};
		\node [style={circle, draw}] (2) at (-1.25, 0.5) {$\varepsilon$};
		\node [style={circle, draw}] (3) at (0.25, 0.5) {$\varepsilon$};
		\node [style=port] (4) at (0.25, -0.25) {};
		\node [style=port] (5) at (0, 3) {};
		\node [style=duplicate] (6) at (0, 2.25) {{\bf -----}};
		\node [style=duplicate] (7) at (-0.5, 1.5) {$\Delta_n$};
		\node [style=duplicate] (8) at (0, 2.25) {{\bf -----}};
		\node [style=port] (9) at (-0.5, 0.75) {$\hdots$};
	\end{pgfonlayer}
	\begin{pgfonlayer}{edgelayer}
		\draw [style=wire, in=90, out=-45, looseness=1.00] (8) to (1);
		\draw [style=wire, bend right=15, looseness=1.00] (8) to (7);
		\draw [style=wire] (5) to (6);
		\draw [style=wire] (2) to (0);
		\draw [style=wire] (3) to (4);
		\draw [style=wire, bend right=15, looseness=1.00] (7) to (2);
		\draw [style=wire, in=109, out=-41, looseness=1.00] (7) to (3);
	\end{pgfonlayer}
\end{tikzpicture}}
   \end{array}\]
\end{proposition}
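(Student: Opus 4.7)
The plan is to deduce $I_n\omega_{(k\ n)} = I_n$ (where I write $I_n := \mathsf{s}(\Delta_n \otimes 1)(\varepsilon^{\otimes n}\otimes 1)$) by combining the polynomial integration formula (Proposition~\ref{polyint}) with the interchange rule~{\bf[s.3]} and the cocommutativity of $\Delta$. Observe first that by cocommutativity $\Delta_n$ is invariant under every permutation of its $n$ outputs, so $I_n$ is already invariant under every permutation of its first $n$ codomain factors. Hence if I can establish invariance under the single swap $\tau := 1_A^{\otimes(n-1)} \otimes \sigma_{A,A}$ of the $n$-th and $(n+1)$-th factors, then composing $\tau$ with these cocommutativity-permutations realises every $\omega_{(k\ n)}$ for $0 \leq k \leq n$, including the cyclic shift $\omega_{(0\ n)}$ expressed as a product of adjacent transpositions.

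The main work is to prove $I_n\tau = I_n$ for $n\geq 1$; the case $n=0$ is immediate since $I_0 = \varepsilon$ by~{\bf[s.1]}. I would first re-index Proposition~\ref{polyint} to read $\Delta_n\varepsilon^{\otimes n} = n\cdot I_{n-1}$ and substitute to obtain
\[
I_n \;=\; \mathsf{s}\bigl((\Delta_n\varepsilon^{\otimes n}) \otimes 1_A\bigr) \;=\; n\cdot \mathsf{s}(I_{n-1} \otimes 1_A).
\]
Writing $I_{n-1} = \mathsf{s}\cdot M$ with $M := (\Delta_{n-1}\otimes 1)(\varepsilon^{\otimes(n-1)}\otimes 1)$, bifunctoriality of $\otimes$ then yields the factorisation
\[
\mathsf{s}(I_{n-1}\otimes 1_A) \;=\; \mathsf{s}(\mathsf{s}\otimes 1_A)\cdot (M\otimes 1_A).
\]

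The crucial observation is that the two rightmost $A$-factors in the codomain of $M\otimes 1_A$ come directly from the two $A$-factors at the end of its domain $\oc A\otimes A\otimes A$ (since $M$ is the identity on its $A$-argument and $1_A$ is the identity), so $(M\otimes 1_A)\tau = (1_{\oc A}\otimes \sigma_{A,A})(M\otimes 1_A)$. Applying~{\bf[s.3]}, namely $\mathsf{s}(\mathsf{s}\otimes 1)(1\otimes \sigma) = \mathsf{s}(\mathsf{s}\otimes 1)$, this allows the swap to be absorbed:
\[
\mathsf{s}(I_{n-1}\otimes 1)\tau \;=\; \mathsf{s}(\mathsf{s}\otimes 1)(1\otimes \sigma)(M\otimes 1) \;=\; \mathsf{s}(\mathsf{s}\otimes 1)(M\otimes 1) \;=\; \mathsf{s}(I_{n-1}\otimes 1),
\]
and scaling by $n$ gives $I_n\tau = I_n$.

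The principal obstacle, and the reason for structuring the argument in this way, is that a naive use of Proposition~\ref{polyint} only gives $(n+1)\cdot I_n\omega_{(k\ n)} = (n+1)\cdot I_n$ from the full symmetry of $\Delta_{n+1}\varepsilon^{\otimes(n+1)}$, and in a general commutative-monoid-enriched category one cannot cancel the scalar $n+1$. The approach above sidesteps this by using Proposition~\ref{polyint} only to extract a scalar $n$ \emph{inside} the argument of $\mathsf{s}$ (where it is harmlessly carried along as a coefficient), while the interchange rule~{\bf[s.3]} performs the actual swap of the two final $A$-factors without any cancellation step.
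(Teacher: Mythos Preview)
Your argument is correct. The key step---writing $I_n = n\cdot \mathsf{s}(\mathsf{s}\otimes 1)(M'\otimes 1_A\otimes 1_A)$ via {\bf[s.Poly]} and then absorbing the last-two-factor swap with {\bf[s.3]}---is sound, and your reduction to the single transposition $\tau$ is valid because the subgroup $S_n$ (acting on the first $n$ outputs, available by cocommutativity of $\Delta_n$) together with $\tau=(n\ n{+}1)$ generates all of $S_{n+1}$, hence in particular every $\omega_{(k\ n)}$ including the cyclic shift.

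The paper's proof uses the same ingredients but is organized differently: it proceeds by induction on $n$, applying {\bf[s.Poly]} once to expand the $(n{+}1)$-fold expression as $(n{+}1)$ times an iterated integral, invoking the induction hypothesis and {\bf[s.3]} on that inner expression, and then applying {\bf[s.Poly]} a second time to collapse back. Your approach is more direct in that it avoids induction entirely and uses {\bf[s.Poly]} only once; the paper's approach handles arbitrary $k$ uniformly inside the induction rather than first reducing to $\tau$. Both routes correctly sidestep the cancellation problem you identify, since in each case the scalar multiple appears on both sides of an equality between already-matched expressions rather than as something to be divided out.
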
 
\begin{proof} We prove this by induction on $n$ using both the interchange rule {\bf [s.3]} and polynomial integration rule {\bf [s.poly]}. For the base case of $n=0$, this equality holds directly by the constant rule {\bf [s.1]} since $\Delta_{0}=e$.  Assume the induction hypothesis holds for $n$, we now show it for $n+1$ and for any $k \leq n+1$: 
{\footnotesize \begin{align*}
&\begin{array}[c]{c}\resizebox{!}{2cm}{%
\begin{tikzpicture}
	\begin{pgfonlayer}{nodelayer}
		\node [style=port] (0) at (-2, -0.25) {};
		\node [style={circle, draw}] (1) at (1, 0.5) {$\varepsilon$};
		\node [style=port] (2) at (0, 3) {};
		\node [style=port] (3) at (-1, 0.5) {$\hdots$};
		\node [style=port] (4) at (1, -0.25) {};
		\node [style={circle, draw}] (5) at (-2, 0.5) {$\varepsilon$};
		\node [style=duplicate] (6) at (-0.5, 1.5) {$\Delta_{n+1}$};
		\node [style=duplicate] (7) at (0, 2.25) {{\bf -----}};
		\node [style=port] (8) at (-0.5, -0.25) {};
		\node [style=duplicate] (9) at (0, 2.25) {{\bf -----}};
		\node [style=port] (10) at (0.25, 0.5) {$\hdots$};
	\end{pgfonlayer}
	\begin{pgfonlayer}{edgelayer}
		\draw [style=wire, in=90, out=-45, looseness=1.50] (7) to (8);
		\draw [style=wire, bend right=15, looseness=1.00] (7) to (6);
		\draw [style=wire] (2) to (9);
		\draw [style=wire] (5) to (0);
		\draw [style=wire] (1) to (4);
		\draw [style=wire, in=90, out=-135, looseness=1.25] (6) to (5);
		\draw [style=wire, in=90, out=-45, looseness=1.25] (6) to (1);
	\end{pgfonlayer}
\end{tikzpicture}}
   \end{array}=(n+1)\cdot 
   \begin{array}[c]{c}\resizebox{!}{2.25cm}{%
\begin{tikzpicture}
	\begin{pgfonlayer}{nodelayer}
		\node [style=duplicate] (0) at (-1, 1) {$\Delta_n$};
		\node [style=duplicate] (1) at (0, 2.25) {{\bf -----}};
		\node [style=port] (2) at (0.5, -0.5) {};
		\node [style={circle, draw}] (3) at (0.5, 0) {$\varepsilon$};
		\node [style=port] (4) at (-1, -0.5) {};
		\node [style=port] (5) at (1.75, -0.5) {};
		\node [style=port] (6) at (-0.5, 0) {$\hdots$};
		\node [style=integral] (7) at (-0.5, 1.5) {{\bf -----}};
		\node [style=port] (8) at (-1.75, 0) {$\hdots$};
		\node [style=port] (9) at (-2.75, -0.5) {};
		\node [style={circle, draw}] (10) at (-2.75, 0) {$\varepsilon$};
		\node [style=duplicate] (11) at (0, 2.25) {{\bf -----}};
		\node [style=port] (12) at (0, 3) {};
	\end{pgfonlayer}
	\begin{pgfonlayer}{edgelayer}
		\draw [style=wire, in=90, out=-45, looseness=1.00] (7) to (5);
		\draw [style=wire, in=90, out=-45, looseness=1.00] (1) to (4);
		\draw [style=wire] (12) to (11);
		\draw [style=wire, bend left=15, looseness=1.00] (7) to (11);
		\draw [style=wire] (10) to (9);
		\draw [style=wire] (3) to (2);
		\draw [style=wire, in=90, out=-135, looseness=1.25] (0) to (10);
		\draw [style=wire, in=90, out=-45, looseness=1.25] (0) to (3);
		\draw [style=wire, bend right, looseness=1.00] (7) to (0);
	\end{pgfonlayer}
\end{tikzpicture}}
   \end{array}=    (n+1)\cdot   \begin{array}[c]{c}\resizebox{!}{2.25cm}{%
\begin{tikzpicture}
	\begin{pgfonlayer}{nodelayer}
		\node [style={circle, draw}] (0) at (0.5, 0) {$\varepsilon$};
		\node [style=port] (1) at (0.5, -0.5) {};
		\node [style=duplicate] (2) at (0, 2.25) {{\bf -----}};
		\node [style=port] (3) at (1.75, -0.5) {};
		\node [style=port] (4) at (0, 3) {};
		\node [style={circle, draw}] (5) at (-2.75, 0) {$\varepsilon$};
		\node [style=port] (6) at (-0.5, 0) {$\hdots$};
		\node [style=port] (7) at (-2.75, -0.5) {};
		\node [style=duplicate] (8) at (0, 2.25) {{\bf -----}};
		\node [style=integral] (9) at (-0.5, 1.5) {{\bf -----}};
		\node [style=port] (10) at (-1.75, 0) {$\hdots$};
		\node [style=duplicate] (11) at (-1, 1) {$\Delta_n$};
		\node [style=port] (12) at (-1, -0.5) {};
	\end{pgfonlayer}
	\begin{pgfonlayer}{edgelayer}
		\draw [style=wire, in=90, out=-45, looseness=1.00] (9) to (12);
		\draw [style=wire, in=90, out=-45, looseness=1.00] (8) to (3);
		\draw [style=wire] (4) to (2);
		\draw [style=wire, bend left=15, looseness=1.00] (9) to (2);
		\draw [style=wire] (5) to (7);
		\draw [style=wire] (0) to (1);
		\draw [style=wire, in=90, out=-135, looseness=1.25] (11) to (5);
		\draw [style=wire, in=90, out=-45, looseness=1.25] (11) to (0);
		\draw [style=wire, bend right, looseness=1.00] (9) to (11);
	\end{pgfonlayer}
\end{tikzpicture}}
   \end{array}
\\&=  (n+1)\cdot \begin{array}[c]{c}\resizebox{!}{2.25cm}{%
\begin{tikzpicture}
	\begin{pgfonlayer}{nodelayer}
		\node [style={circle, draw}] (0) at (-0.75, 0.5) {$\varepsilon$};
		\node [style=port] (1) at (1.5, -0.25) {};
		\node [style=duplicate] (2) at (-1.75, -0.25) {};
		\node [style={circle, draw}] (3) at (-1.75, 0.5) {$\varepsilon$};
		\node [style=duplicate] (4) at (0, 2.25) {{\bf -----}};
		\node [style=duplicate] (5) at (-1.25, 1) {$\Delta_n$};
		\node [style=port] (6) at (-1.25, 0.25) {$\hdots$};
		\node [style=integral] (7) at (-0.75, 1.75) {{\bf -----}};
		\node [style=port] (8) at (0.5, -0.25) {};
		\node [style=duplicate] (9) at (-0.75, -0.25) {};
		\node [style=duplicate] (10) at (0, 2.25) {{\bf -----}};
		\node [style=port] (11) at (0, 3) {};
	\end{pgfonlayer}
	\begin{pgfonlayer}{edgelayer}
		\draw [style=wire, in=90, out=-45, looseness=1.00] (10) to (1);
		\draw [style=wire] (11) to (4);
		\draw [style=wire] (3) to (2);
		\draw [style=wire] (0) to (9);
		\draw [style=wire, bend right, looseness=1.00] (5) to (3);
		\draw [style=wire, bend left, looseness=1.00] (5) to (0);
		\draw [style=wire, bend left=15, looseness=1.00] (5) to (7);
		\draw [style=wire, bend left=15, looseness=1.00] (7) to (4);
		\draw [style=wire, bend left, looseness=1.00] (7) to (8);
	\end{pgfonlayer}
\end{tikzpicture}}
   \end{array}=
      \begin{array}[c]{c}\resizebox{!}{2.25cm}{%
\begin{tikzpicture}
	\begin{pgfonlayer}{nodelayer}
		\node [style=port] (0) at (-1.25, -0.25) {};
		\node [style=port] (1) at (1.25, -0.25) {};
		\node [style={circle, draw}] (2) at (-1.25, 0.5) {$\varepsilon$};
		\node [style={circle, draw}] (3) at (0.25, 0.5) {$\varepsilon$};
		\node [style=port] (4) at (0.25, -0.25) {};
		\node [style=port] (5) at (0, 3) {};
		\node [style=duplicate] (6) at (0, 2.25) {{\bf -----}};
		\node [style=duplicate] (7) at (-0.5, 1.5) {$\Delta_{n+1}$};
		\node [style=duplicate] (8) at (0, 2.25) {{\bf -----}};
		\node [style=port] (9) at (-0.5, 0.75) {$\hdots$};
	\end{pgfonlayer}
	\begin{pgfonlayer}{edgelayer}
		\draw [style=wire, in=90, out=-45, looseness=1.00] (8) to (1);
		\draw [style=wire, bend right=15, looseness=1.00] (8) to (7);
		\draw [style=wire] (5) to (6);
		\draw [style=wire] (2) to (0);
		\draw [style=wire] (3) to (4);
		\draw [style=wire, bend right=15, looseness=1.00] (7) to (2);
		\draw [style=wire, in=109, out=-41, looseness=1.00] (7) to (3);
	\end{pgfonlayer}
\end{tikzpicture}}
   \end{array}
\end{align*}}%
\end{proof} 

An important consequence of polynomial integration is that certain hom-sets are $\mathbb{Q}_{\geq 0}$-modules. 

\begin{theorem}\label{rationals} In an integral category, for every $n \in \mathbb{N}$, $n \geq 1$, and every object $A$, the map $n_{\oc A}: \oc A \to \oc A$ is an isomorphism. 
\end{theorem}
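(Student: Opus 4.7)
The plan is to exhibit an explicit two-sided inverse $\iota_n \colon \oc A \to \oc A$ for $n_{\oc A}$ by applying the polynomial integration formula (Proposition~\ref{polyint}) at the object $\oc A$ rather than at $A$. The starting observation is that in any additive category scalar multiplication commutes with composition, so for any $h \colon \oc A \to \oc A$ one has $n_{\oc A} \circ h = h \circ n_{\oc A} = n \cdot h$. It therefore suffices to produce a single $\iota_n$ satisfying $n \cdot \iota_n = 1_{\oc A}$.

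To build $\iota_n$, I first instantiate \textbf{[s.\,Poly]} with every natural transformation taken at $\oc A$, yielding
\[
n \cdot \mathsf{s}_{\oc A}(\Delta_{n-1} \otimes 1)(\varepsilon_{\oc A}^{\otimes(n-1)} \otimes 1) \;=\; \Delta_n \, \varepsilon_{\oc A}^{\otimes n}
\]
as maps $\oc\oc A \to \oc A^{\otimes n}$. I then precompose both sides with $\delta \colon \oc A \to \oc\oc A$. On the right-hand side I use that $\delta$ is a comonoid morphism (so $\delta \Delta_n = \Delta_n \delta^{\otimes n}$ by iteration) together with the comonad triangle identity $\delta \varepsilon_{\oc A} = 1_{\oc A}$; this collapses the right-hand side to the comultiplication $\Delta_n \colon \oc A \to \oc A^{\otimes n}$ at $A$. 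Finally I postcompose with $e^{\otimes(n-1)} \otimes 1 \colon \oc A^{\otimes n} \to \oc A$; iteration of the comonoid counit axiom $\Delta(e \otimes 1) = 1_{\oc A}$ reduces the right-hand side to $1_{\oc A}$.

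Setting
\[
\iota_n \;:=\; \delta \, \mathsf{s}_{\oc A}(\Delta_{n-1} \otimes 1)(\varepsilon_{\oc A}^{\otimes(n-1)} \otimes 1)(e^{\otimes(n-1)} \otimes 1),
\]
these manipulations yield $n \cdot \iota_n = 1_{\oc A}$, and by the starting observation this makes $\iota_n$ a two-sided inverse of $n_{\oc A}$. I do not expect any serious obstacle: every reduction invokes a single coalgebra modality or comonad axiom, and the base case $n=1$ reduces via \textbf{[s.1]} to $\iota_1 = \delta\varepsilon_{\oc A} = 1_{\oc A}$. The key piece of insight is the choice to apply polynomial integration at $\oc A$ instead of at $A$: an instantiation at $A$ produces an equation whose right-hand side lives in $A^{\otimes n}$, with no natural way back to $\oc A$, whereas passing up to $\oc A$ lands in $\oc A^{\otimes n}$, which the comonoid counit then collapses to $\oc A$.
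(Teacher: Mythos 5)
Your proof is correct and follows essentially the same route as the paper: the paper's inverse is $\delta\,\mathsf{s}_{\oc A}(\Delta_{n-1}\otimes 1)(\varepsilon_{\oc A}^{\otimes(n-1)}\otimes 1)(1_{\oc A}\otimes e^{\otimes(n-1)})$, and its verification likewise instantiates \textbf{[s.\,Poly]} at $\oc A$, precomposes with $\delta$, and collapses via the comonoid-morphism property of $\delta$, the triangle identity, and the counit laws. The only difference is which tensor factor your final counits retain (the $\oc A$ output of $\mathsf{s}$ rather than the first $\Delta_{n-1}$ branch), which is immaterial since the right-hand side of \textbf{[s.\,Poly]} is symmetric in its $n$ factors.
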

\begin{proof} Recall that for any $n \in \mathbb{N}$, we defined $n_{\oc A} = n \cdot 1_{\oc A} = \underbrace{1_{\oc A}+ \hdots + 1_{\oc A}}_{\text{$n$-times}}$. Then for $n \geq 1$, define $n^{-1}_{\oc A}: \oc A \to \oc A$ as follows:
\[n^{-1}_{\oc A}=\delta_A\mathsf{s}_{\oc A}(\Delta_{n-1} \otimes 1)(\underbrace{\varepsilon_{\oc A} \otimes ... \otimes \varepsilon_{\oc A}}_{\text{$n-1$-times}} \otimes 1_{\oc A})(1_{\oc A} \otimes \underbrace{e \otimes ... \otimes e}_{\text{$n-1$-times}}) \]
In string diagrams, $n^{-1}_{\oc A}$ is expressed as follows: 
\[n^{-1}_{\oc A}=\begin{array}[c]{c}\resizebox{!}{3cm}{%
\begin{tikzpicture}
	\begin{pgfonlayer}{nodelayer}
		\node [style=port] (0) at (-1.5, -1.25) {};
		\node [style={circle, draw}] (1) at (2, -0.5) {$e$};
		\node [style={circle, draw}] (2) at (-1.5, 0.5) {$\varepsilon$};
		\node [style={circle, draw}] (3) at (1, 0.5) {$\varepsilon$};
		\node [style={circle, draw}] (4) at (1, -0.5) {$e$};
		\node [style={circle, draw}] (5) at (0, 3.25) {$\delta$};
		\node [style=duplicate] (6) at (0, 2.25) {{\bf -----}};
		\node [style=duplicate] (7) at (-0.5, 1.5) {$\Delta_{n-1}$};
		\node [style=duplicate] (8) at (0, 2.25) {{\bf -----}};
		\node [style=port] (9) at (0.25, 0.5) {$\hdots$};
			\node [style={circle, draw}] (10) at (-0.5, 0.5) {$\varepsilon$};
				\node [style={circle, draw}] (11) at (-0.5, -0.5) {$e$};
					\node [style=port] (12) at (0, 4) {};
		
	\end{pgfonlayer}
	\begin{pgfonlayer}{edgelayer}
		\draw [style=wire, in=90, out=-45, looseness=1.00] (8) to (1);
		\draw [style=wire, bend right=15, looseness=1.00] (8) to (7);
		\draw [style=wire] (5) to (6);
		\draw [style=wire] (2) to (0);
		\draw [style=wire] (3) to (4);
		\draw [style=wire, bend right=15, looseness=1.00] (7) to (2);
		\draw [style=wire, in=109, out=-41, looseness=1.00] (7) to (3);
		\draw [style=wire] (7) to (10);
		\draw [style=wire] (11) to (10);
			\draw [style=wire] (12) to (5);
	\end{pgfonlayer}
\end{tikzpicture} }
   \end{array}\]
Recall that by definition, we have $n^{-1}_{\oc A}n_{\oc A}=n \cdot n^{-1}_{\oc A}= n_{\oc A}n^{-1}_{\oc A}$. So it suffices to show that $n \cdot n^{-1}_{\oc A}=1_{\oc A}$. To prove this we use the additive structure, \textbf{[s.Poly]}, the comonad triangle identities, that $\delta$ is a comonoid morphism and the counit laws for the comultiplication.
{\footnotesize \begin{align*}
&n \cdot n^{-1}_{\oc A}=n \cdot \begin{array}[c]{c}\resizebox{!}{3cm}{%
\begin{tikzpicture}
	\begin{pgfonlayer}{nodelayer}
		\node [style=port] (0) at (-1.5, -1.25) {};
		\node [style={circle, draw}] (1) at (2, -0.5) {$e$};
		\node [style={circle, draw}] (2) at (-1.5, 0.5) {$\varepsilon$};
		\node [style={circle, draw}] (3) at (1, 0.5) {$\varepsilon$};
		\node [style={circle, draw}] (4) at (1, -0.5) {$e$};
		\node [style={circle, draw}] (5) at (0, 3.25) {$\delta$};
		\node [style=duplicate] (6) at (0, 2.25) {{\bf -----}};
		\node [style=duplicate] (7) at (-0.5, 1.5) {$\Delta_{n-1}$};
		\node [style=duplicate] (8) at (0, 2.25) {{\bf -----}};
		\node [style=port] (9) at (0.25, 0.5) {$\hdots$};
			\node [style={circle, draw}] (10) at (-0.5, 0.5) {$\varepsilon$};
				\node [style={circle, draw}] (11) at (-0.5, -0.5) {$e$};
					\node [style=port] (12) at (0, 4) {};
	\end{pgfonlayer}
	\begin{pgfonlayer}{edgelayer}
		\draw [style=wire, in=90, out=-45, looseness=1.00] (8) to (1);
		\draw [style=wire, bend right=15, looseness=1.00] (8) to (7);
		\draw [style=wire] (5) to (6);
		\draw [style=wire] (2) to (0);
		\draw [style=wire] (3) to (4);
		\draw [style=wire, bend right=15, looseness=1.00] (7) to (2);
		\draw [style=wire, in=109, out=-41, looseness=1.00] (7) to (3);
		\draw [style=wire] (7) to (10);
		\draw [style=wire] (11) to (10);
			\draw [style=wire] (12) to (5);
	\end{pgfonlayer}
\end{tikzpicture} }
   \end{array}=\begin{array}[c]{c}\resizebox{!}{3cm}{%
   \begin{tikzpicture}
	\begin{pgfonlayer}{nodelayer}
		\node [style=port] (0) at (-1.5, -1.25) {};
		\node [style={circle, draw}] (1) at (2, -0.5) {$e$};
		\node [style={circle, draw}] (2) at (-1.5, 0.5) {$\varepsilon$};
		\node [style={circle, draw}] (3) at (1, 0.5) {$\varepsilon$};
		\node [style={circle, draw}] (4) at (1, -0.5) {$e$};
		\node [style={circle, draw}] (5) at (-0.5, 2.5) {$\delta$};
		\node [style=duplicate] (7) at (-0.5, 1.5) {$\Delta_{n}$};
		\node [style=port] (9) at (0.25, 0.5) {$\hdots$};
			\node [style={circle, draw}] (10) at (-0.5, 0.5) {$\varepsilon$};
				\node [style={circle, draw}] (11) at (-0.5, -0.5) {$e$};
					\node [style=port] (12) at (-0.5, 3.25) {};
							\node [style={circle, draw}] (13) at (2, 0.5) {$\varepsilon$};
	\end{pgfonlayer}
	\begin{pgfonlayer}{edgelayer}
		\draw [style=wire] (5) to (7);
		\draw [style=wire] (2) to (0);
		\draw [style=wire] (3) to (4);
		\draw [style=wire, bend right=15, looseness=1.00] (7) to (2);
		\draw [style=wire, in=109, out=-41, looseness=1.00] (7) to (3);
				\draw [style=wire, bend left=25, looseness=1.00] (7) to (13);
		\draw [style=wire] (7) to (10);
		\draw [style=wire] (11) to (10);
			\draw [style=wire] (1) to (13);
			\draw [style=wire] (12) to (5);
	\end{pgfonlayer}
\end{tikzpicture} }
   \end{array}=
   \begin{array}[c]{c}\resizebox{!}{3cm}{%
   \begin{tikzpicture}
	\begin{pgfonlayer}{nodelayer}
		\node [style={circle, draw}] (0) at (-1.5, -0.5) {$\varepsilon$};
		\node [style={circle, draw}] (1) at (2, -0.5) {$\varepsilon$};
		\node [style={circle, draw}] (2) at (-1.5, 0.5) {$\delta$};
		\node [style={circle, draw}] (3) at (1, 0.5) {$\delta$};
		\node [style={circle, draw}] (4) at (1, -0.5) {$\varepsilon$};
		\node [style=duplicate] (7) at (-0.5, 1.5) {$\Delta_{n}$};
		\node [style=port] (9) at (0.25, 0.5) {$\hdots$};
			\node [style={circle, draw}] (10) at (-0.5, 0.5) {$\delta$};
				\node [style={circle, draw}] (11) at (-0.5, -0.5) {$\varepsilon$};
					\node [style=port] (12) at (-0.5, 3) {};
							\node [style={circle, draw}] (13) at (2, 0.5) {$\delta$};
							\node [style={circle, draw}] (14) at (2, -1.25) {$e$};
							\node [style={circle, draw}] (15) at (1, -1.25) {$e$};
							\node [style={circle, draw}] (16) at (-0.5, -1.25) {$e$};
							\node [style=port] (17) at (-1.5, -1.75) {};
	\end{pgfonlayer}
	\begin{pgfonlayer}{edgelayer}
		\draw [style=wire] (2) to (0);
		\draw [style=wire] (3) to (4);
		\draw [style=wire, bend right=15, looseness=1.00] (7) to (2);
		\draw [style=wire, in=109, out=-41, looseness=1.00] (7) to (3);
				\draw [style=wire, bend left=25, looseness=1.00] (7) to (13);
		\draw [style=wire] (7) to (10);
		\draw [style=wire] (11) to (10);
			\draw [style=wire] (1) to (13);
			\draw [style=wire] (12) to (7);
					\draw [style=wire] (1) to (14);
							\draw [style=wire] (4) to (15);
									\draw [style=wire] (11) to (16);
											\draw [style=wire] (0) to (17);
	\end{pgfonlayer}
\end{tikzpicture} }
   \end{array}= \begin{array}[c]{c}\resizebox{!}{2cm}{%
\begin{tikzpicture}
	\begin{pgfonlayer}{nodelayer}
		\node [style=port] (0) at (0, 3) {};
		\node [style=port] (1) at (0, 0) {};
	\end{pgfonlayer}
	\begin{pgfonlayer}{edgelayer}
		\draw [style=wire] (0) to (1);
	\end{pgfonlayer}
\end{tikzpicture}}
   \end{array}
\end{align*}}%
\end{proof} 

This implies that in an integral category, hom-sets with domain $\oc A$ are $\mathbb{Q}_{\geq 0}$-modules. The scalar multiplication of a map $f: \oc A \to B$ with a non-negative rational $\frac{p}{q} \in \mathbb{Q}_{\geq 0}$ is the map $\frac{p}{q} \cdot f: \oc A \to B$ defined as $\frac{p}{q} \cdot f = q^{-1}_{\oc A} (p\cdot f)$. 

\begin{corollary} The coKleisli category of an integral category is skew enriched or left additive over $\mathbb{Q}_{\geq 0}$-modules. 
\end{corollary}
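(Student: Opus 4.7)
My plan is to verify two things in turn: that each coKleisli hom-set carries a $\mathbb{Q}_{\geq 0}$-module structure, and that coKleisli composition respects these on the left. The first step is essentially immediate from the preceding material. A morphism $A \to B$ in the coKleisli category $\mathsf{Kl}(\oc)$ is by definition a morphism $\oc A \to B$ in the base integral category, so Theorem \ref{rationals} and the discussion following it equip each hom-set with a $\mathbb{Q}_{\geq 0}$-module structure in which addition is the ambient sum and scalar multiplication is given by pre-composition, $r \cdot f = r_{\oc A} f$, where $r_{\oc A} = q^{-1}_{\oc A} p_{\oc A}$ for $r = p/q$.

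For the second step, coKleisli composition is $f \ast g = \delta_A \oc(f) g$. I would expand $(r \cdot f) \ast g = \delta_A \oc(r_{\oc A} f) g = \delta_A \oc(r_{\oc A}) \oc(f) g$ and $r \cdot (f \ast g) = r_{\oc A} \delta_A \oc(f) g$, so that left scalar-linearity $(r \cdot f) \ast g = r \cdot (f \ast g)$ reduces to the commutation identity $\delta_A \oc(r_{\oc A}) = r_{\oc A} \delta_A$. Similarly, the left-additivity statement $(f_1 + f_2) \ast g = f_1 \ast g + f_2 \ast g$ reduces, using bilinearity of composition in the additive symmetric monoidal base, to $\delta_A \oc(f_1 + f_2) = \delta_A \oc(f_1) + \delta_A \oc(f_2)$. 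I would dispose of this additive reduction first, verifying that $\oc$ preserves finite sums after pre-composition with $\delta_A$ by applying the comonoid and comonad identities of Proposition \ref{codevprop} to the coderiving-style expansion of $\delta_A$.

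The main obstacle is the scalar-commutation identity $\delta_A \oc(r_{\oc A}) = r_{\oc A} \delta_A$. It is not a direct consequence of naturality of $\delta$, because $r_{\oc A}$ is not of the form $\oc(h)$ for any base morphism $h : A \to A$; rather, it is assembled from $\mathsf{s}$, $\Delta_{n-1}$, $\varepsilon$, $e$, and $\delta$ according to the explicit formula from the proof of Theorem \ref{rationals}. I would prove it by unfolding that formula and pushing $\delta_A$ through each constituent in turn, using naturality of $\delta$ and $\mathsf{s}$ together with the identities expressing that $\delta$ is a comonoid morphism and the comonad triangles, so as to reassemble $r_{\oc A}$ on the other side. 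Once this is established, pre-composition with $\oc(f) g$ yields the desired equality. The ``skew'' qualifier in the statement reflects the expected asymmetry: the analogous right-linearity $f \ast (r \cdot g) = r \cdot (f \ast g)$ is not expected to hold, since in $f \ast (r \cdot g) = \delta_A \oc(f) r_{\oc B} g$ the scalar $r_{\oc B}$ sits on the wrong side of $\oc(f)$ for any symmetric argument to go through.
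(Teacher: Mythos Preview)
You have reversed the direction of left-additivity. In the conventions of this paper (and of \cite{blute2009cartesian}), where composition is diagrammatic, a left-additive category is one in which \emph{pre}composition by any map is additive: $f(g+h) = fg + fh$. In the coKleisli category this is the identity $f \ast (g+h) = f\ast g + f\ast h$, which is immediate from the bilinearity of composition in the base, since $f\ast g = \delta_A\,\oc(f)\,g$ is linear in $g$. The scalar version $f\ast(r\cdot g) = r\cdot(f\ast g)$ also holds: for any $h\colon \oc A \to \oc B$ one has $h\,r_{\oc B} = r_{\oc A}\,h$, because $q_{\oc A}$ is invertible and $q_{\oc A}(h\,q^{-1}_{\oc B}) = h\,q_{\oc B}\,q^{-1}_{\oc B} = h = q_{\oc A}(q^{-1}_{\oc A}\,h)$. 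This is why the paper states the corollary without proof.

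The identities you set out to prove, namely $\delta_A\,\oc(f_1+f_2) = \delta_A\,\oc(f_1) + \delta_A\,\oc(f_2)$ and $\delta_A\,\oc(r_{\oc A}) = r_{\oc A}\,\delta_A$, are false in general, because the functor $\oc$ is not additive. For a concrete failure take the symmetric-algebra modality on $\mathsf{VEC}_{\mathbb{K}}$: the map $\mathsf{Sym}(2_{\mathsf{Sym}(V)})$ multiplies the degree-$n$ component of $\mathsf{Sym}(\mathsf{Sym}(V))$ by $2^n$, which is not $2\cdot 1$. Hence $(r\cdot f)\ast g \neq r\cdot(f\ast g)$ in general, and similarly $(f_1+f_2)\ast g \neq f_1\ast g + f_2\ast g$. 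Your closing remark that $f\ast(r\cdot g) = r\cdot(f\ast g)$ ``is not expected to hold'' is therefore exactly backwards: that is the identity which \emph{does} hold and which gives the skew enrichment.
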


In certain integral categories, it is possible to prove that, for every object $A$ and $n \geq 1$, $n_A$ is an isomorphism. There are two simple ways of obtaining this: the first when $\varepsilon$ is a natural retraction and the second when the coalgebra modality is monoidal. Here we focus on $\varepsilon$ being a natural retraction, that is, there exists a natural transformation $\eta$ with components $\eta_A: A \to \oc A$ such that $\eta_A\varepsilon_A=1_A$. 

\begin{proposition}\label{rationalscor} In an integral category such that $\varepsilon$ is a natural retraction, for every $n \in \mathbb{N}$, $n \geq 1$, and every object $A$, the map $n_{A}: A \to A$ is an isomorphism. 
\end{proposition}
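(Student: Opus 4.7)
The plan is to reduce the invertibility of $n_A$ to the already-established invertibility of $n_{\oc A}$ (Theorem \ref{rationals}), by transferring along the natural retraction $\eta$. The inverse is built by the formula
\[ n_A^{-1} := \eta_A \, n_{\oc A}^{-1} \, \varepsilon_A. \]

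The key observation I will exploit is the remark made just after the definition of additive category: for every map $f: X \to Y$, one has $n_X f = n \cdot f = f n_Y$. Applying this to $f = \eta_A$ yields $n_A \eta_A = \eta_A n_{\oc A}$, and applying it to $f = \varepsilon_A$ yields $\varepsilon_A n_A = n_{\oc A} \varepsilon_A$. So scalar multiplication by $n$ slides freely past both the unit and counit components, in either direction.

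Given this, the verification is a short calculation. On one side,
\[ n_A \cdot n_A^{-1} \;=\; n_A \eta_A \, n_{\oc A}^{-1} \varepsilon_A \;=\; \eta_A \, n_{\oc A} \, n_{\oc A}^{-1} \, \varepsilon_A \;=\; \eta_A \varepsilon_A \;=\; 1_A, \]
using first the sliding identity $n_A \eta_A = \eta_A n_{\oc A}$, then Theorem \ref{rationals}, then the retraction hypothesis. On the other side,
\[ n_A^{-1} \cdot n_A \;=\; \eta_A \, n_{\oc A}^{-1} \, \varepsilon_A n_A \;=\; \eta_A \, n_{\oc A}^{-1} \, n_{\oc A} \, \varepsilon_A \;=\; \eta_A \varepsilon_A \;=\; 1_A, \]
using the dual sliding identity $\varepsilon_A n_A = n_{\oc A} \varepsilon_A$.

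There is no real obstacle; the proof is essentially a transport argument. The only thing to be careful about is not confusing $\oc(n_A)$ with $n_{\oc A}$ (the functor $\oc$ is not assumed additive), which is why we route the argument through the commutation rule $n_X f = f n_Y$ for arbitrary morphisms rather than through functoriality of $\oc$ on scalar-multiplied identities.
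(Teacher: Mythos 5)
Your proof is correct and follows essentially the same route as the paper: the same candidate inverse $n_A^{-1}=\eta_A\,n_{\oc A}^{-1}\,\varepsilon_A$, verified using Theorem \ref{rationals} and the retraction identity $\eta_A\varepsilon_A=1_A$. The only cosmetic difference is that the paper observes $n_A n_A^{-1}=n\cdot n_A^{-1}=n_A^{-1} n_A$ up front and so checks a single composite, whereas you check both sides separately via the sliding identities.
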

\begin{proof} Let $\eta$ be a natural section of $\varepsilon$. For every $n \geq 1$, define $n^{-1}_{A}: A \to A$ as $n^{-1}_{A}=\eta_A n^{-1}_{\oc A} \varepsilon_A$, where $n^{-1}_{\oc A}$ is defined as in the proof of Theorem \ref{rationals}. As before, it suffices to show that $n \cdot n^{-1}_{A}=1_{A}$. Here we use the additive structure, that $n \cdot n^{-1}_{\oc A}=1_{\oc A}$ and that $\eta_A\varepsilon_A=1_A$:   
\[n\cdot n^{-1}_{A}= n\cdot (\eta_A n^{-1}_{\oc A} \varepsilon_A)= \eta_A (n \cdot n^{-1}_{\oc A})\varepsilon_A=\eta_A 1_{\oc A}\varepsilon_A= \eta_A\varepsilon_A=1_A \]
\end{proof} 

\begin{corollary} An integral category such that $\varepsilon$ is a natural retraction is enriched over $\mathbb{Q}_{\geq 0}$-modules. 
\end{corollary}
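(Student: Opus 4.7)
The plan is to upgrade the commutative-monoid enrichment already carried by any integral category to a full $\mathbb{Q}_{\geq 0}$-module enrichment, using as the only new ingredient the invertibility of $n_A \colon A \to A$ for $n \geq 1$ supplied by Proposition~\ref{rationalscor}. The first thing I would record is a commutation identity: the paper already observed that $n_A f = n \cdot f = f n_B$ for every $f \colon A \to B$, so applying $n^{-1}_A$ on the left and $n^{-1}_B$ on the right yields $n^{-1}_A f = f n^{-1}_B$. This single identity is the engine behind the whole argument.

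Next I would define scalar multiplication on a hom $f \colon A \to B$ by $\frac{p}{q} \cdot f := q^{-1}_A (p \cdot f)$ for $\frac{p}{q} \in \mathbb{Q}_{\geq 0}$ with $q \geq 1$, noting via the commutation identity that this equals $(p \cdot f) q^{-1}_B$. Independence of the representative (so $pq' = p'q$ implies the two definitions agree) follows from composing both candidates with $(qq')_A$ and reducing to the integer identity $pq' \cdot f = p'q \cdot f$ in the underlying commutative monoid. The $\mathbb{Q}_{\geq 0}$-module axioms -- $0 \cdot f = 0$, $1 \cdot f = f$, distributivity on both sides, and $\frac{pp'}{qq'} \cdot f = \frac{p}{q} \cdot (\frac{p'}{q'} \cdot f)$ -- then reduce to biadditivity of composition together with the elementary identity $(mn)_A = m_A n_A$, so I would not grind through these in detail.

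The one point where the commutation rule does real work is $\mathbb{Q}_{\geq 0}$-bilinearity of composition. For $f \colon A \to B$, $g \colon B \to C$ and $r = \frac{p}{q}$, I would compute $(r \cdot f)\, g = (p \cdot f)\, q^{-1}_B\, g = (p \cdot f)\, g\, q^{-1}_C = r \cdot (fg)$, and symmetrically $f\,(r \cdot g) = r \cdot (fg)$; each step uses only $q^{-1}_B g = g q^{-1}_C$ together with the pre-existing additive enrichment.

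The main obstacle, modest as it is, lies in deploying $n^{-1}_A f = f n^{-1}_B$ and noticing that it requires $n_A$ to be invertible at \emph{every} object $A$, not merely at objects of the form $\oc A$ as in Theorem~\ref{rationals}; this is exactly the point where the hypothesis that $\varepsilon$ is a natural retraction, via Proposition~\ref{rationalscor}, earns its keep (and is what upgrades the skew/left-additive enrichment of the previous corollary to an honest two-sided one). Once this identity is in hand, the remaining verifications are formal bookkeeping inside the additive structure.
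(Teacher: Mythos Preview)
Your proof is correct and is precisely the argument the paper leaves implicit: the corollary is stated without proof, relying on Proposition~\ref{rationalscor} to supply invertibility of $n_A$ at every object, after which the $\mathbb{Q}_{\geq 0}$-module structure and its compatibility with composition follow by the routine computations you outline. Your emphasis on the commutation identity $n^{-1}_A f = f\, n^{-1}_B$ as the key step, and your observation that this is exactly where the hypothesis on $\varepsilon$ upgrades the one-sided enrichment to a two-sided one, is the right reading of the situation.
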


\subsection{Integral Categories with Monoidal Coalgebra Modalities and Fubini's Theorem}

\begin{proposition} In an integral category with a monoidal coalgebra modality, for every $n \in \mathbb{N}$, $n \geq 1$, and every object $A$, the map $n_{A}: A \to A$ is an isomorphism. 
\end{proposition}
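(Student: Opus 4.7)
The plan is to reduce the statement to the special case $A = K$ (the monoidal unit) and then invoke Theorem \ref{rationals} via the local section of $\varepsilon$ that the monoidal structure provides. The key observation is that, since $\varepsilon$ is a monoidal natural transformation in a monoidal coalgebra modality, one has $m_K \varepsilon_K = 1_K$; equivalently, $m_K: K \to \oc K$ is a section of $\varepsilon_K$ at the single object $K$. This is exactly enough structure to mimic the argument of Proposition \ref{rationalscor}, but only at the unit $K$.

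Concretely, I would define
\[
n^{-1}_K := m_K \, n^{-1}_{\oc K} \, \varepsilon_K,
\]
where $n^{-1}_{\oc K}$ is the inverse of $n_{\oc K}$ provided by Theorem \ref{rationals}. Using the $\mathbb{N}$-bilinearity of composition in the form $f \cdot (n \cdot g) \cdot h = n \cdot (fgh)$, one computes
\[
n \cdot n^{-1}_K = m_K \, (n \cdot n^{-1}_{\oc K}) \, \varepsilon_K = m_K \, 1_{\oc K} \, \varepsilon_K = m_K \varepsilon_K = 1_K.
\]
Since $n \cdot n^{-1}_K$ equals both $n_K n^{-1}_K$ and $n^{-1}_K n_K$ (because $n \cdot g = n_X g = g n_Y$ for any $g: X \to Y$), this shows that $n_K$ is an isomorphism.

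To pass from $K$ to a general object $A$, I would use the unitor $\lambda_A: K \otimes A \to A$ together with the compatibility of the tensor product with the additive structure, which yields $n \cdot 1_{K \otimes A} = (n \cdot 1_K) \otimes 1_A = n_K \otimes 1_A$. A short calculation (writing $1_A = \lambda_A^{-1} \lambda_A$ and distributing $n \cdot (-)$ through composition) then gives $n_A = \lambda_A^{-1}(n_K \otimes 1_A)\lambda_A$. Since $n_K$ is an isomorphism by the previous step, so is $n_K \otimes 1_A$, and therefore $n_A$ is an isomorphism with explicit inverse $\lambda_A^{-1}(n^{-1}_K \otimes 1_A)\lambda_A$. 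The argument is entirely formal and I foresee no serious obstacles; the only point requiring slight care is correctly extracting the coherence $m_K \varepsilon_K = 1_K$ from the definition of a monoidal coalgebra modality, since all of the substantive work is already packaged into Theorem \ref{rationals}.
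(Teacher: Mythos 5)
Your proof is correct and follows essentially the same route as the paper: both reduce to Theorem \ref{rationals} at $\oc K$ using $m_K$ together with the monoidal coherence making $m_K$ a section of the counit (the paper uses $m_K e = 1_K$ from $e$ being a monoidal transformation, you use the equally valid $m_K \varepsilon_K = 1_K$ from $\varepsilon$ being monoidal). The only cosmetic difference is that the paper defines $n_A^{-1} = (m_K \otimes 1)(n_{\oc K}^{-1} \otimes 1)(e \otimes 1)$ in one step, whereas you first invert $n_K$ and then tensor with $1_A$ via the unitor; these are the same map up to the suppressed unitor isomorphism.
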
 
\begin{proof} For every $n \geq 1$, define $n^{-1}_{A}: A \to A$ as follows: 
  \[  \xymatrixcolsep{3pc}\xymatrix{A \ar[r]^-{m_K \otimes 1} & \oc K \otimes A \ar[r]^-{n^{-1}_{\oc K} \otimes 1} & \oc K \otimes A \ar[r]^-{e \otimes 1} & A  
  } \]
where $n^{-1}_{\oc A}$ is defined as in the proof of Theorem \ref{rationals}. As before, it suffices to show that $n \cdot n^{-1}_{A}=1_{A}$. Here we use the additive structure, that $n \cdot n^{-1}_{\oc A}=1_{\oc A}$ and that $e$ is a monoidal transformation:   
\begin{align*}
n \cdot n^{-1}_{A}&= ~ n \cdot (m_K \otimes 1)(n^{-1}_{\oc K} \otimes 1)(e\otimes 1)\\
&=~(m_K \otimes 1)((n \cdot n^{-1}_{\oc K}) \otimes 1)(e\otimes 1)\\
&=~(m_K \otimes 1)(e \otimes 1)\\
&=~1
\end{align*}
\end{proof} 

\begin{corollary}\label{monoidalrationals} An integral category with a monoidal coalgebra modality is enriched over $\mathbb{Q}_{\geq 0}$-modules. 
\end{corollary}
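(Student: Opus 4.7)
The plan is to bootstrap the previous proposition -- which gives invertibility of $n_A : A \to A$ for every object $A$ and every $n \geq 1$ -- into a full $\mathbb{Q}_{\geq 0}$-module structure on each hom-set, compatible with composition. The approach parallels the construction sketched just after Theorem \ref{rationals}, but now applies uniformly to all objects, not merely the $\oc A$'s.

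First I would record the crucial ``centrality'' observation from earlier in the paper: for any $f: A \to B$ and any $n \in \mathbb{N}$, one has $n_A f = n \cdot f = f n_B$. When $n \geq 1$, the previous proposition supplies inverses $n^{-1}_A$ and $n^{-1}_B$, and from $n_A f = f n_B$ one immediately derives
\[n^{-1}_A f = f n^{-1}_B\]
by pre- and post-composing with the relevant inverses. With this in hand, I define scalar multiplication of $f: A \to B$ by $\frac{p}{q} \in \mathbb{Q}_{\geq 0}$ (with $q \geq 1$) as
\[\frac{p}{q} \cdot f \; := \; n^{-1}_A (p \cdot f) \; = \; (p \cdot f)\, q^{-1}_B,\]
the two expressions agreeing by centrality.

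The main obstacle is well-definedness: if $\frac{p}{q} = \frac{p'}{q'}$ in $\mathbb{Q}_{\geq 0}$, so that $p q' = p' q$ in $\mathbb{N}$, one must show that $q^{-1}_A (p \cdot f) = (q')^{-1}_A (p' \cdot f)$. The trick here is to compose both sides with $q_A q'_A$ (which is $(qq')_A$, hence invertible), reducing the claim to the identity $q' \cdot (p \cdot f) = q \cdot (p' \cdot f)$, which holds in the $\mathbb{N}$-module structure on hom-sets since $pq' = p'q$. The usual module axioms -- distributivity, associativity of scalar multiplication, and the unit axiom -- then follow routinely from the additive enrichment together with the observation that $(mn)_A = m_A n_A$ for $m,n \in \mathbb{N}$.

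Finally, for the enrichment claim, I would verify that composition is $\mathbb{Q}_{\geq 0}$-bilinear. Additivity in each variable already holds by the additive enrichment, so it suffices to check compatibility with scalars. For $f: A \to B$ and $g: B \to C$, using centrality of $q^{-1}$,
\[\left(\tfrac{p}{q} \cdot f\right) g = (p \cdot f)\, q^{-1}_B \, g = (p \cdot f)\, g \, q^{-1}_C = (p \cdot (fg))\, q^{-1}_C = \tfrac{p}{q} \cdot (fg),\]
and an entirely analogous computation handles $f\,(\tfrac{p}{q} \cdot g) = \tfrac{p}{q} \cdot (fg)$. This establishes that the category is enriched over $\mathbb{Q}_{\geq 0}$-modules, completing the corollary.
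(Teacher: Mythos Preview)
Your proposal is correct and is exactly the standard argument the paper has in mind; in fact the paper states this corollary without proof, relying on the construction already sketched after Theorem~\ref{rationals} (defining $\tfrac{p}{q}\cdot f = q^{-1}(p\cdot f)$) together with the preceding proposition making $n_A$ invertible for \emph{every} object. One small typo: in your displayed definition you wrote $n^{-1}_A(p\cdot f)$ where you mean $q^{-1}_A(p\cdot f)$.
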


\begin{definition} \normalfont A \textbf{monoidal integral transformation} is an integral transformation $\mathsf{s}$ of a monoidal coalgebra modality which satisfies the following \textbf{integral monoidal rule}:
\begin{description}
\item[{\bf [s.m]}] Monoidal Rule: 
\[m_\otimes \mathsf{s}=(\mathsf{s} \otimes \mathsf{d}^\circ)(1 \otimes \sigma \otimes 1)(m \otimes 1 \otimes 1)=(\mathsf{d}^\circ \otimes \mathsf{s})(1 \otimes \sigma \otimes 1)(m \otimes 1 \otimes 1)\]
\[   \begin{array}[c]{c}\resizebox{!}{2.5cm}{%
\begin{tikzpicture}
	\begin{pgfonlayer}{nodelayer}
		\node [style=differential] (0) at (2, -2.25) {$\bigotimes$};
		\node [style=port] (1) at (0.5, -3.25) {};
		\node [style={regular polygon,regular polygon sides=4, draw, inner sep=1pt,minimum size=1pt}] (2) at (1.25, 0) {$\bigotimes$};
		\node [style=port] (3) at (2, 1) {};
		\node [style=port] (4) at (0.5, 1) {};
		\node [style=codifferential] (5) at (1.25, -1) {{\bf -----}};
		\node [style=port] (6) at (1.5, -3.25) {};
		\node [style=port] (7) at (2.5, -3.25) {};
	\end{pgfonlayer}
	\begin{pgfonlayer}{edgelayer}
		\draw [style=wire, in=-90, out=180, looseness=1.25] (2) to (4);
		\draw [style=wire, in=0, out=-90, looseness=1.25] (3) to (2);
		\draw [style=wire, bend left, looseness=1.00] (5) to (0);
		\draw [style=wire, in=90, out=-135, looseness=1.00] (5) to (1);
		\draw [style=wire] (2) to (5);
		\draw [style=wire, in=90, out=-33, looseness=1.25] (0) to (7);
		\draw [style=wire, in=90, out=-150, looseness=1.00] (0) to (6);
	\end{pgfonlayer}
\end{tikzpicture}
  }%
\end{array}=    \begin{array}[c]{c}\resizebox{!}{2.5cm}{%
\begin{tikzpicture}
	\begin{pgfonlayer}{nodelayer}
		\node [style={regular polygon,regular polygon sides=4, draw, inner sep=1pt,minimum size=1pt}] (0) at (-0.75, 1) {$\bigotimes$};
		\node [style=port] (1) at (-0.75, 0) {};
		\node [style=port] (2) at (1, 0) {};
		\node [style=codifferential] (3) at (-1, 3.25) {{\bf -----}};
		\node [style=port] (4) at (-1, 4.25) {};
		\node [style=codifferential] (5) at (1, 3.25) {{\bf =\!=\!=}};
		\node [style=port] (6) at (2.25, 0) {};
		\node [style=port] (7) at (1, 4.25) {};
	\end{pgfonlayer}
	\begin{pgfonlayer}{edgelayer}
		\draw [style=wire] (0) to (1);
		\draw [style=wire, in=90, out=-28, looseness=1.25] (3) to (2);
		\draw [style=wire] (4) to (3);
		\draw [style=wire, in=90, out=-54, looseness=1.25] (5) to (6);
		\draw [style=wire] (7) to (5);
		\draw [style=wire, in=0, out=-135, looseness=1.25] (5) to (0);
		\draw [style=wire, in=180, out=-135, looseness=1.50] (3) to (0);
	\end{pgfonlayer}
\end{tikzpicture}
  }%
\end{array}=  \begin{array}[c]{c}\resizebox{!}{2.5cm}{%
\begin{tikzpicture}
	\begin{pgfonlayer}{nodelayer}
		\node [style={regular polygon,regular polygon sides=4, draw, inner sep=1pt,minimum size=1pt}] (0) at (-0.75, 1) {$\bigotimes$};
		\node [style=port] (1) at (-0.75, 0) {};
		\node [style=port] (2) at (1, 0) {};
		\node [style=codifferential] (3) at (-1, 3.25) {{\bf =\!=\!=}};
		\node [style=port] (4) at (-1, 4.25) {};
		\node [style=codifferential] (5) at (1, 3.25) {{\bf -----}};
		\node [style=port] (6) at (2.25, 0) {};
		\node [style=port] (7) at (1, 4.25) {};
	\end{pgfonlayer}
	\begin{pgfonlayer}{edgelayer}
		\draw [style=wire] (0) to (1);
		\draw [style=wire, in=90, out=-28, looseness=1.25] (3) to (2);
		\draw [style=wire] (4) to (3);
		\draw [style=wire, in=90, out=-54, looseness=1.25] (5) to (6);
		\draw [style=wire] (7) to (5);
		\draw [style=wire, in=0, out=-135, looseness=1.25] (5) to (0);
		\draw [style=wire, in=180, out=-135, looseness=1.50] (3) to (0);
	\end{pgfonlayer}
\end{tikzpicture}
  }%
\end{array}\]
\end{description}
\end{definition}

An alternative description of a monoidal integral transformation is given as follows: 

\begin{proposition}\label{altintmon} For an integral transformation $\mathsf{s}$ of a monoidal coalgebra modality, the following are equivalent: 
\begin{enumerate}[{\em (i)}]
\item $\mathsf{s}$ is monoidal; 
\item The following equality holds: $\mathsf{s}=(m_K \otimes \mathsf{d}^\circ)(\mathsf{s}_K \otimes 1 \otimes 1)(m_ \otimes \otimes 1)$
\[   \begin{array}[c]{c}\resizebox{!}{1.5cm}{%
\begin{tikzpicture}
	\begin{pgfonlayer}{nodelayer}
		\node [style=integral] (0) at (0, 2) {{\bf -----}};
		\node [style=port] (1) at (0, 3) {};
		\node [style=port] (2) at (0.75, 0.75) {};
		\node [style=port] (3) at (-0.75, 0.75) {};
	\end{pgfonlayer}
	\begin{pgfonlayer}{edgelayer}
		\draw [style=wire, bend left, looseness=1.00] (0) to (2);
		\draw [style=wire] (1) to (0);
		\draw [style=wire, bend right, looseness=1.00] (0) to (3);
	\end{pgfonlayer}
\end{tikzpicture}
  }%
\end{array}= \begin{array}[c]{c}\resizebox{!}{2.5cm}{%
\begin{tikzpicture}
	\begin{pgfonlayer}{nodelayer}
		\node [style=codifferential] (0) at (-2.5, 1.25) {{\bf =\!=\!=}};
		\node [style=port] (1) at (-3.75, -2) {};
		\node [style=port] (2) at (-3.25, -0.5) {};
		\node [style=port] (3) at (-2.5, 2.25) {};
		\node [style={regular polygon,regular polygon sides=4, draw, inner sep=1pt,minimum size=1pt}] (4) at (-3.75, -1) {$\bigotimes$};
		\node [style=codifferential] (5) at (-4, 0.75) {{\bf -----}};
		\node [style=port] (6) at (-1.75, -2) {};
		\node [style={circle, draw}] (7) at (-4, 1.75) {$m$};
	\end{pgfonlayer}
	\begin{pgfonlayer}{edgelayer}
		\draw [style=wire] (4) to (1);
		\draw [style=dwire, in=90, out=-28, looseness=1.25] (5) to (2);
		\draw [style=wire] (7) to (5);
		\draw [style=wire, in=90, out=-45, looseness=1.25] (0) to (6);
		\draw [style=wire] (3) to (0);
		\draw [style=wire, in=0, out=-135, looseness=1.25] (0) to (4);
		\draw [style=wire, in=180, out=-135, looseness=1.50] (5) to (4);
	\end{pgfonlayer}
\end{tikzpicture}
  }%
\end{array}\]
where the dotted line represents the monoidal unit. 
\end{enumerate}
\end{proposition}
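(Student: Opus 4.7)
The plan is to derive (ii) from (i) by specializing the monoidal rule \textbf{[s.m]} to the case where the first tensor factor of $m_\otimes$ is the unit $K$, and to run this specialization in reverse for the converse, using the coderiving monoidal rule \textbf{[cd.m]} as the bridge.

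For (i) $\Rightarrow$ (ii), I would instantiate \textbf{[s.m]} with the first tensor argument taken to be $K$. In the strict setting $K \otimes A = A$, so $\mathsf{s}_{K \otimes A} = \mathsf{s}_A$, and \textbf{[s.m]} becomes
\[
m_\otimes \mathsf{s}_A = (\mathsf{s}_K \otimes \mathsf{d}^\circ_A)(1 \otimes \sigma \otimes 1)(m_\otimes \otimes 1 \otimes 1)
\]
with both sides having domain $\oc K \otimes \oc A$. I would then pre-compose both sides with $m_K \otimes 1_{\oc A}$. On the left, the unit coherence of the symmetric monoidal functor $\oc$, $(m_K \otimes 1_{\oc A})\, m_\otimes = 1_{\oc A}$, collapses the composite to $\mathsf{s}_A$; on the right, bifunctoriality regroups $(m_K \otimes 1_{\oc A})(\mathsf{s}_K \otimes \mathsf{d}^\circ_A)$ as $(m_K \otimes \mathsf{d}^\circ_A)(\mathsf{s}_K \otimes 1 \otimes 1)$, producing exactly the formula in (ii).

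For (ii) $\Rightarrow$ (i), I would expand $\mathsf{s}_{A \otimes B}$ via (ii) and pre-compose with $m_\otimes \colon \oc A \otimes \oc B \to \oc(A \otimes B)$. By bifunctoriality this outer $m_\otimes$ can be slid through the initial $(m_K \otimes 1)$ block so that it lands adjacent to $\mathsf{d}^\circ_{A \otimes B}$; at that point \textbf{[cd.m]} rewrites the sub-composite $m_\otimes\, \mathsf{d}^\circ_{A \otimes B}$ as $(\mathsf{d}^\circ_A \otimes \mathsf{d}^\circ_B)(1 \otimes \sigma \otimes 1)(m_\otimes \otimes 1 \otimes 1)$. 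Regrouping what remains, the sub-composite built from $(m_K \otimes 1)$, $\mathsf{s}_K \otimes \mathsf{d}^\circ_A$, the middle swap, and one copy of $m_\otimes$ is precisely the right-hand side of (ii) applied to $A$, hence equal to $\mathsf{s}_A$; the surviving factors match $(\mathsf{s}_A \otimes \mathsf{d}^\circ_B)(1 \otimes \sigma \otimes 1)(m_\otimes \otimes 1 \otimes 1)$, which is one half of \textbf{[s.m]}. The $(\mathsf{d}^\circ_A \otimes \mathsf{s}_B)$ half follows by the symmetric argument, expanding $\mathsf{s}_B$ via (ii) instead and using the symmetric-monoidal naturality of $m_\otimes$.

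The main obstacle is the bookkeeping in the converse direction: the expanded expression for $m_\otimes \mathsf{s}_{A \otimes B}$ contains two distinct occurrences of $m_\otimes$, several permutation isomorphisms, and an auxiliary wire carrying the unit $K$ that must all be tracked as one slides $m_\otimes$ inward and then recognises the $\mathsf{s}_A$ sub-diagram. I expect the cleanest exposition performs these manipulations in the graphical calculus, where the bifunctorial commutations become isotopies and the recovery of $\mathsf{s}_A$ via (ii) becomes visual rather than symbolic.
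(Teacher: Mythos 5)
Your proposal is correct and follows essentially the same route as the paper: for (i) $\Rightarrow$ (ii) the paper also specializes \textbf{[s.m]} at the unit and collapses $(m_K \otimes 1)m_\otimes$ by the unit coherence, and for (ii) $\Rightarrow$ (i) it likewise expands $\mathsf{s}$ via (ii), applies \textbf{[cd.m]} together with associativity of $m_\otimes$, and recognises the $\mathsf{s}_A$ sub-diagram, handling the $\mathsf{d}^\circ$-on-the-left half by the symmetric rearrangement. No gaps.
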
 
\begin{proof} $i) \Rightarrow ii)$: Using that $\mathsf{s}$ is monoidal we obtain the following equality 
\begin{align*}
\begin{array}[c]{c}\resizebox{!}{2.5cm}{%
\begin{tikzpicture}
	\begin{pgfonlayer}{nodelayer}
		\node [style=codifferential] (0) at (-2.5, 1.25) {{\bf =\!=\!=}};
		\node [style=port] (1) at (-3.75, -2) {};
		\node [style=port] (2) at (-3.25, -0.5) {};
		\node [style=port] (3) at (-2.5, 2.25) {};
		\node [style={regular polygon,regular polygon sides=4, draw, inner sep=1pt,minimum size=1pt}] (4) at (-3.75, -1) {$\bigotimes$};
		\node [style=codifferential] (5) at (-4, 0.75) {{\bf -----}};
		\node [style=port] (6) at (-1.75, -2) {};
		\node [style={circle, draw}] (7) at (-4, 1.75) {$m$};
	\end{pgfonlayer}
	\begin{pgfonlayer}{edgelayer}
		\draw [style=wire] (4) to (1);
		\draw [style=dwire, in=90, out=-28, looseness=1.25] (5) to (2);
		\draw [style=wire] (7) to (5);
		\draw [style=wire, in=90, out=-45, looseness=1.25] (0) to (6);
		\draw [style=wire] (3) to (0);
		\draw [style=wire, in=0, out=-135, looseness=1.25] (0) to (4);
		\draw [style=wire, in=180, out=-135, looseness=1.50] (5) to (4);
	\end{pgfonlayer}
\end{tikzpicture}
  }%
\end{array}=    \begin{array}[c]{c}\resizebox{!}{2cm}{%
\begin{tikzpicture}
	\begin{pgfonlayer}{nodelayer}
		\node [style={regular polygon,regular polygon sides=4, draw, inner sep=1pt,minimum size=1pt}] (0) at (1.25, 0) {$\bigotimes$};
		\node [style=port] (1) at (2, 1) {};
		\node [style={circle, draw}] (2) at (0.5, 1) {$m$};
		\node [style=port] (3) at (2, -2.25) {};
		\node [style=port] (4) at (0.5, -2.25) {};
		\node [style=codifferential] (5) at (1.25, -1) {{\bf -----}};
	\end{pgfonlayer}
	\begin{pgfonlayer}{edgelayer}
		\draw [style=wire, in=-90, out=180, looseness=1.25] (0) to (2);
		\draw [style=wire, in=0, out=-90, looseness=1.25] (1) to (0);
		\draw [style=wire, bend left, looseness=1.00] (5) to (3);
		\draw [style=wire, bend right, looseness=1.00] (5) to (4);
		\draw [style=wire] (0) to (5);
	\end{pgfonlayer}
\end{tikzpicture}
  }%
\end{array} =  \begin{array}[c]{c}\resizebox{!}{1.5cm}{%
\begin{tikzpicture}
	\begin{pgfonlayer}{nodelayer}
		\node [style=integral] (0) at (0, 2) {{\bf -----}};
		\node [style=port] (1) at (0, 3) {};
		\node [style=port] (2) at (0.75, 0.75) {};
		\node [style=port] (3) at (-0.75, 0.75) {};
	\end{pgfonlayer}
	\begin{pgfonlayer}{edgelayer}
		\draw [style=wire, bend left, looseness=1.00] (0) to (2);
		\draw [style=wire] (1) to (0);
		\draw [style=wire, bend right, looseness=1.00] (0) to (3);
	\end{pgfonlayer}
\end{tikzpicture}
  }%
\end{array}
\end{align*}
$i) \Rightarrow ii)$: For the equality with $\mathsf{d}^\circ$ on the right, using \textbf{[cd.m]} and the associativity of $m_\otimes$, we have that: 
\begin{align*}
\begin{array}[c]{c}\resizebox{!}{2.5cm}{%
\begin{tikzpicture}
	\begin{pgfonlayer}{nodelayer}
		\node [style=differential] (0) at (2, -2.25) {$\bigotimes$};
		\node [style=port] (1) at (0.5, -3.25) {};
		\node [style={regular polygon,regular polygon sides=4, draw, inner sep=1pt,minimum size=1pt}] (2) at (1.25, 0) {$\bigotimes$};
		\node [style=port] (3) at (2, 1) {};
		\node [style=port] (4) at (0.5, 1) {};
		\node [style=codifferential] (5) at (1.25, -1) {{\bf -----}};
		\node [style=port] (6) at (1.5, -3.25) {};
		\node [style=port] (7) at (2.5, -3.25) {};
	\end{pgfonlayer}
	\begin{pgfonlayer}{edgelayer}
		\draw [style=wire, in=-90, out=180, looseness=1.25] (2) to (4);
		\draw [style=wire, in=0, out=-90, looseness=1.25] (3) to (2);
		\draw [style=wire, bend left, looseness=1.00] (5) to (0);
		\draw [style=wire, in=90, out=-135, looseness=1.00] (5) to (1);
		\draw [style=wire] (2) to (5);
		\draw [style=wire, in=90, out=-33, looseness=1.25] (0) to (7);
		\draw [style=wire, in=90, out=-150, looseness=1.00] (0) to (6);
	\end{pgfonlayer}
\end{tikzpicture}
  }%
\end{array}&=
   \begin{array}[c]{c}\resizebox{!}{2.75cm}{%
\begin{tikzpicture}
	\begin{pgfonlayer}{nodelayer}
		\node [style=codifferential] (0) at (0.5, 0.75) {{\bf -----}};
		\node [style=codifferential] (1) at (2, 1.25) {{\bf =\!=\!=}};
		\node [style=port] (2) at (2.75, -2) {};
		\node [style={circle, draw}] (3) at (0.5, 1.75) {$m$};
		\node [style=port] (4) at (0.75, -2) {};
		\node [style=port] (5) at (1.25, -0.5) {};
		\node [style={regular polygon,regular polygon sides=4, draw, inner sep=1pt,minimum size=1pt}] (6) at (0.75, -1) {$\bigotimes$};
		\node [style={regular polygon,regular polygon sides=4, draw, inner sep=1pt,minimum size=1pt}] (7) at (2, 2.25) {$\bigotimes$};
		\node [style=port] (8) at (2.75, 3.25) {};
		\node [style=port] (9) at (1.25, 3.25) {};
	\end{pgfonlayer}
	\begin{pgfonlayer}{edgelayer}
		\draw [style=wire] (6) to (4);
		\draw [style=dwire, in=90, out=-28, looseness=1.25] (0) to (5);
		\draw [style=wire] (3) to (0);
		\draw [style=wire, in=90, out=-45, looseness=1.25] (1) to (2);
		\draw [style=wire, in=0, out=-135, looseness=1.25] (1) to (6);
		\draw [style=wire, in=180, out=-135, looseness=1.50] (0) to (6);
		\draw [style=wire, in=-90, out=180, looseness=1.25] (7) to (9);
		\draw [style=wire, in=0, out=-90, looseness=1.25] (8) to (7);
		\draw [style=wire] (7) to (1);
	\end{pgfonlayer}
\end{tikzpicture}
  }%
\end{array}=    \begin{array}[c]{c}\resizebox{!}{2.75cm}{%
\begin{tikzpicture}
	\begin{pgfonlayer}{nodelayer}
		\node [style=codifferential] (0) at (-3, 0.75) {{\bf -----}};
		\node [style={circle, draw}] (1) at (-3, 1.75) {$m$};
		\node [style=port] (2) at (-2.75, -2) {};
		\node [style=port] (3) at (-2.25, -0.5) {};
		\node [style={regular polygon,regular polygon sides=4, draw, inner sep=1pt,minimum size=1pt}] (4) at (-2.75, -1) {$\bigotimes$};
		\node [style=codifferential] (5) at (0.75, 2.25) {{\bf =\!=\!=}};
		\node [style=port] (6) at (0.75, -2) {};
		\node [style=port] (7) at (0.75, 3.25) {};
		\node [style={regular polygon,regular polygon sides=4, draw, inner sep=1pt,minimum size=1pt}] (8) at (-1, 0) {$\bigotimes$};
		\node [style=codifferential] (9) at (-1.25, 2.25) {{\bf =\!=\!=}};
		\node [style=port] (10) at (2, -2) {};
		\node [style=port] (11) at (-1.25, 3.25) {};
	\end{pgfonlayer}
	\begin{pgfonlayer}{edgelayer}
		\draw [style=wire] (4) to (2);
		\draw [style=dwire, in=90, out=-28, looseness=1.25] (0) to (3);
		\draw [style=wire] (1) to (0);
		\draw [style=wire, in=180, out=-135, looseness=1.50] (0) to (4);
		\draw [style=wire, in=90, out=-28, looseness=1.25] (9) to (6);
		\draw [style=wire] (11) to (9);
		\draw [style=wire, in=90, out=-54, looseness=1.25] (5) to (10);
		\draw [style=wire] (7) to (5);
		\draw [style=wire, in=0, out=-135, looseness=1.25] (5) to (8);
		\draw [style=wire, in=180, out=-135, looseness=1.50] (9) to (8);
		\draw [style=wire, in=-90, out=0, looseness=1.00] (4) to (8);
	\end{pgfonlayer}
\end{tikzpicture}
  }%
\end{array}=    \begin{array}[c]{c}\resizebox{!}{2.75cm}{%
\begin{tikzpicture}
	\begin{pgfonlayer}{nodelayer}
		\node [style=codifferential] (0) at (-3, 2) {{\bf -----}};
		\node [style={circle, draw}] (1) at (-3, 3) {$m$};
		\node [style=port] (2) at (-1.25, -2) {};
		\node [style=port] (3) at (-2.25, 0.75) {};
		\node [style={regular polygon,regular polygon sides=4, draw, inner sep=1pt,minimum size=1pt}] (4) at (-2.75, 0.25) {$\bigotimes$};
		\node [style=codifferential] (5) at (0.75, 2.25) {{\bf =\!=\!=}};
		\node [style=port] (6) at (0.75, -2) {};
		\node [style=port] (7) at (0.75, 3.25) {};
		\node [style={regular polygon,regular polygon sides=4, draw, inner sep=1pt,minimum size=1pt}] (8) at (-1.25, -0.5) {$\bigotimes$};
		\node [style=codifferential] (9) at (-1.25, 2.25) {{\bf =\!=\!=}};
		\node [style=port] (10) at (2, -2) {};
		\node [style=port] (11) at (-1.25, 3.25) {};
	\end{pgfonlayer}
	\begin{pgfonlayer}{edgelayer}
		\draw [style=dwire, in=90, out=-28, looseness=1.25] (0) to (3);
		\draw [style=wire] (1) to (0);
		\draw [style=wire, in=180, out=-135, looseness=1.50] (0) to (4);
		\draw [style=wire, in=90, out=-28, looseness=1.25] (9) to (6);
		\draw [style=wire] (11) to (9);
		\draw [style=wire, in=90, out=-54, looseness=1.25] (5) to (10);
		\draw [style=wire] (7) to (5);
		\draw [style=wire, in=0, out=-135, looseness=1.25] (5) to (8);
		\draw [style=wire, in=180, out=-90, looseness=1.25] (4) to (8);
		\draw [style=wire] (8) to (2);
		\draw [style=wire, in=-135, out=0, looseness=1.50] (4) to (9);
	\end{pgfonlayer}
\end{tikzpicture}
  }%
\end{array}\\&= \begin{array}[c]{c}\resizebox{!}{2.5cm}{%
\begin{tikzpicture}
	\begin{pgfonlayer}{nodelayer}
		\node [style={regular polygon,regular polygon sides=4, draw, inner sep=1pt,minimum size=1pt}] (0) at (-0.75, 1) {$\bigotimes$};
		\node [style=port] (1) at (-0.75, 0) {};
		\node [style=port] (2) at (1, 0) {};
		\node [style=codifferential] (3) at (-1, 3.25) {{\bf -----}};
		\node [style=port] (4) at (-1, 4.25) {};
		\node [style=codifferential] (5) at (1, 3.25) {{\bf =\!=\!=}};
		\node [style=port] (6) at (2.25, 0) {};
		\node [style=port] (7) at (1, 4.25) {};
	\end{pgfonlayer}
	\begin{pgfonlayer}{edgelayer}
		\draw [style=wire] (0) to (1);
		\draw [style=wire, in=90, out=-28, looseness=1.25] (3) to (2);
		\draw [style=wire] (4) to (3);
		\draw [style=wire, in=90, out=-54, looseness=1.25] (5) to (6);
		\draw [style=wire] (7) to (5);
		\draw [style=wire, in=0, out=-135, looseness=1.25] (5) to (0);
		\draw [style=wire, in=180, out=-135, looseness=1.50] (3) to (0);
	\end{pgfonlayer}
\end{tikzpicture}
  }%
\end{array}
\end{align*}
For the other equality with $\mathsf{d}^\circ$ on the left, one must simply observe that: 
\[ \begin{array}[c]{c}\resizebox{!}{2.5cm}{%
\begin{tikzpicture}
	\begin{pgfonlayer}{nodelayer}
		\node [style=codifferential] (0) at (-2.5, 1.25) {{\bf =\!=\!=}};
		\node [style=port] (1) at (-3.75, -2) {};
		\node [style=port] (2) at (-3.25, -0.5) {};
		\node [style=port] (3) at (-2.5, 2.25) {};
		\node [style={regular polygon,regular polygon sides=4, draw, inner sep=1pt,minimum size=1pt}] (4) at (-3.75, -1) {$\bigotimes$};
		\node [style=codifferential] (5) at (-4, 0.75) {{\bf -----}};
		\node [style=port] (6) at (-1.75, -2) {};
		\node [style={circle, draw}] (7) at (-4, 1.75) {$m$};
	\end{pgfonlayer}
	\begin{pgfonlayer}{edgelayer}
		\draw [style=wire] (4) to (1);
		\draw [style=dwire, in=90, out=-28, looseness=1.25] (5) to (2);
		\draw [style=wire] (7) to (5);
		\draw [style=wire, in=90, out=-45, looseness=1.25] (0) to (6);
		\draw [style=wire] (3) to (0);
		\draw [style=wire, in=0, out=-135, looseness=1.25] (0) to (4);
		\draw [style=wire, in=180, out=-135, looseness=1.50] (5) to (4);
	\end{pgfonlayer}
\end{tikzpicture}
  }%
  \end{array}=  \begin{array}[c]{c}\resizebox{!}{2.5cm}{%
\begin{tikzpicture}
	\begin{pgfonlayer}{nodelayer}
		\node [style=codifferential] (0) at (-3, 1) {{\bf =\!=\!=}};
		\node [style=port] (1) at (-3, -2) {};
		\node [style=port] (2) at (-0.75, -0.75) {};
		\node [style=port] (3) at (-3, 2) {};
		\node [style={regular polygon,regular polygon sides=4, draw, inner sep=1pt,minimum size=1pt}] (4) at (-3, -1) {$\bigotimes$};
		\node [style=codifferential] (5) at (-1.5, 0.5) {{\bf -----}};
		\node [style=port] (6) at (-1.75, -2) {};
		\node [style={circle, draw}] (7) at (-1.5, 1.5) {$m$};
	\end{pgfonlayer}
	\begin{pgfonlayer}{edgelayer}
		\draw [style=wire] (4) to (1);
		\draw [style=dwire, in=90, out=-28, looseness=1.25] (5) to (2);
		\draw [style=wire] (7) to (5);
		\draw [style=wire] (3) to (0);
		\draw [style=wire, in=0, out=-150, looseness=1.25] (5) to (4);
		\draw [style=wire, in=180, out=-150, looseness=1.75] (0) to (4);
		\draw [style=wire, in=90, out=-45, looseness=1.00] (0) to (6);
	\end{pgfonlayer}
\end{tikzpicture}
  }%
\end{array}\]
And the remainder of the proof is similar. 
\end{proof} 

We discuss briefly the interpretation of Fubini's theorem in integral categories. Fubini's theorem states that for a function $f$ in two variables the following equality holds:
\[\int_Y (\int_X f(x,y) ~\mathsf{d}x) ~\mathsf{d}y = \int_X (\int_Y f(x,y) ~\mathsf{d}y) ~\mathsf{d}x\]
that is, integral $f$ with respect to $x$ first then $y$ is the same as integral $f$ with respect to $y$ then $x$. Under our intuition of integration for integral categories, this implies we are dealing with a function in four variables $x$, $y$, $\mathsf{d}x$ and $\mathsf{d}y$ which is bilinear in $\mathsf{d}x$ and $\mathsf{d}y$. The reader may notice the similarities with the axiom \textbf{[S.3]}, the independence rule, for integral categories. However there are crucial differences. First, Fubini's theorem concerns to the double integral of a function in two different variables while the axiom \textbf{[S.3]} explains the process of integral the same variable twice. Secondly, the double integral in Fubini's theorem is an iterated integral, where the function is integrated as a function of a single variable by holding the other variable constant. On the other hand, \textbf{[S.3]} takes all variables into consideration at once. Therefore, we will need a different approach in the interpretation of Fubini's theorem.

To interpret Fubini's theorem in an integral category we will require that our additive symmetric monoidal category has finite biproducts and that our coalgebra modality has Seely isomorphisms \cite{bierman1995categorical,blute2015cartesian,dblute2006differential}. A coalgebra modality on a symmetric monoidal with finite products has \textbf{Seely isomorphisms} if the natural transformations $\chi$ and $\chi_{0}$ defined respectively as:
    $$  \xymatrixcolsep{2pc}\xymatrix{\oc(A \times B) \ar[r]^-{\Delta} & \oc(A \times B) \otimes \oc(A \times B) \ar[rr]^-{\oc(\pi_0) \otimes \oc(\pi_1)} && \oc A \otimes \oc B  & \oc(0) \ar[r]^-{e} & K
  } $$
are isomorphisms, so $\oc(A \times B) \cong \oc A \otimes \oc B$ and $\oc(0) \cong K$. As explained in \cite{blute2015cartesian} (Theorem 3.1.6), requiring that a coalgebra modality has Seely isomorphisms is equivalent to asking that it be monoidal. Every coalgebra modality having the Seely isomorphisms is a monoidal coalgebra modality, where $m_\otimes$ is 
$$\xymatrixcolsep{2.5pc}\xymatrix{\oc A \otimes \oc B \ar[r]^-{\chi^{-1}} & \oc(A \times B) \ar[r]^-{\delta} &   \oc \oc(A \times B) \ar[r]^-{\oc(\chi)} & \oc (\oc A \otimes \oc B) \ar[r]^-{\oc(\varepsilon \otimes \varepsilon)} & \oc(A \otimes B) 
  }$$
  and $m_K$ is defined as
  $$\xymatrixcolsep{2.5pc}\xymatrix{K \ar[r]^-{\chi^{-1}_{\mathsf{T}}} & \oc(\mathsf{T}) \ar[r]^-{\delta} & \oc \oc(\mathsf{T}) \ar[r]^-{\oc(\chi_{\mathsf{T}})} & \oc(K)
  }$$
  Conversly, in the presence of finite products, every monoidal coalgebra modality satisfies the Seely isomorphisms \cite{bierman1995categorical} where the inverse of $\chi$ is 
  $$ \xymatrixcolsep{2.5pc}\xymatrix{\oc A \otimes \oc B \ar[r]^-{\delta \otimes \delta} & \oc \oc A \otimes \oc \oc B \ar[r]^-{m_\otimes} & \oc(\oc A \otimes \oc B) \ar[rr]^-{\oc(\langle \varepsilon \otimes e, e \otimes \varepsilon \rangle)} && \oc(A \times B)  
  } $$
  while the inverse of $\chi_{\mathsf{T}}$ is
    $$  \xymatrixcolsep{2.5pc}\xymatrix{ K \ar[r]^-{m_K} & \oc(K) \ar[r]^-{\oc(\mathsf{t})} & \oc(\mathsf{T})
  } $$
where $\mathsf{t}: K \to \mathsf{T}$ is the unique map to the terminal object. 

Suppose now that we have an integral category with finite biproducts and a monoidal coalgebra modality, which implies it has Seely isomorphisms. Fubini's theorem concerns maps of the form $f: \oc(A \times B) \otimes A \otimes B \to C$ whose type ensures it is bilinear in the second two occurrences of $A$ and $B$. Maps of this form can be integrated with respect to either $A$ or $B$, or both $A$ and $B$ simultaneously: the latter, the double integral of $f$, is obtained as follows:
  \[  \xymatrixcolsep{3pc}\xymatrix{ \oc(A \times B) \ar[r]^-{\chi^{-1}} & \oc A \otimes \oc B \ar[r]^-{\mathsf{s} \otimes \mathsf{s}} & \oc A \otimes A \otimes \oc B \otimes B \ar[r]^-{1 \otimes \sigma \otimes 1} & \\
   \oc A \otimes \oc B \otimes A \otimes B \ar[r]^-{\chi \otimes 1 \otimes 1} & \oc(A \times B) \otimes A \otimes B \ar[r]^-{f} & C
  } \]
Fubini's theorem asserts that the order of integration in this double integral does not matter.   At this level of generality this order independence is an immediate consequence 
of the bifunctoriality of $\_\otimes\_$. 

\section{Differential Categories}\label{dblute2006differentialec}

\subsection{Differential Categories}

We briefly recall the definition of a differential category  \cite{dblute2006differential}. We omit the full definition of the differential combinator and directly give the definition of a deriving transformation (of course both are equivalent).  At the same time, we introduce the graphical calculus for differential categories.  

\begin{definition} \normalfont A \textbf{differential category} is an additive symmetric monoidal category with a coalgebra modality which comes equipped with a \textbf{deriving transformation}  \cite{dblute2006differential}, that is, a natural transformation $\mathsf{d}$ with components $\mathsf{d}_A: \oc A \otimes A \to \oc A$, which is represented in the graphical calculus as:
$$ \mathsf{d}\coloneqq \begin{array}[c]{c} \resizebox{!}{1.5cm}{%
\begin{tikzpicture}
	\begin{pgfonlayer}{nodelayer}
		\node [style=port] (0) at (2, 3) {};
		\node [style=port] (1) at (1.25, 0.75) {};
		\node [style=integral] (2) at (1.25, 1.75) {{\bf =\!=\!=}};
		\node [style=port] (3) at (0.5, 3) {};
	\end{pgfonlayer}
	\begin{pgfonlayer}{edgelayer}
		\draw [style=wire, bend right, looseness=1.00] (2) to (0);
		\draw [style=wire] (1) to (2);
		\draw [style=wire, bend left, looseness=1.00] (2) to (3);
	\end{pgfonlayer}
\end{tikzpicture}}
   \end{array}$$
 such that $\mathsf{d}$ satisfies the following equations: 
\begin{description}
\item[{\bf [d.1]}] Constant Rule: $\mathsf{d}e= 0$
$$\begin{array}[c]{c} \resizebox{!}{1.7cm}{%
\begin{tikzpicture}
	\begin{pgfonlayer}{nodelayer}
		\node [style={circle, draw}] (0) at (0, 0) {$e$};
		\node [style=port] (1) at (0.75, 2) {};
		\node [style=differential] (2) at (0, 1) {{\bf =\!=\!=\!=}};
		\node [style=port] (3) at (-0.75, 2) {};
	\end{pgfonlayer}
	\begin{pgfonlayer}{edgelayer}
		\draw [style=wire, bend right, looseness=1.00] (2) to (1);
		\draw [style=wire, bend left, looseness=1.00] (2) to (3);
		\draw [style=wire] (2) to (0);
	\end{pgfonlayer}
\end{tikzpicture}}
   \end{array}=
   \begin{array}[c]{c}
0
   \end{array}$$
\item[{\bf [d.2]}] Leibniz Rule:  $\mathsf{d}\Delta=(\Delta \otimes 1)(1 \otimes \sigma)(\mathsf{d} \otimes 1)+ (\Delta \otimes 1)(1\otimes \mathsf{d})$
\[\begin{array}[c]{c} \resizebox{!}{1.7cm}{%
\begin{tikzpicture}
	\begin{pgfonlayer}{nodelayer}
		\node [style=differential] (0) at (0, 1) {{\bf =\!=\!=\!=}};
		\node [style=port] (1) at (0.75, 2) {};
		\node [style=port] (2) at (-0.75, 2) {};
		\node [style=port] (3) at (-0.75, -0.75) {};
		\node [style=port] (4) at (0, 0.25) {$\Delta$};
		\node [style=port] (5) at (0.75, -0.75) {};
	\end{pgfonlayer}
	\begin{pgfonlayer}{edgelayer}
		\draw [style=wire, bend right, looseness=1.00] (0) to (1);
		\draw [style=wire, bend left, looseness=1.00] (0) to (2);
		\draw [style=wire, bend right, looseness=1.00] (4) to (3);
		\draw [style=wire, bend left, looseness=1.00] (4) to (5);
		\draw [style=wire] (0) to (4);
	\end{pgfonlayer}
\end{tikzpicture}}
   \end{array}=
      \begin{array}[c]{c} \resizebox{!}{1.7cm}{%
\begin{tikzpicture}
	\begin{pgfonlayer}{nodelayer}
		\node [style=port] (0) at (-0.25, 2) {};
		\node [style=differential] (1) at (-0.75, 0) {{\bf =\!=\!=\!=}};
		\node [style=port] (2) at (1, 2) {};
		\node [style=port] (3) at (-0.25, 1) {$\Delta$};
		\node [style=port] (4) at (-0.75, -0.75) {};
		\node [style=port] (5) at (0.75, -0.75) {};
	\end{pgfonlayer}
	\begin{pgfonlayer}{edgelayer}
		\draw [style=wire, in=-90, out=30, looseness=1.00] (1) to (2);
		\draw [style=wire, in=150, out=-150, looseness=1.50] (3) to (1);
		\draw [style=wire] (0) to (3);
		\draw [style=wire] (1) to (4);
		\draw [style=wire, bend left, looseness=1.25] (3) to (5);
	\end{pgfonlayer}
\end{tikzpicture}}
   \end{array}+ \begin{array}[c]{c} \resizebox{!}{1.7cm}{%
\begin{tikzpicture}
	\begin{pgfonlayer}{nodelayer}
		\node [style=port] (0) at (1, 2) {};
		\node [style=differential] (1) at (0.75, 0) {{\bf =\!=\!=\!=}};
		\node [style=port] (2) at (0.75, -0.75) {};
		\node [style=port] (3) at (-0.75, -0.75) {};
		\node [style=port] (4) at (-0.25, 2) {};
		\node [style=port] (5) at (-0.25, 1) {$\Delta$};
	\end{pgfonlayer}
	\begin{pgfonlayer}{edgelayer}
		\draw [style=wire, bend right, looseness=1.00] (1) to (0);
		\draw [style=wire, in=91, out=-135, looseness=0.75] (5) to (3);
		\draw [style=wire, in=150, out=-30, looseness=1.00] (5) to (1);
		\draw [style=wire] (4) to (5);
		\draw [style=wire] (1) to (2);
	\end{pgfonlayer}
\end{tikzpicture}}
   \end{array}\]
\item[{\bf [d.3]}] Linear Rule: $\mathsf{d}\varepsilon=e \otimes 1$
\[\begin{array}[c]{c} \resizebox{!}{2.0cm}{%
\begin{tikzpicture}
	\begin{pgfonlayer}{nodelayer}
		\node [style=port] (0) at (0.75, 2) {};
		\node [style=port] (1) at (-0.75, 2) {};
		\node [style={circle, draw}] (2) at (0, 0) {$\varepsilon$};
		\node [style=differential] (3) at (0, 1) {{\bf =\!=\!=\!=}};
		\node [style=port] (4) at (0, -1) {};
	\end{pgfonlayer}
	\begin{pgfonlayer}{edgelayer}
		\draw [style=wire, bend right, looseness=1.00] (3) to (0);
		\draw [style=wire, bend left, looseness=1.00] (3) to (1);
		\draw [style=wire] (3) to (2);
		\draw [style=wire] (2) to (4);
	\end{pgfonlayer}
\end{tikzpicture}}
   \end{array}=
   \begin{array}[c]{c} \resizebox{!}{2.0cm}{%
\begin{tikzpicture}
	\begin{pgfonlayer}{nodelayer}
		\node [style=port] (0) at (-0.75, 2) {};
		\node [style=port] (1) at (0, 2) {};
		\node [style=port] (2) at (0, -1) {};
		\node [style={circle, draw}] (3) at (-0.75, 0) {$e$};
	\end{pgfonlayer}
	\begin{pgfonlayer}{edgelayer}
		\draw [style=wire] (0) to (3);
		\draw [style=wire] (1) to (2);
	\end{pgfonlayer}
\end{tikzpicture}}
   \end{array}\]
\item[{\bf [d.4]}] Chain Rule: $\mathsf{d}\delta=(\Delta \otimes 1)(\delta \otimes 1 \otimes 1)(1 \otimes \mathsf{d})\mathsf{d}$
\[\begin{array}[c]{c} \resizebox{!}{2.0cm}{%
\begin{tikzpicture}
	\begin{pgfonlayer}{nodelayer}
		\node [style=port] (0) at (0.75, 2) {};
		\node [style=differential] (1) at (0, 1) {{\bf =\!=\!=\!=}};
		\node [style=port] (2) at (0, -1) {};
		\node [style=port] (3) at (-0.75, 2) {};
		\node [style={circle, draw}] (4) at (0, 0) {$\delta$};
	\end{pgfonlayer}
	\begin{pgfonlayer}{edgelayer}
		\draw [style=wire, bend right, looseness=1.00] (1) to (0);
		\draw [style=wire, bend left, looseness=1.00] (1) to (3);
		\draw [style=wire] (1) to (4);
		\draw [style=wire] (4) to (2);
	\end{pgfonlayer}
\end{tikzpicture}}
   \end{array}=
   \begin{array}[c]{c} \resizebox{!}{2.0cm}{%
\begin{tikzpicture}
	\begin{pgfonlayer}{nodelayer}
		\node [style={circle, draw}] (0) at (-1, 0.25) {$\delta$};
		\node [style=port] (1) at (-0.5, 1.25) {$\Delta$};
		\node [style=circle] (2) at (-0.5, 2) {};
		\node [style=differential] (3) at (0.25, 0.5) {{\bf =\!=\!=\!=}};
		\node [style=port] (4) at (0.5, 2) {};
		\node [style=differential] (5) at (-0.25, -0.75) {{\bf =\!=\!=\!=}};
		\node [style=port] (6) at (-0.25, -1.5) {};
	\end{pgfonlayer}
	\begin{pgfonlayer}{edgelayer}
		\draw [style=wire, bend right, looseness=1.00] (1) to (0);
		\draw [style=wire] (2) to (1);
		\draw [style=wire, bend right, looseness=1.00] (3) to (4);
		\draw [style=wire, in=150, out=-30, looseness=1.25] (1) to (3);
		\draw [style=wire] (5) to (6);
		\draw [style=wire, in=30, out=-90, looseness=1.00] (3) to (5);
		\draw [style=wire, in=150, out=-90, looseness=1.00] (0) to (5);
	\end{pgfonlayer}
\end{tikzpicture}}
   \end{array}\]
     \item[{\bf [d.5]}] Interchange Rule: $(\mathsf{d} \otimes 1)\mathsf{d}=(1 \otimes \sigma)(\mathsf{d} \otimes 1)\mathsf{d}$
$$\begin{array}[c]{c} \resizebox{!}{1.75cm}{%
\begin{tikzpicture}
	\begin{pgfonlayer}{nodelayer}
		\node [style=port] (0) at (0.5, 2.5) {};
		\node [style=port] (1) at (0, 2.5) {};
		\node [style=port] (2) at (0, 0) {};
		\node [style=port] (3) at (-1, 2.5) {};
		\node [style=codifferential] (4) at (0, 0.75) {{\bf =\!=\!=\!=}};
		\node [style=codifferential] (5) at (-0.5, 1.75) {{\bf =\!=\!=\!=}};
	\end{pgfonlayer}
	\begin{pgfonlayer}{edgelayer}
		\draw [style=wire, bend left=15, looseness=1.25] (4) to (5);
		\draw [style=wire, bend right, looseness=1.00] (4) to (0);
		\draw [style=wire] (2) to (4);
		\draw [style=wire, bend left, looseness=1.00] (5) to (3);
		\draw [style=wire, bend right, looseness=1.00] (5) to (1);
	\end{pgfonlayer}
\end{tikzpicture}}
   \end{array}=
      \begin{array}[c]{c} \resizebox{!}{1.75cm}{%
\begin{tikzpicture}
	\begin{pgfonlayer}{nodelayer}
		\node [style=port] (0) at (-1, 2.5) {};
		\node [style=codifferential] (1) at (-0.5, 1.75) {{\bf =\!=\!=\!=}};
		\node [style=codifferential] (2) at (0, 0.75) {{\bf =\!=\!=\!=}};
		\node [style=port] (3) at (0, 0) {};
		\node [style=port] (4) at (0, 2.5) {};
		\node [style=port] (5) at (0.5, 2.5) {};
	\end{pgfonlayer}
	\begin{pgfonlayer}{edgelayer}
		\draw [style=wire, bend left=15, looseness=1.25] (2) to (1);
		\draw [style=wire] (3) to (2);
		\draw [style=wire, bend left, looseness=1.00] (1) to (0);
		\draw [style=wire, in=45, out=-90, looseness=1.00] (5) to (1);
		\draw [style=wire, in=45, out=-90, looseness=1.50] (4) to (2);
	\end{pgfonlayer}
\end{tikzpicture}}
   \end{array}$$
      \end{description}
\end{definition}

As for integral categories, it might be useful for the reader to have some intuition regarding the axioms of a deriving transformation. The first axiom, {\bf [d.1]}, states that the derivative of a constant map is zero. The second axiom {\bf [d.2]} is the Leibniz rule for differentiation -- also called the product rule.  The third axiom {\bf [d.3]} says that the derivative of a linear map is a constant. The fourth axiom {\bf [d.4]} is the chain rule. The last axiom {\bf [d.5]} is the independence of differentiation or the interchange law, which naively states that differentiating with respect to $x$ then $y$ is the same as differentiation with respect to $y$ then $x$. It should be noted that {\bf [d.5]} was not a requirement in  \cite{dblute2006differential} but was later added to the definition \cite{blute2015cartesian, blute2009cartesian} to ensure that the coKleisli category of a differential category was a Cartesian differential category. 

The coderiving transformation is related to the deriving transformation by:  
\begin{proposition}\label{Wprop} The deriving transformation and coderiving transformations satisfy the following equality:
$$\mathsf{d}\mathsf{d}^\circ= \mathsf{W}+1$$ 
where $\mathsf{W}=(\mathsf{d}^\circ \otimes 1)(1 \otimes \sigma)(\mathsf{d} \otimes 1)$. 
$$   \begin{array}[c]{c}\resizebox{!}{2cm}{%
\begin{tikzpicture}
	\begin{pgfonlayer}{nodelayer}
		\node [style=differential] (0) at (0, 2) {{\bf =\!=\!=}};
		\node [style=port] (1) at (-0.75, 3) {};
		\node [style=port] (2) at (0.75, 3) {};
		\node [style=port] (3) at (-0.75, 0) {};
		\node [style=port] (4) at (0.75, 0) {};
		\node [style=codifferential] (5) at (0, 1) {{\bf =\!=\!=}};
	\end{pgfonlayer}
	\begin{pgfonlayer}{edgelayer}
		\draw [style=wire, bend right, looseness=1.00] (0) to (2);
		\draw [style=wire, bend left, looseness=1.00] (0) to (1);
		\draw [style=wire, bend left, looseness=1.00] (5) to (4);
		\draw [style=wire, bend right, looseness=1.00] (5) to (3);
		\draw [style=wire] (0) to (5);
	\end{pgfonlayer}
\end{tikzpicture}}
   \end{array}
=
   \begin{array}[c]{c}\resizebox{!}{2cm}{%
\begin{tikzpicture}
	\begin{pgfonlayer}{nodelayer}
		\node [style=differential] (0) at (-1, 1) {{\bf =\!=\!=}};
		\node [style=port] (1) at (-1, 0) {};
		\node [style=codifferential] (2) at (-1, 2) {{\bf =\!=\!=}};
		\node [style=port] (3) at (-1, 3) {};
		\node [style=port] (4) at (1, 3) {};
		\node [style=port] (5) at (1, 0) {};
	\end{pgfonlayer}
	\begin{pgfonlayer}{edgelayer}
		\draw [style=wire] (3) to (2);
		\draw [style=wire] (1) to (0);
		\draw [style=wire, bend right=60, looseness=1.50] (2) to (0);
		\draw [style=wire, in=90, out=-45, looseness=1.25] (2) to (5);
		\draw [style=wire, in=45, out=-90, looseness=1.00] (4) to (0);
	\end{pgfonlayer}
\end{tikzpicture}}
   \end{array}
   +
   \begin{array}[c]{c}\resizebox{!}{2cm}{%
\begin{tikzpicture}
	\begin{pgfonlayer}{nodelayer}
		\node [style=port] (0) at (-0.5, 3) {};
		\node [style=port] (1) at (0.5, 3) {};
		\node [style=port] (2) at (-0.5, 0) {};
		\node [style=port] (3) at (0.5, 0) {};
	\end{pgfonlayer}
	\begin{pgfonlayer}{edgelayer}
		\draw [style=wire] (0) to (2);
		\draw [style=wire] (1) to (3);
	\end{pgfonlayer}
\end{tikzpicture}}
   \end{array}
   $$
\end{proposition}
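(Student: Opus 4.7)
The strategy is to unfold the definition $\mathsf{d}^\circ = \Delta(1 \otimes \varepsilon)$, rewriting $\mathsf{d}\mathsf{d}^\circ$ as $\mathsf{d}\Delta(1 \otimes \varepsilon)$, and then invoke the Leibniz rule {\bf [d.2]} to expand $\mathsf{d}\Delta$ as a sum of two terms. The additive structure lets me distribute $(1 \otimes \varepsilon)$ across this sum:
$$\mathsf{d}\mathsf{d}^\circ = (\Delta \otimes 1)(1 \otimes \sigma)(\mathsf{d} \otimes 1)(1 \otimes \varepsilon) + (\Delta \otimes 1)(1 \otimes \mathsf{d})(1 \otimes \varepsilon).$$
The plan is then to show that the first summand reassembles into $\mathsf{W}$, while the second collapses to $1$.

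For the first summand, I want to match it with $\mathsf{W} = (\mathsf{d}^\circ \otimes 1)(1 \otimes \sigma)(\mathsf{d} \otimes 1)$. Since the $\varepsilon$ at the end of the first summand acts on a tensor factor that is passive throughout $(1 \otimes \sigma)(\mathsf{d} \otimes 1)$, bifunctoriality of $\otimes$ together with naturality of $\sigma$ lets me slide $(1 \otimes \varepsilon)$ leftward past these two operations to arrive at $(\Delta \otimes 1)(1 \otimes \varepsilon \otimes 1)(1 \otimes \sigma)(\mathsf{d} \otimes 1)$. Recognising the prefix $(\Delta \otimes 1)(1 \otimes \varepsilon \otimes 1) = (\Delta(1 \otimes \varepsilon) \otimes 1) = (\mathsf{d}^\circ \otimes 1)$ then identifies this summand with $\mathsf{W}$.

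The second summand simplifies quickly: by bifunctoriality its tail is $1 \otimes (\mathsf{d}\varepsilon)$, the linear rule {\bf [d.3]} rewrites $\mathsf{d}\varepsilon$ as $e \otimes 1$, and the comonoid counit law $\Delta(1 \otimes e) = 1$ then collapses the whole summand to the identity on $\oc A \otimes A$. Adding the two results yields $\mathsf{W} + 1$, as desired.

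The argument is essentially mechanical once one sees to begin with Leibniz; there is no genuine obstacle. The only step requiring modest care is the bifunctorial sliding of $(1 \otimes \varepsilon)$ in the first summand, but this is immediate in the graphical calculus since the wire carrying the counited factor does not cross any active box on the way out.
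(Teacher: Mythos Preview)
Your proof is correct and follows essentially the same approach as the paper: both unfold $\mathsf{d}^\circ$ as $\Delta(1 \otimes \varepsilon)$, apply the Leibniz rule {\bf [d.2]}, then simplify the first summand to $\mathsf{W}$ by recognising the coderiving transformation and the second to the identity via {\bf [d.3]} and the comonoid counit law. The only cosmetic difference is that the paper carries this out in the graphical calculus, where your ``sliding'' argument for the first summand is automatic.
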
 

The notation $\mathsf{W}$ was introduced by Ehrhard in \cite{ehrhard2017introduction}. 

\begin{proof} Here we use the Leibniz rule {\bf [d.2]}, that the derivative of a linear map is a constant {\bf [d.3]} and the counit of the comultiplication:  
\begin{align*}
\begin{array}[c]{c}\resizebox{!}{2cm}{%
\begin{tikzpicture}
	\begin{pgfonlayer}{nodelayer}
		\node [style=port] (0) at (0.5, 2) {};
		\node [style=differential] (1) at (1.25, 1) {{\bf =\!=\!=}};
		\node [style=port] (2) at (2, 2) {};
		\node [style={circle, draw}] (3) at (2, -0.75) {$\varepsilon$};
		\node [style=port] (4) at (2, -1.25) {};
		\node [style=duplicate] (5) at (1.25, 0.25) {$\Delta$};
		\node [style=port] (6) at (0.5, -1.25) {};
	\end{pgfonlayer}
	\begin{pgfonlayer}{edgelayer}
		\draw [style=wire, bend right, looseness=1.00] (1) to (2);
		\draw [style=wire, bend left, looseness=1.00] (1) to (0);
		\draw [style=wire, bend left, looseness=1.00] (5) to (3);
		\draw [style=wire] (3) to (4);
		\draw [style=wire, in=90, out=-156, looseness=1.00] (5) to (6);
		\draw [style=wire] (5) to (1);
	\end{pgfonlayer}
\end{tikzpicture}}%
 \end{array}=   \begin{array}[c]{c}\resizebox{!}{2cm}{%
\begin{tikzpicture}
	\begin{pgfonlayer}{nodelayer}
		\node [style=differential] (0) at (-1, 0) {{\bf =\!=\!=}};
		\node [style=port] (1) at (-0.5, 2) {};
		\node [style=port] (2) at (-0.5, 1) {$\Delta$};
		\node [style=port] (3) at (0.5, -0.75) {};
		\node [style=port] (4) at (0.5, 2) {};
		\node [style=port] (5) at (-1, -0.75) {};
		\node [style={circle, draw}] (6) at (0.5, 0) {$\varepsilon$};
	\end{pgfonlayer}
	\begin{pgfonlayer}{edgelayer}
		\draw [style=wire, in=-90, out=30, looseness=1.00] (0) to (4);
		\draw [style=wire, in=150, out=-150, looseness=1.50] (2) to (0);
		\draw [style=wire] (1) to (2);
		\draw [style=wire] (0) to (5);
		\draw [style=wire, in=90, out=-45, looseness=1.50] (2) to (6);
		\draw [style=wire] (6) to (3);
	\end{pgfonlayer}
\end{tikzpicture}}%
  \end{array}+  \begin{array}[c]{c}\resizebox{!}{2cm}{%
\begin{tikzpicture}
	\begin{pgfonlayer}{nodelayer}
		\node [style=differential] (0) at (2, 0) {{\bf =\!=\!=}};
		\node [style=port] (1) at (0.5, -1.25) {};
		\node [style={circle, draw}] (2) at (2, -0.75) {$\varepsilon$};
		\node [style=port] (3) at (1, 2) {};
		\node [style=port] (4) at (2.5, 2) {};
		\node [style=port] (5) at (1, 1) {$\Delta$};
		\node [style=port] (6) at (2, -1.25) {};
	\end{pgfonlayer}
	\begin{pgfonlayer}{edgelayer}
		\draw [style=wire, bend right, looseness=1.00] (0) to (4);
		\draw [style=wire, in=91, out=-135, looseness=0.75] (5) to (1);
		\draw [style=wire, in=150, out=-30, looseness=1.00] (5) to (0);
		\draw [style=wire] (3) to (5);
		\draw [style=wire] (0) to (2);
		\draw [style=wire] (2) to (6);
	\end{pgfonlayer}
\end{tikzpicture}}%
   \end{array}=\begin{array}[c]{c}\resizebox{!}{2cm}{%
\begin{tikzpicture}
	\begin{pgfonlayer}{nodelayer}
		\node [style=differential] (0) at (-1, 1) {{\bf =\!=\!=}};
		\node [style=port] (1) at (-1, 0) {};
		\node [style=codifferential] (2) at (-1, 2) {{\bf =\!=\!=}};
		\node [style=port] (3) at (-1, 3) {};
		\node [style=port] (4) at (0, 3) {};
		\node [style=port] (5) at (0, 0) {};
	\end{pgfonlayer}
	\begin{pgfonlayer}{edgelayer}
		\draw [style=wire] (3) to (2);
		\draw [style=wire] (1) to (0);
		\draw [style=wire, bend right=60, looseness=1.50] (2) to (0);
		\draw [style=wire, in=90, out=-45, looseness=1.25] (2) to (5);
		\draw [style=wire, in=45, out=-90, looseness=1.00] (4) to (0);
	\end{pgfonlayer}
\end{tikzpicture}}%
   \end{array}+
   \begin{array}[c]{c}\resizebox{!}{2cm}{%
\begin{tikzpicture}
	\begin{pgfonlayer}{nodelayer}
		\node [style=port] (0) at (0.5, -1) {};
		\node [style=port] (1) at (1, 2) {};
		\node [style=port] (2) at (2, 2) {};
		\node [style=port] (3) at (1, 1) {$\Delta$};
		\node [style=port] (4) at (2, -1) {};
		\node [style={circle, draw}] (5) at (1.5, 0.25) {$e$};
	\end{pgfonlayer}
	\begin{pgfonlayer}{edgelayer}
		\draw [style=wire, in=91, out=-135, looseness=0.75] (3) to (0);
		\draw [style=wire] (1) to (3);
		\draw [style=wire] (2) to (4);
		\draw [style=wire, in=90, out=-30, looseness=1.00] (3) to (5);
	\end{pgfonlayer}
\end{tikzpicture}}%
   \end{array}=   \begin{array}[c]{c}\resizebox{!}{2cm}{%
\begin{tikzpicture}
	\begin{pgfonlayer}{nodelayer}
		\node [style=differential] (0) at (-1, 1) {{\bf =\!=\!=}};
		\node [style=port] (1) at (-1, 0) {};
		\node [style=codifferential] (2) at (-1, 2) {{\bf =\!=\!=}};
		\node [style=port] (3) at (-1, 3) {};
		\node [style=port] (4) at (0, 3) {};
		\node [style=port] (5) at (0, 0) {};
	\end{pgfonlayer}
	\begin{pgfonlayer}{edgelayer}
		\draw [style=wire] (3) to (2);
		\draw [style=wire] (1) to (0);
		\draw [style=wire, bend right=60, looseness=1.50] (2) to (0);
		\draw [style=wire, in=90, out=-45, looseness=1.25] (2) to (5);
		\draw [style=wire, in=45, out=-90, looseness=1.00] (4) to (0);
	\end{pgfonlayer}
\end{tikzpicture}}%
   \end{array}
   +
   \begin{array}[c]{c}\resizebox{!}{2cm}{%
\begin{tikzpicture}
	\begin{pgfonlayer}{nodelayer}
		\node [style=port] (0) at (-0.5, 3) {};
		\node [style=port] (1) at (0.5, 3) {};
		\node [style=port] (2) at (-0.5, 0) {};
		\node [style=port] (3) at (0.5, 0) {};
	\end{pgfonlayer}
	\begin{pgfonlayer}{edgelayer}
		\draw [style=wire] (0) to (2);
		\draw [style=wire] (1) to (3);
	\end{pgfonlayer}
\end{tikzpicture}}%
   \end{array}
   \end{align*}
\end{proof} 

One might expect the definition of a {\em monoidal\/} deriving transformation, that is a deriving transformation $\mathsf{d}$ for a monoidal coalgebra modality, to require that 
the \textbf{differential monoidal rule} should hold:
\begin{description}
\item[{\bf [d.m]}] Monoidal Rule: $(1 \otimes \mathsf{d})m_\otimes = (\mathsf{d}^\circ \otimes 1 \otimes 1)(1 \otimes \sigma \otimes 1)(m_ \otimes \otimes 1 \otimes 1)\mathsf{d}$
\[\begin{array}[c]{c}\resizebox{!}{2cm}{%
\begin{tikzpicture}
	\begin{pgfonlayer}{nodelayer}
		\node [style=port] (0) at (0, -1.25) {};
		\node [style=port] (1) at (-0.75, 1.5) {};
		\node [style={regular polygon,regular polygon sides=4, draw, inner sep=1pt,minimum size=1pt}] (2) at (0, 0) {$\bigotimes$};
		\node [style=codifferential] (3) at (0.75, 0.75) {{\bf =\!=\!=}};
		\node [style=port] (4) at (1.25, 1.5) {};
		\node [style=port] (5) at (0.25, 1.5) {};
	\end{pgfonlayer}
	\begin{pgfonlayer}{edgelayer}
		\draw [style=wire, in=-105, out=165, looseness=1.00] (2) to (1);
		\draw [style=wire] (2) to (0);
		\draw [style=wire, bend right=15, looseness=1.00] (3) to (4);
		\draw [style=wire, bend left=15, looseness=1.00] (3) to (5);
		\draw [style=wire, in=15, out=-90, looseness=1.25] (3) to (2);
	\end{pgfonlayer}
\end{tikzpicture}
  }%
\end{array}= \begin{array}[c]{c}\resizebox{!}{2.5cm}{%
\begin{tikzpicture}
	\begin{pgfonlayer}{nodelayer}
		\node [style=port] (0) at (-1.25, 1.25) {};
		\node [style=differential] (1) at (-2, -1) {$\bigotimes$};
		\node [style={regular polygon,regular polygon sides=4, draw, inner sep=1pt,minimum size=1pt}] (2) at (-4.25, -1) {$\bigotimes$};
		\node [style=port] (3) at (-2.25, 1.25) {};
		\node [style=port] (4) at (-4, 1.25) {};
		\node [style=port] (5) at (-3, -3) {};
		\node [style=codifferential] (6) at (-3, -2.25) {{\bf =\!=\!=}};
		\node [style=codifferential] (7) at (-4, 0.25) {{\bf =\!=\!=}};
	\end{pgfonlayer}
	\begin{pgfonlayer}{edgelayer}
		\draw [style=wire] (6) to (5);
		\draw [style=wire, in=150, out=-90, looseness=1.00] (2) to (6);
		\draw [style=wire, in=30, out=-90, looseness=1.25] (1) to (6);
		\draw [style=wire, in=0, out=-90, looseness=1.25] (3) to (2);
		\draw [style=wire, in=0, out=-90, looseness=1.00] (0) to (1);
		\draw [style=wire] (4) to (7);
		\draw [style=wire, in=180, out=-30, looseness=1.25] (7) to (1);
		\draw [style=wire, in=180, out=-150, looseness=2.00] (7) to (2);
	\end{pgfonlayer}
\end{tikzpicture}
  }%
\end{array}\]
\end{description}
However, it turns out that this is already implied (see \cite{cockett2017there}): 

\begin{proposition} For a monoidal coalgebra modality, all deriving transformations satisfy the monoidal rule \textbf{[d.m]}. 
\end{proposition}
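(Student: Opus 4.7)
The strategy is a direct string-diagram calculation resting on three ingredients already in our hands: the chain rule \textbf{[d.4]} for $\mathsf{d}$, the monoidality of $\delta$ built into the definition of a monoidal coalgebra modality, and the coderiving monoidal rule \textbf{[cd.m]} established for $\mathsf{d}^\circ$ in Section \ref{coalgsec}.

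As a preliminary sanity check, I would verify that both sides of \textbf{[d.m]} coincide upon post-composition with $\varepsilon_{A \otimes B}$. On the left, using that $\varepsilon$ is monoidal gives $m_\otimes \varepsilon = \varepsilon_A \otimes \varepsilon_B$, and then \textbf{[d.3]} collapses the inner $\mathsf{d}\varepsilon_B$ to $e_B \otimes 1_B$, leaving $\varepsilon_A \otimes e_B \otimes 1_B$. On the right, \textbf{[d.3]} first collapses $\mathsf{d}\varepsilon_{A\otimes B}$ to $e_{A \otimes B} \otimes 1$, then monoidality of $e$ replaces $m_\otimes e$ by $e_A \otimes e_B$, and finally \textbf{[cd.1]} gives $\mathsf{d}^\circ(e_A \otimes 1_A) = \varepsilon_A$, yielding the same $\varepsilon_A \otimes e_B \otimes 1_B$. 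This alignment is reassuring but not yet sufficient, so the genuine proof proceeds as follows. First, insert the comonad triangle identity $\oc(\varepsilon_B) \delta_B = 1_{\oc B}$ into the $\oc B$-wire of the left-hand side so that $\mathsf{d}$ is applied in a context involving $\delta$; then expand via the chain rule \textbf{[d.4]}, $\mathsf{d}\delta = (\Delta \otimes 1)(\delta \otimes 1 \otimes 1)(1 \otimes \mathsf{d})\mathsf{d}$, producing a nested occurrence of $\mathsf{d}$ with extra copies of $\Delta$ and $\delta$. Second, apply the monoidality coherence for $\delta$, which says $\delta_{A \otimes B}\, m_\otimes = \oc(m_\otimes)\, m_\otimes\, (\delta_A \otimes \delta_B)$, to commute $\delta$ past $m_\otimes$. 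Third, use the coderiving monoidal rule \textbf{[cd.m]} to push $\mathsf{d}^\circ$ (which appears after further manipulation involving $\mathsf{d}^\circ = \Delta(1 \otimes \varepsilon)$ and naturality) past $m_\otimes$. Fourth, simplify using coassociativity of $\Delta$, the counit laws, and monoidality of $\Delta$ and $e$, collapsing the expression down to the right-hand side of \textbf{[d.m]}.

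The main obstacle will be the string-diagram bookkeeping. The chain rule introduces nested $\mathsf{d}$'s and extra copies of $\Delta$ and $\delta$; the monoidality of $\delta$ brings in the double-layer map $\oc(m_\otimes)\, m_\otimes$ which lives at the $\oc\oc$ level; and all of these must be synchronised with the swap $\sigma$ appearing on the right-hand side. Carefully tracking which layer of $\oc$ each operation sits on, and verifying that the spurious terms produced by the Leibniz rule cancel via the counit axioms, is where the real work lies.

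A cleaner alternative, which in practice may be preferable, is to appeal to the uniqueness-of-differentiation theorem of \cite{cockett2017there}: on a monoidal coalgebra modality every deriving transformation is determined by a simpler ``principal part,'' and that principal part automatically enforces \textbf{[d.m]}, reducing the verification to checking the principal-part axiom rather than chasing the full string diagram by hand.
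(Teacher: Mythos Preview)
The paper does not prove this proposition at all: it simply cites \cite{cockett2017there}. Your closing alternative---appealing to the uniqueness-of-differentiation result in that reference---is therefore exactly what the paper does, and is the intended argument.

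Your primary direct approach, however, has a real gap. The plan to ``insert the comonad triangle identity $\oc(\varepsilon_B)\delta_B = 1_{\oc B}$ \ldots\ then expand via the chain rule \textbf{[d.4]}'' does not lead anywhere obvious. The chain rule governs the composite $\mathsf{d}\delta$, but in the left-hand side $(1 \otimes \mathsf{d}_B)m_\otimes$ there is no $\delta$ positioned for \textbf{[d.4]} to act on: inserting $\delta_B\oc(\varepsilon_B)$ after $\mathsf{d}_B$ does produce $\mathsf{d}_B\delta_B$, but the residual $\oc(\varepsilon_B)$ then changes the input type of $m_\otimes$ from $\oc A \otimes \oc B$ to $\oc A \otimes \oc\oc B$, and you have not said how to repair that. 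More seriously, \textbf{[d.4]} introduces a $\delta$ and a nested $\mathsf{d}$ at the $\oc\oc$-level, whereas the target identity \textbf{[d.m]} contains neither; your steps 3 and 4 (``push $\mathsf{d}^\circ$ past $m_\otimes$,'' ``further manipulation,'' ``simplify'') are precisely where the substantive argument would have to live, and they are not spelled out. The $\varepsilon$ sanity check is correct but, as you acknowledge, insufficient.

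The argument in \cite{cockett2017there} does not proceed by this kind of direct rewriting from the five axioms \textbf{[d.1]}--\textbf{[d.5]}. Rather, in a monoidal coalgebra modality each $\oc A$ carries an additive bialgebra structure (with multiplication built from $m_\otimes$ and the Seely data), and one shows that a deriving transformation is equivalently given by a \emph{codereliction} $\eta: A \to \oc A$ satisfying a short list of axioms; \textbf{[d.m]} then drops out of those axioms and the bialgebra coherences. If you want a self-contained proof, that is the machinery to develop---not a chain-rule expansion.
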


\subsection{$\mathsf{L}$, $\mathsf{K}$ and $\mathsf{J}$}\label{LJKsec}

Here we introduce $\mathsf{L}$, $\mathsf{K}$ and $\mathsf{J}$, which are natural transformations occurring in all differential categories, constructed using the deriving transformation and coderiving transformation. We provide long lists of rather technical properties which will be used to develop the theory of antiderivatives.

\begin{definition} \normalfont In a differential category, define three natural transformations $\mathsf{L}$, $\mathsf{K}$ and $\mathsf{J}$ of the same type $\oc \Rightarrow \oc$ as follows:
\begin{align*}
\mathsf{L}&=\mathsf{d}^\circ\mathsf{d} & \mathsf{K}&=\mathsf{L}+\oc 0& \mathsf{J}&=\mathsf{L}+1 \\
\begin{array}[c]{c}\resizebox{!}{2cm}{%
\begin{tikzpicture}
	\begin{pgfonlayer}{nodelayer}
		\node [style=port] (0) at (0, 3) {};
		\node [style=port] (1) at (0, 0) {};
		\node [style={circle, draw}] (2) at (0, 1.5) {$\mathsf{L}$};
	\end{pgfonlayer}
	\begin{pgfonlayer}{edgelayer}
		\draw [style=wire] (0) to (2);
		\draw [style=wire] (2) to (1);
	\end{pgfonlayer}
\end{tikzpicture}}%
   \end{array} &=
   \begin{array}[c]{c}\resizebox{!}{2cm}{%
\begin{tikzpicture}
	\begin{pgfonlayer}{nodelayer}
		\node [style=codifferential] (0) at (0, 2) {{\bf =\!=\!=}};
		\node [style=port] (1) at (0, 3) {};
		\node [style=differential] (2) at (0, 1) {{\bf =\!=\!=}};
		\node [style=port] (3) at (0, 0) {};
	\end{pgfonlayer}
	\begin{pgfonlayer}{edgelayer}
		\draw [style=wire] (1) to (0);
		\draw [style=wire] (3) to (2);
		\draw [style=wire, bend right=60, looseness=1.50] (0) to (2);
		\draw [style=wire, bend left=60, looseness=1.50] (0) to (2);
	\end{pgfonlayer}
\end{tikzpicture}}%
   \end{array}&\begin{array}[c]{c}\resizebox{!}{2cm}{%
\begin{tikzpicture}
	\begin{pgfonlayer}{nodelayer}
		\node [style=port] (0) at (0, 3) {};
		\node [style=port] (1) at (0, 0) {};
		\node [style={circle, draw}] (2) at (0, 1.5) {$\mathsf{K}$};
	\end{pgfonlayer}
	\begin{pgfonlayer}{edgelayer}
		\draw [style=wire] (0) to (2);
		\draw [style=wire] (2) to (1);
	\end{pgfonlayer}
\end{tikzpicture}}%
   \end{array} &=
   \begin{array}[c]{c}\resizebox{!}{2cm}{%
\begin{tikzpicture}
	\begin{pgfonlayer}{nodelayer}
		\node [style=codifferential] (0) at (0, 2) {{\bf =\!=\!=}};
		\node [style=port] (1) at (0, 3) {};
		\node [style=differential] (2) at (0, 1) {{\bf =\!=\!=}};
		\node [style=port] (3) at (0, 0) {};
	\end{pgfonlayer}
	\begin{pgfonlayer}{edgelayer}
		\draw [style=wire] (1) to (0);
		\draw [style=wire] (3) to (2);
		\draw [style=wire, bend right=60, looseness=1.50] (0) to (2);
		\draw [style=wire, bend left=60, looseness=1.50] (0) to (2);
	\end{pgfonlayer}
\end{tikzpicture}}%
   \end{array}+
      \begin{array}[c]{c}\resizebox{!}{2cm}{%
\begin{tikzpicture}
	\begin{pgfonlayer}{nodelayer}
		\node [style=port] (0) at (0, 0) {};
		\node [style=port] (1) at (0, 3) {};
		\node [style={regular polygon,regular polygon sides=4, draw}] (2) at (0, 1.5) {$0$};
	\end{pgfonlayer}
	\begin{pgfonlayer}{edgelayer}
		\draw [style=wire] (1) to (2);
		\draw [style=wire] (2) to (0);
	\end{pgfonlayer}
\end{tikzpicture}}%
   \end{array} &\begin{array}[c]{c}\resizebox{!}{2cm}{%
\begin{tikzpicture}
	\begin{pgfonlayer}{nodelayer}
		\node [style=port] (0) at (0, 3) {};
		\node [style=port] (1) at (0, 0) {};
		\node [style={circle, draw}] (2) at (0, 1.5) {$\mathsf{J}$};
	\end{pgfonlayer}
	\begin{pgfonlayer}{edgelayer}
		\draw [style=wire] (0) to (2);
		\draw [style=wire] (2) to (1);
	\end{pgfonlayer}
\end{tikzpicture}}%
   \end{array} &=
   \begin{array}[c]{c}\resizebox{!}{2cm}{%
\begin{tikzpicture}
	\begin{pgfonlayer}{nodelayer}
		\node [style=codifferential] (0) at (0, 2) {{\bf =\!=\!=}};
		\node [style=port] (1) at (0, 3) {};
		\node [style=differential] (2) at (0, 1) {{\bf =\!=\!=}};
		\node [style=port] (3) at (0, 0) {};
	\end{pgfonlayer}
	\begin{pgfonlayer}{edgelayer}
		\draw [style=wire] (1) to (0);
		\draw [style=wire] (3) to (2);
		\draw [style=wire, bend right=60, looseness=1.50] (0) to (2);
		\draw [style=wire, bend left=60, looseness=1.50] (0) to (2);
	\end{pgfonlayer}
\end{tikzpicture}}%
   \end{array}+
   \begin{array}[c]{c}\resizebox{!}{2cm}{%
\begin{tikzpicture}
	\begin{pgfonlayer}{nodelayer}
		\node [style=port] (0) at (0, 3) {};
		\node [style=port] (1) at (0, 0) {};
	\end{pgfonlayer}
	\begin{pgfonlayer}{edgelayer}
		\draw [style=wire] (0) to (1);
	\end{pgfonlayer}
\end{tikzpicture}}%
   \end{array}
\end{align*}
\end{definition}
The main purpose of $\mathsf{L}$ will be to simplify proofs of the properties for $\mathsf{K}$ and $\mathsf{J}$.  The properties of $\mathsf{K}$ and $\mathsf{J}$ will be used to establish properties of $\mathsf{K}^{-1}$ and $\mathsf{J}^{-1}$ which will, in turn, be used in the definition of antiderivatives. It is convenient also to have a special  notation for the following families of maps which will become 
important:
$$\mathsf{L}_n := \mathsf{L} + n \cdot 1 ~~~~~\mbox{and}~~~~~ \mathsf{J}_n := \mathsf{J} + n \cdot 1$$
Clearly $\mathsf{L} = \mathsf{L}_0$, $\mathsf{J} = \mathsf{J}_0 = \mathsf{L}_1$, and $\mathsf{J}_n = \mathsf{L}_{n+1}$.  Thus, by establishing properties of  $\mathsf{L}_{n}$, we will obtain as 
a corollary properties of $\mathsf{J}_n$.  We shall begin therefore with the basic properties of $\mathsf{L}_n$:

\begin{proposition}\label{Lprop} $\mathsf{L}$ satisfies the following properties:
\begin{enumerate}[{\bf [L.1]}]
\item $\mathsf{L}_n \oc(0)=n \cdot \oc(0) =\oc(0)\mathsf{L}_n$;
\item $\mathsf{L}_n e=n \cdot e$;
\item $\mathsf{L}_n \varepsilon= (n+1) \cdot \varepsilon$;
\item $\mathsf{L}_n \Delta=\Delta(\mathsf{L}_r \otimes 1)+\Delta(1 \otimes \mathsf{L}_s)$ where $r+s=n$;
\item $\mathsf{L}_n\delta=\delta \mathsf{d}^\circ (1 \otimes \mathsf{L})\mathsf{d} + n \cdot \delta$ ;
\item $\mathsf{d}\mathsf{L}_n=(\mathsf{L}_{n+1} \otimes 1)\mathsf{d}$;
\item $\mathsf{L}_n\mathsf{d}^\circ=\mathsf{d}^\circ(\mathsf{L}_{n+1} \otimes 1)$;
\item $\mathsf{L}_n\mathsf{d}^\circ (1 \otimes f) \mathsf{d} =  \mathsf{d}^\circ (1 \otimes f) \mathsf{d} \mathsf{L}_n$;
\item $(\mathsf{d}^\circ \otimes 1)(\mathsf{L}_n \otimes 1)\mathsf{d}= (1 \otimes \sigma)(\mathsf{d}^\circ \otimes 1)(\mathsf{L}_n \otimes 1)\mathsf{d}$;
\item $\mathsf{d}^\circ(\mathsf{L}_n \otimes 1)(\mathsf{d}^\circ \otimes 1)= \mathsf{d}^\circ (\mathsf{L}_n \otimes 1)(\mathsf{d}^\circ \otimes 1)(1 \otimes \sigma)$;
\item $(\mathsf{L}_n \otimes 1)\mathsf{W}=\mathsf{W}(\mathsf{L}_n \otimes 1)$;
\item $(\mathsf{L}_n \otimes 1)\mathsf{d}\mathsf{d}^\circ=\mathsf{d}\mathsf{d}^\circ(\mathsf{L}_n \otimes 1)$
\end{enumerate}
\end{proposition}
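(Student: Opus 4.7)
\noindent\textbf{Proof proposal for Proposition \ref{Lprop}.}  My plan is to derive the twelve identities in the order in which each builds on its predecessors, treating $\mathsf{L}_n = \mathsf{L}+ n\cdot 1$ as a ``counter'' and pushing the $n\cdot 1$ summand through to the end of each calculation.  The first three items are immediate: for {\bf [L.1]} the naturality of $\mathsf{d}$ gives $\mathsf{d}\,\oc(0) = (\oc(0)\otimes 0)\mathsf{d}=0$, so $\mathsf{L}\,\oc(0)=\mathsf{d}^\circ\mathsf{d}\,\oc(0)=0$, and dually for $\oc(0)\mathsf{L}$; adding $n\cdot\oc(0)$ to both sides yields {\bf [L.1]}.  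For {\bf [L.2]}, the constant rule {\bf [d.1]} gives $\mathsf{d}e=0$, hence $\mathsf{L}e=0$.  For {\bf [L.3]}, the linear rule {\bf [d.3]} together with {\bf [cd.1]} gives $\mathsf{L}\varepsilon = \mathsf{d}^\circ\mathsf{d}\varepsilon = \mathsf{d}^\circ(e\otimes 1)=\varepsilon$, so adding $n\cdot\varepsilon$ yields $(n+1)\cdot\varepsilon$.

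\noindent For {\bf [L.4]} I would expand $\mathsf{d}\Delta$ by the Leibniz rule {\bf [d.2]}, then use {\bf [cd.3]} on the first summand and {\bf [cd.4]} on the second to obtain $\mathsf{L}\Delta=\Delta(\mathsf{L}\otimes 1)+\Delta(1\otimes\mathsf{L})$; splitting $n\cdot\Delta$ as $r\cdot\Delta+s\cdot\Delta$ with $r+s=n$ and grouping each summand into a $\mathsf{L}_r$ or $\mathsf{L}_s$ yields the stated form.  For {\bf [L.5]} I would expand using the chain rule {\bf [d.4]}, absorb the $\mathsf{d}^\circ$ on the outside by {\bf [cd.3]}, and use $\delta\mathsf{d}^\circ = \Delta(\delta\otimes 1)$ (which follows from $\delta\Delta=\Delta(\delta\otimes\delta)$ and the comonad triangle identity $\delta\varepsilon_{\oc A}=1$) to rewrite the composite as $\delta\mathsf{d}^\circ(1\otimes\mathsf{L})\mathsf{d}$; adding $n\cdot\delta$ gives {\bf [L.5]}.

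\noindent The main obstacle is the pair {\bf [L.6]} and {\bf [L.7]}, the ``shifting'' identities, because these are the ones that actually change the subscript.  For {\bf [L.6]} I would use $\mathsf{d}\mathsf{L}=\mathsf{d}\mathsf{d}^\circ\mathsf{d}=(\mathsf{W}+1)\mathsf{d}$ from Proposition \ref{Wprop}, and then apply the interchange rule {\bf [d.5]} to collapse $\mathsf{W}\mathsf{d}=(\mathsf{d}^\circ\otimes 1)(1\otimes\sigma)(\mathsf{d}\otimes 1)\mathsf{d}$ to $(\mathsf{d}^\circ\otimes 1)(\mathsf{d}\otimes 1)\mathsf{d}=(\mathsf{L}\otimes 1)\mathsf{d}$; adding the $n$-multiple of $\mathsf{d}$ packages the shift into $\mathsf{L}_{n+1}$.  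For {\bf [L.7]} I would dually compute $\mathsf{L}\mathsf{d}^\circ=\mathsf{d}^\circ\mathsf{d}\mathsf{d}^\circ=\mathsf{d}^\circ(\mathsf{W}+1)$ and use the cosymmetry rule {\bf [cd.6]} to show $\mathsf{d}^\circ\mathsf{W}=\mathsf{d}^\circ(\mathsf{L}\otimes 1)$, then add the $n$-multiple of $\mathsf{d}^\circ$.

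\noindent The remaining identities then follow formally.  {\bf [L.8]} is obtained by starting with {\bf [L.7]} to push $\mathsf{L}_n$ past $\mathsf{d}^\circ$ at the cost of $\mathsf{L}_{n+1}\otimes 1$ on the other side, sliding $(\mathsf{L}_{n+1}\otimes 1)$ past $(1\otimes f)$ (which affects a disjoint tensor factor), and then applying {\bf [L.6]} to pull $\mathsf{L}_{n+1}\otimes 1$ through $\mathsf{d}$.  {\bf [L.9]} and {\bf [L.10]} are obtained by substituting the definition of $\mathsf{L}$ and reducing to the interchange rules {\bf [d.5]} and {\bf [cd.6]} respectively (the $n\cdot 1$ term contributes purely symmetric pieces handled by those rules).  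Finally, {\bf [L.11]} and {\bf [L.12]} are immediate: for {\bf [L.11]} combine {\bf [L.6]} and {\bf [L.7]} to slide $\mathsf{L}_n$ through each of the three factors in $\mathsf{W}=(\mathsf{d}^\circ\otimes 1)(1\otimes\sigma)(\mathsf{d}\otimes 1)$ (where the symmetry contributes no shift because the shifts on entry and exit cancel), and for {\bf [L.12]} use $\mathsf{d}\mathsf{d}^\circ=\mathsf{W}+1$ from Proposition \ref{Wprop} together with {\bf [L.11]}.
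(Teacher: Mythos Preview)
Your proposal is correct and follows essentially the same strategy as the paper: prove each identity for $\mathsf{L}=\mathsf{L}_0$ using the basic coalgebra-modality identities {\bf [cd.1]}--{\bf [cd.6]}, the deriving-transformation axioms {\bf [d.1]}--{\bf [d.5]}, and Proposition~\ref{Wprop}, then absorb the $n\cdot 1$ summand at the end. Your treatments of {\bf [L.1]}--{\bf [L.8]} and {\bf [L.12]} match the paper almost step for step.

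There are two small route differences worth noting. For {\bf [L.5]} you invoke the identity $\delta\,\mathsf{d}^\circ=\Delta(\delta\otimes 1)$ directly, whereas the paper reaches the same endpoint by inserting $\delta\varepsilon=1$ and using that $\delta$ is a comonoid morphism; both are equivalent one-line rewritings. For {\bf [L.11]} you slide $\mathsf{L}_n$ through $\mathsf{W}$ using the shift identities {\bf [L.6]}/{\bf [L.7]} with the subscript going up by one across $\mathsf{d}^\circ$ and back down across $\mathsf{d}$, which is arguably cleaner than the paper's route via {\bf [L.9]}/{\bf [L.10]}; either way the argument is the same commutation-through-$\mathsf{W}$ idea. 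Your sketches of {\bf [L.9]} and {\bf [L.10]} are terser than the paper's (which pushes $\mathsf{L}_n$ to an $\mathsf{L}_{n+1}$ on the far side via {\bf [L.6]}/{\bf [L.7]} and then applies {\bf [d.5]}/{\bf [cd.6]}), but the content is the same.
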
 

\begin{proof} These are mostly straightforward calculations: \\ 

\noindent
{\bf [L.1]}: The naturality of the deriving transformation and coderiving transformation gives:
\begin{align*}

\end{align*}
\end{proof} 

As $\mathsf{J}_n = \mathsf{L}_{n+1}$ we immediately have:

\begin{corollary}\label{Jprop} $\mathsf{J}$ satisfies the following properties:
\begin{enumerate}[{\bf [J.1]}]
\item $\mathsf{J}_n\oc(0)=(n+1) \cdot \oc(0)=\oc(0)\mathsf{J}_n$;
\item $\mathsf{J}_n e= (n+1) \cdot e$;
\item $\mathsf{J}_n \varepsilon= (n+2) \cdot \varepsilon$; 
\item $\mathsf{J}_n\Delta=\Delta(\mathsf{L}_r \otimes 1)+\Delta(1 \otimes \mathsf{L}_s)$ where $r + s = n+1$;
\item $\mathsf{J}_n\delta=\delta \mathsf{d}^\circ (1 \otimes \mathsf{L})\mathsf{d} + (n+1) \cdot \delta$;
\item $\mathsf{d} \mathsf{J}_n = (\mathsf{J}_{n+1} \otimes 1)\mathsf{d}$; 
\item $\mathsf{J}_n\mathsf{d}^\circ = \mathsf{d}^\circ (\mathsf{J}_{n+1} \otimes 1)$;
\item $\mathsf{J}_n\mathsf{d}^\circ (1 \otimes f) \mathsf{d} = \mathsf{d}^\circ (1 \otimes f) \mathsf{d} \mathsf{J}_n$;
\item $(\mathsf{d}^\circ \otimes 1)(\mathsf{J}_n \otimes 1)\mathsf{d}= (1 \otimes \sigma)(\mathsf{d}^\circ \otimes 1)(\mathsf{J}_n \otimes 1)\mathsf{d}$;
\item $\mathsf{d}^\circ(\mathsf{J}_n \otimes 1)(\mathsf{d}^\circ \otimes 1)= \mathsf{d}^\circ (\mathsf{J}_n \otimes 1)(\mathsf{d}^\circ \otimes 1)(1 \otimes \sigma)$;
\item $(\mathsf{J}_n \otimes 1)\mathsf{W}=\mathsf{W}(\mathsf{J}_n \otimes 1)$;
\item $(\mathsf{J}_n \otimes 1)\mathsf{d}\mathsf{d}^\circ=\mathsf{d}\mathsf{d}^\circ(\mathsf{J}_n \otimes 1)$;
\end{enumerate}
\end{corollary}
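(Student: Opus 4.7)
The plan is to observe that the corollary is entirely a consequence of Proposition \ref{Lprop} together with the identity $\mathsf{J}_n = \mathsf{L}_{n+1}$, which is immediate from the definitions $\mathsf{J} = \mathsf{L} + 1$ and the conventions $\mathsf{L}_n = \mathsf{L} + n \cdot 1$, $\mathsf{J}_n = \mathsf{J} + n \cdot 1$. The strategy is therefore to take each equation of Proposition \ref{Lprop} and substitute $n+1$ for $n$, re-indexing to read it as a statement about $\mathsf{J}$.

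Concretely, I will handle the items in order. For \textbf{[J.1]} through \textbf{[J.3]}: substituting $n+1$ into \textbf{[L.1]}--\textbf{[L.3]} gives the coefficients $(n+1)$, $(n+1)$, $(n+2)$ respectively as claimed. For \textbf{[J.4]}: \textbf{[L.4]} with parameter $n+1$ reads $\mathsf{L}_{n+1}\Delta = \Delta(\mathsf{L}_r \otimes 1) + \Delta(1 \otimes \mathsf{L}_s)$ for any $r+s = n+1$, which is precisely \textbf{[J.4]} after rewriting the left side as $\mathsf{J}_n\Delta$. For \textbf{[J.5]}: \textbf{[L.5]} with parameter $n+1$ yields the required $(n+1)\cdot \delta$ term. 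For \textbf{[J.6]} and \textbf{[J.7]}: \textbf{[L.6]} with parameter $n+1$ gives $\mathsf{d}\mathsf{L}_{n+1} = (\mathsf{L}_{n+2} \otimes 1)\mathsf{d}$, which reads as $\mathsf{d}\mathsf{J}_n = (\mathsf{J}_{n+1} \otimes 1)\mathsf{d}$; likewise for the coderiving transformation. Items \textbf{[J.8]}--\textbf{[J.12]} all have the form of an equation relating $\mathsf{L}_n$ to other natural transformations with no dependence on $n$ on the right-hand side, so substituting $n+1$ for $n$ directly yields the analogous statement about $\mathsf{J}_n$.

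Accordingly, the proof is essentially one line per item: invoke the indicated clause of Proposition \ref{Lprop} with index $n+1$ and apply $\mathsf{J}_n = \mathsf{L}_{n+1}$. There is no genuine obstacle here; the only thing to be careful about is the bookkeeping of the arithmetic of coefficients (so that the constants $n$, $n+1$, $n+2$ in \textbf{[J.1]}--\textbf{[J.3]} and \textbf{[J.5]} are correctly shifted by one) and the re-indexing of the decomposition $r+s$ in \textbf{[J.4]}. Since the substantive work has already been done in Proposition \ref{Lprop}, the proof should be presented as a short remark rather than a list of twelve separate diagrammatic derivations.
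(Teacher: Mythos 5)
Your proposal is correct and is exactly the argument the paper uses: the corollary is stated there with the single justification that $\mathsf{J}_n = \mathsf{L}_{n+1}$, so each clause is the corresponding clause of Proposition \ref{Lprop} with index $n+1$. Your coefficient bookkeeping (the shifts to $n+1$, $n+2$, and $r+s=n+1$) matches the statement, so nothing further is needed.
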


We now give properties of $\mathsf{K}$. Notice that most of the properties of $\mathsf{K}$ are extremely similar, indeed sometimes identical, to the corresponding property of $\mathsf{J}$.

\begin{proposition}\label{Kprop} $\mathsf{K}$ satisfies the following properties:
\begin{enumerate}[{\bf [K.1]}]
\item $\mathsf{K}\oc(0)=\oc(0)=\oc(0)\mathsf{K}$; 
\item $\mathsf{K}e=e$; 
\item $\mathsf{K}\varepsilon= \varepsilon$; 
\item $\mathsf{K}\Delta=\Delta(\mathsf{L} \otimes 1)+\Delta(1 \otimes \mathsf{L})+\Delta(\oc(0) \otimes \oc(0))$;
\item $\mathsf{K}\delta=\delta \mathsf{d}^\circ (1 \otimes \mathsf{L})\mathsf{d}+ \delta\oc(\oc(0))$;
\item $\mathsf{d}\mathsf{K}=\mathsf{d}\mathsf{L}$;
\item $\mathsf{K}\mathsf{d}^\circ=\mathsf{L}\mathsf{d}^\circ$;
\item $\mathsf{K}\mathsf{d}^\circ (1 \otimes f) \mathsf{d} = \mathsf{d}^\circ (1 \otimes f) \mathsf{d} \mathsf{K}$;
\item $(\mathsf{d}^\circ \otimes 1)(\mathsf{K} \otimes 1)\mathsf{d}= (1 \otimes \sigma)(\mathsf{d}^\circ \otimes 1)(\mathsf{K} \otimes 1)\mathsf{d}$;
\item $\mathsf{d}^\circ(\mathsf{K} \otimes 1)(\mathsf{d}^\circ \otimes 1)= \mathsf{d}^\circ (\mathsf{K} \otimes 1)(\mathsf{d}^\circ \otimes 1)(1 \otimes \sigma)$;
\item $(\mathsf{K} \otimes 1)\mathsf{W}=\mathsf{W}(\mathsf{K} \otimes 1)$;
\item $(\mathsf{K} \otimes 1)\mathsf{d}\mathsf{d}^\circ=\mathsf{d}\mathsf{d}^\circ(\mathsf{K} \otimes 1)$.
\end{enumerate}
\end{proposition}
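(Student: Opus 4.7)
The plan is to exploit the decomposition $\mathsf{K} = \mathsf{L} + \oc(0) = \mathsf{L}_0 + \oc(0)$ and derive each property from the $n = 0$ case of the corresponding item in Proposition \ref{Lprop}, adding in whatever contribution $\oc(0)$ makes. Before descending into cases, I would collect the handful of facts about $\oc(0)$ that will be used over and over. By functoriality, $\oc(0)\oc(0) = \oc(0\cdot 0) = \oc(0)$. Naturality of $e$ applied at $0$ gives $\oc(0)\,e = e$. Naturality of $\varepsilon$, $\mathsf{d}$, and $\mathsf{d}^\circ$ applied at $0$ forces a tensor factor of $0$, so $\oc(0)\,\varepsilon = 0$, $\mathsf{d}\,\oc(0) = 0$, and $\oc(0)\,\mathsf{d}^\circ = 0$. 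Finally, naturality of $\Delta$ and $\delta$ gives $\oc(0)\,\Delta = \Delta(\oc(0) \otimes \oc(0))$ and $\oc(0)\,\delta = \delta\,\oc(\oc(0))$. These observations are all one-line checks from the structure already established.

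With these in hand, I would verify the twelve items of Proposition \ref{Kprop} essentially by bookkeeping. For \textbf{[K.1]}, $\mathsf{K}\,\oc(0) = \mathsf{L}\,\oc(0) + \oc(0)\,\oc(0) = 0 + \oc(0) = \oc(0)$ by \textbf{[L.1]} (with $n=0$) and functoriality, and symmetrically for $\oc(0)\,\mathsf{K}$. Item \textbf{[K.2]} is \textbf{[L.2]} plus $\oc(0)\,e = e$; \textbf{[K.3]} is \textbf{[L.3]} plus $\oc(0)\,\varepsilon = 0$; \textbf{[K.4]} is \textbf{[L.4]} (with $r = s = 0$) plus $\oc(0)\,\Delta = \Delta(\oc(0) \otimes \oc(0))$; and \textbf{[K.5]} is \textbf{[L.5]} plus $\oc(0)\,\delta = \delta\,\oc(\oc(0))$. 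Items \textbf{[K.6]} and \textbf{[K.7]} are immediate because $\mathsf{d}\,\oc(0) = 0$ and $\oc(0)\,\mathsf{d}^\circ = 0$, so the $\oc(0)$-contribution vanishes on the appropriate side and leaves $\mathsf{d}\mathsf{L}$ or $\mathsf{L}\mathsf{d}^\circ$ respectively.

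The remaining five items \textbf{[K.8]}--\textbf{[K.12]} follow the same pattern: split both sides along $\mathsf{K} = \mathsf{L} + \oc(0)$, note that the $\oc(0)$-contributions on either side vanish or match, and appeal to the corresponding $\mathsf{L}$-item. For instance, \textbf{[K.8]} becomes the equation $\mathsf{L}\mathsf{d}^\circ(1\otimes f)\mathsf{d} = \mathsf{d}^\circ(1\otimes f)\mathsf{d}\mathsf{L}$ since $\oc(0)\,\mathsf{d}^\circ = 0$ on the left and $\mathsf{d}\,\oc(0) = 0$ on the right, and that is precisely \textbf{[L.8]}. The interchange-type identities \textbf{[K.9]} and \textbf{[K.10]} reduce to \textbf{[L.9]} and \textbf{[L.10]} because the $\oc(0)$-summand is followed (respectively preceded) by $\mathsf{d}$ or $\mathsf{d}^\circ$ in a slot where the vanishing observations apply. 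Items \textbf{[K.11]} and \textbf{[K.12]} similarly reduce to \textbf{[L.11]} and \textbf{[L.12]}, since both $\mathsf{W}$ and $\mathsf{d}\mathsf{d}^\circ$ have a factor of $\mathsf{d}$ adjacent to the $\oc(0)$-contribution.

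The main (and very mild) obstacle is simply keeping track, on each side of each equation, of which occurrence of $\oc(0)$ dies and which survives; once the preliminary facts about $\oc(0)$ are recorded the rest is formulaic. I would therefore present the proof as a brief lemma listing the identities $\oc(0)\,\oc(0) = \oc(0)$, $\oc(0)\,e = e$, $\oc(0)\,\varepsilon = 0$, $\mathsf{d}\,\oc(0) = 0$, $\oc(0)\,\mathsf{d}^\circ = 0$, $\oc(0)\,\Delta = \Delta(\oc(0)\otimes\oc(0))$, and $\oc(0)\,\delta = \delta\,\oc(\oc(0))$, followed by a uniform case analysis of the form ``apply Proposition \ref{Lprop}, \textbf{[L.$k$]} to $\mathsf{L}$-part and use the above to the $\oc(0)$-part''. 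No new graphical calculations beyond those already carried out for $\mathsf{L}$ should be required.
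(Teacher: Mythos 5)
Your proposal is correct and follows essentially the same route as the paper: the paper's proof of each item likewise splits $\mathsf{K}=\mathsf{L}+\oc(0)$, applies the corresponding $\mathsf{L}_0$-identity from Proposition \ref{Lprop}, and disposes of the $\oc(0)$-summand via functoriality and the naturality of $e$, $\varepsilon$, $\Delta$, $\delta$, $\mathsf{d}$, and $\mathsf{d}^\circ$ (e.g.\ $\mathsf{d}\,\oc(0)=(\oc(0)\otimes 0)\mathsf{d}=0$). Your suggestion of isolating the $\oc(0)$-identities in a preliminary lemma is a tidy packaging of what the paper does inline.
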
 

\begin{proof} These are straightforward calculations using the properties of $\mathsf{L}$: \\ \\
{\bf [K.1]}: Here we use {\bf [L.1]} and that $\oc(0)\oc(0)=\oc(0)$: 
\begin{align*}

\end{align*}
\end{proof}

$\mathsf{L}$, $\mathsf{K}$, and $\mathsf{J}$ are compatible with the monoidal coalgebra modality as well. 

\begin{proposition} In a differential category with a monoidal coalgebra modality, the following equalities hold: 
\end{proposition}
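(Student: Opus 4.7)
My plan is to prove each equality by first establishing the corresponding property for $\mathsf{L} = \mathsf{d}^\circ \mathsf{d}$ and then deriving the $\mathsf{K}$ and $\mathsf{J}$ versions by adding appropriate constants. The two essential ingredients are the coderiving monoidal rule \textbf{[cd.m]} (already proven) together with the deriving monoidal rule \textbf{[d.m]}, which holds automatically for any deriving transformation on a monoidal coalgebra modality (as cited at the end of the previous subsection). The remainder of the argument is bookkeeping with functor boxes, $m_\otimes$, $m_K$, and the cocommutative comonoid structure.

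First I would handle the interaction of $\mathsf{L}$ with $m_\otimes$. Starting from $\mathsf{L}m_\otimes = \mathsf{d}^\circ \mathsf{d}\, m_\otimes$, one pushes $m_\otimes$ past $\mathsf{d}$ using \textbf{[d.m]}, which introduces a $\mathsf{d}^\circ$ on one of the tensor factors, then pushes $m_\otimes$ past the outer $\mathsf{d}^\circ$ using \textbf{[cd.m]}. After reapplying \textbf{[cd.m]} and \textbf{[d.m]} in the reverse direction on each of the resulting branches (using cocommutativity of $\Delta$ to symmetrize), each branch collapses to an $\mathsf{L}$ on one side of $m_\otimes$. The expected form is a Leibniz-type identity $\mathsf{L}\, m_\otimes = (\mathsf{L} \otimes 1) m_\otimes + (1 \otimes \mathsf{L}) m_\otimes$, mirroring \textbf{[L.4]}. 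For $m_K$, the $\oc$-counit laws together with $\mathsf{d}e = 0$ and $e \mathsf{d}^\circ = \varepsilon$ (the nullary case of the cocommutative counit rule) should force $\mathsf{L}\,m_K = 0$, mirroring \textbf{[L.2]}.

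Next, the identities for $\mathsf{K} = \mathsf{L} + \oc(0)$ and $\mathsf{J}_n = \mathsf{L}_{n+1}$ follow formally. Using the additivity of composition and tensor, the $\mathsf{L}$-identity expands each of $\mathsf{K}$ and $\mathsf{J}$ into the $\mathsf{L}$ piece plus correction terms; the correction terms are handled by the naturality of $m_\otimes$ and $m_K$ (to commute $\oc(0)$ past $m_\otimes$) and by the idempotence $\oc(0)\oc(0) = \oc(0)$, exactly as in the proof of \textbf{[K.1]}. For $\mathsf{J}$ the constant $1$ on either side simply passes freely through $m_\otimes$, so the statement one obtains is identical in shape to the $\mathsf{L}$ statement with $r + s = n + 1$, matching \textbf{[J.4]}.

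The main obstacle is purely combinatorial: every application of \textbf{[cd.m]} or \textbf{[d.m]} doubles the number of $\oc$-boxes and introduces a swap, and one must chase these through several steps while keeping track of which branches of the Leibniz expansion carry which copy of $\mathsf{L}$. I would therefore carry out the proofs in the graphical calculus, absorbing all associativity and unitor data into the diagrammatic conventions and using the cocommutative comonoid coherence freely, rather than attempting an equational derivation. Once the $\mathsf{L}$-identities are drawn out cleanly, the $\mathsf{K}$- and $\mathsf{J}$-identities are one-line corollaries.
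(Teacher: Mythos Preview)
You have misidentified the content of the proposition. The equalities being proven are \emph{direct commutations}
\[
m_\otimes \mathsf{L}_n = (\mathsf{L}_n \otimes 1)m_\otimes = (1 \otimes \mathsf{L}_n)m_\otimes,
\]
and likewise for $\mathsf{J}_n$ and $\mathsf{K}$; there is no Leibniz-type sum, and there is no statement about $m_K$. Your proposed identity $m_\otimes\mathsf{L} = (\mathsf{L}\otimes 1)m_\otimes + (1\otimes\mathsf{L})m_\otimes$ is incompatible with the actual one (in any example where $\mathsf{L}\neq 0$ and addition is not idempotent it would force a contradiction).

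The source of the error is your expectation that \textbf{[d.m]} is a Leibniz rule. It is not: pushing $\mathsf{d}$ through $m_\otimes$ does \emph{not} produce a sum over which tensor factor receives the derivative. Instead \textbf{[d.m]} says that $(1\otimes\mathsf{d})m_\otimes = (\mathsf{d}^\circ\otimes 1\otimes 1)(1\otimes\sigma\otimes 1)(m_\otimes\otimes 1\otimes 1)\mathsf{d}$: the other factor acquires a single $\mathsf{d}^\circ$, no branching. The paper's proof of \textbf{[L.m]} exploits exactly this: starting from $m_\otimes\mathsf{d}^\circ\mathsf{d}$, apply \textbf{[cd.m]} to obtain $\mathsf{d}^\circ$ on \emph{both} inputs feeding into $(m_\otimes\otimes 1\otimes 1)\mathsf{d}$, then recognize this as the right-hand side of \textbf{[d.m]} and collapse it to $(1\otimes\mathsf{d})m_\otimes$ preceded by a $\mathsf{d}^\circ$ on that same factor, yielding $(1\otimes\mathsf{d}^\circ\mathsf{d})m_\otimes = (1\otimes\mathsf{L})m_\otimes$. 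The symmetric version with $\mathsf{L}$ on the left factor uses the mirror forms of \textbf{[cd.m]} and \textbf{[d.m]}. The $\mathsf{L}_n$, $\mathsf{J}_n$, $\mathsf{K}$ cases then follow by adding $n\cdot 1$ or $\oc(0)$ and using naturality of $m_\otimes$ (e.g.\ $(\oc(0)\otimes 1)m_\otimes = m_\otimes\oc(0) = (1\otimes\oc(0))m_\otimes$), essentially as you outlined for that part.
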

\begin{description}
\item[\textbf{[L.m]}] $m_\otimes \mathsf{L}_n = (\mathsf{L}_n \otimes 1)m_\otimes= (1 \otimes \mathsf{L}_n)m_\otimes$;
\item[\textbf{[J.m]}] $m_\otimes \mathsf{J}_n= (\mathsf{J}_n \otimes 1)m_\otimes= (1 \otimes \mathsf{J}_n)m_\otimes$.
\item[\textbf{[K.m]}] $m_\otimes \mathsf{K}= (\mathsf{K} \otimes 1)m_\otimes= (1 \otimes \mathsf{K})m_\otimes$;
\end{description}
\begin{proof} Straightforward using the monoidal rules of the deriving transformation and coderiving transformation. \\ \\
\textbf{[L.m]}: Here, for $\mathsf{L}_0$, we use \textbf{[cd.m]} and \textbf{[d.m]}:
\begin{align*}
\begin{array}[c]{c}\resizebox{!}{2cm}{%
\begin{tikzpicture}
	\begin{pgfonlayer}{nodelayer}
		\node [style={regular polygon,regular polygon sides=4, draw, inner sep=1pt,minimum size=1pt}] (0) at (0, 4) {$\bigotimes$};
		\node [style=port] (1) at (0.75, 5) {};
		\node [style=port] (2) at (-0.75, 5) {};
		\node [style={circle, draw}] (3) at (0, 3) {$\mathsf{L}$};
		\node [style=port] (4) at (0, 2) {};
	\end{pgfonlayer}
	\begin{pgfonlayer}{edgelayer}
		\draw [style=wire, in=-90, out=180, looseness=1.25] (0) to (2);
		\draw [style=wire, in=0, out=-90, looseness=1.25] (1) to (0);
		\draw [style=wire] (3) to (4);
		\draw [style=wire] (0) to (3);
	\end{pgfonlayer}
\end{tikzpicture}
  }%
\end{array}= 
   \begin{array}[c]{c}\resizebox{!}{2.5cm}{%
\begin{tikzpicture}
	\begin{pgfonlayer}{nodelayer}
		\node [style={regular polygon,regular polygon sides=4, draw, inner sep=1pt,minimum size=1pt}] (0) at (1.25, 3.25) {$\bigotimes$};
		\node [style=port] (1) at (2, 4.25) {};
		\node [style=port] (2) at (0.5, 4.25) {};
		\node [style=differential] (3) at (1.25, 1.25) {{\bf =\!=\!=}};
		\node [style=codifferential] (4) at (1.25, 2.25) {{\bf =\!=\!=}};
		\node [style=port] (5) at (1.25, 0.25) {};
	\end{pgfonlayer}
	\begin{pgfonlayer}{edgelayer}
		\draw [style=wire, in=-90, out=180, looseness=1.25] (0) to (2);
		\draw [style=wire, in=0, out=-90, looseness=1.25] (1) to (0);
		\draw [style=wire, bend right=60, looseness=1.50] (4) to (3);
		\draw [style=wire, bend left=60, looseness=1.50] (4) to (3);
		\draw [style=wire] (0) to (4);
		\draw [style=wire] (3) to (5);
	\end{pgfonlayer}
\end{tikzpicture}
  }%
\end{array}=    \begin{array}[c]{c}\resizebox{!}{2.5cm}{%
\begin{tikzpicture}
	\begin{pgfonlayer}{nodelayer}
		\node [style=codifferential] (0) at (-3, 2.75) {{\bf =\!=\!=}};
		\node [style=differential] (1) at (-1, 1.5) {$\bigotimes$};
		\node [style=port] (2) at (-3, 3.75) {};
		\node [style=port] (3) at (-2, -0.5) {};
		\node [style=codifferential] (4) at (-2, 0.25) {{\bf =\!=\!=}};
		\node [style={regular polygon,regular polygon sides=4, draw, inner sep=1pt,minimum size=1pt}] (5) at (-3.25, 1.5) {$\bigotimes$};
		\node [style=port] (6) at (-1, 3.75) {};
		\node [style=codifferential] (7) at (-1, 2.75) {{\bf =\!=\!=}};
	\end{pgfonlayer}
	\begin{pgfonlayer}{edgelayer}
		\draw [style=wire] (4) to (3);
		\draw [style=wire, in=150, out=-90, looseness=1.00] (5) to (4);
		\draw [style=wire, in=30, out=-90, looseness=1.25] (1) to (4);
		\draw [style=wire] (2) to (0);
		\draw [style=wire, in=180, out=-30, looseness=1.25] (0) to (1);
		\draw [style=wire, in=180, out=-150, looseness=2.00] (0) to (5);
		\draw [style=wire] (6) to (7);
		\draw [style=wire, in=0, out=-150, looseness=1.25] (7) to (5);
		\draw [style=wire, in=0, out=-45, looseness=1.75] (7) to (1);
	\end{pgfonlayer}
\end{tikzpicture}
  }%
\end{array}=    \begin{array}[c]{c}\resizebox{!}{2.5cm}{%
\begin{tikzpicture}
	\begin{pgfonlayer}{nodelayer}
		\node [style=port] (0) at (-3, 3.5) {};
		\node [style={regular polygon,regular polygon sides=4, draw, inner sep=1pt,minimum size=1pt}] (1) at (-2, 0.5) {$\bigotimes$};
		\node [style=port] (2) at (-1, 3.5) {};
		\node [style=differential] (3) at (-1, 1.5) {{\bf =\!=\!=}};
		\node [style=codifferential] (4) at (-1, 2.5) {{\bf =\!=\!=}};
		\node [style=port] (5) at (-2, -0.5) {};
	\end{pgfonlayer}
	\begin{pgfonlayer}{edgelayer}
		\draw [style=wire, bend right=60, looseness=1.50] (4) to (3);
		\draw [style=wire, bend left=60, looseness=1.50] (4) to (3);
		\draw [style=wire] (2) to (4);
		\draw [style=wire, in=0, out=-90, looseness=1.50] (3) to (1);
		\draw [style=wire] (1) to (5);
		\draw [style=wire, in=180, out=-90, looseness=1.00] (0) to (1);
	\end{pgfonlayer}
\end{tikzpicture}
  }%
\end{array}= \begin{array}[c]{c}\resizebox{!}{2cm}{%
\begin{tikzpicture}
	\begin{pgfonlayer}{nodelayer}
		\node [style={regular polygon,regular polygon sides=4, draw, inner sep=1pt,minimum size=1pt}] (0) at (0, 2.25) {$\bigotimes$};
		\node [style=port] (1) at (-0.75, 4.25) {};
		\node [style=port] (2) at (0, 1) {};
		\node [style={circle, draw}] (3) at (0.75, 3.25) {$\mathsf{L}$};
		\node [style=port] (4) at (0.75, 4.25) {};
	\end{pgfonlayer}
	\begin{pgfonlayer}{edgelayer}
		\draw [style=wire, in=180, out=-90, looseness=1.25] (1) to (0);
		\draw [style=wire] (0) to (2);
		\draw [style=wire] (4) to (3);
		\draw [style=wire, in=0, out=-90, looseness=1.25] (3) to (0);
	\end{pgfonlayer}
\end{tikzpicture}
  }%
\end{array}
\end{align*}
To obtain the other equality with $\mathsf{L}$ on left, one must simply use the left versions of \textbf{[d.m]} and \textbf{[cd.m]} instead.   Now to 
obtain the general result for $\mathsf{L}_n$ we have:

\begin{align*}
 \begin{array}[c]{c}\resizebox{!}{2cm}{%
\begin{tikzpicture}
	\begin{pgfonlayer}{nodelayer}
		\node [style={regular polygon,regular polygon sides=4, draw, inner sep=1pt,minimum size=1pt}] (0) at (0, 4) {$\bigotimes$};
		\node [style=port] (1) at (0.75, 5) {};
		\node [style=port] (2) at (-0.75, 5) {};
		\node [style={circle, draw, inner sep=1pt,minimum size=1pt}] (3) at (0, 3) {$\mathsf{L}_n$};
		\node [style=port] (4) at (0, 2) {};
	\end{pgfonlayer}
	\begin{pgfonlayer}{edgelayer}
		\draw [style=wire, in=-90, out=180, looseness=1.25] (0) to (2);
		\draw [style=wire, in=0, out=-90, looseness=1.25] (1) to (0);
		\draw [style=wire] (3) to (4);
		\draw [style=wire] (0) to (3);
	\end{pgfonlayer}
\end{tikzpicture}
  }%
\end{array}=  \begin{array}[c]{c}\resizebox{!}{2cm}{%
\begin{tikzpicture}
	\begin{pgfonlayer}{nodelayer}
		\node [style={regular polygon,regular polygon sides=4, draw, inner sep=1pt,minimum size=1pt}] (0) at (0, 4) {$\bigotimes$};
		\node [style=port] (1) at (0.75, 5) {};
		\node [style=port] (2) at (-0.75, 5) {};
		\node [style={circle, draw}] (3) at (0, 3) {$\mathsf{L}$};
		\node [style=port] (4) at (0, 2) {};
	\end{pgfonlayer}
	\begin{pgfonlayer}{edgelayer}
		\draw [style=wire, in=-90, out=180, looseness=1.25] (0) to (2);
		\draw [style=wire, in=0, out=-90, looseness=1.25] (1) to (0);
		\draw [style=wire] (3) to (4);
		\draw [style=wire] (0) to (3);
	\end{pgfonlayer}
\end{tikzpicture}
  }%
\end{array} +  n \cdot \begin{array}[c]{c}\resizebox{!}{1.5cm}{%
\begin{tikzpicture}
	\begin{pgfonlayer}{nodelayer}
		\node [style=port] (0) at (2, 1) {};
		\node [style=port] (1) at (1.25, -1) {};
		\node [style=port] (2) at (0.5, 1) {};
		\node [style={regular polygon,regular polygon sides=4, draw, inner sep=1pt,minimum size=1pt}] (3) at (1.25, 0) {$\bigotimes$};
	\end{pgfonlayer}
	\begin{pgfonlayer}{edgelayer}
		\draw [style=wire, in=-90, out=180, looseness=1.25] (3) to (2);
		\draw [style=wire] (3) to (1);
		\draw [style=wire, in=0, out=-90, looseness=1.25] (0) to (3);
	\end{pgfonlayer}
\end{tikzpicture}
  }%
\end{array} = \begin{array}[c]{c}\resizebox{!}{2cm}{%
\begin{tikzpicture}
	\begin{pgfonlayer}{nodelayer}
		\node [style={regular polygon,regular polygon sides=4, draw, inner sep=1pt,minimum size=1pt}] (0) at (0, 2.25) {$\bigotimes$};
		\node [style=port] (1) at (-0.75, 4.25) {};
		\node [style=port] (2) at (0, 1) {};
		\node [style={circle, draw}] (3) at (0.75, 3.25) {$\mathsf{L}$};
		\node [style=port] (4) at (0.75, 4.25) {};
	\end{pgfonlayer}
	\begin{pgfonlayer}{edgelayer}
		\draw [style=wire, in=180, out=-90, looseness=1.25] (1) to (0);
		\draw [style=wire] (0) to (2);
		\draw [style=wire] (4) to (3);
		\draw [style=wire, in=0, out=-90, looseness=1.25] (3) to (0);
	\end{pgfonlayer}
\end{tikzpicture}
  }%
\end{array}+ n \cdot \begin{array}[c]{c}\resizebox{!}{1.5cm}{%
\begin{tikzpicture}
	\begin{pgfonlayer}{nodelayer}
		\node [style=port] (0) at (2, 1) {};
		\node [style=port] (1) at (1.25, -1) {};
		\node [style=port] (2) at (0.5, 1) {};
		\node [style={regular polygon,regular polygon sides=4, draw, inner sep=1pt,minimum size=1pt}] (3) at (1.25, 0) {$\bigotimes$};
	\end{pgfonlayer}
	\begin{pgfonlayer}{edgelayer}
		\draw [style=wire, in=-90, out=180, looseness=1.25] (3) to (2);
		\draw [style=wire] (3) to (1);
		\draw [style=wire, in=0, out=-90, looseness=1.25] (0) to (3);
	\end{pgfonlayer}
\end{tikzpicture}
  }%
\end{array}= \begin{array}[c]{c}\resizebox{!}{2cm}{%
\begin{tikzpicture}
	\begin{pgfonlayer}{nodelayer}
		\node [style={regular polygon,regular polygon sides=4, draw, inner sep=1pt,minimum size=1pt}] (0) at (0, 2.25) {$\bigotimes$};
		\node [style=port] (1) at (-0.75, 4.25) {};
		\node [style=port] (2) at (0, 1) {};
		\node [style={circle, draw, inner sep=1pt,minimum size=1pt}] (3) at (0.75, 3.25) {$\mathsf{L}_n$};
		\node [style=port] (4) at (0.75, 4.25) {};
	\end{pgfonlayer}
	\begin{pgfonlayer}{edgelayer}
		\draw [style=wire, in=180, out=-90, looseness=1.25] (1) to (0);
		\draw [style=wire] (0) to (2);
		\draw [style=wire] (4) to (3);
		\draw [style=wire, in=0, out=-90, looseness=1.25] (3) to (0);
	\end{pgfonlayer}
\end{tikzpicture}
  }%
\end{array}
\end{align*} \\

\noindent
\textbf{[J.m]} This is a direct consequence of \textbf{[L.m]}.

\noindent
\textbf{[K.m]}: By the naturality of $m_\otimes$ and the additive structure we first observe that: 
\[(\oc(0) \otimes 1)m_\otimes= m_\otimes \oc(0 \otimes 1) = m_\otimes \oc(0) = m_\otimes \oc(1 \otimes 0)= (1 \otimes \oc(0)) m_\otimes\]
Then using \textbf{[L.m]}, the additive structure and the naturality of $m_\otimes$ we get that: 
\begin{align*}
 \begin{array}[c]{c}\resizebox{!}{2cm}{%
\begin{tikzpicture}
	\begin{pgfonlayer}{nodelayer}
		\node [style={regular polygon,regular polygon sides=4, draw, inner sep=1pt,minimum size=1pt}] (0) at (0, 4) {$\bigotimes$};
		\node [style=port] (1) at (0.75, 5) {};
		\node [style=port] (2) at (-0.75, 5) {};
		\node [style={circle, draw}] (3) at (0, 3) {$\mathsf{K}$};
		\node [style=port] (4) at (0, 2) {};
	\end{pgfonlayer}
	\begin{pgfonlayer}{edgelayer}
		\draw [style=wire, in=-90, out=180, looseness=1.25] (0) to (2);
		\draw [style=wire, in=0, out=-90, looseness=1.25] (1) to (0);
		\draw [style=wire] (3) to (4);
		\draw [style=wire] (0) to (3);
	\end{pgfonlayer}
\end{tikzpicture}
  }%
\end{array}=  \begin{array}[c]{c}\resizebox{!}{2cm}{%
\begin{tikzpicture}
	\begin{pgfonlayer}{nodelayer}
		\node [style={regular polygon,regular polygon sides=4, draw, inner sep=1pt,minimum size=1pt}] (0) at (0, 4) {$\bigotimes$};
		\node [style=port] (1) at (0.75, 5) {};
		\node [style=port] (2) at (-0.75, 5) {};
		\node [style={circle, draw}] (3) at (0, 3) {$\mathsf{L}$};
		\node [style=port] (4) at (0, 2) {};
	\end{pgfonlayer}
	\begin{pgfonlayer}{edgelayer}
		\draw [style=wire, in=-90, out=180, looseness=1.25] (0) to (2);
		\draw [style=wire, in=0, out=-90, looseness=1.25] (1) to (0);
		\draw [style=wire] (3) to (4);
		\draw [style=wire] (0) to (3);
	\end{pgfonlayer}
\end{tikzpicture}
  }%
\end{array} +  \begin{array}[c]{c}\resizebox{!}{2cm}{%
\begin{tikzpicture}
	\begin{pgfonlayer}{nodelayer}
		\node [style={regular polygon,regular polygon sides=4, draw, inner sep=1pt,minimum size=1pt}] (0) at (0, 4) {$\bigotimes$};
		\node [style=port] (1) at (0.75, 5) {};
		\node [style=port] (2) at (-0.75, 5) {};
		\node [style={regular polygon,regular polygon sides=4, draw}] (3) at (0, 3) {$0$};
		\node [style=port] (4) at (0, 2) {};
	\end{pgfonlayer}
	\begin{pgfonlayer}{edgelayer}
		\draw [style=wire, in=-90, out=180, looseness=1.25] (0) to (2);
		\draw [style=wire, in=0, out=-90, looseness=1.25] (1) to (0);
		\draw [style=wire] (3) to (4);
		\draw [style=wire] (0) to (3);
	\end{pgfonlayer}
\end{tikzpicture}
  }%
\end{array} = \begin{array}[c]{c}\resizebox{!}{2cm}{%
\begin{tikzpicture}
	\begin{pgfonlayer}{nodelayer}
		\node [style={regular polygon,regular polygon sides=4, draw, inner sep=1pt,minimum size=1pt}] (0) at (0, 2.25) {$\bigotimes$};
		\node [style=port] (1) at (-0.75, 4.25) {};
		\node [style=port] (2) at (0, 1) {};
		\node [style={circle, draw}] (3) at (0.75, 3.25) {$\mathsf{L}$};
		\node [style=port] (4) at (0.75, 4.25) {};
	\end{pgfonlayer}
	\begin{pgfonlayer}{edgelayer}
		\draw [style=wire, in=180, out=-90, looseness=1.25] (1) to (0);
		\draw [style=wire] (0) to (2);
		\draw [style=wire] (4) to (3);
		\draw [style=wire, in=0, out=-90, looseness=1.25] (3) to (0);
	\end{pgfonlayer}
\end{tikzpicture}
  }%
\end{array}+ \begin{array}[c]{c}\resizebox{!}{2cm}{%
\begin{tikzpicture}
	\begin{pgfonlayer}{nodelayer}
		\node [style={regular polygon,regular polygon sides=4, draw, inner sep=1pt,minimum size=1pt}] (0) at (0, 2.25) {$\bigotimes$};
		\node [style=port] (1) at (-0.75, 4.25) {};
		\node [style=port] (2) at (0, 1) {};
		\node [style={regular polygon,regular polygon sides=4, draw}] (3) at (0.75, 3.25) {$0$};
		\node [style=port] (4) at (0.75, 4.25) {};
	\end{pgfonlayer}
	\begin{pgfonlayer}{edgelayer}
		\draw [style=wire, in=180, out=-90, looseness=1.25] (1) to (0);
		\draw [style=wire] (0) to (2);
		\draw [style=wire] (4) to (3);
		\draw [style=wire, in=0, out=-90, looseness=1.25] (3) to (0);
	\end{pgfonlayer}
\end{tikzpicture}
  }%
\end{array}= \begin{array}[c]{c}\resizebox{!}{2cm}{%
\begin{tikzpicture}
	\begin{pgfonlayer}{nodelayer}
		\node [style={regular polygon,regular polygon sides=4, draw, inner sep=1pt,minimum size=1pt}] (0) at (0, 2.25) {$\bigotimes$};
		\node [style=port] (1) at (-0.75, 4.25) {};
		\node [style=port] (2) at (0, 1) {};
		\node [style={circle, draw}] (3) at (0.75, 3.25) {$\mathsf{K}$};
		\node [style=port] (4) at (0.75, 4.25) {};
	\end{pgfonlayer}
	\begin{pgfonlayer}{edgelayer}
		\draw [style=wire, in=180, out=-90, looseness=1.25] (1) to (0);
		\draw [style=wire] (0) to (2);
		\draw [style=wire] (4) to (3);
		\draw [style=wire, in=0, out=-90, looseness=1.25] (3) to (0);
	\end{pgfonlayer}
\end{tikzpicture}
  }%
\end{array}
\end{align*}
Similarly to obtain $\mathsf{K}$ on the left. 
\end{proof}

\section{Calculus Categories}\label{calccatsec}

One of the main goals of developing integral categories, apart from axiomatizing integration, was to obtain a categorical setting for calculus. Now that we have established both integral categories and differential categories, we will combine them to obtain calculus categories. However, a calculus category is not simply a category which is both an integral category and a differential category: we require integration and differentiation to be compatible with each other. In particular, we would like integration to be anti-differentiation. In the classical calculus setting, the relationship between integration and anti-differentiation is precisely expressed by the two fundamental theorems of calculus. The fundamental theorems explain to what extent integration and differentiation are inverse processes. Therefore, the extra axioms required for calculus categories will be to impose a similar relation between the deriving transformation and integral transformations by axiomatizing the two fundamental theorems. 

\subsection{Second Fundamental Theorem of Calculus}

\begin{definition}\label{FT2DEF} \normalfont A deriving transformation $\mathsf{d}$ and an integral transformation $\mathsf{s}$ for the same coalgebra modality satisfy the \textbf{Second Fundamental Theorem of Calculus} if: $\mathsf{s}\mathsf{d} +\oc 0=1$. 
$$ \begin{array}[c]{c} \resizebox{!}{1.5cm}{%
\begin{tikzpicture}
	\begin{pgfonlayer}{nodelayer}
		\node [style=port] (0) at (0, 0.5) {};
		\node [style=integral] (1) at (0, 2) {{\bf ------}};
		\node [style=port] (2) at (0, 2.5) {};
		\node [style=differential] (3) at (0, 1) {{\bf =\!=\!=\!=}};
	\end{pgfonlayer}
	\begin{pgfonlayer}{edgelayer}
		\draw [style=wire] (2) to (1);
		\draw [style=wire] (0) to (3);
		\draw [style=wire, bend right=60, looseness=1.50] (1) to (3);
		\draw [style=wire, bend left=60, looseness=1.50] (1) to (3);
	\end{pgfonlayer}
\end{tikzpicture}}
   \end{array}+
   \begin{array}[c]{c} \resizebox{!}{2cm}{%
\begin{tikzpicture}
	\begin{pgfonlayer}{nodelayer}
		\node [style=port] (0) at (0, 0) {};
		\node [style=port] (1) at (0, 3) {};
		\node [style={regular polygon,regular polygon sides=4, draw}] (2) at (0, 1.5) {$0$};
	\end{pgfonlayer}
	\begin{pgfonlayer}{edgelayer}
		\draw [style=wire] (1) to (2);
		\draw [style=wire] (2) to (0);
	\end{pgfonlayer}
\end{tikzpicture}}
   \end{array}=
   \begin{array}[c]{c} \resizebox{!}{2cm}{%
\begin{tikzpicture}
	\begin{pgfonlayer}{nodelayer}
		\node [style=port] (0) at (0, 2.5) {};
		\node [style=port] (1) at (0, 0.5) {};
	\end{pgfonlayer}
	\begin{pgfonlayer}{edgelayer}
		\draw [style=wire] (0) to (1);
	\end{pgfonlayer}
\end{tikzpicture}}
   \end{array}$$
Therefore, for every differentiable map $f: \oc A \to B$: $\mathsf{S}[\mathsf{D}[f]]+\oc(0)f=f$.\end{definition}

Two important concepts related to the Second Fundamental Theorem of Calculus are the Compatibility and Taylor conditions:

\begin{definition} \normalfont A deriving transformation $\mathsf{d}$ and an integral transformation $\mathsf{s}$ for the same coalgebra modality are said to be \textbf{compatible} if: $\mathsf{d}\mathsf{s}\mathsf{d}=\mathsf{d}$ 
$$ \begin{array}[c]{c} \resizebox{!}{2cm}{%
\begin{tikzpicture}
	\begin{pgfonlayer}{nodelayer}
		\node [style=port] (0) at (0, 0) {};
		\node [style=integral] (1) at (0, 1.5) {{\bf ------}};
		\node [style=differential] (2) at (0, 0.5) {{\bf =\!=\!=\!=}};
		\node [style=port] (3) at (0.5, 2.75) {};
		\node [style=differential] (4) at (0, 2) {{\bf =\!=\!=\!=}};
		\node [style=port] (5) at (-0.5, 2.75) {};
	\end{pgfonlayer}
	\begin{pgfonlayer}{edgelayer}
		\draw [style=wire] (0) to (2);
		\draw [style=wire, bend right=60, looseness=1.50] (1) to (2);
		\draw [style=wire, bend left=60, looseness=1.50] (1) to (2);
		\draw [style=wire, bend right, looseness=1.00] (4) to (3);
		\draw [style=wire, bend left, looseness=1.00] (4) to (5);
		\draw [style=wire] (4) to (1);
	\end{pgfonlayer}
\end{tikzpicture}}
   \end{array}=
   \begin{array}[c]{c} \resizebox{!}{1.5cm}{%
\begin{tikzpicture}
	\begin{pgfonlayer}{nodelayer}
		\node [style=differential] (0) at (0, 1.75) {{\bf =\!=\!=\!=}};
		\node [style=port] (1) at (0.75, 3) {};
		\node [style=port] (2) at (-0.75, 3) {};
		\node [style=port] (3) at (0, 0.75) {};
	\end{pgfonlayer}
	\begin{pgfonlayer}{edgelayer}
		\draw [style=wire, bend right, looseness=1.00] (0) to (1);
		\draw [style=wire] (3) to (0);
		\draw [style=wire, bend left, looseness=1.00] (0) to (2);
	\end{pgfonlayer}
\end{tikzpicture}}
   \end{array}
   $$
\end{definition}

Simply put, compatibility is a weaker version of the Second Fundamental Theorem of Calculus. In fact, if one differentiates the equation of the Second Fundamental Theorem of Calculus, one obtains the expression for compatibility. We next define the Taylor condition, which unlike compatibility and the fundamental theorems, is a property only of the deriving transformation. 

\begin{definition} \normalfont A deriving transformation $\mathsf{d}$ is said to be \textbf{Taylor} \cite{ehrhard2017introduction} if for every pair of maps $f,g: C \otimes \oc A  \to B$, such that $(1 \otimes \mathsf{d})f=(1 \otimes \mathsf{d})g$, then: 
\[f+(1 \otimes \oc(0))g=g+(1 \otimes \oc(0)) f\]
\end{definition}

The Taylor property is analogous to the statement that if the derivatives of two maps are equal, then those two maps differ by constants. In this case, the constants are the maps evaluated at zero. If one assumes that we can subtract maps, that is, if we have negatives, then the Taylor property is equivalent to the statement that, if the derivative of a map is zero, then it is a constant. The Taylor condition was the extra condition Ehrhard required of his differential transformation to obtain the Second Fundamental Theorem of Calculus for his integral transformation \cite{ehrhard2017introduction}.

Being Compatible and Taylor is equivalent to satisfying the second fundamental theorem of calculus:

\begin{proposition}\label{FTC2CompTaylor} For a deriving transformation $\mathsf{d}$ and an integral transformation $\mathsf{s}$ on the same coalgebra modality, the following are equivalent:
\begin{enumerate}[{\em (i)}]
\item $\mathsf{d}$ and $\mathsf{s}$ satisfy the Second Fundamental Theorem of Calculus;
\item $\mathsf{d}$ and $\mathsf{s}$ are Compatible and $\mathsf{d}$ is Taylor. 
\end{enumerate}
\end{proposition}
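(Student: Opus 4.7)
The plan is to prove (i) $\Leftrightarrow$ (ii) directly from the three definitions, exploiting naturality of $\mathsf{d}$ and $\mathsf{s}$ with respect to the zero map. First I would record two naturality facts that will be used repeatedly: applying naturality of $\mathsf{d}$ to $0: A \to A$ gives $\mathsf{d}\oc(0) = (\oc(0) \otimes 0)\mathsf{d} = 0$, and applying naturality of $\mathsf{s}$ gives $\oc(0)\mathsf{s} = \mathsf{s}(\oc(0) \otimes 0) = 0$, since $h \otimes 0 = 0$ in an additive symmetric monoidal category.

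For (i) $\Rightarrow$ (ii), Compatibility is immediate: post-composing SFC, $\mathsf{s}\mathsf{d} + \oc 0 = 1$, with $\mathsf{d}$ yields $\mathsf{d}\mathsf{s}\mathsf{d} + \mathsf{d}\oc(0) = \mathsf{d}$, and the second term vanishes by the first naturality identity. For Taylor, suppose $(1 \otimes \mathsf{d})f = (1 \otimes \mathsf{d})g$ for $f, g: C \otimes \oc A \to B$. Tensoring SFC on the left by $1_C$ gives $(1 \otimes \mathsf{s})(1 \otimes \mathsf{d}) + (1 \otimes \oc 0) = 1_{C \otimes \oc A}$, and therefore
\[ f = (1 \otimes \mathsf{s})(1 \otimes \mathsf{d})f + (1 \otimes \oc 0)f, \qquad g = (1 \otimes \mathsf{s})(1 \otimes \mathsf{d})g + (1 \otimes \oc 0)g. \]
Adding $(1 \otimes \oc 0)g$ to the first equation, $(1 \otimes \oc 0)f$ to the second, and invoking the hypothesis $(1 \otimes \mathsf{d})f = (1 \otimes \mathsf{d})g$ to identify the ``integrated'' parts, one reads off $f + (1 \otimes \oc 0)g = g + (1 \otimes \oc 0)f$.

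For (ii) $\Rightarrow$ (i), I would apply Taylor to the pair $f \coloneqq \mathsf{s}\mathsf{d}$ and $g \coloneqq 1_{\oc A}$, both viewed as maps $\oc A \to \oc A$ with $C = K$. The hypothesis $(1 \otimes \mathsf{d})f = (1 \otimes \mathsf{d})g$ collapses (since $C = K$) to $\mathsf{d}\mathsf{s}\mathsf{d} = \mathsf{d}$, which is exactly Compatibility. Taylor therefore delivers
\[ \mathsf{s}\mathsf{d} + \oc 0 = 1 + \oc(0)\mathsf{s}\mathsf{d}, \]
and the second naturality identity $\oc(0)\mathsf{s} = 0$ kills the final term, producing SFC as desired.

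The only subtlety worth flagging, and the point I would double-check, is that the enrichment need not have negatives, so no cancellation is available. Fortunately, both directions are purely additive: each argument reduces to adding equal terms to both sides and then eliminating trailing $\oc(0)$ composites through the identities $\mathsf{d}\oc(0) = 0$ and $\oc(0)\mathsf{s} = 0$, so the absence of negatives is respected throughout.
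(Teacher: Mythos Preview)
Your proof is correct and matches the paper's argument essentially step for step: both directions use the same pair $(f,g)=(\mathsf{s}\mathsf{d},1)$ for Taylor and the same naturality identities $\mathsf{d}\oc(0)=0$ and $\oc(0)\mathsf{s}=0$ to eliminate residual terms. Your explicit remark that no negatives are needed is a nice addition.
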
 
\begin{proof} $(i) \Rightarrow (ii)$: Suppose $\mathsf{d}$ and $\mathsf{s}$ satisfy the Second Fundamental Theorem of Calculus. For Taylor, suppose that $(1 \otimes \mathsf{d})f=(1 \otimes \mathsf{d})g$. Then we have the following equality:
\begin{align*}
f+(1 \otimes \oc 0)g &= (1 \otimes \mathsf{s})(1 \otimes \mathsf{d})f + (1 \otimes \oc 0)f +(1 \otimes \oc 0)g\\
& = (1 \otimes \mathsf{s})(1 \otimes \mathsf{d})g + (1 \otimes \oc 0)f +(1 \otimes \oc 0)g\\
& = g+(1 \otimes \oc 0) f
\end{align*}
For compatibility, by naturality, we have the following equality: 
$$\mathsf{d}=\mathsf{d}\mathsf{s}\mathsf{d}+\mathsf{d}\oc(0)=\mathsf{d}\mathsf{s}\mathsf{d}+(\oc(0) \otimes 0)\mathsf{d}=\mathsf{d}\mathsf{s}\mathsf{d}+0=\mathsf{d}\mathsf{s}\mathsf{d}$$
$(i) \Rightarrow (ii)$: Suppose $\mathsf{d}$ and $\mathsf{s}$ are Compatible and $\mathsf{d}$ is Taylor. Notice by Compatibility we have: $\mathsf{d}\mathsf{s}\mathsf{d}=\mathsf{d}$, and then by Taylor (where $f=\mathsf{s}\mathsf{d}$ and $g=1$) we have the following equality:
$\mathsf{s}\mathsf{d}+\oc(0)=1+\oc(0)\mathsf{s}\mathsf{d}$.
However, using naturality, we have:
$$\mathsf{s}\mathsf{d}+\oc(0)=1+\oc(0)\mathsf{s}\mathsf{d}=1+\mathsf{s}(\oc(0) \otimes 0)\mathsf{d}=1+0=1$$
\end{proof} 

\subsection{First Fundamental Theorem of Calculus and the Poincar\' e Condition}\label{FT1subsec}

Unlike the Second Fundamental Theorem of Calculus, the First Fundamental Theorem of Calculus holds for only certain integrable maps. 
\begin{definition} 
\normalfont Let $\mathsf{d}$ be a deriving transformation and $\mathsf{s}$ be an integral transformation on the same coalgebra modality. A map $f: C \otimes \oc A \otimes A \to B$ satisfies the \textbf{First Fundamental Theorem} (in the last two arguments) if:
\[(1 \otimes (\mathsf{d}_A\mathsf{s}_A))f=f\]
 $$\begin{array}[c]{c}
   \resizebox{!}{2.5cm}{%
\begin{tikzpicture}
	\begin{pgfonlayer}{nodelayer}
		\node [style={circle, draw}] (0) at (-0.75, -0.5) {$f$};
		\node [style=port] (1) at (0.25, 2.5) {};
		\node [style=port] (2) at (-0.75, 2.5) {};
		\node [style=codifferential] (3) at (-0.25, 0.75) {{\bf --------}};
		\node [style=integral] (4) at (-0.25, 1.5) {{\bf =\!=\!=\!=}};
		\node [style=port] (5) at (-1.5, 2.5) {};
		\node [style=port] (6) at (-0.75, -1.25) {};
	\end{pgfonlayer}
	\begin{pgfonlayer}{edgelayer}
		\draw [style=wire, bend right, looseness=1.00] (4) to (1);
		\draw [style=wire, bend left, looseness=1.00] (4) to (2);
		\draw [style=wire] (4) to (3);
		\draw [style=wire, bend right, looseness=1.00] (3) to (0);
		\draw [style=wire, in=30, out=-45, looseness=1.50] (3) to (0);
		\draw [style=wire, in=150, out=-91, looseness=0.75] (5) to (0);
		\draw [style=wire] (0) to (6);
	\end{pgfonlayer}
\end{tikzpicture}}
   \end{array}=  \begin{array}[c]{c}
     \resizebox{!}{2.5cm}{%
\begin{tikzpicture}
	\begin{pgfonlayer}{nodelayer}
		\node [style={circle, draw}] (0) at (-0.75, -0.5) {$f$};
		\node [style=port] (1) at (0, 2.5) {};
		\node [style=port] (2) at (-0.75, 2.5) {};
		\node [style=port] (3) at (-1.5, 2.5) {};
		\node [style=port] (4) at (-0.75, -1.25) {};
	\end{pgfonlayer}
	\begin{pgfonlayer}{edgelayer}
		\draw [style=wire, in=150, out=-91, looseness=0.75] (3) to (0);
		\draw [style=wire] (0) to (4);
		\draw [style=wire] (2) to (0);
		\draw [style=wire, in=30, out=-93, looseness=0.75] (1) to (0);
	\end{pgfonlayer}
\end{tikzpicture}}
   \end{array}$$
\end{definition}

Thus, if $f$ satisfies the First Fundamental theorem, it may be viewed as the differential of a map -- namely the differential of its integral.  Clearly not all maps will satisfy the First Fundamental theorem calculus, but a necessary condition is:

\begin{lemma}\label{Poincarebackwards} For a deriving transformation $\mathsf{d}$ and an integral transformation $\mathsf{s}$ on the same coalgebra modality, if $f: C \otimes \oc A \otimes A \to B$, satisfies the First Fundamental Theorem, then: 
\[(1 \otimes 1 \otimes \sigma)(1 \otimes \mathsf{d} \otimes 1)f=(1 \otimes \mathsf{d} \otimes 1)f\]
\[\begin{array}[c]{c}\resizebox{!}{2cm}{%
\begin{tikzpicture}
	\begin{pgfonlayer}{nodelayer}
		\node [style=differential] (0) at (-1, 1) {${\bf =\!=\!=\!=}$};
		\node [style=port] (1) at (-1.5, 2) {};
		\node [style=port] (2) at (-0.5, 2) {};
		\node [style=port] (3) at (-1, -0.75) {};
		\node [style={circle, draw}] (4) at (-1, 0) {$f$};
		\node [style=port] (5) at (-2.25, 2) {};
		\node [style=port] (6) at (0.25, 2) {};
	\end{pgfonlayer}
	\begin{pgfonlayer}{edgelayer}
		\draw [style=wire, in=30, out=-90, looseness=1.00] (2) to (0);
		\draw [style=wire, in=90, out=-90, looseness=1.25] (0) to (4);
		\draw [style=wire] (4) to (3);
		\draw [style=wire, in=150, out=-90, looseness=1.25] (5) to (4);
		\draw [style=wire, bend right, looseness=1.00] (1) to (0);
		\draw [style=wire, in=30, out=-90, looseness=1.00] (6) to (4);
	\end{pgfonlayer}
\end{tikzpicture}}
   \end{array} \!=\!   \begin{array}[c]{c}\resizebox{!}{2cm}{%
\begin{tikzpicture}
	\begin{pgfonlayer}{nodelayer}
		\node [style=differential] (0) at (-1, 1) {${\bf =\!=\!=\!=}$};
		\node [style=port] (1) at (-1.5, 2) {};
		\node [style=port] (2) at (-0.5, 2) {};
		\node [style=port] (3) at (-1, -0.75) {};
		\node [style={circle, draw}] (4) at (-1, 0) {$f$};
		\node [style=port] (5) at (-2.25, 2) {};
		\node [style=port] (6) at (0.25, 2) {};
	\end{pgfonlayer}
	\begin{pgfonlayer}{edgelayer}
		\draw [style=wire, in=90, out=-90, looseness=1.25] (0) to (4);
		\draw [style=wire] (4) to (3);
		\draw [style=wire, in=150, out=-90, looseness=1.25] (5) to (4);
		\draw [style=wire, bend right, looseness=1.00] (1) to (0);
		\draw [style=wire, in=45, out=-90, looseness=1.00] (6) to (0);
		\draw [style=wire, in=30, out=-60, looseness=1.50] (2) to (4);
	\end{pgfonlayer}
\end{tikzpicture}}
   \end{array}\]
\end{lemma}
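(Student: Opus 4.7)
The plan is to use the First Fundamental Theorem to convert the statement into an instance of the interchange rule \textbf{[d.5]} for the deriving transformation, which is precisely the equation $(\mathsf{d}\otimes 1)\mathsf{d}=(1\otimes\sigma)(\mathsf{d}\otimes 1)\mathsf{d}$. The idea is that the FFT hypothesis lets us freely insert $\mathsf{s}$ after any $\mathsf{d}$ applied to the last two arguments of $f$, which allows us to rewrite compositions $(\mathsf{d}\otimes 1)\mathsf{d}\mathsf{s}$ and then apply interchange under the ``$\mathsf{s}$ cap.''

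First I would unfold the FFT hypothesis to rewrite $f = (1\otimes\mathsf{d}\mathsf{s})f$, and substitute this occurrence of $f$ on the right-hand side of the target equation, yielding
\begin{align*}
(1\otimes\mathsf{d}\otimes 1)f &= (1\otimes\mathsf{d}\otimes 1)(1\otimes\mathsf{d}\mathsf{s})f \\
&= \bigl(1\otimes(\mathsf{d}\otimes 1)\mathsf{d}\mathsf{s}\bigr)f.
\end{align*}
Next, I would invoke the interchange rule \textbf{[d.5]} to replace $(\mathsf{d}\otimes 1)\mathsf{d}$ by $(1\otimes\sigma)(\mathsf{d}\otimes 1)\mathsf{d}$ inside the composite, giving
\[
\bigl(1\otimes(\mathsf{d}\otimes 1)\mathsf{d}\mathsf{s}\bigr)f = \bigl(1\otimes(1\otimes\sigma)(\mathsf{d}\otimes 1)\mathsf{d}\mathsf{s}\bigr)f = (1\otimes 1\otimes\sigma)\bigl(1\otimes(\mathsf{d}\otimes 1)\mathsf{d}\mathsf{s}\bigr)f.
\]
Finally, I would reassociate $(\mathsf{d}\otimes 1)\mathsf{d}\mathsf{s} = (\mathsf{d}\otimes 1)(\mathsf{d}\mathsf{s})$ and apply the FFT hypothesis in reverse to absorb the trailing $(1\otimes\mathsf{d}\mathsf{s})$, producing
\[
(1\otimes 1\otimes\sigma)(1\otimes\mathsf{d}\otimes 1)(1\otimes\mathsf{d}\mathsf{s})f = (1\otimes 1\otimes\sigma)(1\otimes\mathsf{d}\otimes 1)f,
\]
which is the required identity.

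There is no serious obstacle here; the only mild subtlety is bookkeeping of the tensor positions to see that the interchange rule, which is a statement about the two rightmost $A$-factors entering $\mathsf{d}$ applied twice, is exactly what one obtains once the FFT condition has been used to insert $\mathsf{s}$ after the outer $\mathsf{d}$. Translating the argument into the graphical calculus makes this bookkeeping transparent: the FFT lets one ``cap'' the $\mathsf{d}$ with $\mathsf{s}$, the interchange rule is applied under the cap, and the cap is then removed by a second application of FFT.
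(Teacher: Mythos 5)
Your proposal is correct and follows essentially the same route as the paper's own proof: insert $(1\otimes\mathsf{d}\mathsf{s})$ via the First Fundamental Theorem hypothesis, apply the interchange rule \textbf{[d.5]} under the inserted $\mathsf{s}$, and remove the insertion by a second use of the hypothesis. The only cosmetic difference is that you run the chain of equalities starting from the $\sigma$-free side while the paper starts from the side with $\sigma$.
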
 
\begin{proof}
As $(1 \otimes \mathsf{d}\mathsf{s})f=f$, the interchange rule for the deriving transformation {\bf [d.5]} gives: 
\begin{align*}
(1 \otimes 1 \otimes \sigma)(1 \otimes \mathsf{d} \otimes 1)f  &=   (1 \otimes 1 \otimes \sigma)(1 \otimes \mathsf{d} \otimes 1)(1 \otimes (\mathsf{d}\mathsf{s}))f\\
&= (1 \otimes \mathsf{d} \otimes 1)(1 \otimes (\mathsf{d}\mathsf{s}))f\\
&= (1 \otimes \mathsf{d} \otimes 1)f
\end{align*}
\end{proof} 

The converse of this lemma will be used as an axiom for calculus categories called the Poincar\' e condition.

The Poincar\' e Condition states that if an integrable map satisfies a certain pre-condition, then it satisfies the First Fundamental Theorem of Calculus. The name of the condition comes from the Poincar\' e Lemma from cohomology  \cite{weibel1995introduction} and differential topology \cite{bott2013differential}, which states an analoguous result giving criteria for a map to be an antiderivative. 

\begin{definition}\label{Poincaredef} \normalfont A deriving transformation $\mathsf{d}$ and an integral transformation $\mathsf{s}$ for the same coalgebra modality are said to satisfy the \textbf{Poincar\' e condition} if any map 
$f: C \otimes \oc A \otimes A \to B$ for which: $(1 \otimes  1 \otimes \sigma)(1 \otimes \mathsf{d} \otimes 1)f=(1 \otimes \mathsf{d} \otimes 1)f$, 
satisfies the First Fundamental Theorem -- that is: $(1 \otimes \mathsf{d}\mathsf{s})f=f$. 
\end{definition}

The Poincar\' e condition and Lemma \ref{Poincarebackwards} imply the following equivalence:
$$\begin{array}[c]{c}\resizebox{!}{2cm}{%
\begin{tikzpicture}
	\begin{pgfonlayer}{nodelayer}
		\node [style=differential] (0) at (-1, 1) {${\bf =\!=\!=\!=}$};
		\node [style=port] (1) at (-1.5, 2) {};
		\node [style=port] (2) at (-0.5, 2) {};
		\node [style=port] (3) at (-1, -0.75) {};
		\node [style={circle, draw}] (4) at (-1, 0) {$f$};
		\node [style=port] (5) at (-2.25, 2) {};
		\node [style=port] (6) at (0.25, 2) {};
	\end{pgfonlayer}
	\begin{pgfonlayer}{edgelayer}
		\draw [style=wire, in=30, out=-90, looseness=1.00] (2) to (0);
		\draw [style=wire, in=90, out=-90, looseness=1.25] (0) to (4);
		\draw [style=wire] (4) to (3);
		\draw [style=wire, in=150, out=-90, looseness=1.25] (5) to (4);
		\draw [style=wire, bend right, looseness=1.00] (1) to (0);
		\draw [style=wire, in=30, out=-90, looseness=1.00] (6) to (4);
	\end{pgfonlayer}
\end{tikzpicture}}
   \end{array} \!=\!   \begin{array}[c]{c}\resizebox{!}{2cm}{%
\begin{tikzpicture}
	\begin{pgfonlayer}{nodelayer}
		\node [style=differential] (0) at (-1, 1) {${\bf =\!=\!=\!=}$};
		\node [style=port] (1) at (-1.5, 2) {};
		\node [style=port] (2) at (-0.5, 2) {};
		\node [style=port] (3) at (-1, -0.75) {};
		\node [style={circle, draw}] (4) at (-1, 0) {$f$};
		\node [style=port] (5) at (-2.25, 2) {};
		\node [style=port] (6) at (0.25, 2) {};
	\end{pgfonlayer}
	\begin{pgfonlayer}{edgelayer}
		\draw [style=wire, in=90, out=-90, looseness=1.25] (0) to (4);
		\draw [style=wire] (4) to (3);
		\draw [style=wire, in=150, out=-90, looseness=1.25] (5) to (4);
		\draw [style=wire, bend right, looseness=1.00] (1) to (0);
		\draw [style=wire, in=45, out=-90, looseness=1.00] (6) to (0);
		\draw [style=wire, in=30, out=-60, looseness=1.50] (2) to (4);
	\end{pgfonlayer}
\end{tikzpicture}}
   \end{array} \Leftrightarrow\!  \begin{array}[c]{c}\resizebox{!}{2.5cm}{%
\begin{tikzpicture}
	\begin{pgfonlayer}{nodelayer}
		\node [style={circle, draw}] (0) at (-0.75, -0.5) {$f$};
		\node [style=port] (1) at (0.25, 2.5) {};
		\node [style=port] (2) at (-0.75, 2.5) {};
		\node [style=codifferential] (3) at (-0.25, 0.75) {{\bf -----}};
		\node [style=integral] (4) at (-0.25, 1.5) {{\bf =\!=\!=\!=}};
		\node [style=port] (5) at (-1.5, 2.5) {};
		\node [style=port] (6) at (-0.75, -1.25) {};
	\end{pgfonlayer}
	\begin{pgfonlayer}{edgelayer}
		\draw [style=wire, bend right, looseness=1.00] (4) to (1);
		\draw [style=wire, bend left, looseness=1.00] (4) to (2);
		\draw [style=wire] (4) to (3);
		\draw [style=wire, bend right, looseness=1.00] (3) to (0);
		\draw [style=wire, in=30, out=-45, looseness=1.50] (3) to (0);
		\draw [style=wire, in=150, out=-91, looseness=0.75] (5) to (0);
		\draw [style=wire] (0) to (6);
	\end{pgfonlayer}
\end{tikzpicture}}
   \end{array}=  \begin{array}[c]{c}\resizebox{!}{2.5cm}{%
\begin{tikzpicture}
	\begin{pgfonlayer}{nodelayer}
		\node [style={circle, draw}] (0) at (-0.75, -0.5) {$f$};
		\node [style=port] (1) at (0, 2.5) {};
		\node [style=port] (2) at (-0.75, 2.5) {};
		\node [style=port] (3) at (-1.5, 2.5) {};
		\node [style=port] (4) at (-0.75, -1.25) {};
	\end{pgfonlayer}
	\begin{pgfonlayer}{edgelayer}
		\draw [style=wire, in=150, out=-91, looseness=0.75] (3) to (0);
		\draw [style=wire] (0) to (4);
		\draw [style=wire] (2) to (0);
		\draw [style=wire, in=30, out=-93, looseness=0.75] (1) to (0);
	\end{pgfonlayer}
\end{tikzpicture}}
   \end{array}$$

The Poincar\' e condition also implies compatibility of the deriving transformation and integral transformation. 

\begin{proposition} A deriving transformation $\mathsf{d}$ and an integral transformation $\mathsf{s}$ which satisfy the Poincar\' e condition are Compatible. 
\end{proposition}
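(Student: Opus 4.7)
The plan is to apply the Poincar\'e condition directly to the deriving transformation itself, viewed as a map with the appropriate shape.

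First I would set $C = K$ and take $f = \mathsf{d}_A : \oc A \otimes A \to \oc A$, which fits the template $C \otimes \oc A \otimes A \to B$ of Definition \ref{Poincaredef} since we are working in a strict symmetric monoidal category ($K \otimes \oc A \otimes A = \oc A \otimes A$). The target equation I want -- compatibility, $\mathsf{d}\mathsf{s}\mathsf{d} = \mathsf{d}$ -- is, with this choice of $f$, exactly the conclusion $(1_C \otimes \mathsf{d}\mathsf{s})f = f$ of the Poincar\'e condition.

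Next I would verify the hypothesis of the Poincar\'e condition for this $f$. Unwinding the definition (with $C = K$ absorbed), the hypothesis reads
\[
(1 \otimes \sigma)(\mathsf{d} \otimes 1)\mathsf{d} = (\mathsf{d} \otimes 1)\mathsf{d},
\]
which is precisely the interchange axiom \textbf{[d.5]} of the deriving transformation. Thus the hypothesis is free.

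Applying the Poincar\'e condition to $f = \mathsf{d}$ then yields $\mathsf{d}\mathsf{s}\mathsf{d} = \mathsf{d}$, which is compatibility. There is no real obstacle here: the entire content of the proof is the observation that the Poincar\'e precondition, applied to $\mathsf{d}$ itself, coincides exactly with the differential interchange rule, so no further calculation is required.
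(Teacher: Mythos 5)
Your proof is correct and is essentially identical to the paper's: both apply the Poincar\'e condition to $f = \mathsf{d}$ itself (with trivial $C$), observing that the precondition is exactly the interchange rule \textbf{[d.5]} and the conclusion is exactly compatibility $\mathsf{d}\mathsf{s}\mathsf{d} = \mathsf{d}$.
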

\begin{proof} By {\bf [d.5]}, the deriving transformation $\mathsf{d}$ satisfies the Poincar\' e pre-condition that $(1 \otimes \sigma)(\mathsf{d} \otimes 1)\mathsf{d}=(\mathsf{d} \otimes 1)\mathsf{d}$. Therefore, $\mathsf{d}$ satisfies the First fundamental theorem of Calculus, which is simply the statement of Compatibility: $\mathsf{d}\mathsf{s}\mathsf{d}=\mathsf{d}$. 
\end{proof} 

\begin{corollary}\label{PoincareFTC2} A deriving transformation $\mathsf{d}$ and an integral transformation $\mathsf{s}$ which satisfy the Poincar\' e condition such that $\mathsf{d}$ is Taylor, satisfies the Second Fundamental Theorem of Calculus. 
\end{corollary}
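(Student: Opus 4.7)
The plan is to combine the two immediately preceding results. Proposition \ref{FTC2CompTaylor} gives the equivalence between satisfying the Second Fundamental Theorem of Calculus and the conjunction of Compatibility (of $\mathsf{d}$ and $\mathsf{s}$) and $\mathsf{d}$ being Taylor. So it suffices to verify these two conditions under the hypotheses of the corollary.

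The Taylor condition for $\mathsf{d}$ is given directly as a hypothesis, so that side is free. For Compatibility, I would simply invoke the proposition immediately above the corollary, which states that the Poincar\'e condition implies Compatibility. Since we are assuming the Poincar\'e condition, we get $\mathsf{d}\mathsf{s}\mathsf{d} = \mathsf{d}$ for free.

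With both Compatibility and the Taylor property in hand, the $(ii) \Rightarrow (i)$ direction of Proposition \ref{FTC2CompTaylor} then yields $\mathsf{s}\mathsf{d} + \oc 0 = 1$, which is the Second Fundamental Theorem of Calculus. There is no real obstacle here — the work was already carried out in Proposition \ref{FTC2CompTaylor} and the preceding Compatibility proposition; the corollary is essentially a packaging of those two facts into a single statement. The proof therefore amounts to two lines citing the prior results.
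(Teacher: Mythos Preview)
Your proposal is correct and follows exactly the same approach as the paper's own proof: the Poincar\'e condition gives Compatibility by the preceding proposition, and then Compatibility together with the Taylor hypothesis yields the Second Fundamental Theorem of Calculus via Proposition~\ref{FTC2CompTaylor}.
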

\begin{proof} The Poincar\' e condition implies Compatibility. The Second Fundamental Theorem of Calculus follows since it is equivalent to Compatibility and Taylor by Proposition \ref{FTC2CompTaylor}. 
\end{proof} 

\subsection{Calculus Categories and Calculus Objects}

We now give the definition of a calculus category. 

\begin{definition} \normalfont A \textbf{calculus category} is a differential category and an integral category on the same coalgebra modality such that the deriving transformation and the integral transformation satisfy the Second Fundamental Theorem of Calculus and the Poincar\' e condition. 
\end{definition}

It is important to note that by Corollary \ref{PoincareFTC2}, an equivalent definition of a calculus category would simply require the Poincar\' e Condition and that the deriving transformation is Taylor. In the next section, we will show that a differential category for which $\mathsf{K}$ is an isomorphism is a calculus category. 

We now turn our attention to calculus objects: these are objects where the identity of $\oc A \otimes A$ satisfies the First Fundamental Theorem of Calculus, which is another way of saying that the deriving transformation and integral transformation satisfies it. 

\begin{definition} \normalfont In a calculus category, a \textbf{calculus object} is an object $A$ such that the deriving transformation $\mathsf{d}_A$ and the integral transformation $\mathsf{s}_A$ satisfy the \textbf{First Fundamental Theorem of Calculus}, $\mathsf{d}_A\mathsf{s}_A=1_{\oc A \otimes A}$. 
\end{definition}

This ensures that any integrable map with domain $C \otimes \oc A \otimes A$, where $A$ is a calculus object, will satisfy the First Fundamental Theorem of Calculus. Every calculus category has at least one calculus object: the monoidal unit! 

\begin{proposition}\label{Poincarecalcobj} In a calculus category, if $A$ is an object such that $\sigma_{A,A}=1_{A \otimes A}$, then $A$ is a calculus object. Conversely, if $A$ is a calculus object and $\varepsilon_A$ is a retraction then $\sigma_{A,A}=1_{A \otimes A}$. 
\end{proposition}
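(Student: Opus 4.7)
My plan is to derive both directions from the Poincaré condition (Definition \ref{Poincaredef}) applied to the identity map $f = 1_{\oc A \otimes A}$ with trivial context $C = K$. For this $f$, the Poincaré precondition $(1 \otimes 1 \otimes \sigma)(1 \otimes \mathsf{d} \otimes 1)f = (1 \otimes \mathsf{d} \otimes 1)f$ collapses to
\[
(1_{\oc A} \otimes \sigma_{A,A})(\mathsf{d}_A \otimes 1_A) = \mathsf{d}_A \otimes 1_A,
\]
and its First Fundamental Theorem conclusion $(1 \otimes \mathsf{d}\mathsf{s})f = f$ becomes exactly the calculus object condition $\mathsf{d}_A \mathsf{s}_A = 1_{\oc A \otimes A}$. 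This sets up a direct bridge between the hypothesis $\sigma_{A,A} = 1$ and the calculus object property.

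The forward direction is then immediate: if $\sigma_{A,A} = 1_{A \otimes A}$, the precondition holds trivially, so the Poincaré condition yields $\mathsf{d}_A \mathsf{s}_A = 1_{\oc A \otimes A}$, exhibiting $A$ as a calculus object. For the converse, since $A$ is a calculus object, the identity map $f = 1_{\oc A \otimes A}$ satisfies the First Fundamental Theorem, and Lemma \ref{Poincarebackwards} then provides the equation $(1_{\oc A} \otimes \sigma_{A,A})(\mathsf{d}_A \otimes 1_A) = \mathsf{d}_A \otimes 1_A$. I would post-compose both sides with $\varepsilon_A \otimes 1_A$ and invoke the linear rule {\bf [d.3]} ($\mathsf{d}_A\varepsilon_A = e_A \otimes 1_A$) to reduce the equation to the simpler form
\[
e_A \otimes \sigma_{A,A} = e_A \otimes 1_{A \otimes A}
\]
as maps $\oc A \otimes A \otimes A \to A \otimes A$ (using the unitor identification $K \otimes A \otimes A \cong A \otimes A$).

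The crucial step is to pre-compose this simplified equation with $(\eta_A \mathsf{s}_A) \otimes 1_A : A \otimes A \to \oc A \otimes A \otimes A$, where $\eta_A$ is the hypothesized section of $\varepsilon_A$. By the integral-of-constants rule {\bf [s.1]} ($\mathsf{s}_A(e_A \otimes 1_A) = \varepsilon_A$) together with $\eta_A \varepsilon_A = 1_A$, we obtain $(\eta_A \mathsf{s}_A)(e_A \otimes 1_A) = 1_A$. Bifunctoriality of $\otimes$ then collapses the right-hand side to $1_{A \otimes A}$, while for the left-hand side, the factorization $e_A \otimes \sigma_{A,A} = (e_A \otimes 1_{A \otimes A}) \cdot \sigma_{A,A}$ allows the same argument to collapse it to $\sigma_{A,A}$. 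Equating yields $\sigma_{A,A} = 1_{A \otimes A}$.

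The main obstacle lies in the converse direction: the equation $e_A \otimes \sigma_{A,A} = e_A \otimes 1_{A \otimes A}$ cannot be simplified by direct cancellation of $e_A$, since $e_A$ need not be a split epi and no section $K \to \oc A$ is available. The retraction hypothesis on $\varepsilon_A$ only supplies a section at the level of $A$, not of $K$. The key insight is that the composite $\eta_A \mathsf{s}_A : A \to \oc A \otimes A$ nonetheless functions as an effective section of $e_A \otimes 1_A$ (rather than of $e_A$ alone), and this is precisely where the calculus object property (making $\mathsf{s}_A$ available) and the $\varepsilon$-retraction hypothesis combine to close the argument.
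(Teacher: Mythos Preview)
Your proof is correct, and the forward direction matches the paper's proof exactly. For the converse you take a genuinely different route from the paper.

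The paper's converse argument inserts $\mathsf{d}_A\mathsf{s}_A = 1$ into $(\eta_A \otimes 1)(\varepsilon_A \otimes 1) = 1$ and then invokes the $n=1$ case of Proposition~\ref{swappoly}, namely $\mathsf{s}_A(\varepsilon_A \otimes 1)\sigma_{A,A} = \mathsf{s}_A(\varepsilon_A \otimes 1)$, to slip a $\sigma_{A,A}$ onto the end; removing $\mathsf{d}_A\mathsf{s}_A$ again yields $1 = \sigma_{A,A}$. Your argument instead pulls the symmetry equation from the \emph{differential} side: Lemma~\ref{Poincarebackwards} applied to $f = 1_{\oc A \otimes A}$ gives $(1 \otimes \sigma_{A,A})(\mathsf{d}_A \otimes 1) = \mathsf{d}_A \otimes 1$, which you reduce via {\bf [d.3]} to $e_A \otimes \sigma_{A,A} = e_A \otimes 1$, and then cancel by precomposing with the effective section $(\eta_A\mathsf{s}_A) \otimes 1$ using {\bf [s.1]}. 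The paper's route hides its work inside Proposition~\ref{swappoly}, whose proof requires the full strength of the integral axioms including the Rota-Baxter rule {\bf [s.2]} (via polynomial integration {\bf [s.Poly]}); your route uses only {\bf [d.3]}, {\bf [d.5]}, and {\bf [s.1]}, so it is actually more economical in axiomatic demands and arguably more transparent.
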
 
\begin{proof} Suppose that for an object $A$, $\sigma_{A,A}=1_{A \otimes A}$. Then the Poincar\' e pre-condition is true trivially for the identity $1_{\oc A \otimes A}$. Therefore, $\mathsf{d}_A\mathsf{s}_A=1_{\oc A \otimes A}$. Conversly, suppose $A$ is a calculus object and let $\eta_A$ be a section of $\varepsilon_A$. To show that $\sigma_{A,A}=1_{A \otimes A}$ we will need to use the case $n=1$ of Proposition \ref{swappoly} (recall that $\omega_{(0 \ 1)}=\sigma$):
\begin{align*}
1_{A \otimes A}&= (\eta_A \otimes 1_A)(\varepsilon_A \otimes 1_A) \\
&= (\eta_A \otimes 1_A)\mathsf{d}_A\mathsf{s}_A(\varepsilon_A \otimes 1_A)  \\
&= (\eta_A \otimes 1_A)\mathsf{d}_A\mathsf{s}_A(\varepsilon_A \otimes 1_A)\sigma_{A,A}  \\
&= (\eta_A \otimes 1_A)(\varepsilon_A \otimes 1_A)\sigma_{A,A}\\
&= \sigma_{A,A}
\end{align*}
\end{proof} 

\begin{corollary}\label{Kcalcobj} The monoidal unit, $K$, of a calculus category is always a calculus object. 
\end{corollary}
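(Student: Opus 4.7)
The plan is to obtain this corollary as an immediate consequence of Proposition \ref{Poincarecalcobj}. That proposition says that any object $A$ with $\sigma_{A,A} = 1_{A \otimes A}$ is automatically a calculus object, so it suffices to verify this equation in the case $A = K$.

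First I would note that the paper works in a strict symmetric monoidal category (see the conventions in Subsection~1.1), so $K \otimes K = K$ and $\sigma_{K,K}$ is an endomorphism of $K$. By coherence for symmetric monoidal categories, every endomorphism of $K$ built from the structural isomorphisms is the identity; in particular $\sigma_{K,K} = 1_{K \otimes K}$. (If one prefers an elementary derivation: in any symmetric monoidal category the left and right unitors agree on $K$, and the hexagon/triangle axioms force $\sigma_{K,K}$ to coincide with the identity on $K \otimes K$.)

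Once $\sigma_{K,K} = 1_{K \otimes K}$ is in hand, Proposition~\ref{Poincarecalcobj} applies verbatim with $A = K$ and concludes that $\mathsf{d}_K \mathsf{s}_K = 1_{\oc K \otimes K}$, which is precisely the definition of $K$ being a calculus object. There is no real obstacle here — the substantive content has already been done inside Proposition~\ref{Poincarecalcobj}, where the Poincar\'e pre-condition is satisfied trivially by the identity because the symmetry is trivial. The corollary is essentially a one-line citation, and I would present it as such without recomputing the Poincar\'e condition for $1_{\oc K \otimes K}$.
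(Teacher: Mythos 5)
Your proposal is correct and matches the paper's own proof exactly: the paper also cites coherence of symmetric monoidal categories to get $\sigma_{K,K}=1_{K\otimes K}$ and then applies Proposition \ref{Poincarecalcobj}. The extra remarks on why coherence forces $\sigma_{K,K}$ to be the identity are fine but not needed beyond what the paper already asserts.
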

\begin{proof} By coherence of symmetric monoidal categories, $\sigma_{K,K}=1_{K \otimes K}$. Therefore, $K$ is a calculus object. 
\end{proof} 

\section{Antiderivatives}\label{antidiffsec}

In this section we explore what it means for a differential category to have antiderivatives.  We shall equate this with the requirement  that $\mathsf{K}$ be a natural isomorphism.  Recall, however, that Ehrhard's original idea was to obtain antiderivatives by requiring that $\mathsf{J}$ be a natural isomorphism \cite{ehrhard2017introduction}. Requiring that just $\mathsf{J}$ is invertible does not imply that $\mathsf{K}$ is invertible, and, more importantly, it does not imply the second fundamental theorem of calculus.  This means that the antiderivative built from the inverse of $\mathsf{J}$  does not deliver a calculus category. This is why asking that $\mathsf{K}$ be a natural isomorphism is important.  Proposition \ref{K-1J-1equiv} shows that $\mathsf{J}$ being invertible {\em and\/} the Taylor Property holding implies $\mathsf{K}$ is invertible.  In Theorem \ref{anticalc} we show that $\mathsf{K}$ being invertible implies that one has 
a calculus category (with respect to the antiderivative) and so the Taylor property holds.  Thus, as advertised, assuming $\mathsf{K}$ is invertible removes the necessity for the non-equational Taylor requirement.

As shown by Example \ref{JnotKexample} in Section \ref{examplesec},  the invertibility of $\mathsf{J}$ may not deliver a calculus category.  However, there is no reason why it should not still deliver an integral category and, in fact, we shall show that the invertibility of $\mathsf{J}$ {\em does\/} deliver an integral category.  Furthermore the Poincar\' e Condition will hold in for this integral, thus, the only defect is that this integration may not satisfy the second fundamental theorem of calculus. 

\subsection{$\mathsf{K}^{-1}$ and $\mathsf{J}_n^{-1}$}\label{antisubsec}

\begin{proposition}\label{K-1J-1equiv}In a differential category:
\begin{enumerate}[{\em (i)}]
\item $\mathsf{K}$ is a natural isomorphism implies $\mathsf{J}$ is a natural isomorphism;
\item $\mathsf{J}_n$ is a natural isomorphism implies $\mathsf{J}_m$ is a natural isomorphism for $m \geq  n$;
\item If $\mathsf{J}$ is a natural isomorphism and the Taylor property holds then $\mathsf{K}$ is a natural isomorphism.
\end{enumerate}
\end{proposition}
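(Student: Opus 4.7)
The proofs all rest on the algebra of $\mathsf{L}$, $\mathsf{K}$, and $\mathsf{J}_n$ developed in Propositions~\ref{Lprop} and~\ref{Kprop} and Corollary~\ref{Jprop}. Two identities will drive the arguments: $\mathsf{K} + 1 = \mathsf{J} + \oc(0)$ (since both equal $\mathsf{L} + 1 + \oc(0)$) and $\mathsf{K}\oc(0) = \oc(0)\mathsf{K} = \oc(0)$ from~[K.1], together with the analogous $\mathsf{J}_n\oc(0) = (n+1)\oc(0)$ from~[J.1]. The second immediately gives $\mathsf{K}^{-1}\oc(0) = \oc(0)\mathsf{K}^{-1} = \oc(0)$ whenever $\mathsf{K}$ is invertible.

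For (ii), I would proceed by induction on $m - n$. The inductive step that $\mathsf{J}_n$ invertible implies $\mathsf{J}_{n+1}$ invertible exploits $\mathsf{J}_{n+1} = \mathsf{J}_n + 1$ together with the fact that $\mathsf{J}_n\oc(0) = (n+1)\oc(0)$ forces $(n+1)_{\oc A}$ to act invertibly on the image of $\oc(0)$. Since $\mathsf{L}$ vanishes on that image and commutes with $\mathsf{J}_n$ by~[L.8], the plan is to write down an explicit formula for $\mathsf{J}_{n+1}^{-1}$ as a positive combination of iterates of $\mathsf{J}_n^{-1}$ and $\oc(0)$. The main obstacle is the purely additive (commutative-monoid-enriched) setting: we must realise the heuristic $\mathsf{J}_{n+1}^{-1} = \mathsf{J}_n^{-1}(1 + \mathsf{J}_n^{-1})^{-1}$ without recourse to subtraction or to a convergent geometric series.

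For (i), once (ii) is in hand we need only show $\mathsf{K}$ invertible implies $\mathsf{J} = \mathsf{J}_0$ invertible, from which (ii) upgrades to all $\mathsf{J}_m$. The identity $\mathsf{K} + 1 = \mathsf{J} + \oc(0)$ gives $\mathsf{K}(1 + \mathsf{K}^{-1}) = \mathsf{J} + \oc(0)$, and using $\mathsf{K}^{-1}\oc(0) = \oc(0)$ we can try to isolate $\mathsf{J}^{-1}$. Concretely, I plan to derive $\mathsf{J}\mathsf{K}^{-1} + \oc(0) = 1 + \mathsf{K}^{-1}$ and rearrange it, exploiting the commutation of $\mathsf{K}^{-1}$ with $\oc(0)$, into a positive formula for $\mathsf{J}^{-1}$. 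The absence of subtraction is again the key difficulty.

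For (iii), Taylor enters as a uniqueness principle. Given $\mathsf{J}^{-1}$ and Taylor, I would propose a candidate $\alpha: \oc \Rightarrow \oc$ built from $\mathsf{J}^{-1}$ and $\oc(0)$ and verify $\mathsf{K}\alpha = 1 = \alpha\mathsf{K}$ in three steps. First, show $\mathsf{d}(\mathsf{K}\alpha) = \mathsf{d}(1_{\oc A})$ by combining~[K.6] (which gives $\mathsf{d}\mathsf{K} = \mathsf{d}\mathsf{L}$) with~[L.6] ($\mathsf{d}\mathsf{L} = (\mathsf{J} \otimes 1)\mathsf{d}$) and the defining property of $\mathsf{J}^{-1}$. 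Second, invoke Taylor to conclude $\mathsf{K}\alpha + \oc(0) = 1 + \oc(0)\alpha$. Third, arrange $\alpha$ so that $\oc(0)\alpha = \oc(0)$ and extract $\mathsf{K}\alpha = 1$. The hardest step is the third: since the hom-monoid is not assumed cancellative, we must refine $\alpha$ so that Taylor delivers the identity $\mathsf{K}\alpha = 1$ on the nose rather than modulo $\oc(0)$. This is where the non-equational nature of Taylor is used most essentially, and is therefore the main obstacle in the proposition.
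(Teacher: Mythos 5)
There is a genuine gap: in all three parts you correctly diagnose the obstruction (no subtraction, no cancellation in the hom-monoids) but you never exhibit the candidate inverses, and the routes you sketch for producing them would not close. For (i) and (ii), any attempt to solve $\mathsf{J}_{n+1}^{-1} = \mathsf{J}_n^{-1}(1+\mathsf{J}_n^{-1})^{-1}$ or to ``rearrange'' $\mathsf{J}\mathsf{K}^{-1}+\oc(0)=1+\mathsf{K}^{-1}$ into a positive formula is exactly the kind of manipulation the additive setting forbids, and no positive combination of iterates of $\mathsf{J}_n^{-1}$ and $\oc(0)$ will do the job. The idea you are missing is an \emph{index shift} through the comonad: the paper sets $\mathsf{J}^{-1}:=\delta\,\mathsf{K}^{-1}\,\mathsf{d}^\circ(\oc(\varepsilon)\otimes e)$ and $\mathsf{J}_{n+1}^{-1}:=\delta\,\mathsf{J}_n^{-1}\,\mathsf{d}^\circ(\oc(\varepsilon)\otimes e)$. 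Here {\bf [J.7]} (equivalently {\bf [L.7]}), $\mathsf{J}_n\mathsf{d}^\circ=\mathsf{d}^\circ(\mathsf{J}_{n+1}\otimes 1)$, lets $\mathsf{J}_{n+1}$ pass through $\mathsf{d}^\circ$ and drop to $\mathsf{J}_n$, and {\bf [cd.7]}, $\delta\mathsf{d}^\circ(\oc(\varepsilon)\otimes e)=1$, collapses the frame; one-sided invertibility comes out directly, and the two-sided statement follows from $\mathsf{J}\mathsf{J}^{-1}=\mathsf{J}^{-1}\mathsf{J}$, proved via {\bf [J.5]}, {\bf [K.8]}/{\bf [J.8]} and Lemma \ref{L-lemma}. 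Nothing resembling a geometric series appears.

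For (iii) your three-step plan stalls at exactly the point you flag. If you apply Taylor to $f=\mathsf{K}\alpha$ and $g=1$ you get $\mathsf{K}\alpha+\oc(0)=1+\oc(0)\mathsf{K}\alpha$, and even after arranging $\oc(0)\mathsf{K}\alpha=\oc(0)$ you are left with $\mathsf{K}\alpha+\oc(0)=1+\oc(0)$, from which $\mathsf{K}\alpha=1$ does not follow without cancellation. The paper's resolution is to apply Taylor not to $\mathsf{K}\alpha$ but to $h:=\mathsf{d}^\circ(\mathsf{J}^{-1}\otimes 1)\mathsf{d}$: one checks $\mathsf{d}h=\mathsf{d}$ using {\bf [J.6]} and {\bf [J.12]}, and crucially $\oc(0)h=\mathsf{d}^\circ(\oc(0)\otimes 0)(\mathsf{J}^{-1}\otimes 1)\mathsf{d}=0$ \emph{exactly} (not merely $\oc(0)$), so Taylor yields $h+\oc(0)=1+0=1$ on the nose. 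Defining $\mathsf{K}^{-1}:=\mathsf{d}^\circ(\mathsf{J}^{-1}\otimes 1)(\mathsf{J}^{-1}\otimes 1)\mathsf{d}+\oc(0)$, the identities {\bf [K.1]}, {\bf [J.6]} and {\bf [J.7]} show $\mathsf{K}\mathsf{K}^{-1}=\mathsf{K}^{-1}\mathsf{K}=h+\oc(0)$, and the Taylor computation finishes the proof. So the essential missing content is the choice of a map whose precomposition with $\oc(0)$ vanishes outright; without that, the Taylor property cannot be made to deliver an exact inverse.
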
 

Before proving Proposition \ref{K-1J-1equiv} it is useful to have the following observation in hand:

\begin{lemma} \label{L-lemma} In any differential category, the following equality holds: 
$$\mathsf{d}^\circ (1 \otimes \mathsf{L}) \mathsf{d}\mathsf{d}^\circ (\varepsilon \otimes e) = \mathsf{d}^\circ (\varepsilon \otimes e) \mathsf{L}.$$
\end{lemma}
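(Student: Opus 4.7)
The plan is to reduce both sides of the identity to the same explicit expression
$\Delta\bigl((\varepsilon\mathsf{L})\otimes(\varepsilon e)\bigr)$, where $\varepsilon=\varepsilon_{\oc A}$, $e=e_A$, $\Delta=\Delta_{\oc A}$, and $\mathsf{L}=\mathsf{L}_A$ (so $\mathsf{d}^\circ$ throughout is $\mathsf{d}^\circ_{\oc A}:\oc\oc A\to\oc\oc A\otimes\oc A$).

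I would start with the left-hand side and apply Proposition~\ref{Wprop}, writing $\mathsf{d}\mathsf{d}^\circ=\mathsf{W}+1$ so that
\[
\mathsf{d}^\circ(1\otimes\mathsf{L})\mathsf{d}\mathsf{d}^\circ(\varepsilon\otimes e)
=\mathsf{d}^\circ(1\otimes\mathsf{L})\mathsf{W}(\varepsilon\otimes e)+\mathsf{d}^\circ(1\otimes\mathsf{L})(\varepsilon\otimes e).
\]
The second summand equals $\mathsf{d}^\circ(\varepsilon\otimes\mathsf{L}e)$ after pushing $(1\otimes\mathsf{L})$ through the tensor, and the $n=0$ case of \textbf{[L.2]} gives $\mathsf{L}e=0$, so this term vanishes.

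Next I expand $\mathsf{W}=(\mathsf{d}^\circ\otimes 1)(1\otimes\sigma)(\mathsf{d}\otimes 1)$ and simplify the tail $(\mathsf{d}\otimes 1)(\varepsilon\otimes e)$ using the linear rule \textbf{[d.3]} ($\mathsf{d}\varepsilon=e\otimes 1$): it becomes $(e\otimes 1)\otimes e$. Then naturality of the symmetry gives $(1\otimes\sigma)(e\otimes 1\otimes e)=e\otimes e\otimes 1$ as a map $\oc\oc A\otimes\oc A\otimes\oc A\to\oc A$. Thus the $\mathsf{W}$-term reduces to
\[
\mathsf{d}^\circ(1\otimes\mathsf{L})(\mathsf{d}^\circ\otimes 1)(e\otimes e\otimes 1)
=\mathsf{d}^\circ(1\otimes\mathsf{L})\bigl(\mathsf{d}^\circ(e\otimes e)\otimes 1\bigr).
\]
Now $\mathsf{d}^\circ(e\otimes e)=\Delta(1\otimes\varepsilon)(e\otimes e)=\Delta(e\otimes\varepsilon e)$, and this collapses via the counit law $\Delta(e\otimes 1)=1$ to $\varepsilon e$. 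After pushing $(1\otimes\mathsf{L})$ through the tensor once more and unpacking $\mathsf{d}^\circ=\Delta(1\otimes\varepsilon)$,
\[
\mbox{LHS}=\mathsf{d}^\circ\bigl((\varepsilon e)\otimes\mathsf{L}\bigr)
=\Delta\bigl((\varepsilon e)\otimes(\varepsilon\mathsf{L})\bigr).
\]

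For the right-hand side the computation is shorter: expanding $\mathsf{d}^\circ=\Delta(1\otimes\varepsilon)$ gives $\Delta(\varepsilon\otimes\varepsilon e)\mathsf{L}$, and since $\varepsilon e$ lands in $K$ the post-composition with $\mathsf{L}$ acts only on the other factor, producing $\Delta\bigl((\varepsilon\mathsf{L})\otimes(\varepsilon e)\bigr)$. Finally, cocommutativity of $\Delta$ (together with the triviality of $\sigma_{\oc A,K}$ in the strict setting) identifies this with $\Delta\bigl((\varepsilon e)\otimes(\varepsilon\mathsf{L})\bigr)$, matching the left-hand side.

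The only real obstacle is careful bookkeeping of the three tensor factors and the unit-iso coherences around $K$; all the genuine moves are one of (i) Proposition~\ref{Wprop}, (ii) \textbf{[L.2]} with $n=0$, (iii) the linear rule \textbf{[d.3]}, (iv) \textbf{[cd.1]} combined with the counit law for $\Delta$, and (v) cocommutativity of $\Delta$.
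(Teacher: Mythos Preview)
Your proof is correct under the reading $\varepsilon=\varepsilon_{\oc A}$ that you declare at the outset, and the overall shape---split via Proposition~\ref{Wprop} into a $\mathsf{W}$-term and a term killed by {\bf [L.2]}---matches the paper exactly. Where you diverge is in simplifying the $\mathsf{W}$-term: the paper pushes the map on the $\oc\oc A$-wire up through $\mathsf{d}$ and $\mathsf{d}^\circ$ by naturality and then invokes {\bf [cd.6]} to permute the two linear outputs, arriving at the right-hand side without ever unpacking $\mathsf{d}^\circ$ as $\Delta(1\otimes\varepsilon)$. You instead compute both sides down to the concrete form $\Delta\bigl((\varepsilon e)\otimes(\varepsilon\mathsf{L})\bigr)$ using {\bf [d.3]}, {\bf [cd.1]}, and cocommutativity of $\Delta$. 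Your route is more elementary and self-contained; the paper's is more ``structural'' and never leaves the level of $\mathsf{d}$, $\mathsf{d}^\circ$, and naturality.

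One caveat worth flagging: the paper's string diagrams (and the sole application of this lemma in Proposition~\ref{K-1J-1equiv}) use the square-boxed $\varepsilon$, i.e.\ $\oc(\varepsilon_A)$ rather than $\varepsilon_{\oc A}$, so the written statement appears to contain a typo. Your argument still proves a true identity, but the step invoking {\bf [d.3]} is specific to $\varepsilon_{\oc A}$; for $\oc(\varepsilon_A)$ that step must be replaced by naturality of $\mathsf{d}$ (which is exactly what the paper does). The rest of your strategy adapts without difficulty.
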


\begin{proof} Here we use Proposition \ref{Wprop}, {\bf [L.2]}, naturality of $\mathsf{d}$ and $\mathsf{d}^\circ$, and {\bf [cd.6]}: 
\begin{align*}
 
\end{align*} \\

 \noindent
 $(iii)$ Suppose that $\mathsf{J}$ is a natural isomorphism and the deriving transformation is Taylor. Let $\mathsf{F} := \mathsf{J}^{-1} \otimes 1: \oc A \otimes A \to \oc A \otimes A$ be the inverse of $\mathsf{J} \otimes 1$. Define $\mathsf{K}^{-1}$ as $\mathsf{K}^{-1}:=\mathsf{d}^\circ\mathsf{F}\mathsf{F}\mathsf{d}+\oc 0$. We need to prove that $\mathsf{K}\mathsf{K}^{-1}=1$ and $\mathsf{K}^{-1}\mathsf{K}=1$. First we have the following equality by \textbf{[K.1]}, \textbf{[J.6]} and \textbf{[J.7]}:
 \begin{align*}
\mathsf{K}\mathsf{K}^{-1} &= \mathsf{K}\mathsf{d}^\circ\mathsf{F}\mathsf{F}\mathsf{d}+\mathsf{K}\oc 0 \\
&= \mathsf{d}^\circ(\mathsf{J} \otimes 1)\mathsf{F}\mathsf{F}\mathsf{d}+\oc 0 \\
&= \mathsf{d}^\circ\mathsf{F}\mathsf{d}+\oc 0\\
&=\mathsf{d}^\circ\mathsf{F}\mathsf{F}(\mathsf{J} \otimes 1)\mathsf{d}+\oc 0 \\
&=\mathsf{d}^\circ\mathsf{F}\mathsf{F}\mathsf{d}\mathsf{K}+\oc 0\mathsf{K} \\
&= \mathsf{K}^{-1}\mathsf{K}
\end{align*}
 Therefore, $\mathsf{K}\mathsf{K}^{-1}=\mathsf{K}^{-1}\mathsf{K}=\mathsf{d}^\circ\mathsf{F}\mathsf{d}+\oc 0$. Consider now the derivative of $\mathsf{d}^\circ\mathsf{F}\mathsf{d}$. By \textbf{[J.6]} and \textbf{[J.13]} we have that: 
 \begin{align*}
\mathsf{d}\mathsf{d}^\circ\mathsf{F}\mathsf{d} &= \mathsf{F}(\mathsf{J} \otimes 1)\mathsf{d}\mathsf{d}^\circ\mathsf{F}\mathsf{d} \\
&= \mathsf{F}\mathsf{d}\mathsf{d}^\circ(\mathsf{J} \otimes 1)\mathsf{F}\mathsf{d} \\
&=  \mathsf{F}\mathsf{d}\mathsf{d}^\circ\mathsf{d} \\
&= \mathsf{F}\mathsf{d}\mathsf{L} \\
&= \mathsf{F}(\mathsf{J} \otimes 1)\mathsf{d}\\
&= \mathsf{d}
\end{align*}
Since the deriving transformation is Taylor, we then have: 
\[\mathsf{d}^\circ\mathsf{F}\mathsf{d}+\oc 0=1+\oc 0 \mathsf{d}^\circ\mathsf{F}\mathsf{d}\]
However by naturality we have that $\oc 0 \mathsf{d}^\circ\mathsf{F}\mathsf{d}=\mathsf{d}^\circ( \oc 0 \otimes 0)\mathsf{F}\mathsf{d}=0$. Finally, we conclude:
\[\mathsf{K}\mathsf{K}^{-1}=\mathsf{K}^{-1}\mathsf{K}=\mathsf{d}^\circ(\mathsf{J}^{-1} \otimes 1)\mathsf{d}+\oc 0 = 1\]
\end{proof} 

 \begin{definition} \normalfont A differential category is said to have \textbf{antiderivatives} if $\mathsf{K}$ is a natural isomorphism. \end{definition}
 
By Proposition \ref{K-1J-1equiv}, an equivalent definition of having antiderivatives is if $\mathsf{J}$ is a natural isomorphism and the deriving transformation is Taylor. We now give properties of $\mathsf{K}^{-1}$. 

\begin{proposition}\label{K-1prop} $\mathsf{K}^{-1}$ satisfies the following properties:
\begin{enumerate}[{\bf [K$^{-1}$.1]}]
\item $\mathsf{K}^{-1}\oc(0)=\oc(0)=\oc(0)\mathsf{K}^{-1}$; 
\item $\mathsf{K}^{-1}e=e$; 
\item $\mathsf{K}^{-1}\varepsilon= \varepsilon$; 
\item $\Delta(\mathsf{K}^{-1} \otimes \mathsf{K}^{-1})+\Delta(\mathsf{K}^{-1} \otimes \oc(0))+\Delta(\oc(0) \otimes \mathsf{K}^{-1})=\mathsf{K}^{-1}\Delta(\mathsf{K}^{-1} \otimes 1)+\mathsf{K}^{-1}\Delta(1 \otimes \mathsf{K}^{-1})+\Delta(\oc(0) \otimes \oc(0))$
\item $(\mathsf{K}^{-1} \otimes 1)\mathsf{W}=\mathsf{W}(\mathsf{K}^{-1} \otimes 1)$;
\item $(\mathsf{K}^{-1} \otimes 1)\mathsf{d}\mathsf{d}^\circ=\mathsf{d}\mathsf{d}^\circ(\mathsf{K}^{-1} \otimes 1)$;
\item $(\mathsf{d} \otimes 1)(\mathsf{K}^{-1} \otimes 1)\mathsf{d}= (1 \otimes \sigma)(\mathsf{d} \otimes 1)(\mathsf{K}^{-1} \otimes 1)\mathsf{d}$; 
\item $\mathsf{d}^\circ(\mathsf{K}^{-1} \otimes 1)(\mathsf{d}^\circ \otimes 1)= \mathsf{d}^\circ (\mathsf{K}^{-1} \otimes 1)(\mathsf{d}^\circ \otimes 1)(1 \otimes \sigma)$. 
\end{enumerate}
\end{proposition}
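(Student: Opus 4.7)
The plan is to derive each of the eight properties of $\mathsf{K}^{-1}$ from the corresponding property of $\mathsf{K}$ established in Proposition~\ref{Kprop}, exploiting $\mathsf{K}\mathsf{K}^{-1}=1=\mathsf{K}^{-1}\mathsf{K}$, functoriality of $\otimes$, and the secondary identity $\mathsf{L}\mathsf{K}^{-1} + \oc(0) = 1 = \mathsf{K}^{-1}\mathsf{L} + \oc(0)$, which follows at once from $\mathsf{K} = \mathsf{L} + \oc(0)$ together with [K$^{-1}$.1]. I would first dispatch [K$^{-1}$.1]--[K$^{-1}$.3] as one-line consequences: from [K.1], $\mathsf{K}\oc(0) = \oc(0) = \oc(0)\mathsf{K}$, and pre- or post-composing with $\mathsf{K}^{-1}$ yields [K$^{-1}$.1]; the same recipe applied to [K.2] and [K.3] produces [K$^{-1}$.2] and [K$^{-1}$.3]. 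The commutation relations [K$^{-1}$.5] and [K$^{-1}$.6] are then pure conjugations: since $(\mathsf{K}^{-1}\otimes 1) = (\mathsf{K}\otimes 1)^{-1}$ by functoriality of $\otimes$, flanking the identity $(\mathsf{K}\otimes 1)\mathsf{W} = \mathsf{W}(\mathsf{K}\otimes 1)$ of [K.11] on both sides by $(\mathsf{K}^{-1}\otimes 1)$ produces [K$^{-1}$.5], and the same scheme applied to [K.12] yields [K$^{-1}$.6].

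For [K$^{-1}$.7], I would first extract from [K.6] and [L.6] the clean intertwining $\mathsf{d}\mathsf{K} = \mathsf{d}\mathsf{L} = (\mathsf{J}\otimes 1)\mathsf{d}$ and then invert, using the invertibility of $\mathsf{J}$ from Proposition~\ref{K-1J-1equiv}, to conclude $\mathsf{d}\mathsf{K}^{-1} = (\mathsf{J}^{-1}\otimes 1)\mathsf{d}$. Lifting this by functoriality of $\otimes$ gives $(\mathsf{d}\otimes 1)(\mathsf{K}^{-1}\otimes 1)\mathsf{d} = (\mathsf{J}^{-1}\otimes 1\otimes 1)(\mathsf{d}\otimes 1)\mathsf{d}$; the differential interchange [d.5], coupled with the fact that $\mathsf{J}^{-1}\otimes 1\otimes 1$ commutes with $1\otimes\sigma$ (they act on disjoint factors), then delivers the claim. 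The parallel identity [K$^{-1}$.8] is obtained by the analogous dual route: from [K.7] and [L.7] one has $\mathsf{K}\mathsf{d}^\circ = \mathsf{d}^\circ(\mathsf{J}\otimes 1)$, hence $\mathsf{K}^{-1}\mathsf{d}^\circ = \mathsf{d}^\circ(\mathsf{J}^{-1}\otimes 1)$. Expanding $\mathsf{K} = \mathsf{L} + \oc(0)$ inside [K.10] and observing that the $\oc(0)$-summand $\mathsf{d}^\circ(\oc(0)\otimes 1)(\mathsf{d}^\circ\otimes 1)$ vanishes (because $\oc(0)\mathsf{d}^\circ = 0$ from $\oc(0)\varepsilon = 0$ and naturality of $\Delta$) identifies [K.10] with [L.10]; the corresponding $\mathsf{K}^{-1}$-version is then obtained by transporting via the $\mathsf{J}^{-1}$-intertwining together with the coderivative interchange [cd.6].

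The main obstacle will be the Leibniz-style identity [K$^{-1}$.4]. I would approach it by first distilling [K.4] into its cleaner form: extracting the $\oc(0)\Delta = \Delta(\oc(0)\otimes\oc(0))$ contribution via naturality of $\Delta$ leaves $\mathsf{L}\Delta = \Delta(\mathsf{L}\otimes 1) + \Delta(1\otimes\mathsf{L})$ (the $n=0$ case of [L.4]). Post-composing with $(\mathsf{K}^{-1}\otimes\mathsf{K}^{-1})$ and invoking biadditivity of $\otimes$ together with the key identity $\mathsf{L}\mathsf{K}^{-1} + \oc(0) = 1$ in each tensor slot rewrites this as
\[\mathsf{L}\Delta(\mathsf{K}^{-1}\otimes\mathsf{K}^{-1}) + \Delta(\oc(0)\otimes\mathsf{K}^{-1}) + \Delta(\mathsf{K}^{-1}\otimes\oc(0)) = \Delta(1\otimes\mathsf{K}^{-1}) + \Delta(\mathsf{K}^{-1}\otimes 1).\]
Pre-composing with $\mathsf{K}^{-1}$ and using $\mathsf{K}^{-1}\mathsf{L} + \oc(0) = 1$ together with the naturality identity $\oc(0)\Delta(\mathsf{K}^{-1}\otimes\mathsf{K}^{-1}) = \Delta(\oc(0)\otimes\oc(0))$ (which comes from $\oc(0)\Delta = \Delta(\oc(0)\otimes\oc(0))$ and [K$^{-1}$.1]) collapses the leftmost summand to $\Delta(\mathsf{K}^{-1}\otimes\mathsf{K}^{-1})$ modulo $\Delta(\oc(0)\otimes\oc(0))$, and assembles the remaining summands into the target equation. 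The bookkeeping is purely algebraic but delicate; ensuring that the various $\oc(0)$-correction terms land in exactly the configuration stated in [K$^{-1}$.4] will rely on auxiliary simplifications such as $\mathsf{K}\Delta(\mathsf{K}^{-1}\otimes\oc(0)) = \Delta(1\otimes\oc(0))$ (an easy consequence of [K.4], $\mathsf{L}\oc(0) = 0$ from [L.1], and idempotence of $\oc(0)$), and this is where the most care is required.
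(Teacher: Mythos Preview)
Your approach is correct and matches the paper's for {\bf [K$^{-1}$.1]}--{\bf [K$^{-1}$.3]}, {\bf [K$^{-1}$.5]}, and {\bf [K$^{-1}$.6]}. For {\bf [K$^{-1}$.7]} and {\bf [K$^{-1}$.8]} you take a slightly different and cleaner route: you derive the intertwinings $\mathsf{d}\mathsf{K}^{-1}=(\mathsf{J}^{-1}\otimes 1)\mathsf{d}$ and $\mathsf{K}^{-1}\mathsf{d}^\circ=\mathsf{d}^\circ(\mathsf{J}^{-1}\otimes 1)$ directly from $\mathsf{d}\mathsf{K}=(\mathsf{J}\otimes 1)\mathsf{d}$, $\mathsf{K}\mathsf{d}^\circ=\mathsf{d}^\circ(\mathsf{J}\otimes 1)$ and invertibility of $\mathsf{J}$, then reduce to {\bf [d.5]} and {\bf [cd.6]} using that $\mathsf{J}^{-1}\otimes 1\otimes 1$ commutes with $1\otimes\sigma$. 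The paper instead inserts $\mathsf{K}\mathsf{K}^{-1}$, expands $\mathsf{L}=\mathsf{d}^\circ\mathsf{d}$, and shuttles $\mathsf{K}^{-1}$ through $\mathsf{W}$ via {\bf [K$^{-1}$.5]}/{\bf [K$^{-1}$.6]}; this avoids naming $\mathsf{J}^{-1}$ but is longer.

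For {\bf [K$^{-1}$.4]} your caution about the bookkeeping is more than justified. Your maneuver and the paper's are the same: sandwich the identity for $\mathsf{K}\Delta$ between $\mathsf{K}^{-1}$ on the left and $\mathsf{K}^{-1}\otimes\mathsf{K}^{-1}$ on the right, simplifying with $\mathsf{L}\mathsf{K}^{-1}+\oc(0)=1$ and {\bf [K$^{-1}$.1]}. Both cleanly establish
\[
\Delta(\mathsf{K}^{-1}\!\otimes\!\mathsf{K}^{-1}) + \mathsf{K}^{-1}\Delta(\mathsf{K}^{-1}\!\otimes\!\oc(0)) + \mathsf{K}^{-1}\Delta(\oc(0)\!\otimes\!\mathsf{K}^{-1}) = \mathsf{K}^{-1}\Delta(\mathsf{K}^{-1}\!\otimes\! 1) + \mathsf{K}^{-1}\Delta(1\!\otimes\!\mathsf{K}^{-1}) + \Delta(\oc(0)\!\otimes\!\oc(0)),
\]
which differs from the \emph{stated} {\bf [K$^{-1}$.4]} by a leading $\mathsf{K}^{-1}$ on the second and third left-hand summands. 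Your auxiliary identity $\mathsf{K}\Delta(\mathsf{K}^{-1}\otimes\oc(0))=\Delta(1\otimes\oc(0))$ is correct and gives $\Delta(\mathsf{K}^{-1}\otimes\oc(0))=\mathsf{K}^{-1}\Delta(1\otimes\oc(0))$, but \emph{not} $\mathsf{K}^{-1}\Delta(\mathsf{K}^{-1}\otimes\oc(0))=\Delta(\mathsf{K}^{-1}\otimes\oc(0))$, which is what would be needed. In fact the stated form is false: in the (co-)model $\mathsf{Sym}(\mathbb{Q})=\mathbb{Q}[x]$, evaluating the dual identity at $x^{2}\otimes 1$ yields $x^{2}$ on the left and $\tfrac{3}{4}x^{2}$ on the right. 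So the ``delicate bookkeeping'' you flag cannot be closed to the stated target; the displayed identity above is the correct one, and the paper's diagram chain slips at precisely this step. Since {\bf [K$^{-1}$.4]} is not invoked elsewhere in the paper, this is a local misstatement rather than a structural problem.
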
 

\begin{proof} These are mostly straightforward calculations by using the properties of $\mathsf{K}$ and that $\mathsf{K}$ is an isomorphism: \\ \\
{\bf [K$^{-1}$.1]}: Here we use {\bf [K.1]}:
\begin{align*}

\end{align*}
\end{proof} 

We now give properties of $\mathsf{J}^{-1}$. Notice that the first seven points below hold without the assumption that $\mathsf{K}$ is also an isomorphism. However {\bf [J$^{-1}$.8]} and {\bf [J$^{-1}$.9]} require $\mathsf{K}^{-1}$. In particular, the {\bf [J$^{-1}$.9]} will be important for the following section when we discuss our constructed integral transformation for differential category with antiderivatives. 

\begin{proposition}\label{J-1prop} $\mathsf{J}_n^{-1}$ satisfies the following properties:
\begin{enumerate}[{\bf [J$^{-1}$.1]}]
\item $(n+1) \cdot \mathsf{J}_n^{-1}\oc(0)=\oc(0)=(n+1) \cdot \oc(0)\mathsf{J}^{-1}$; 
\item $(n+1) \cdot \mathsf{J}_n^{-1}e=e$;
\item $2 \cdot \mathsf{J}^{-1}\varepsilon= \varepsilon$;
\item $\mathsf{d}(\mathsf{J}_n)^{-1} = ((\mathsf{J}_{n+1})^{-1} \otimes 1)\mathsf{d}$; 
\item $(\mathsf{J}_n)^{-1}\mathsf{d}^\circ = \mathsf{d}^\circ ((\mathsf{J}_{n+1})^{-1} \otimes 1)$;
\item $(\mathsf{J}_n^{-1} \otimes 1)\mathsf{W}=\mathsf{W}(\mathsf{J}_n^{-1} \otimes 1)$; 
\item $(\mathsf{J}_n^{-1} \otimes 1)\mathsf{d}\mathsf{d}^\circ=\mathsf{d}\mathsf{d}^\circ(\mathsf{J}_n^{-1} \otimes 1)$;
\item $(\mathsf{J}^{-1} \otimes 1)\mathsf{d}=\mathsf{d}\mathsf{K}^{-1}$;
\item $\mathsf{d}^\circ(\mathsf{J}^{-1} \otimes 1)=\mathsf{K}^{-1}\mathsf{d}^\circ$;
\item $(\mathsf{d} \otimes 1)(\mathsf{J}_n^{-1} \otimes 1)\mathsf{d}= (1 \otimes \sigma)(\mathsf{d} \otimes 1)(\mathsf{J}_n^{-1} \otimes 1)\mathsf{d}$; 
\item $\mathsf{d}^\circ(\mathsf{J}_n^{-1} \otimes 1)(\mathsf{d}^\circ \otimes 1)= \mathsf{d}^\circ (\mathsf{J}_n^{-1} \otimes 1)(\mathsf{d}^\circ \otimes 1)(1 \otimes \sigma)$. 
\end{enumerate}
\end{proposition}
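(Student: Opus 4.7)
The plan is to derive each of the eleven properties of $\mathsf{J}_n^{-1}$ from the corresponding property of $\mathsf{J}_n$ (or of $\mathsf{L}$ and $\mathsf{K}$ when $\mathsf{K}^{-1}$ appears), using that $\mathsf{J}_n^{-1}\mathsf{J}_n = 1 = \mathsf{J}_n\mathsf{J}_n^{-1}$ as established in the proof of Proposition~\ref{K-1J-1equiv}(ii), and using that every $\mathsf{J}_n$ (for $n \geq 0$) is invertible by the same proposition. So the overall strategy mirrors the way Proposition~\ref{K-1prop} was obtained from Proposition~\ref{Kprop}.

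For \textbf{[J$^{-1}$.1]}--\textbf{[J$^{-1}$.3]} the recipe is uniform: take the identity of Corollary~\ref{Jprop} and compose with $\mathsf{J}_n^{-1}$ on the side opposite to $\mathsf{J}_n$. For instance, \textbf{[J.1]} gives $\mathsf{J}_n\oc(0) = (n+1)\cdot\oc(0) = \oc(0)\mathsf{J}_n$; pre-composing the first equation with $\mathsf{J}_n^{-1}$ (respectively post-composing the second) yields $(n+1)\cdot\mathsf{J}_n^{-1}\oc(0) = \oc(0) = (n+1)\cdot\oc(0)\mathsf{J}_n^{-1}$. The same trick, applied to \textbf{[J.2]} and \textbf{[J.3]}, produces \textbf{[J$^{-1}$.2]} and \textbf{[J$^{-1}$.3]}. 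For \textbf{[J$^{-1}$.4]}--\textbf{[J$^{-1}$.7]} I exploit that the shifted operator $\mathsf{J}_{n+1}$ is also invertible: starting from \textbf{[J.6]}, namely $\mathsf{d}\mathsf{J}_n = (\mathsf{J}_{n+1}\otimes 1)\mathsf{d}$, post-composing with $\mathsf{J}_n^{-1}$ and pre-composing with $(\mathsf{J}_{n+1}^{-1}\otimes 1)$ gives $(\mathsf{J}_{n+1}^{-1}\otimes 1)\mathsf{d} = \mathsf{d}\mathsf{J}_n^{-1}$, which is \textbf{[J$^{-1}$.4]}. The proofs of \textbf{[J$^{-1}$.5]}--\textbf{[J$^{-1}$.7]} follow by the same left-right manoeuvre applied to \textbf{[J.7]}, \textbf{[J.11]}, and \textbf{[J.12]}.

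The properties \textbf{[J$^{-1}$.8]} and \textbf{[J$^{-1}$.9]} tie $\mathsf{J}^{-1}$ to $\mathsf{K}^{-1}$ and are the key structural outputs required for the construction of the antiderivative integral transformation later in this section. They use both inverses simultaneously. From \textbf{[K.6]} we have $\mathsf{d}\mathsf{K} = \mathsf{d}\mathsf{L}$, and \textbf{[L.6]} with $n=0$ gives $\mathsf{d}\mathsf{L} = (\mathsf{L}_1\otimes 1)\mathsf{d} = (\mathsf{J}\otimes 1)\mathsf{d}$, so $\mathsf{d}\mathsf{K} = (\mathsf{J}\otimes 1)\mathsf{d}$; post-composing with $\mathsf{K}^{-1}$ and pre-composing with $(\mathsf{J}^{-1}\otimes 1)$ yields $(\mathsf{J}^{-1}\otimes 1)\mathsf{d} = \mathsf{d}\mathsf{K}^{-1}$, which is \textbf{[J$^{-1}$.8]}; \textbf{[J$^{-1}$.9]} comes dually from \textbf{[K.7]} and \textbf{[L.7]}. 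Finally \textbf{[J$^{-1}$.10]} and \textbf{[J$^{-1}$.11]} follow by pushing $\mathsf{J}_n^{-1}$ past the outer $\mathsf{d}$ (resp.~$\mathsf{d}^\circ$) using \textbf{[J$^{-1}$.4]} (resp.~\textbf{[J$^{-1}$.5]}), applying the interchange rule \textbf{[d.5]} (resp.~\textbf{[cd.6]}) to the resulting $(\mathsf{d}\otimes 1)\mathsf{d}$, and then sliding $\mathsf{J}_{n+1}^{-1}$ back through the symmetry, which commutes past it since it acts on a different tensor factor.

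I expect no serious obstacle: all eleven items reduce to short algebraic manipulations given the prior lemmas. The only conceptual delicacy is in \textbf{[J$^{-1}$.8]}--\textbf{[J$^{-1}$.9]}, which simultaneously invoke invertibility of $\mathsf{J}$ and of $\mathsf{K}$---precisely what the standing hypothesis of the antiderivatives subsection guarantees---and whose significance lies primarily in their use in the construction that follows, rather than in their proof.
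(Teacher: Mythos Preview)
Your proposal is correct and follows essentially the same approach as the paper: each item is obtained by the ``sandwich'' trick of inserting $\mathsf{J}_n\mathsf{J}_n^{-1}$ (or $\mathsf{K}\mathsf{K}^{-1}$ for \textbf{[J$^{-1}$.8]}--\textbf{[J$^{-1}$.9]}) and invoking the corresponding property from Corollary~\ref{Jprop} or Proposition~\ref{Kprop}. The only minor variation is that the paper derives \textbf{[J$^{-1}$.7]} from \textbf{[J$^{-1}$.6]} via the decomposition $\mathsf{d}\mathsf{d}^\circ = \mathsf{W} + 1$ rather than directly from \textbf{[J.12]}, but your direct route is equally valid.
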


\begin{proof} These are mostly straightforward calculations by using the properties of $\mathsf{J}$: \\ \\
{\bf [J$^{-1}$.1]}: Here we use the property {\bf [J.1]}:
\begin{align*} (n+1) \cdot 
 
\end{align*}
\end{proof}

\begin{proposition} In a differential category with antiderivatives and a monoidal coalgebra modality, the following equalities hold: 
\end{proposition}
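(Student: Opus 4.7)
The expected statement, following the pattern of \textbf{[L.m]}, \textbf{[J.m]}, and \textbf{[K.m]}, is that $\mathsf{J}_n^{-1}$ and $\mathsf{K}^{-1}$ are themselves compatible with the monoidal structure:
\begin{description}
\item[\textbf{[J$^{-1}$.m]}] $m_\otimes \mathsf{J}_n^{-1} = (\mathsf{J}_n^{-1} \otimes 1)m_\otimes = (1 \otimes \mathsf{J}_n^{-1})m_\otimes$;
\item[\textbf{[K$^{-1}$.m]}] $m_\otimes \mathsf{K}^{-1} = (\mathsf{K}^{-1} \otimes 1)m_\otimes = (1 \otimes \mathsf{K}^{-1})m_\otimes$.
\end{description}

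The plan is to derive each of these equalities purely formally from the corresponding already-established rules \textbf{[J.m]} and \textbf{[K.m]}, using only the fact that $\mathsf{J}_n$ and $\mathsf{K}$ are natural isomorphisms (this is where the ``antiderivatives'' hypothesis enters, via Proposition \ref{K-1J-1equiv} to produce $\mathsf{J}_n^{-1}$). No further properties of the coderiving transformation, deriving transformation, or coalgebra modality are required: this is a general fact about invertible natural transformations that intertwine a morphism.

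Concretely, for the first equality of \textbf{[J$^{-1}$.m]}, I would start from \textbf{[J.m]}, namely $m_\otimes \mathsf{J}_n = (\mathsf{J}_n \otimes 1) m_\otimes$, and pre-compose on the left with $(\mathsf{J}_n^{-1} \otimes 1)$ and post-compose on the right with $\mathsf{J}_n^{-1}$:
\begin{align*}
(\mathsf{J}_n^{-1} \otimes 1) m_\otimes
&= (\mathsf{J}_n^{-1} \otimes 1) m_\otimes \mathsf{J}_n \mathsf{J}_n^{-1} \\
&= (\mathsf{J}_n^{-1} \otimes 1)(\mathsf{J}_n \otimes 1) m_\otimes \mathsf{J}_n^{-1} \\
&= m_\otimes \mathsf{J}_n^{-1}.
\end{align*}
The second equality of \textbf{[J$^{-1}$.m]} is handled identically, starting from the variant $m_\otimes \mathsf{J}_n = (1 \otimes \mathsf{J}_n) m_\otimes$ of \textbf{[J.m]} and using that $(1 \otimes \mathsf{J}_n^{-1})$ is the inverse of $(1 \otimes \mathsf{J}_n)$ by bifunctoriality of $\otimes$. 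The proof of \textbf{[K$^{-1}$.m]} is verbatim the same with $\mathsf{K}$ in place of $\mathsf{J}_n$, invoking \textbf{[K.m]} in place of \textbf{[J.m]}.

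There is no real obstacle here: the argument is a one-line inversion that works for any invertible endo-natural transformation satisfying an intertwining with $m_\otimes$. The only subtlety worth mentioning is that the existence of $\mathsf{J}_n^{-1}$ for $n \geq 1$ (not just $n = 0$) is guaranteed by Proposition \ref{K-1J-1equiv}(ii), and for $n = 0$ (i.e.\ $\mathsf{J}$) it is given by Proposition \ref{K-1J-1equiv}(i) from the invertibility of $\mathsf{K}$; so the hypothesis ``antiderivatives'' is exactly what is needed to even write down $\mathsf{J}_n^{-1}$ and $\mathsf{K}^{-1}$ in the statement.
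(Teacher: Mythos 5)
Your proposal is correct and matches the paper's own argument: the paper's string-diagram proof is precisely the one-line inversion you describe, inserting $\mathsf{K}\mathsf{K}^{-1}=1$ (resp.\ $\mathsf{J}\mathsf{J}^{-1}=1$), sliding the isomorphism across $m_\otimes$ via \textbf{[K.m]} (resp.\ \textbf{[J.m]}), and cancelling. The only cosmetic difference is that the paper states the result for $\mathsf{J}^{-1}=\mathsf{J}_0^{-1}$ rather than general $\mathsf{J}_n^{-1}$, but your more general version goes through identically.
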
 
\begin{description}
\item[\textbf{[K$^{-1}$.m]}] $m_\otimes \mathsf{K}^{-1}= (\mathsf{K}^{-1} \otimes 1)m_\otimes= (1 \otimes \mathsf{K}^{-1})m_\otimes$;
\item[\textbf{[J$^{-1}$.m]}] $m_\otimes \mathsf{J}^{-1}= (\mathsf{J}^{-1} \otimes 1)m_\otimes= (1 \otimes \mathsf{J}^{-1})m_\otimes$.
\end{description}
\begin{proof} Straightforward using the monoidal rules for $\mathsf{K}$ and $\mathsf{J}$: \\Ê\\
\textbf{[K$^{-1}$.m]}: Here we use \textbf{[K.m]}:
\begin{align*}
\begin{array}[c]{c}\resizebox{!}{2cm}{%
\begin{tikzpicture}
	\begin{pgfonlayer}{nodelayer}
		\node [style={regular polygon,regular polygon sides=4, draw, inner sep=1pt,minimum size=1pt}] (0) at (0, 4) {$\bigotimes$};
		\node [style=port] (1) at (0.75, 5) {};
		\node [style=port] (2) at (-0.75, 5) {};
		\node [style={circle, draw, inner sep=1pt,minimum size=1pt}] (3) at (0, 3) {$\mathsf{K}^{-1}$};
		\node [style=port] (4) at (0, 2) {};
	\end{pgfonlayer}
	\begin{pgfonlayer}{edgelayer}
		\draw [style=wire, in=-90, out=180, looseness=1.25] (0) to (2);
		\draw [style=wire, in=0, out=-90, looseness=1.25] (1) to (0);
		\draw [style=wire] (3) to (4);
		\draw [style=wire] (0) to (3);
	\end{pgfonlayer}
\end{tikzpicture}
  }%
\end{array} =
   \begin{array}[c]{c}\resizebox{!}{2.5cm}{%
\begin{tikzpicture}
	\begin{pgfonlayer}{nodelayer}
		\node [style=port] (0) at (1.25, 0.5) {};
		\node [style={circle, draw, inner sep=1pt,minimum size=1pt}] (1) at (0.5, 4.25) {$\mathsf{K}^{-1}$};
		\node [style={regular polygon,regular polygon sides=4, draw, inner sep=1pt,minimum size=1pt}] (2) at (1.25, 2.25) {$\bigotimes$};
		\node [style={circle, draw}] (3) at (0.5, 3.25) {$\mathsf{K}$};
		\node [style=port] (4) at (2, 5) {};
		\node [style=port] (5) at (0.5, 5) {};
		\node [style={circle, draw, inner sep=1pt,minimum size=1pt}] (6) at (1.25, 1.25) {$\mathsf{K}^{-1}$};
	\end{pgfonlayer}
	\begin{pgfonlayer}{edgelayer}
		\draw [style=wire, in=0, out=-90, looseness=1.00] (4) to (2);
		\draw [style=wire] (1) to (3);
		\draw [style=wire, in=180, out=-90, looseness=1.25] (3) to (2);
		\draw [style=wire] (5) to (1);
		\draw [style=wire] (2) to (6);
		\draw [style=wire] (6) to (0);
	\end{pgfonlayer}
\end{tikzpicture}
  }%
\end{array}=    \begin{array}[c]{c}\resizebox{!}{2.5cm}{%
\begin{tikzpicture}
	\begin{pgfonlayer}{nodelayer}
		\node [style=port] (0) at (1.25, -0.25) {};
		\node [style={regular polygon,regular polygon sides=4, draw, inner sep=1pt,minimum size=1pt}] (1) at (1.25, 2.25) {$\bigotimes$};
		\node [style={circle, draw, inner sep=1pt,minimum size=1pt}] (2) at (0.5, 3.25) {$\mathsf{K}^{-1}$};
		\node [style=port] (3) at (2, 4) {};
		\node [style=port] (4) at (0.5, 4) {};
		\node [style={circle, draw, inner sep=1pt,minimum size=1pt}] (5) at (1.25, 0.5) {$\mathsf{K}^{-1}$};
		\node [style={circle, draw}] (6) at (1.25, 1.5) {$\mathsf{K}$};
	\end{pgfonlayer}
	\begin{pgfonlayer}{edgelayer}
		\draw [style=wire, in=0, out=-90, looseness=1.00] (3) to (1);
		\draw [style=wire, in=180, out=-90, looseness=1.25] (2) to (1);
		\draw [style=wire] (5) to (0);
		\draw [style=wire] (4) to (2);
		\draw [style=wire] (1) to (6);
		\draw [style=wire] (6) to (5);
	\end{pgfonlayer}
\end{tikzpicture}
  }%
\end{array}=  \begin{array}[c]{c}\resizebox{!}{2cm}{%
\begin{tikzpicture}
	\begin{pgfonlayer}{nodelayer}
		\node [style={regular polygon,regular polygon sides=4, draw, inner sep=1pt,minimum size=1pt}] (0) at (0, 2.25) {$\bigotimes$};
		\node [style=port] (1) at (0.75, 4.25) {};
		\node [style=port] (2) at (0, 1) {};
		\node [style={circle, draw, inner sep=1pt,minimum size=1pt}] (3) at (-0.75, 3.25) {$\mathsf{K}^{-1}$};
		\node [style=port] (4) at (-0.75, 4.25) {};
	\end{pgfonlayer}
	\begin{pgfonlayer}{edgelayer}
		\draw [style=wire, in=0, out=-90, looseness=1.25] (1) to (0);
		\draw [style=wire] (0) to (2);
		\draw [style=wire] (4) to (3);
		\draw [style=wire, in=180, out=-90, looseness=1.25] (3) to (0);
	\end{pgfonlayer}
\end{tikzpicture}
  }%
\end{array}
\end{align*}
Similarly to obtain $\mathsf{K}^{-1}$ on the right. \\ \\
\textbf{[J$^{-1}$.m]}: Here we use \textbf{[J.m]}:
\begin{align*}
\begin{array}[c]{c}\resizebox{!}{2cm}{%
\begin{tikzpicture}
	\begin{pgfonlayer}{nodelayer}
		\node [style={regular polygon,regular polygon sides=4, draw, inner sep=1pt,minimum size=1pt}] (0) at (0, 4) {$\bigotimes$};
		\node [style=port] (1) at (0.75, 5) {};
		\node [style=port] (2) at (-0.75, 5) {};
		\node [style={circle, draw, inner sep=1pt,minimum size=1pt}] (3) at (0, 3) {$\mathsf{J}^{-1}$};
		\node [style=port] (4) at (0, 2) {};
	\end{pgfonlayer}
	\begin{pgfonlayer}{edgelayer}
		\draw [style=wire, in=-90, out=180, looseness=1.25] (0) to (2);
		\draw [style=wire, in=0, out=-90, looseness=1.25] (1) to (0);
		\draw [style=wire] (3) to (4);
		\draw [style=wire] (0) to (3);
	\end{pgfonlayer}
\end{tikzpicture}
  }%
\end{array} =
   \begin{array}[c]{c}\resizebox{!}{2.5cm}{%
\begin{tikzpicture}
	\begin{pgfonlayer}{nodelayer}
		\node [style=port] (0) at (1.25, 0.5) {};
		\node [style={circle, draw, inner sep=1pt,minimum size=1pt}] (1) at (0.5, 4.25) {$\mathsf{J}^{-1}$};
		\node [style={regular polygon,regular polygon sides=4, draw, inner sep=1pt,minimum size=1pt}] (2) at (1.25, 2.25) {$\bigotimes$};
		\node [style={circle, draw}] (3) at (0.5, 3.25) {$\mathsf{J}$};
		\node [style=port] (4) at (2, 5) {};
		\node [style=port] (5) at (0.5, 5) {};
		\node [style={circle, draw, inner sep=1pt,minimum size=1pt}] (6) at (1.25, 1.25) {$\mathsf{J}^{-1}$};
	\end{pgfonlayer}
	\begin{pgfonlayer}{edgelayer}
		\draw [style=wire, in=0, out=-90, looseness=1.00] (4) to (2);
		\draw [style=wire] (1) to (3);
		\draw [style=wire, in=180, out=-90, looseness=1.25] (3) to (2);
		\draw [style=wire] (5) to (1);
		\draw [style=wire] (2) to (6);
		\draw [style=wire] (6) to (0);
	\end{pgfonlayer}
\end{tikzpicture}
  }%
\end{array}=    \begin{array}[c]{c}\resizebox{!}{2.5cm}{%
\begin{tikzpicture}
	\begin{pgfonlayer}{nodelayer}
		\node [style=port] (0) at (1.25, -0.25) {};
		\node [style={regular polygon,regular polygon sides=4, draw, inner sep=1pt,minimum size=1pt}] (1) at (1.25, 2.25) {$\bigotimes$};
		\node [style={circle, draw, inner sep=1pt,minimum size=1pt}] (2) at (0.5, 3.25) {$\mathsf{J}^{-1}$};
		\node [style=port] (3) at (2, 4) {};
		\node [style=port] (4) at (0.5, 4) {};
		\node [style={circle, draw, inner sep=1pt,minimum size=1pt}] (5) at (1.25, 0.5) {$\mathsf{J}^{-1}$};
		\node [style={circle, draw}] (6) at (1.25, 1.5) {$\mathsf{J}$};
	\end{pgfonlayer}
	\begin{pgfonlayer}{edgelayer}
		\draw [style=wire, in=0, out=-90, looseness=1.00] (3) to (1);
		\draw [style=wire, in=180, out=-90, looseness=1.25] (2) to (1);
		\draw [style=wire] (5) to (0);
		\draw [style=wire] (4) to (2);
		\draw [style=wire] (1) to (6);
		\draw [style=wire] (6) to (5);
	\end{pgfonlayer}
\end{tikzpicture}
  }%
\end{array}=  \begin{array}[c]{c}\resizebox{!}{2cm}{%
\begin{tikzpicture}
	\begin{pgfonlayer}{nodelayer}
		\node [style={regular polygon,regular polygon sides=4, draw, inner sep=1pt,minimum size=1pt}] (0) at (0, 2.25) {$\bigotimes$};
		\node [style=port] (1) at (0.75, 4.25) {};
		\node [style=port] (2) at (0, 1) {};
		\node [style={circle, draw, inner sep=1pt,minimum size=1pt}] (3) at (-0.75, 3.25) {$\mathsf{J}^{-1}$};
		\node [style=port] (4) at (-0.75, 4.25) {};
	\end{pgfonlayer}
	\begin{pgfonlayer}{edgelayer}
		\draw [style=wire, in=0, out=-90, looseness=1.25] (1) to (0);
		\draw [style=wire] (0) to (2);
		\draw [style=wire] (4) to (3);
		\draw [style=wire, in=180, out=-90, looseness=1.25] (3) to (0);
	\end{pgfonlayer}
\end{tikzpicture}
  }%
\end{array}
\end{align*}
Similarly to obtain $\mathsf{J}^{-1}$ on the right.
\end{proof}  

\subsection{From Antiderivatives to Calculus Categories}\label{anticalccat}

In this section we prove that a differential category with antiderivative is a calculus category. This is the main result of this paper. We first show that Ehrhard's integral built in \cite{ehrhard2017introduction} is an integral transformation which furthermore satisfies Poincar\'e Condition.

\begin{proposition}\label{Jint} In a differential category for which $\mathsf{J}$ is a natural isomorphism, the natural transformation $\mathsf{s} := \mathsf{d}^\circ (\mathsf{J}^{-1} \otimes 1)$ is an integral transformation which satisfies the Poincar\'e Condition. 
\end{proposition}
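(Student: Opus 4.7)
The plan is to verify the integral transformation axioms \textbf{[s.1]}, \textbf{[s.2]}, \textbf{[s.3]} of Definition \ref{inttran} and the Poincar\'e Condition for $\mathsf{s} := \mathsf{d}^\circ(\mathsf{J}^{-1} \otimes 1)$; naturality is immediate as $\mathsf{s}$ is a composite of natural transformations.

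Axioms \textbf{[s.1]} and \textbf{[s.3]} are short computations. For \textbf{[s.1]}, unfold $\mathsf{s}(e \otimes 1) = \mathsf{d}^\circ((\mathsf{J}^{-1} e) \otimes 1)$, apply \textbf{[J$^{-1}$.2]} at $n=0$ to obtain $\mathsf{J}^{-1} e = e$, and close with \textbf{[cd.1]}. For \textbf{[s.3]}, expand $\mathsf{s}(\mathsf{s} \otimes 1)$ as $\mathsf{d}^\circ(\mathsf{J}^{-1} \otimes 1)(\mathsf{d}^\circ \otimes 1)(\mathsf{J}^{-1} \otimes 1 \otimes 1)$ and push the middle $\mathsf{J}^{-1}$ past $\mathsf{d}^\circ$ via \textbf{[J$^{-1}$.5]} and bifunctoriality, producing $\mathsf{d}^\circ(\mathsf{d}^\circ \otimes 1)(\mathsf{J}_1^{-1} \otimes 1 \otimes 1)(\mathsf{J}^{-1} \otimes 1 \otimes 1)$. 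Both $\mathsf{J}^{-1}$-operators act only on the $\oc A$-position and hence commute with the swap $(1 \otimes \sigma)$ on the two $A$-positions; the coderiving interchange \textbf{[cd.6]} then transfers to $\mathsf{s}$.

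For the Poincar\'e Condition, combine Proposition \ref{Wprop} ($\mathsf{d}\mathsf{d}^\circ = \mathsf{W} + 1$) with \textbf{[J$^{-1}$.7]} to rewrite $\mathsf{d}\mathsf{s} = \mathsf{d}\mathsf{d}^\circ(\mathsf{J}^{-1} \otimes 1) = (\mathsf{J}^{-1} \otimes 1)\mathsf{d}\mathsf{d}^\circ$. Given $f : C \otimes \oc A \otimes A \to B$ satisfying the pre-condition $(1 \otimes 1 \otimes \sigma)(1 \otimes \mathsf{d} \otimes 1)f = (1 \otimes \mathsf{d} \otimes 1)f$, expanding
\[
(1 \otimes \mathsf{W})f = (1 \otimes \mathsf{d}^\circ \otimes 1)(1 \otimes 1 \otimes \sigma)(1 \otimes \mathsf{d} \otimes 1)f
\]
and applying the pre-condition absorbs the swap, reducing this to $(1 \otimes \mathsf{L} \otimes 1)f$ via $\mathsf{L} = \mathsf{d}^\circ\mathsf{d}$ and bifunctoriality. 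Hence $(1 \otimes \mathsf{d}\mathsf{d}^\circ)f = (1 \otimes \mathsf{J} \otimes 1)f$, and so $(1 \otimes \mathsf{d}\mathsf{s})f = (1 \otimes \mathsf{J}^{-1} \otimes 1)(1 \otimes \mathsf{J} \otimes 1)f = f$.

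The Rota-Baxter rule \textbf{[s.2]} is the main obstacle. Two identities are crucial: $\mathsf{L}\mathsf{s} = \mathsf{d}^\circ$ (from \textbf{[L.7]} using $\mathsf{L}_1\mathsf{J}^{-1} = 1$) and $\mathsf{J}\mathsf{s} = \mathsf{s} + \mathsf{d}^\circ$ (from \textbf{[J.7]} using $\mathsf{J}_1\mathsf{J}^{-1} = 1 + \mathsf{J}^{-1}$). Writing $R_1, R_2$ for the two summands on the RHS of Rota-Baxter, the Leibniz-type rule \textbf{[J.4]} applied to $\mathsf{J}\Delta(\mathsf{s}\otimes\mathsf{s})$ and, together with \textbf{[cd.3]}, \textbf{[cd.4]}, to $\mathsf{J}(R_1 + R_2)$, shows that both expand with the same extra summands $\Delta(\mathsf{d}^\circ \otimes \mathsf{s}) + \Delta(\mathsf{s} \otimes \mathsf{d}^\circ)$. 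With negatives available this would immediately yield $\Delta(\mathsf{s} \otimes \mathsf{s}) = R_1 + R_2$ by invertibility of $\mathsf{J}$; without negatives the plan is to post-compose the putative identity with the isomorphism $(\mathsf{J} \otimes 1 \otimes \mathsf{J} \otimes 1)$, so that $\mathsf{s}(\mathsf{J} \otimes 1) = \mathsf{d}^\circ$ collapses the LHS to the purely coderiving-level expression $\Delta(\mathsf{d}^\circ \otimes \mathsf{d}^\circ)$, and then to systematically unpack the correspondingly transformed RHS using \textbf{[cd.3]}, \textbf{[cd.4]}, and \textbf{[J.4]}. Organizing this bookkeeping so that the transformed RHS likewise reduces to $\Delta(\mathsf{d}^\circ \otimes \mathsf{d}^\circ)$ is the technical heart of the proof.
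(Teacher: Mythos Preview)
Your treatments of \textbf{[s.1]}, \textbf{[s.3]}, and the Poincar\'e Condition are correct and essentially match the paper's arguments (the paper phrases \textbf{[s.3]} via \textbf{[J$^{-1}$.11]} rather than unpacking through \textbf{[J$^{-1}$.5]} and \textbf{[cd.6]}, but these are equivalent, and your Poincar\'e argument is a clean repackaging of the paper's).

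The gap is \textbf{[s.2]}. You have correctly identified the useful ingredients ($\mathsf{L}\mathsf{s}=\mathsf{d}^\circ$, $\mathsf{J}\mathsf{s}=\mathsf{s}+\mathsf{d}^\circ$, the Leibniz-type rule \textbf{[J.4]}), but your proposed closing strategy---post-composing with $(\mathsf{J}\otimes 1\otimes\mathsf{J}\otimes 1)$ and reducing both sides to $\Delta(\mathsf{d}^\circ\otimes\mathsf{d}^\circ)$---does not obviously close. The difficulty is that on the right-hand side the \emph{outer} $\mathsf{s}$ in each summand $R_i$ is untouched by this post-composition; you still have $\mathsf{s}(\Delta\otimes 1)(\mathsf{d}^\circ\otimes\mathsf{J}\otimes 1)$ and its twisted partner, with no direct way to collapse the leading $\mathsf{s}$. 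Pushing $\mathsf{J}$ left through $\Delta$ via \textbf{[J.4]} reintroduces sums, and you end up chasing the same non-cancellation problem you started with. The phrase ``organizing this bookkeeping'' is hiding a real obstruction, not just tedium.

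The paper's approach avoids this entirely by running a \emph{forward} derivation from $\Delta(\mathsf{s}\otimes\mathsf{s})$ to $R_1+R_2$. The key move is to first establish an auxiliary identity of the form
\[
\Delta\bigl(\mathsf{d}^\circ(\mathsf{J}^{-1}\otimes 1)\otimes \mathsf{J}^{-1}\bigr)
\;=\;
\mathsf{J}^{-1}\Delta\bigl(\mathsf{d}^\circ\otimes \mathsf{J}^{-1}\bigr)
\;+\;
\mathsf{J}^{-1}\Delta\bigl(\mathsf{d}^\circ(\mathsf{J}^{-1}\otimes 1)\otimes 1\bigr),
\]
obtained by inserting $\mathsf{J}^{-1}\mathsf{J}$ at the top and applying \textbf{[J.4]} with the split $r=0$, $s=1$ (so one summand carries $\mathsf{L}$ and the other $\mathsf{J}$), then using \textbf{[J.7]} to absorb factors. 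The asymmetric choice of split is what makes one summand's $\mathsf{J}$ cancel cleanly against an existing $\mathsf{J}^{-1}$. With this identity in hand, the paper rewrites $\Delta(\mathsf{s}\otimes\mathsf{s})$ using \textbf{[cd.3]}/\textbf{[cd.4]} to expose a $\mathsf{d}^\circ$ at the top (turning it into $\mathsf{d}^\circ(\ldots\otimes 1)$ applied to the auxiliary expression), substitutes the identity, and then uses \textbf{[J$^{-1}$.11]} and \textbf{[cd.3]}/\textbf{[cd.4]} once more to recognize the two resulting summands as exactly $R_1$ and $R_2$. No cancellation or negatives are needed at any stage.
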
 
\begin{proof} In string diagram notation, $\mathsf{s}$ is expressed as follows:
\[
    \]

Here we have used the definition of the integral transformation,  {\bf [K$^{-1}$.1]}, and that $\mathsf{K}$ is an isomorphism.
\end{proof} 

\begin{proposition} In a differential category with antiderivatives, a map which satisfies the first fundamental theorem of calculus satisfies integration by substitution. 
\end{proposition}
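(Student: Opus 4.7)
The plan is to leverage the hypothesis $f = \mathsf{d}h$ (where $h := \mathsf{s}f$, which holds because $f$ satisfies the first fundamental theorem) and reduce integration by substitution to the chain rule together with the second fundamental theorem. Let $g: \oc A \otimes A \to B$ be an arbitrary integrable map and set $k := \mathsf{s}g$; the target identity is
\[
\delta_A\, \oc(k)\, h \;=\; \mathsf{s}\bigl[(\Delta \otimes 1)(\delta \otimes 1 \otimes 1)(\oc(k) \otimes g)\, f\bigr].
\]

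First I would compute $\mathsf{d}(\delta\oc(k)h)$ via the chain rule {\bf [d.4]} together with the naturality of $\mathsf{d}$ at $k$, which gives $\mathsf{d}_{\oc A}\oc(k) = (\oc(k) \otimes k)\mathsf{d}_B$, and the identification $\mathsf{d}_B h = f$. This yields
\[
\mathsf{d}(\delta\oc(k)h) \;=\; (\Delta \otimes 1)(\delta \otimes 1 \otimes 1)(\oc(k) \otimes \mathsf{d}k)\, f,
\]
which has $\mathsf{d}k = \mathsf{d}\mathsf{s}g$ in place of the $g$ appearing on the right of the target. Applying $\mathsf{s}$ to both sides and using the second fundamental theorem $\mathsf{s}\mathsf{d} + \oc(0) = 1$ applied to $\delta\oc(k)h$ reduces the target equality to the discrepancy identity
\[
\mathsf{s}\bigl[(\Delta \otimes 1)(\delta \otimes 1 \otimes 1)(\oc(k) \otimes (g - \mathsf{d}k))\, f\bigr] \;=\; \oc(0)\,\delta\oc(k)h, \qquad (\star)
\]
in settings with negatives; in the general additive case one works with the equivalent unsigned version, in which each side is added to a common term to make the formulas balance.

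To establish $(\star)$ I would use two auxiliary identities for the defect $q := g - \mathsf{d}\mathsf{s}g$: first, $\mathsf{d}\mathsf{s}q = 0$, which follows from the compatibility $\mathsf{d}\mathsf{s}\mathsf{d} = \mathsf{d}$ (forcing $\mathsf{d}\mathsf{s}\mathsf{d}\mathsf{s}g = \mathsf{d}\mathsf{s}g$); and second, $\mathsf{s}q = \oc(0)k$, which follows from the second fundamental theorem applied to $k = \mathsf{s}g$. Expanding $\mathsf{s} = \mathsf{K}^{-1}\mathsf{d}^\circ$ (Definition \ref{antiderivativeint}) and commuting $\mathsf{d}^\circ$ past $(\Delta \otimes 1)$ via {\bf [cd.3]} rewrites the left side of $(\star)$ into a form where the dependence on $q$ is localised to the $\oc A$ argument of $f = \mathsf{d}h$. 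The $\oc(0)$ factor arising from $\mathsf{s}q = \oc(0)k$ then propagates outward through the naturality of $\delta$ and $\oc(k)$ with respect to zero maps and the identity {\bf [K$^{-1}$.1]}, producing exactly $\oc(0)\delta\oc(k)h$.

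The routine part of this plan is the initial chain-rule computation; the main obstacle is verifying $(\star)$. The difficulty is precisely that $g$ and $\mathsf{d}\mathsf{s}g$ genuinely differ whenever $g$ itself fails to satisfy the first fundamental theorem, so one cannot simply substitute one for the other inside the integrand. What the antiderivative structure provides is that although these two integrands differ, they differ in a controlled way: the extra term is a ``pure constant'' in the sense of being an $\oc(0)$-translate, and this is exactly the correction needed to cancel the $\oc(0)\delta\oc(k)h$ residue produced by the second fundamental theorem. Tracking this cancellation through the coderiving transformation is the technical heart of the proof.
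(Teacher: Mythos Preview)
Your overall architecture---apply the second fundamental theorem to the left side, expand the derivative via the chain rule {\bf [d.4]}, and collapse $\mathsf{d}\mathsf{s}f = f$ using the hypothesis on $f$---is exactly what the paper does. Where you diverge from the paper is in the handling of the residual discrepancy, and here your description of~$(\star)$ misidentifies the mechanism.

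You frame the problem as controlling the defect $q = g - \mathsf{d}\mathsf{s}g$ via the identity $\mathsf{s}q = \oc(0)k$. But after you push the outer $\mathsf{d}^\circ$ from $\mathsf{s} = \mathsf{K}^{-1}\mathsf{d}^\circ$ through $\Delta$ using {\bf [cd.3]}, the dependence on $g$ sits in the $B$-argument of $f$ as $\mathsf{d}^\circ g$ versus $\mathsf{d}^\circ\mathsf{d}\mathsf{s}g$---not as $\mathsf{s}q$. The identity that actually closes the gap is
\[
\mathsf{d}^\circ\mathsf{d}\mathsf{s}g \;=\; \mathsf{L}\mathsf{K}^{-1}\mathsf{d}^\circ g \;=\; \mathsf{d}^\circ g,
\]
which holds because $\mathsf{L}\mathsf{K}^{-1} + \oc(0) = \mathsf{K}\mathsf{K}^{-1} = 1$ (using {\bf [K$^{-1}$.1]}) and $\oc(0)\mathsf{d}^\circ = 0$ by naturality. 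This makes the two integrands literally equal after {\bf [cd.3]}, so no defect analysis or ``propagation'' is needed. Separately, the $\oc(0)\delta\oc(k)h$ residue from the second fundamental theorem vanishes outright: $\oc(0)k = \oc(0)\mathsf{s}g = 0$ by naturality of $\mathsf{s}$, and this zero feeds through $\oc$ and $\mathsf{s}f$ to kill the whole term. The paper arranges this without negatives by absorbing the $\oc(0)$ residue into the $\mathsf{L}$ factor to rebuild $\mathsf{K}$, then cancelling against $\mathsf{K}^{-1}$.

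So your plan is salvageable, but $(\star)$ is not the ``technical heart'': both its sides are zero, and the real content is the one-line identity $\mathsf{L}\mathsf{s} = \mathsf{d}^\circ$.
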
 
\begin{proof} Let $f: \oc A \otimes A \to B$ satisfy the first fundamental theorem, that is, $\mathsf{d}\mathsf{K}^{-1}\mathsf{d}^\circ f=f$, and let $g: \oc C \otimes C \to A$ be any integrable map. Using the Second Fundamental Theorem, the chain rule \textbf{[d.5]}, naturality of $\delta$, the deriving transformation and the coderiving transformation, \textbf{[cd.3]}, and {\bf [K$^{-1}$.1]} we have that:
\begin{align*}
  &

\end{align*}
\end{proof} 

\subsection{From Calculus Categories to Antiderivatives}\label{monoidalsec}

In this section we prove that a calculus category with a monoidal coalgebra modality has antiderivatives, and provide necessary and sufficient conditions for when the integral is the antiderivative integral. 

\begin{theorem} A differential category  on a monoidal coalgebra modality which is also an integral category satisyfing the Second Fundamental Theorem of Calculus, is also a differential category with antiderivatives. 
\end{theorem}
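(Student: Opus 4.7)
The plan is to apply Proposition \ref{K-1J-1equiv}(iii), which reduces the invertibility of $\mathsf{K}$ to the Taylor property on $\mathsf{d}$ together with the invertibility of $\mathsf{J}$. The Taylor property comes for free from the hypothesis: by Proposition \ref{FTC2CompTaylor}, the Second Fundamental Theorem $\mathsf{s}\mathsf{d}+\oc 0=1$ is equivalent to Compatibility ($\mathsf{d}\mathsf{s}\mathsf{d}=\mathsf{d}$) together with Taylor for $\mathsf{d}$. So the whole proof reduces to exhibiting an inverse to $\mathsf{J}$.

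My candidate is
\[ \mathsf{J}^{-1}_A \;:=\; \delta_A\cdot\mathsf{s}_{\oc A}\cdot(\oc(\varepsilon_A)\otimes e_A)\;:\;\oc A\longrightarrow\oc A, \]
patterned after \textbf{[cd.7]}, $\delta\mathsf{d}^\circ(\oc(\varepsilon)\otimes e)=1$, with the coderiving transformation $\mathsf{d}^\circ$ replaced by the given integral $\mathsf{s}$. This is the only natural candidate: were $\mathsf{s}$ already known to coincide with the antiderivative integral $\mathsf{d}^\circ(\mathsf{J}^{-1}\otimes 1)$ of Proposition \ref{Jint}, then naturality of $\mathsf{J}^{-1}$ with respect to $\varepsilon$ together with \textbf{[cd.7]} would immediately make the above formula collapse to $\mathsf{J}^{-1}$ itself, so any proof that works must produce this map.

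The verification of $\mathsf{J}\mathsf{J}^{-1}=\mathsf{J}^{-1}\mathsf{J}=1$ is the main technical step. It proceeds by string-diagrammatic calculation, using the identity \textbf{[J.5]} (which expresses $\mathsf{J}\delta$ in terms of $\delta$ and $\mathsf{d}^\circ(1\otimes\mathsf{L})\mathsf{d}$) to commute $\mathsf{J}$ past the outermost $\delta$, the Rota--Baxter rule \textbf{[s.2]} to distribute products of integrals, and the Compatibility identity $\mathsf{d}\mathsf{s}\mathsf{d}=\mathsf{d}$ together with the Second Fundamental Theorem $\mathsf{s}\mathsf{d}+\oc 0=1$ (and the consequent orthogonality $\mathsf{s}\mathsf{d}\cdot\oc 0=\oc 0\cdot\mathsf{s}\mathsf{d}=0$, obtained by naturality of $\mathsf{s}$ and $\mathsf{d}$) to eliminate $\mathsf{s}\mathsf{d}$-subterms. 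A final application of \textbf{[cd.7]} and the comonad triangle identities collapses the remaining expression to the identity.

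The main obstacle is the $\oc\oc A$-level bookkeeping that arises because $\delta$ in the candidate lifts the whole computation one comonadic level up, where the monoidal coherences \textbf{[cd.m]} for the coderiving transformation and \textbf{[J.m]} for $\mathsf{J}$ must be invoked at the right moments to bring the computation back down to the $\oc A$-level; the monoidal hypothesis is used essentially here. Once $\mathsf{J}\mathsf{J}^{-1}=\mathsf{J}^{-1}\mathsf{J}=1$ has been established, Proposition \ref{K-1J-1equiv}(iii) combined with the already-obtained Taylor property delivers $\mathsf{K}^{-1}$, which by definition means the differential category has antiderivatives.
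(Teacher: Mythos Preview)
Your overall strategy is correct and matches the paper: reduce to Proposition~\ref{K-1J-1equiv}(iii), obtain Taylor from Proposition~\ref{FTC2CompTaylor}, and exhibit an inverse for $\mathsf{J}$. The divergence is in the candidate for $\mathsf{J}^{-1}$.

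Your candidate $\delta_A\,\mathsf{s}_{\oc A}\,(\oc(\varepsilon_A)\otimes e_A)$ is the one the paper uses \emph{later}, in the non-monoidal setting, where it is shown to work only under an additional coherence hypothesis $\mathsf{d}\mathsf{s}=(\delta\otimes 1)(\mathsf{s}\otimes 1)(\oc(\varepsilon)\otimes e\otimes 1)\mathsf{d}\mathsf{d}^\circ$ that is not assumed here. Tracing your outline, the step $\mathsf{J}^{-1}\mathsf{J}=1$ reduces (via naturality of $\mathsf{J}$ and \textbf{[J.7]}) to needing $\mathsf{s}_{\oc A}(\mathsf{J}_{\oc A}\otimes 1)=\mathsf{d}^\circ_{\oc A}$, which is essentially the statement that $\mathsf{s}$ is already the antiderivative integral at $\oc A$; the tools you list (\textbf{[s.2]}, Compatibility, \textbf{[cd.m]}, \textbf{[J.m]}) do not supply this. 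Your allusion to monoidal coherences ``bringing the computation back down'' is where the argument is left unfinished.

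The paper instead uses the monoidal structure directly in the \emph{candidate}: it sets
\[
\mathsf{J}^{-1}_A \;=\; (m_K\otimes 1_{\oc A})\,(\mathsf{s}_K\otimes 1_{\oc A})\,m_\otimes,
\]
routing everything through the monoidal unit $K$. The point is that at $K$ one has $\sigma_{K,K}=1$, so the $\mathsf{W}$-identity (Proposition~\ref{Wprop}) and the Second Fundamental Theorem yield $\mathsf{s}_K(\mathsf{J}_K\otimes 1)=\mathsf{d}^\circ_K$ by a short computation; then \textbf{[J.m]} transports this to an arbitrary $A$. The monoidal hypothesis is thus used not to rescue a $\delta$-based candidate but to access the unit $K$, where the symmetry obstruction vanishes.
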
 

It is important to note in this statement that the assumed integral transformation is \emph{not} necessarily the same as the integral transformation constructed as the antiderivative.

\begin{proof} Since the Second Fundamental Theorem of Calculus holds then the deriving transformation is Taylor. Therefore, it suffices to show that $\mathsf{J}$ is a natural isomorphism. Define $\mathsf{J}^{-1}$ as follows:
  \[  \xymatrixcolsep{2pc}\xymatrix{\oc A \ar[r]^-{m_K \otimes 1} & \oc K \otimes \oc A \ar[r]^-{\mathsf{s}_K \otimes 1} & \oc K \otimes K \otimes \oc A \cong \oc K \otimes \oc A \ar[r]^-{m_\otimes} & \oc A
  } \]
expressed in the graphical calculus as:
  \[\begin{array}[c]{c}\resizebox{!}{2cm}{%
\begin{tikzpicture}
	\begin{pgfonlayer}{nodelayer}
		\node [style=port] (0) at (0, 3) {};
		\node [style=port] (1) at (0, 0) {};
		\node [style={circle, draw, inner sep=1pt,minimum size=1pt}] (2) at (0, 1.5) {$\mathsf{J}^{-1}$};
	\end{pgfonlayer}
	\begin{pgfonlayer}{edgelayer}
		\draw [style=wire] (0) to (2);
		\draw [style=wire] (2) to (1);
	\end{pgfonlayer}
\end{tikzpicture}}
   \end{array} =   \begin{array}[c]{c}\resizebox{!}{2.5cm}{%
\begin{tikzpicture}
	\begin{pgfonlayer}{nodelayer}
		\node [style=port] (0) at (2, 2.25) {};
		\node [style=port] (1) at (0.75, -2) {};
		\node [style=codifferential] (2) at (0.5, 0.75) {{\bf -----}};
		\node [style=port] (3) at (1.25, -0.5) {};
		\node [style={regular polygon,regular polygon sides=4, draw, inner sep=1pt,minimum size=1pt}] (4) at (0.75, -1) {$\bigotimes$};
		\node [style={circle, draw}] (5) at (0.5, 1.75) {$m$};
	\end{pgfonlayer}
	\begin{pgfonlayer}{edgelayer}
		\draw [style=wire] (4) to (1);
		\draw [style=dwire, in=90, out=-28, looseness=1.25] (2) to (3);
		\draw [style=wire] (5) to (2);
		\draw [style=wire, in=0, out=-90, looseness=1.00] (0) to (4);
		\draw [style=wire, in=180, out=-135, looseness=1.50] (2) to (4);
	\end{pgfonlayer}
\end{tikzpicture}
  }%
\end{array} \]
where the dotted line represents the monoidal unit. First notice that by \textbf{[J.m]} we have the following equality: 
\[\begin{array}[c]{c}\resizebox{!}{2cm}{%
\begin{tikzpicture}
	\begin{pgfonlayer}{nodelayer}
		\node [style=port] (0) at (0, 0) {};
		\node [style={circle, draw}] (1) at (0, 0.75) {$\mathsf{J}$};
		\node [style=port] (2) at (0, 3) {};
		\node [style={circle, draw, inner sep=1pt,minimum size=1pt}] (3) at (0, 2.25) {$\mathsf{J}^{-1}$};
	\end{pgfonlayer}
	\begin{pgfonlayer}{edgelayer}
		\draw [style=wire] (2) to (3);
		\draw [style=wire] (3) to (1);
		\draw [style=wire] (1) to (0);
	\end{pgfonlayer}
\end{tikzpicture}}
   \end{array} =    \begin{array}[c]{c}\resizebox{!}{3cm}{%
\begin{tikzpicture}
	\begin{pgfonlayer}{nodelayer}
		\node [style=port] (0) at (2, 2.25) {};
		\node [style=codifferential] (1) at (0.5, 0.75) {{\bf -----}};
		\node [style=port] (2) at (1.25, -0.5) {};
		\node [style={regular polygon,regular polygon sides=4, draw, inner sep=1pt,minimum size=1pt}] (3) at (0.75, -1) {$\bigotimes$};
		\node [style={circle, draw}] (4) at (0.5, 1.75) {$m$};
		\node [style={circle, draw}] (5) at (0.75, -2) {$\mathsf{J}$};
		\node [style=port] (6) at (0.75, -3) {};
	\end{pgfonlayer}
	\begin{pgfonlayer}{edgelayer}
		\draw [style=dwire, in=90, out=-28, looseness=1.25] (1) to (2);
		\draw [style=wire] (4) to (1);
		\draw [style=wire, in=0, out=-90, looseness=1.00] (0) to (3);
		\draw [style=wire, in=180, out=-135, looseness=1.50] (1) to (3);
		\draw [style=wire] (5) to (6);
		\draw [style=wire] (3) to (5);
	\end{pgfonlayer}
\end{tikzpicture}
  }%
\end{array} =    \begin{array}[c]{c}\resizebox{!}{2.5cm}{%
\begin{tikzpicture}
	\begin{pgfonlayer}{nodelayer}
		\node [style=port] (0) at (2, 2.25) {};
		\node [style=codifferential] (1) at (0.5, 0.75) {{\bf -----}};
		\node [style=port] (2) at (1.25, -0.5) {};
		\node [style={regular polygon,regular polygon sides=4, draw, inner sep=1pt,minimum size=1pt}] (3) at (0.75, -1) {$\bigotimes$};
		\node [style={circle, draw}] (4) at (0.5, 1.75) {$m$};
		\node [style=port] (5) at (0.75, -2) {};
		\node [style={circle, draw}] (6) at (2, 0.75) {$\mathsf{J}$};
	\end{pgfonlayer}
	\begin{pgfonlayer}{edgelayer}
		\draw [style=dwire, in=90, out=-28, looseness=1.25] (1) to (2);
		\draw [style=wire] (4) to (1);
		\draw [style=wire, in=180, out=-135, looseness=1.50] (1) to (3);
		\draw [style=wire] (0) to (6);
		\draw [style=wire, in=0, out=-90, looseness=1.50] (6) to (3);
		\draw [style=wire] (3) to (5);
	\end{pgfonlayer}
\end{tikzpicture}
  }%
\end{array} =    \begin{array}[c]{c}\resizebox{!}{2cm}{%
\begin{tikzpicture}
	\begin{pgfonlayer}{nodelayer}
		\node [style=port] (0) at (0, 0) {};
		\node [style={circle, draw, inner sep=1pt,minimum size=1pt}] (1) at (0, 0.75) {$\mathsf{J}^{-1}$};
		\node [style=port] (2) at (0, 3) {};
		\node [style={circle, draw}] (3) at (0, 2.25) {$\mathsf{J}$};
	\end{pgfonlayer}
	\begin{pgfonlayer}{edgelayer}
		\draw [style=wire] (2) to (3);
		\draw [style=wire] (3) to (1);
		\draw [style=wire] (1) to (0);
	\end{pgfonlayer}
\end{tikzpicture}
  }%
\end{array}\]
Therefore, $\mathsf{J}\mathsf{J}^{-1}=\mathsf{J}^{-1}\mathsf{J}$. To prove this gives the identity, consider $\mathsf{s}_K(\mathsf{J}_K \otimes 1)$. Using that $\sigma_{K,K}=1_K \otimes 1_K$, the $\mathsf{W}$ identity, the Second Fundamental Theorem of Calculus and naturality of the integral transformation, we have that:
 \begin{align*}
   \begin{array}[c]{c}\resizebox{!}{2cm}{%
\begin{tikzpicture}
	\begin{pgfonlayer}{nodelayer}
		\node [style=port] (0) at (2, -0.25) {};
		\node [style=port] (1) at (1.25, 3) {};
		\node [style=integral] (2) at (1.25, 2) {{\bf -----}};
		\node [style={circle, draw}] (3) at (0.5, 0.75) {$\mathsf{J}$};
		\node [style=port] (4) at (0.5, -0.25) {};
	\end{pgfonlayer}
	\begin{pgfonlayer}{edgelayer}
		\draw [style=dwire, in=93, out=-30, looseness=1.00] (2) to (0);
		\draw [style=wire] (1) to (2);
		\draw [style=wire, bend right, looseness=1.00] (2) to (3);
		\draw [style=wire] (3) to (4);
	\end{pgfonlayer}
\end{tikzpicture}
  }%
\end{array}=    \begin{array}[c]{c}\resizebox{!}{2cm}{%
\begin{tikzpicture}
	\begin{pgfonlayer}{nodelayer}
		\node [style=port] (0) at (2, -1) {};
		\node [style=port] (1) at (1.25, 3) {};
		\node [style=integral] (2) at (1.25, 2) {{\bf -----}};
		\node [style=differential] (3) at (0.5, -0.25) {{\bf =\!=\!=}};
		\node [style=codifferential] (4) at (0.5, 0.75) {{\bf =\!=\!=}};
		\node [style=port] (5) at (0.5, -1) {};
	\end{pgfonlayer}
	\begin{pgfonlayer}{edgelayer}
		\draw [style=dwire, in=93, out=-45, looseness=1.00] (2) to (0);
		\draw [style=wire] (1) to (2);
		\draw [style=wire, bend right=60, looseness=1.50] (4) to (3);
		\draw [style=dwire, bend left=60, looseness=1.50] (4) to (3);
		\draw [style=wire, in=89, out=-135, looseness=1.25] (2) to (4);
		\draw [style=wire] (3) to (5);
	\end{pgfonlayer}
\end{tikzpicture}
  }%
\end{array}+   \begin{array}[c]{c}\resizebox{!}{1.5cm}{%
\begin{tikzpicture}
	\begin{pgfonlayer}{nodelayer}
		\node [style=port] (0) at (-2.5, -1.25) {};
		\node [style=codifferential] (1) at (-3.25, -0.25) {{\bf -----}};
		\node [style=port] (2) at (-4, -1.25) {};
		\node [style=port] (3) at (-3.25, 0.75) {};
	\end{pgfonlayer}
	\begin{pgfonlayer}{edgelayer}
		\draw [style=dwire, bend left, looseness=1.00] (1) to (0);
		\draw [style=wire, bend right, looseness=1.00] (1) to (2);
		\draw [style=wire] (3) to (1);
	\end{pgfonlayer}
\end{tikzpicture}
  }%
\end{array}=    \begin{array}[c]{c}\resizebox{!}{2cm}{%
\begin{tikzpicture}
	\begin{pgfonlayer}{nodelayer}
		\node [style=port] (0) at (2, -1) {};
		\node [style=port] (1) at (1.25, 3) {};
		\node [style=integral] (2) at (1.25, 2) {{\bf -----}};
		\node [style=differential] (3) at (0.5, -0.25) {{\bf =\!=\!=}};
		\node [style=codifferential] (4) at (0.5, 0.75) {{\bf =\!=\!=}};
		\node [style=port] (5) at (0.5, -1) {};
	\end{pgfonlayer}
	\begin{pgfonlayer}{edgelayer}
		\draw [style=wire] (1) to (2);
		\draw [style=wire, bend right=60, looseness=1.50] (4) to (3);
		\draw [style=wire, in=89, out=-135, looseness=1.25] (2) to (4);
		\draw [style=wire] (3) to (5);
		\draw [style=dwire, in=30, out=-45, looseness=1.25] (2) to (3);
		\draw [style=dwire, in=90, out=-45, looseness=0.75] (4) to (0);
	\end{pgfonlayer}
\end{tikzpicture}
  }%
\end{array}+   \begin{array}[c]{c}\resizebox{!}{1.5cm}{%
\begin{tikzpicture}
	\begin{pgfonlayer}{nodelayer}
		\node [style=port] (0) at (-2.5, -1.25) {};
		\node [style=codifferential] (1) at (-3.25, -0.25) {{\bf -----}};
		\node [style=port] (2) at (-4, -1.25) {};
		\node [style=port] (3) at (-3.25, 0.75) {};
	\end{pgfonlayer}
	\begin{pgfonlayer}{edgelayer}
		\draw [style=dwire, bend left, looseness=1.00] (1) to (0);
		\draw [style=wire, bend right, looseness=1.00] (1) to (2);
		\draw [style=wire] (3) to (1);
	\end{pgfonlayer}
\end{tikzpicture}
  }%
\end{array}=    \begin{array}[c]{c}\resizebox{!}{2cm}{%
\begin{tikzpicture}
	\begin{pgfonlayer}{nodelayer}
		\node [style=port] (0) at (-2.5, -1.25) {};
		\node [style=codifferential] (1) at (-3.25, -0.25) {{\bf =\!=\!=}};
		\node [style=port] (2) at (-4, -1.25) {};
		\node [style=integral] (3) at (-3.25, 1.75) {{\bf -----}};
		\node [style=differential] (4) at (-3.25, 0.75) {{\bf =\!=\!=}};
		\node [style=port] (5) at (-3.25, 2.75) {};
	\end{pgfonlayer}
	\begin{pgfonlayer}{edgelayer}
		\draw [style=dwire, bend left, looseness=1.00] (1) to (0);
		\draw [style=wire, bend right, looseness=1.00] (1) to (2);
		\draw [style=wire, bend right=60, looseness=1.50] (3) to (4);
		\draw [style=dwire, in=30, out=-30, looseness=1.50] (3) to (4);
		\draw [style=wire] (4) to (1);
		\draw [style=wire] (5) to (3);
	\end{pgfonlayer}
\end{tikzpicture}
  }%
\end{array} + \underbrace{   \begin{array}[c]{c}\resizebox{!}{2cm}{%
\begin{tikzpicture}
	\begin{pgfonlayer}{nodelayer}
		\node [style=port] (0) at (-2.5, -1.25) {};
		\node [style=codifferential] (1) at (-3.25, -0.25) {{\bf =\!=\!=}};
		\node [style=port] (2) at (-4, -1.25) {};
		\node [style=port] (3) at (-3.25, 1.5) {};
		\node [style={regular polygon,regular polygon sides=4, draw}] (4) at (-3.25, 0.75) {$0$};
	\end{pgfonlayer}
	\begin{pgfonlayer}{edgelayer}
		\draw [style=dwire, bend left, looseness=1.00] (1) to (0);
		\draw [style=wire, bend right, looseness=1.00] (1) to (2);
		\draw [style=wire] (3) to (4);
		\draw [style=wire] (4) to (1);
	\end{pgfonlayer}
\end{tikzpicture}
  }%
\end{array}}_{=~0} =    \begin{array}[c]{c}\resizebox{!}{1.5cm}{%
\begin{tikzpicture}
	\begin{pgfonlayer}{nodelayer}
		\node [style=port] (0) at (-2.5, -1.25) {};
		\node [style=codifferential] (1) at (-3.25, -0.25) {{\bf =\!=\!=}};
		\node [style=port] (2) at (-4, -1.25) {};
		\node [style=port] (3) at (-3.25, 0.75) {};
	\end{pgfonlayer}
	\begin{pgfonlayer}{edgelayer}
		\draw [style=dwire, bend left, looseness=1.00] (1) to (0);
		\draw [style=wire, bend right, looseness=1.00] (1) to (2);
		\draw [style=wire] (3) to (1);
	\end{pgfonlayer}
\end{tikzpicture}
  }%
\end{array}
\end{align*}
Finally using \textbf{[J.m]}, the above identity, that $\Delta$ and $\varepsilon$ are monoidal transformation and the unit laws for the monoidal strength we obtain the following equality: 
\begin{align*}
 \begin{array}[c]{c}\resizebox{!}{3cm}{%
\begin{tikzpicture}
	\begin{pgfonlayer}{nodelayer}
		\node [style=port] (0) at (2, 2.25) {};
		\node [style=codifferential] (1) at (0.5, 0.75) {{\bf -----}};
		\node [style=port] (2) at (1.25, -0.5) {};
		\node [style={regular polygon,regular polygon sides=4, draw, inner sep=1pt,minimum size=1pt}] (3) at (0.75, -1) {$\bigotimes$};
		\node [style={circle, draw}] (4) at (0.5, 1.75) {$m$};
		\node [style={circle, draw}] (5) at (0.75, -2) {$\mathsf{J}$};
		\node [style=port] (6) at (0.75, -3) {};
	\end{pgfonlayer}
	\begin{pgfonlayer}{edgelayer}
		\draw [style=dwire, in=90, out=-28, looseness=1.25] (1) to (2);
		\draw [style=wire] (4) to (1);
		\draw [style=wire, in=0, out=-90, looseness=1.00] (0) to (3);
		\draw [style=wire, in=180, out=-135, looseness=1.50] (1) to (3);
		\draw [style=wire] (5) to (6);
		\draw [style=wire] (3) to (5);
	\end{pgfonlayer}
\end{tikzpicture}
  }%
\end{array}=    \begin{array}[c]{c}\resizebox{!}{2.5cm}{%
\begin{tikzpicture}
	\begin{pgfonlayer}{nodelayer}
		\node [style=port] (0) at (2, 2.25) {};
		\node [style=port] (1) at (1.25, 0) {};
		\node [style={regular polygon,regular polygon sides=4, draw, inner sep=1pt,minimum size=1pt}] (2) at (0.75, -1) {$\bigotimes$};
		\node [style={circle, draw}] (3) at (0.5, 2.25) {$m$};
		\node [style=port] (4) at (0.75, -2) {};
		\node [style=integral] (5) at (0.5, 1.25) {{\bf -----}};
		\node [style={circle, draw}] (6) at (-0.25, 0) {$\mathsf{J}$};
	\end{pgfonlayer}
	\begin{pgfonlayer}{edgelayer}
		\draw [style=wire, in=0, out=-90, looseness=1.00] (0) to (2);
		\draw [style=wire] (2) to (4);
		\draw [style=wire, bend right, looseness=1.00] (5) to (6);
		\draw [style=wire] (3) to (5);
		\draw [style=dwire, bend left=15, looseness=1.25] (5) to (1);
		\draw [style=wire, in=180, out=-90, looseness=1.25] (6) to (2);
	\end{pgfonlayer}
\end{tikzpicture}
  }%
\end{array}=    \begin{array}[c]{c}\resizebox{!}{2cm}{%
\begin{tikzpicture}
	\begin{pgfonlayer}{nodelayer}
		\node [style={circle, draw}] (0) at (0.5, 1.5) {$m$};
		\node [style=codifferential] (1) at (0.5, 0.5) {{\bf =\!=\!=}};
		\node [style=port] (2) at (2.25, 1.75) {};
		\node [style={regular polygon,regular polygon sides=4, draw, inner sep=1pt,minimum size=1pt}] (3) at (1, -1) {$\bigotimes$};
		\node [style=port] (4) at (1, -1.75) {};
		\node [style=port] (5) at (1.25, -0.5) {};
	\end{pgfonlayer}
	\begin{pgfonlayer}{edgelayer}
		\draw [style=wire, in=0, out=-90, looseness=1.25] (2) to (3);
		\draw [style=wire, in=180, out=-150, looseness=2.00] (1) to (3);
		\draw [style=wire] (0) to (1);
		\draw [style=wire] (3) to (4);
		\draw [style=dwire, in=90, out=-30, looseness=1.00] (1) to (5);
	\end{pgfonlayer}
\end{tikzpicture}
  }%
\end{array}=    \begin{array}[c]{c}\resizebox{!}{2cm}{%
\begin{tikzpicture}
	\begin{pgfonlayer}{nodelayer}
		\node [style={circle, draw}] (0) at (0.5, 1.75) {$m$};
		\node [style=port] (1) at (2.25, 1.75) {};
		\node [style={regular polygon,regular polygon sides=4, draw, inner sep=1pt,minimum size=1pt}] (2) at (0.75, -1) {$\bigotimes$};
		\node [style=port] (3) at (0.75, -1.75) {};
		\node [style=duplicate] (4) at (0.5, 1) {$\Delta$};
		\node [style={circle, draw}] (5) at (1.25, 0) {$\varepsilon$};
		\node [style=port] (6) at (1.25, -0.5) {};
	\end{pgfonlayer}
	\begin{pgfonlayer}{edgelayer}
		\draw [style=wire, in=0, out=-90, looseness=1.25] (1) to (2);
		\draw [style=wire] (2) to (3);
		\draw [style=wire, bend left, looseness=1.00] (4) to (5);
		\draw [style=dwire] (5) to (6);
		\draw [style=wire, in=180, out=-150, looseness=1.25] (4) to (2);
		\draw [style=wire] (0) to (4);
	\end{pgfonlayer}
\end{tikzpicture}
  }%
\end{array}=    \begin{array}[c]{c}\resizebox{!}{2cm}{%
\begin{tikzpicture}
	\begin{pgfonlayer}{nodelayer}
		\node [style={circle, draw}] (0) at (1.25, 1.25) {$m$};
		\node [style=port] (1) at (2.25, 1.75) {};
		\node [style={regular polygon,regular polygon sides=4, draw, inner sep=1pt,minimum size=1pt}] (2) at (0.75, -1) {$\bigotimes$};
		\node [style=port] (3) at (0.75, -1.75) {};
		\node [style={circle, draw}] (4) at (1.25, 0.25) {$\varepsilon$};
		\node [style=port] (5) at (1.25, -0.5) {};
		\node [style={circle, draw}] (6) at (0, 0) {$m$};
	\end{pgfonlayer}
	\begin{pgfonlayer}{edgelayer}
		\draw [style=wire, in=0, out=-90, looseness=1.25] (1) to (2);
		\draw [style=wire] (2) to (3);
		\draw [style=dwire] (4) to (5);
		\draw [style=wire] (0) to (4);
		\draw [style=wire, in=180, out=-90, looseness=1.25] (6) to (2);
	\end{pgfonlayer}
\end{tikzpicture}
  }%
\end{array}=    \begin{array}[c]{c}\resizebox{!}{1.5cm}{%
\begin{tikzpicture}
	\begin{pgfonlayer}{nodelayer}
		\node [style=port] (0) at (0, 3) {};
		\node [style=port] (1) at (0, 0) {};
	\end{pgfonlayer}
	\begin{pgfonlayer}{edgelayer}
		\draw [style=wire] (0) to (1);
	\end{pgfonlayer}
\end{tikzpicture}
  }%
\end{array}
\end{align*}
So $\mathsf{J}\mathsf{J}^{-1}=\mathsf{J}^{-1}\mathsf{J}=1$. 
\end{proof} 

\begin{theorem} For a calculus category with a monoidal coalgebra modality, if the integral transformation is monoidal then it is the antiderivative integral transformation.  
\end{theorem}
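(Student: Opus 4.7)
The plan is to exploit Proposition \ref{altintmon}, which shows that a monoidal integral transformation is entirely determined by its component $\mathsf{s}_K$ at the monoidal unit via the formula $\mathsf{s} = (m_K \otimes \mathsf{d}^\circ)(\mathsf{s}_K \otimes 1 \otimes 1)(m_\otimes \otimes 1)$. By the preceding theorem, a calculus category with a monoidal coalgebra modality has antiderivatives, so the antiderivative integral $\mathsf{K}^{-1}\mathsf{d}^\circ$ exists; and by the proposition immediately before it, this antiderivative integral is itself monoidal. Hence both $\mathsf{s}$ and $\mathsf{K}^{-1}\mathsf{d}^\circ$ admit the same canonical decomposition in terms of their respective components at $K$, and the proof reduces to showing that they agree at the monoidal unit, i.e. $\mathsf{s}_K = \mathsf{K}_K^{-1}\mathsf{d}^\circ_K$, equivalently $\mathsf{K}_K \mathsf{s}_K = \mathsf{d}^\circ_K$.

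The key fact closing the argument is Corollary \ref{Kcalcobj}: the monoidal unit $K$ is always a calculus object in any calculus category, so the First Fundamental Theorem holds on the nose, $\mathsf{d}_K \mathsf{s}_K = 1_{\oc K \otimes K}$. Combining this with the defining identity $\mathsf{K} = \mathsf{d}^\circ \mathsf{d} + \oc(0)$ yields
\[
\mathsf{K}_K \mathsf{s}_K \;=\; \mathsf{d}^\circ_K \mathsf{d}_K \mathsf{s}_K + \oc(0_K) \mathsf{s}_K \;=\; \mathsf{d}^\circ_K + \oc(0_K) \mathsf{s}_K,
\]
and the remaining term vanishes by naturality of $\mathsf{s}$ at the zero morphism $0_K: K \to K$: indeed $\oc(0_K) \mathsf{s}_K = \mathsf{s}_K(\oc(0_K) \otimes 0_K) = 0$, since the zero map in the right tensor factor annihilates the composite. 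Thus $\mathsf{K}_K \mathsf{s}_K = \mathsf{d}^\circ_K$, and invertibility of $\mathsf{K}_K$ delivers $\mathsf{s}_K = \mathsf{K}_K^{-1} \mathsf{d}^\circ_K$, which by Proposition \ref{altintmon} propagates to the desired equality $\mathsf{s} = \mathsf{K}^{-1}\mathsf{d}^\circ$ on all objects.

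There is no genuine obstacle here: the statement really is a corollary of Proposition \ref{altintmon} together with Corollary \ref{Kcalcobj}. The only point requiring care is the invocation of the monoidality of the antiderivative integral --- established by the proposition immediately preceding this theorem --- which is precisely what ensures that the rigidity of monoidal integrals applies symmetrically to both $\mathsf{s}$ and $\mathsf{K}^{-1}\mathsf{d}^\circ$, making the reduction to a single component $\mathsf{s}_K$ legitimate.
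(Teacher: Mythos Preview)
Your proof is correct and follows essentially the same architecture as the paper's: both reduce to showing $\mathsf{s}_K = \mathsf{K}^{-1}_K \mathsf{d}^\circ_K$ via Corollary~\ref{Kcalcobj}, then propagate globally by Proposition~\ref{altintmon}. The one difference is in the middle step: the paper argues that $\mathsf{d}_K\mathsf{s}_K = \mathsf{d}_K\mathsf{K}^{-1}_K\mathsf{d}^\circ_K$ and then invokes the Taylor property to cancel, whereas you compute $\mathsf{K}_K\mathsf{s}_K = \mathsf{d}^\circ_K\mathsf{d}_K\mathsf{s}_K + \oc(0_K)\mathsf{s}_K = \mathsf{d}^\circ_K$ directly from the definition of $\mathsf{K}$ --- this is a slight simplification, avoiding the explicit appeal to Taylor.
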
 
\begin{proof} By Corollary \ref{Kcalcobj}, the monoidal unit $K$ is a calculus object for any calculus category structure, that is, $\mathsf{d}_K\mathsf{s}_K=1$. However, since we also have antiderivatives, we also have $\mathsf{d}_K \mathsf{K}^{-1}_K \mathsf{d}^\circ_K=1$, and so we have $\mathsf{d}_K\mathsf{s}_K= \mathsf{d}_K \mathsf{K}^{-1}_K \mathsf{d}^\circ_K$. But since the deriving transformation is Taylor we have that: 
\[\mathsf{s}_K + \oc(0)\mathsf{K}^{-1}_K \mathsf{d}^\circ_K =\mathsf{K}^{-1}_K \mathsf{d}^\circ_K + \oc(0)\mathsf{s}\]
But by naturality, this simplifies to $\mathsf{s}_K=\mathsf{K}^{-1}_K \mathsf{d}^\circ_K$. Therefore, since our integral transformation was assumed to monoidal and the antiderivative integral transformation is monoidal, by Proposition \ref{altintmon}, we obtain the following: 
\begin{align*}
\mathsf{s}&= (m_K \otimes \mathsf{d}^\circ)(\mathsf{s}_K \otimes 1 \otimes 1)(m_ \otimes \otimes 1)\\
&= (m_K \otimes \mathsf{d}^\circ)(\mathsf{K}^{-1}_K \otimes 1 \otimes 1)(\mathsf{d}^\circ_K \otimes 1 \otimes 1)(m_ \otimes \otimes 1)\\
&= \mathsf{K}^{-1} \mathsf{d}^\circ
\end{align*}
\end{proof} 

It is also possible -- though technically harder -- to give necessary and sufficient conditions which ensure the integral transformation of a coalgebra modality, which is not necessarily monoidal, is an antiderivative: 

\begin{proposition} For a differential category $\mathbb{X}$ which is also an integral category on the same coalgebra modality with deriving transformation $\mathsf{d}$ and integral transformation $\mathsf{s}$, the following are equivalent: 
\begin{enumerate}[{\em (i)}]
\item $\mathsf{d}$ and $\mathsf{s}$ satisfy the Second Fundamental Theorem of Calculus and the following equality holds:
$$\mathsf{d}\mathsf{s}= (\delta \otimes 1)(\mathsf{s} \otimes 1)(\oc(\varepsilon) \otimes e \otimes 1)\mathsf{d}\mathsf{d}^\circ$$
$$  
\]

A well-known example of a differential category (whose coalgebra modality also happens to be monoidal) comes from the free symmetric algebra construction which actually gives a co-differential category. We briefly recall this example (see \cite{dblute2006differential} for more details) by first recalling the free symmetric algebra adjunction: 

\begin{example} \normalfont The category of vector spaces over a field $\mathbb{K}$, $\mathsf{VEC}_{\mathbb{K}}$, is a co-differential category \cite{dblute2006differential}, so that, $\mathsf{VEC}^{\sf op}_{\mathbb{K}}$ is a differential category. The additive symmetric monoidal structure is given by the standard tensor product and additive enrichment of vector spaces. The algebra modality is given by the free symmetric algebra monad where for a vector space $V$, define $\mathsf{Sym}(V)$, called the free symmetric algebra over $V$, as following (see Section 8, Chapter XVI in \cite{lang2002algebra} for more details): 
$$\mathsf{Sym}(V)= \bigoplus^{\infty}_{n=0} \mathsf{Sym}^n(V)= \mathbb{K} \oplus V \oplus \mathsf{Sym}^2(V) \oplus ... $$ 
where $\mathsf{Sym}^n(V)$ is simply the quotient of $V^{\otimes^n}$ by the tensor symmetry equalities: 
$$v_1 \otimes ... \otimes v_i \otimes ... \otimes v_n = v_{\sigma(1)} \otimes ... \otimes v_{\sigma(i)} \otimes ... \otimes v_{\sigma(n)}$$
$\mathsf{Sym}(V)$ is the free commutative algebra over $V$, that is, we obtain an adjunction: 
$$\xymatrixcolsep{2.5pc}\xymatrix{\mathsf{VEC}_{\mathbb{K}} \ar@<+1.1ex>[r]^-{\mathsf{Sym}} & \mathsf{CALG}_{\mathbb{K}} \ar@<+1ex>[l]_-{\bot}^-{U}
  }$$
The induced monad is an algebra modality -- the dual of a coalgebra modality (see \cite{dblute2006differential} for more details on this algebra modality). Furthermore, this algebra modality has the Seely isomorphisms \cite{lang2002algebra}, that is:
$$\mathsf{Sym}(V \times W) \cong \mathsf{Sym}(V) \otimes \mathsf{Sym}(W) \quad \mathsf{Sym}(0)\cong \mathbb{K}$$
and therefore this is also a comonoidal algebra modality. The deriving transformation $\mathsf{d}: \mathsf{Sym}(V) \to ~\mathsf{Sym}(V) \otimes V$ on pure tensors is defined as follows:
$$\mathsf{d}(a_1 \otimes ... \otimes a_n)= \sum_{i=1}^{n} (a_1 \otimes ... \otimes a_{i-1} \otimes a_{i+1} \otimes ... \otimes a_n) \otimes a_i$$ 
which we then extend by linearity (if this map looks backwards, recall that $\mathsf{MOD}_R$ is a co-differential category). An alternative approach of illustrating this differential category structure on $\mathsf{VEC}_{\mathbb{K}}$ is to consider polynomial rings. If $X= \lbrace x_1, x_2, ... \rbrace$ is a basis of $V$, then $\mathsf{Sym} V$ is isomorphic to the polynomial ring over $X$: $\mathsf{Sym}(V) \cong \mathbb{K}[X]$ \cite{lang2002algebra}. Then the deriving transformation $\mathsf{d}_V$ on monomials is given by the sum of partial derivatives of the monomial:
$$\mathsf{d}_V(x_1^{r_1}  ...  x_n^{r_n}) = \sum^n_{i=1} r_i \cdot x_1^{r_1} ...  x_i^{r_i-1}  ...  x^{r_n} \otimes x_i$$
\end{example}

It is important to note that this differential category structure on $\mathsf{VEC}_{\mathbb{K}}$ can be generalized to the category of modules over any ring $R$. In fact, this example can be generalized further. Indeed, the free symmetric algebra construction on appropriate additive symmetric monoidal categories induces a differential category structure. 

We now give two examples of differential categories with monoidal coalgebra modalities and antiderivatives, and therefore, two examples of calculus categories. In fact, these were two of the main examples of differential categories in \cite{dblute2006differential}. 

\begin{example} \normalfont The category of sets and relations, $\mathsf{REL}$, is a differential category \cite{dblute2006differential} with antiderivatives. The symmetric monoidal strucure is given by the cartesian product of sets while the additive structure is given by the union of sets. The coalgebra modality is given by the free symmetric algebra construction on $\mathsf{REL}$, also known as the finite bag comonad. Explicitly, for a set $X$, $\oc X$ is the set of all finite bags (also known as multisets) of $X$ (including the empty bag):
\[\oc(X)=\lbrace \llbracket x_1,...,x_n \rrbracket \vert~ x_i \in X \rbrace\] 
See \cite{dblute2006differential} for more details about this coalgebra modality. The deriving transformation $\mathsf{d}_X: \oc X \times X \to \oc X$ is the relation which adds an extra element to the bag:
$$\mathsf{d}_X = \lbrace ((B, x), B \cup x) \vert ~ x \in X,~ B \in \oc X \rbrace$$
The additive idempotency of $\mathsf{REL}$ makes both $\mathsf{K}$ and $\mathsf{J}$ the identity and thus trivially isomorphisms. Therefore, the integral transformation is the coderiving transformation $\mathsf{d}^\circ_X: \oc X \to \oc X \times X$, which is the relation which removes an element from the bag:
$$\mathsf{d}^\circ_X = \lbrace (B, (B-\lbrace x \rbrace, x)) \vert ~ x \in X,~ B \in \oc X \rbrace$$
This makes $\mathsf{REL}$ into a calculus category. Furthermore, the only calculus objects of this calculus category structure for $\mathsf{REL}$ are one element sets and the empty set. 
\end{example}
 
\begin{example} \normalfont The category of vector spaces over a field $\mathbb{K}$ of characteristic of $0$, $\mathsf{VEC}_{\mathbb{K}}$, is a co-differential category with antiderivatives, so that, $\mathsf{VEC}^{\sf op}_{\mathbb{K}}$ is a calculus category. While having a field of characteristic zero is not required to obtain differential structure, it is required for antiderivatives. On pure tensors, $\mathsf{L}$ scalar multiplies a word by its length:
\[\mathsf{L}(v_1 \otimes ... \otimes v_n)=n \cdot (v_1 \otimes ... \otimes v_n)\]
$\mathsf{K}$ is equal to $\mathsf{L}$ everywhere except on constants (elements of $\mathbb{K}$) since $\mathsf{L}$ on constants is zero while $\mathsf{K}$ map constants is the identity: 
\[\mathsf{K}_V(w)=\begin{cases} w & \text{ if } w\in \mathsf{Sym}^0(V)=\mathbb{K} \\
\mathsf{L}(w) & \text{ o.w. } 
\end{cases}\]
$\mathsf{J}$, on pure tensors, scalar multiplies a word by its length plus one:
\[\mathsf{J}_V(v_1 \otimes ... \otimes v_n)=(n+1) \cdot (v_1 \otimes ... \otimes v_n)\]
Notice that in this case, unlike the previous example, the $\mathsf{J}$ map is not equal to the $\mathsf{K}$ map!  As the rationals are embedded in our field, both $\mathsf{K}$ and $\mathsf{J}$ are isomorphisms, and the resulting integral transformation $\mathsf{s}_V: \mathsf{Sym} V \otimes V \to \mathsf{Sym} V$ is defined on pure tensors as follows: 
\[\mathsf{s}_V((v_1 \otimes ... \otimes v_n) \otimes v)= \frac{1}{n+1} v_1 \otimes ... \otimes v_n \otimes v\]
which we then extend by linearity. Using the alternative description, the integral transformation can be described on the polynomial ring $\mathsf{s}_V: \mathbb{K}[X] \otimes V \to \mathbb{K}[X]$ as follows on monomials:
\[\mathsf{s}_V((x_1^{r_1}  ...  x_n^{r_n}) \otimes x_i)= \frac{1}{1+\sum^n_{j=1} r_j} x_1^{r_1}  ... x_i^{r_i+1}  ...  x^{r_n}\]
At first glance this may seem bizarre. One might expect the integral transformation to integrate a monomial with respect to the variable $x_i$ and thus only multiply by $\frac{1}{1+r_i}$. However, this classical idea of integration fails the Rota-Baxter rule \textbf{[s.2]} for any vector space of dimension greater than one. The only calculus objects of this calculus category structure for $\mathsf{VEC}_{\mathbb{K}}$ are one dimensional vector spaces, in particular the field $\mathbb{K}$, and the zero vector space. 
\end{example}

As seen with Corollary \ref{monoidalrationals}, all integral categories with a monoidal coalgebra modality is enriched over $\mathbb{Q}_{\geq 0}$-modules. Therefore, we do not need to look very far for a differential category which is not an integral category.
 
\begin{example} \normalfont  As shown above, the category of vector spaces for any fixed field with the free symmetric algebra is a differential category. However for a field with non-zero characteristic, this is not an integral category. For a particular example, the free symmetric algebra modality on category of vector spaces over $\mathbb{Z}_2$ does have a deriving transformation but does not have an integral transformation.  
\end{example}

Interestingly, one of the algebraic notions of differentiation -- differential algebras -- does not give a differential category, but does give an integral category (over the appropriate field).

\begin{example} \normalfont  A (commutative) \textbf{differential algebra (of weight $0$)} \cite{guo2008differential} over a field $\mathbb{K}$ is a pair $(A, \mathsf{D})$ consisting of a commutative $\mathbb{K}$-algebra $A$ and a linear map $\mathsf{D}: A \to A$ such that $\mathsf{D}$ satisfies the Leibniz rule, that is, the following equality holds:
$$\mathsf{D}(ab)=\mathsf{D}(a)b+a\mathsf{D}(b) \quad \forall a,b \in A$$
Modifying slightly the construction given in \cite{guo2008differential} of the free differential algebra over a set to instead obtain a free differential algebra over a vector space, we obtain that the forgetful functor from the category of differential algebras over $\mathbb{K}$, $\mathsf{CDA}_\mathbb{K}$, has a left adjoint:
$$\xymatrixcolsep{2.5pc}\xymatrix{\mathsf{VEC}_{\mathbb{K}} \ar@<+1.1ex>[r]^-{\mathsf{DIFF}} & \mathsf{CDA}_{\mathbb{K}} \ar@<+1ex>[l]_-{\bot}^-{U}
  }$$
Briefly, $\mathsf{DIFF}: \mathsf{VEC}_{\mathbb{K}} \to \mathsf{CDA}_{\mathbb{K}}$ is defined on objects as $\mathsf{DIFF}(V)= \mathsf{Sym}(\bigoplus_{n\in \mathbb{N}} V)$ and we refer the reader to \cite{guo2008differential} for the remainder of the construction (as it is similar). The induced monad of this adjunction is an algebra modality. This algebra modality has the Seely isomorphisms since the free symmetric algebra does and the infinite coproduct behaves well with the finite biporduct \cite{lang2002algebra}:
\begin{align*}
\mathsf{DIFF}(M \times N)&= \mathsf{Sym}(\bigoplus_{n\in \mathbb{N}} (M \times N))\\
&\cong \mathsf{Sym}(\bigoplus_{n\in \mathbb{N}} M \times \bigoplus_{n\in \mathbb{N}} N) \\
& \cong \mathsf{Sym}(\bigoplus_{n\in \mathbb{N}} M) \otimes \mathsf{Sym}(\bigoplus_{n\in \mathbb{N}} N)\\
&= \mathsf{DIFF}(M) \otimes \mathsf{DIFF}(N)
\end{align*}
Therefore this is a comonoidal algebra modality. However this algebra modality does not have a deriving transformation, since the chain rule \textbf{[d.4]} cannot be satisfied. 
\end{example}

\begin{example} \normalfont For the category of vector space over $\mathbb{Z}_2$, $\mathsf{DIFF}$ is a comonoidal algebra modality which does not have a deriving transformation or an integral transformation. 
\end{example}

\begin{example} \normalfont If $\mathbb{K}$ is a field of characteristic zero, then $\mathsf{DIFF}$ has an integral transformation defined as: 
  \[  \xymatrixcolsep{3.5pc}\xymatrix{\mathsf{Sym}(\bigoplus_{n\in \mathbb{N}} V) \otimes V \ar[r]^-{1 \otimes \iota_0} & \mathsf{Sym}(\bigoplus_{n\in \mathbb{N}} V) \otimes \bigoplus_{n\in \mathbb{N}} V \ar[r]^-{\mathsf{s}_{\bigoplus_{n\in \mathbb{N}} V}} & \mathsf{Sym}(\bigoplus_{n\in \mathbb{N}} V)  
  } \]
  where $\iota_0: V \to \bigoplus_{n\in \mathbb{N}} V$ is the injection map into the coproduct and $\mathsf{s}_{\bigoplus_{n\in \mathbb{N}} V}$ is the integral transformation of the free symmetric algebra.
\end{example}

Similarly one of the algebraic abstractions of integration -- Rota-Baxter algebras -- always gives a differential category:

\begin{example} \normalfont A (commutative) \textbf{Rota-Baxter algebra (of weight $0$)} \cite{guo2012introduction} over a field $\mathbb{K}$ is a pair $(A, \mathsf{P})$ consisting of a commutative $\mathbb{K}$-algebra $A$ and a linear map $\mathsf{P}: A \to A$ such that $\mathsf{P}$ satisfies the  Rota-Baxter equation, that is, the following equality holds:
$$\mathsf{P}(a)\mathsf{P}(b)=\mathsf{P}(a\mathsf{P}(b))+\mathsf{P}(\mathsf{P}(a)b) \quad \forall ab, \in A$$
 The map $\mathsf{P}$ is called a \textbf{Rota-Baxter operator} (we refer the reader to \cite{guo2012introduction} for more details on Rota-Baxter algebras). It turns out that there is a left adjoint to the forgetful functor between the category of Rota-Baxter algebras, $\mathsf{CRBA}_{\mathbb{K}}$, and the category of commutative algebras over $\mathbb{K}$, $\mathsf{CALG}_{\mathbb{K}}$ (for more details on this adjunction and the induced monad see \cite{zhang2016monads}). 
 $$\xymatrixcolsep{2.5pc}\xymatrix{\mathsf{CALG}_{\mathbb{K}} \ar@<+1.1ex>[r]^-{\mathsf{RB}} & \mathsf{CRBA}_{\mathbb{K}} \ar@<+1ex>[l]_-{\bot}^-{U}
  }$$
 Briefly, on objects the left adjoint $\mathsf{RB}: \mathsf{CALG}_{\mathbb{K}} \to \mathsf{CRBA}_{\mathbb{K}}$ is given by $\mathsf{RB}(A)=\mathsf{Sh}(A) \otimes A$ where $\mathsf{Sh}(A)$ is the shuffle algebra \cite{guo2012introduction}. To obtain an algebra modality on the category of vector spaces over $\mathbb{K}$, we compose the free Rota-Baxter algebra adjunction and the free symmetric algebra adjunction: 
$$\xymatrixcolsep{2.5pc}\xymatrix{\mathsf{VEC}_{\mathbb{K}} \ar@<+1.1ex>[r]^-{\mathsf{Sym}} & \mathsf{CALG}_{\mathbb{K}} \ar@<+1ex>[l]_-{\bot}^-{U} \ar@<+1.1ex>[r]^-{\mathsf{RB}} & \mathsf{CRBA}_{\mathbb{K}} \ar@<+1ex>[l]_-{\bot}^-{U}
  }$$ 
  and take the induced monad of this adjunction. This algebra modality comes equipped with a deriving transformation defined as 
$$1 \otimes \mathsf{d}: \mathsf{Sh}(\mathsf{Sym}(V)) \otimes \mathsf{Sym}(V) \to \mathsf{Sh}(\mathsf{Sym}(V)) \otimes \mathsf{Sym}(V) \otimes V$$
where $\mathsf{d}$ is the deriving transformation of $\mathsf{Sym}$. However, this deriving transformation is not Taylor for the simple reason that there exists non-constants which derive to zero. 
\end{example}

\begin{example} \normalfont For the category of vector space over $\mathbb{Z}_2$, $\mathsf{RB}$ is a non-comonoidal algebra modality with a deriving transformation but not an integral transformation. \end{example}

As usual, in the presence of rationals, Rota-Baxter algebras also give example of an integral category. However, the differential and integral structures do not give a calculus category. 

\begin{example}\label{JnotKexample} \normalfont If the field $\mathbb{K}$ has characteristic zero, then $\mathsf{RB}$ also has an integral transformation defined as
$$1 \otimes \mathsf{s}: \mathsf{Sh}(\mathsf{Sym}(V)) \otimes \mathsf{Sym}(V) \otimes V \to \mathsf{Sh}(\mathsf{Sym}(V)) \otimes \mathsf{Sym}(V)$$
where $\mathsf{s}$ is the integral transformation of $\mathsf{Sym}$. Since the deriving transformation is not Taylor, the Second Fundamental Theorem of Calculus does not hold and therefore this is not a calculus category structure. Furthermore, this is an example where $\mathsf{J}$ is invertible while $\mathsf{K}$ is not. 
\end{example}

Combining the differential algebra and Rota-Baxter algebra examples we obtain an example which is solely an algebra modality.

\begin{example} \normalfont A (commutative) \textbf{differential Rota-Baxter algebra (of weight $0$)} \cite{guo2008differential} over a field $\mathbb{K}$ is a triple $(A, \mathsf{D}, \mathsf{P})$ consisting of a differential algebra $(A, \mathsf{D})$ and a Rota-Baxter algebra $(A, \mathsf{P})$ such that $\mathsf{P}\mathsf{D}=1_A$. It turns out that the free Rota-Baxter algebra over a differential algebra is also its free differential Rota-Baxter algebra, and therefore inducing the following adjunction between the category differential algebras, $\mathsf{CDA}_{\mathbb{K}}$, and the category of differential Rota-Baxter algebras, $\mathsf{CDRBA}_{\mathbb{K}}$: 
$$\xymatrixcolsep{2.5pc}\xymatrix{\mathsf{CDA}_{\mathbb{K}} \ar@<+1.1ex>[r]^-{\mathsf{RB}} & \mathsf{CDRBA}_{\mathbb{K}} \ar@<+1ex>[l]_-{\bot}^-{U}
  }$$
  The full construction can be found in \cite{guo2008differential}. Once again, to obtain an algebra modality we compose this adjunction with the free differential algebra adjunction: 
  $$\xymatrixcolsep{2.5pc}\xymatrix{\mathsf{VEC}_{\mathbb{K}} \ar@<+1.1ex>[r]^-{\mathsf{DIFF}} & \mathsf{CDA}_{\mathbb{K}} \ar@<+1ex>[l]_-{\bot}^-{U} \ar@<+1.1ex>[r]^-{\mathsf{RB}} & \mathsf{CDRBA}_{\mathbb{K}} \ar@<+1ex>[l]_-{\bot}^-{U}
  }$$
  This algebra modality is not comonoidal for the same reasons as the free Rota-Baxter algebra, and is not a differential category failing the chain rule like the free differential algebra. 
\end{example}

\begin{example} \normalfont Continuing the previous example, let $\mathbb{K}$ be a field of characteristic zero. We define an integral transformation for $\mathsf{RB}(\mathsf{DIFF}(V))$ as follows: 
  \[  \xymatrixcolsep{4pc}\xymatrix{\mathsf{Sh}(\mathsf{Sym}(\bigoplus_{n\in \mathbb{N}} V)) \otimes \mathsf{Sym}(\bigoplus_{n\in \mathbb{N}} V) \otimes V \ar[r]^-{1 \otimes 1 \otimes \iota_0} &\\
   \mathsf{Sh}(\mathsf{Sym}(\bigoplus_{n\in \mathbb{N}} V)) \otimes \mathsf{Sym}(\bigoplus_{n\in \mathbb{N}} V) \otimes \bigoplus_{n\in \mathbb{N}} V
   \ar[r]^-{1 \otimes \mathsf{s}_{\bigoplus_{n\in \mathbb{N}} V}} &\\\mathsf{Sh}(\mathsf{Sym}(\bigoplus_{n\in \mathbb{N}} V)) \otimes \mathsf{Sym}(\bigoplus_{n\in \mathbb{N}} V)  
  } \]
Then $\mathsf{VEC}_R$ with the free differential Rota-Baxter algebra monad is an integral category whose algebra modality is not comonoidal. 
\end{example}

\begin{example} \normalfont On the other hand, the category of vector space over $\mathbb{Z}_2$ with free differential Rota-Baxter algebra monad is not a differential category, nor is it an integral category, and whose algebra modality is not comonoidal. 
\end{example}

\begin{example} \normalfont Let $R$ be a non-zero commutative ring. The zero $R$-module, $0$, induces a coalgebra modality on the category of modules over $R$ where the functor maps all objects to $0$ and all maps to zero maps. This coalgebra modality is not monoidal since $\oc(0)=0 \ncong R$. It is however a differential category with antideriviatives since every map in sight would be zero and the necessary equalities hold trivially. This example can be generalized to any additive symmetric monoidal category with a zero object. 
\end{example}

\begin{example} \normalfont Similar to the construction of the free symmetric algebra for the category of vector spaces over a field, one can construct the free additively idempotent (i.e $1+1=1$) commutative algebra in the category of modules over a rig \cite{golan2013semirings}, which gives an algebra modality. The deriving transformation is the same as the free symmetric algebra, and since our algebra is additively idempotent: $\mathsf{K}$ and $\mathsf{J}$ are the identity. Therefore, this is a differential category with antiderivatives. However, if the rig $R$ is not additively idempotent (such as $\mathbb{N}$), this is not a comonoidal algebra modality since $\oc(0) \ncong R$. When $R$ is additively idempotent, then the algebra modlity does have the Seely isomorphisms and therefore would be comonoidal. 
\end{example}

\section{Conclusion}\label{conclusionsec}

In the examples above while we have filled in many of the blanks: however, it is important to note the areas we have {\em not\/} filled.  For example, while we suspect it may be possible, we do not have an example of a differential category on a monoidal coalgebra modality which has integration and yet is not a calculus category.  Nor do we have an example of a calculus category which does not have antiderivatives.

The influence of linear logic  has made monoidal coalgebra modalities central.  However, coalgebra modalities which are not monoidal, while perhaps somewhat overlooked, are certainly mathematically 
important.   A natural example of a differential category with antiderivatives, whose coalgebra modality is not moniodal is the category of real vector spaces with smooth functions determined by the free $\mathcal{C}^\infty$-ring monad.  This example was uncovered by Geoff Cruttwell, Rory Lucyshyn-Wright, and the second author.  The example,  being mathematically central, suggests that non-monoidal coalgebra modalities cannot be ignored.

A reasonable strategy for further developing the theory of integral categories is to follow the same path as that followed for differential categories. The theory of differential categories was developed in four stages with each stage formalizing a different aspect of the theory of differentiation: (tensor) differential categories  \cite{dblute2006differential} formalize basic differentiation, cartesian differential categories axiomatize directional derivatives \cite{blute2009cartesian}, restriction differential categories \cite{cockett2011differential} formalize the theory of differentiation on open subsets, and tangent categories \cite{cockett2014differential} formalize tangent structure and the theory of smooth manifolds. Each stage relates to the next in interesting ways: the coKleisli category of a differential category is a cartesian differential category, while a cartesian differential category is a special case of a restriction differential category. Restriction differential categories, in turn, have an associated manifold completion which is itself a tangent category.

Integral categories are closely related to differential categories. Therefore, it seems worthwhile to develop a parallel approach for integral categories in four corresponding stages. Here we have developed the first step with (tensor) integral categories. The next step -- which is already in progress -- is to develop Cartesian integral categories and Cartesian calculus categories, and in particular, to show the sense in which the coKleisli category of an integral category is a Cartesian integral category.   Cartesian integral categories have a term logic which gives these categories a more ``classical'' feel: in fact, we have borrowed parts of this term logic to help motivate this paper. 

The observation that the coderiving transformation is an integral transformation when addition is idempotent (i.e. $1+1=1$) is interesting as differential categories of interest in computer science often 
have this property.  The properties of these integral categories may, thus, be of special interest to computer science.

\paragraph*{Acknowledgements:} 
The authors would like to thank Rick Blute for drawing both authors' attention to Rota-Baxter algebras. Integral categories simply would not have developed so rapidly without this basic inspiration.   Jonathan Gallagher 
reminded us of Ehrhard's work at exactly the right moment, while Kristine Bauer provided continual constructive criticism during the evolution of our  thoughts. 

\bibliographystyle{plain}      
\bibliography{intcatarchive}   

\end{document}